\documentclass[11pt]{amsart}
\usepackage{graphicx}
\usepackage{amsmath} 
\usepackage{amsthm,amsfonts,amssymb,mathrsfs,amscd,amstext,amsbsy} 
\usepackage{epic,eepic} 
\usepackage{yfonts}
\usepackage{paralist,enumerate}
\usepackage[all]{xy}
\usepackage{hyperref}
\hypersetup{colorlinks}

\newcommand{\D}{\displaystyle}

\newtheorem{theorem}{Theorem}[section]
\newtheorem{cor}[theorem]{Corollary}

\newtheorem{theo}[theorem]{Theorem}
\newtheorem{lem}[theorem]{Lemma}
\newtheorem{pro}[theorem]{Proposition}
\newtheorem{rem}[theorem]{Remark}

\newtheorem{Definition}[theorem]{Definition}
\newtheorem*{Definition*}{Definition}

\def\qed{\hfill \ifhmode\unskip\nobreak\fi\quad\ifmmode\Box\else$\Box$\fi\\ }

\begin{document}

\title[Symplectic circle actions with three fixed points]{Symplectic periodic flows with exactly three equilibrium points}
\author{Donghoon Jang}
\address{Department of Mathematics, University of Illinois at Urbana-Champaign, Urbana, IL 61801}
\email{jang12@illinois.edu}

\begin{abstract}
Let the circle act symplectically on a compact, connected symplectic manifold $M$. If there are exactly three fixed points, $M$ is equivariantly symplectomorphic to $\mathbb{CP}^{2}$.
\end{abstract}

\maketitle

\section{Introduction}

$\indent$

The study of fixed points of flows or maps is a classical and important topic studied in geometry and dynamical systems. Torus actions in symplectic geometry correspond to periodic flows in mechanical systems. Fixed points by the actions correspond to equilibrium points by the flows. If a torus action has fixed points, a lot of information is encoded by the fixed point set of the action.

Hamiltonian actions are symplectic but symplectic actions need not be Hamiltonian. Hence it is a natural question to ask if there is a non-Hamiltonian symplectic action on a compact symplectic manifold when given a certain number of fixed points. It is a classical fact that a Hamiltonian circle action on a compact symplectic manifold $(M,\omega)$ has at least $\frac{1}{2}\dim M+1$ fixed points. 

Let the circle act symplectically on a compact, connected symplectic manifold $M$. First of all, there cannot be exactly one fixed point. Also, due to C. Kosniowski, if there are exactly two fixed points, then either $M$ is the 2-sphere or $\dim M=6$ \cite{Ko}. This is reproved by A. Pelayo and S. Tolman using another method \cite{PT}. This result itself does not rule out the possibility that there is a $6$-dimensional compact symplectic manifold $M$ with exactly two fixed points by the symplectic circle action. In this paper, we follow the method that A. Pelayo and S. Tolman use.

In this paper we study symplectic circle actions on compact, connected symplectic manifolds with exactly three fixed points. The conclusion is that in this case the manifold must be 4-dimensional and it is equivariantly symplectomorphic to $\mathbb{CP}^{2}$ with the standard action. In particular, if there are exactly three fixed points, symplectic circle actions are Hamiltonian.

In the proof, we use the equivariant cohomology of $M$ and weights at each fixed point. ABBV localization Theorem is used, but only the image of 1 is used. If there is a weight $a$ for some fixed point $p$, then there also is a weight $-a$ for some fixed point. If two distinct fixed points lie in the same component $N$ of $M^{\mathbb{Z}_{k}}$ for some $k>1$, then weights at the two fixed points are equal modulo $k$. The number of fixed points with $i$ negative weights is the same as the number of fixed points with $n-i$ negative weights, where $\dim M=2n$.

A two form $\sigma \in \Omega^{2}(M)$ on a compact manifold $M$ is called a \emph{symplectic form} if it is closed and non-degenerate. If a circle action on $M$ preserves $\sigma$, then the action is called \emph{symplectic}. Let $X_{M}$ be the vector field on $M$ induced by the circle action. Then the action is called \emph{Hamiltonian}, if there exists a map $\mu : M \rightarrow \mathbb{R}$ such that
\begin{center}
$-d\mu=\iota_{X_{M}}\sigma$.
\end{center}
This implies that every symplectic action is Hamiltonian if $H_{1}(M;\mathbb{R})=0$, since $\iota_{X_{M}}\sigma$ is closed. The main result of this paper is the following:

\begin{theo} \label{t11} Let the circle act symplectically on a compact, connected symplectic manifold $M$. If there are exactly three fixed points, $M$ is equivariantly symplectomorphic to $\mathbb{CP}^{2}$.
\end{theo}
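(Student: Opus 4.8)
The plan is to use the equivariant cohomology facts recalled above to cut the dimension down to four, and then invoke the classification of Hamiltonian circle actions on symplectic $4$-manifolds. \textbf{Step 1: combinatorics of the fixed data.} Passing to $S^1$ modulo its (finite) kernel we may assume the action effective, so the three fixed points are isolated; write $\dim M = 2n$. Let $a_i$ be the number of fixed points with exactly $i$ negative weights, so $\sum_i a_i = 3$ and $a_i = a_{n-i}$. If $n$ were odd the involution $i\mapsto n-i$ of $\{0,\dots,n\}$ would be free and $\sum_i a_i$ would be even; hence $n=2m$ and $a_m$ is odd, so $a_m\in\{1,3\}$. The localization identity (the image of $1$) gives $\sum_p\prod_j w_j(p)^{-1}=0$. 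If $a_m=3$, all three products $\prod_j w_j(p)$ have sign $(-1)^m$, so the sum cannot vanish; hence $a_m=1$, and the fixed points $p_-,p_0,p_+$ have $i,m,2m-i$ negative weights for a single $0\le i<m$. Since $\prod_j w_j(p_\pm)$ has sign $(-1)^i$ while $\prod_j w_j(p_0)$ has sign $(-1)^m$, the identity forces $(-1)^i=-(-1)^m$ together with
\[
\frac{1}{\bigl|\prod_j w_j(p_-)\bigr|}+\frac{1}{\bigl|\prod_j w_j(p_+)\bigr|}=\frac{1}{\bigl|\prod_j w_j(p_0)\bigr|}.
\]

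\textbf{Step 2: extracting a proper invariant $2$-sphere.} If all weights were $\pm1$, the three denominators would be $1$ and the displayed identity would read $1+1=1$; so $N:=\max_{p,j}|w_j(p)|\ge 2$. Choose a fixed point carrying a weight of absolute value $N$ and let $Y$ be the connected component through it of $M^{\mathbb{Z}_N}$: it is a compact connected symplectic $S^1$-invariant submanifold with $2\le\dim Y<\dim M$, on which the residual circle $S^1/\mathbb{Z}_N$ acts with $Y^{S^1}=Y\cap\{p_-,p_0,p_+\}$ and with all isotropy weights equal to $\pm 1$ (the only weights divisible by $N$ are $\pm N$, since every weight has absolute value $\le N$). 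A symplectic circle action never has exactly one fixed point, so $|Y^{S^1}|\in\{2,3\}$; if it were $3$, Step 1 applied to $Y$ would give $1+1=1$ again. Hence $|Y^{S^1}|=2$, and by Kosniowski's theorem \cite{Ko} either $Y\cong S^2$ or $\dim Y=6$; in the latter case the analysis of \cite{PT} shows the weights at the two fixed points must have the form $\{a,b,-a-b\}$ and $\{-a,-b,a+b\}$, which cannot all be $\pm1$. So $Y\cong S^2$, and in particular our chosen fixed point has a unique weight of absolute value $N$.

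\textbf{Step 3: descent to dimension four and conclusion.} When $Y\cong S^2$, it contains exactly one further fixed point, the remaining fixed point is isolated in $M^{\mathbb{Z}_N}$, and (by the $\pm$-symmetry of weights) $N$ and $-N$ occur exactly once each, at the two fixed points of $Y$. One now runs the same analysis with successively smaller weight magnitudes in place of $N$, using that fixed points lying in one component of $M^{\mathbb{Z}_k}$ have weights congruent modulo $k$; this squeezes all weight magnitudes down and is possible only when $\dim M=4$. (Alternatively, argue by induction on $\dim M$: a proper isotropy submanifold carrying all three fixed points would, by the inductive hypothesis, be equivariantly $\mathbb{CP}^2$, contradicting that its residual weights are $\pm1$, so the reduction always stops at dimension four.) For $\dim M=4$: a symplectic circle action on a compact connected $4$-manifold with a fixed point is Hamiltonian (McDuff), its moment map is a perfect Morse function with critical points of index $0,2,4$, so $M$ is simply connected with the Betti numbers of $\mathbb{CP}^2$, and Karshon's classification of Hamiltonian circle actions on compact symplectic $4$-manifolds identifies $M$ equivariantly with $\mathbb{CP}^2$ (the class of $\omega$ being pinned down by $b_2=1$ and the total volume).

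\textbf{Main obstacle.} The crux is Step 3: turning the ``squeezing'' into a rigorous termination argument — equivalently, ruling out any high-dimensional phantom configuration and showing the third fixed point cannot remain forever isolated from the invariant spheres — which forces one to track carefully, through the congruences modulo $k$, exactly which fixed point carries which weight. Excluding the $6$-dimensional two-fixed-point case without quoting the sharp form of \cite{PT}, and the precise appeal to the $4$-dimensional classification, are secondary points.
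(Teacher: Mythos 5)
Your Steps 1 and 2 are essentially the paper's Corollary \ref{c23}, the localization computation, and Lemma \ref{l32}, and they are correct (with one small caveat: the sign argument in Step 1 only gives $i\equiv m-1 \pmod 2$; pinning down $\lambda_p=n-2$, $\lambda_q=n$, $\lambda_r=n+2$ exactly requires the congruence/Chern-class argument of Lemmas \ref{l33} and \ref{l34}, not just signs).

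The genuine gap is Step 3, and it is not a secondary technicality --- it is the entire content of the theorem beyond dimension four. After Step 2 you know that $M^{\mathbb{Z}_N}$ consists of a $2$-sphere containing two fixed points plus the third fixed point sitting as an isolated component. Nothing in your argument rules this configuration out in dimension $\geq 8$; "running the same analysis with successively smaller weight magnitudes" does not obviously terminate, because the smaller isotropy submanifolds $M^{\mathbb{Z}_e}$ can a priori realize any of the four configurations of Lemma \ref{l45}, and the third fixed point can a priori stay isolated at every stage. Your parenthetical inductive alternative also fails: the inductive hypothesis only applies when a \emph{single} component of a proper isotropy submanifold contains all three fixed points, which is exactly the case that Lemma \ref{l32} already excludes for the largest weight; it says nothing about the persistent "sphere plus isolated point" configuration. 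The paper's actual elimination of $\dim M\geq 8$ occupies Sections 4 through 8: it uses that the first Chern class map vanishes identically at every fixed point when $\dim M\geq 8$ (Corollary \ref{c27}), forces the three fixed points into a common $4$-dimensional component of $M^{\mathbb{Z}_2}$ with weights $\{a,a+b\},\{-a,b\},\{-b,-a-b\}$ (Lemma \ref{l44}), splits on the parity of the largest weight, and then carries out a lengthy enumeration of weight multiplicities (Lemmas \ref{l52}--\ref{l57} for the odd case and Lemmas \ref{l62}--\ref{l811} for the even case) before reaching a contradiction via a signed localization inequality. None of that machinery, nor any substitute for it, appears in your proposal, so the proof is incomplete precisely at the point you yourself flag as the crux.
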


\begin{proof}
By quotienting out by the subgroup which acts trivially, without loss of generality we may assume that the action is effective. Then this is an immediate consequence of Proposition \ref{p43}, Proposition \ref{p51}, and Proposition \ref{p61} below.
\end{proof}

\section{Backgrounds and Notations}

Consider a circle action on a manifold $M$. \emph{The equivariant cohomology of} $M$ is defined by $\D{H_{S^{1}}^{*}(M)=H^{*}(M \times_{S^{1}} S^{\infty})}$. If $M$ is oriented and compact, then from the projection map $\pi : M \times_{S^{1}} S^{\infty} \rightarrow \mathbb{CP}^{\infty}$ we obtain a natural push-forward map

\begin{center}
$\pi_{*} : \D{H_{S^{1}}^{i}(M;\mathbb{Z}) \rightarrow \D{H_{S^{1}}^{i-\dim M}(\mathbb{CP}^{\infty};\mathbb{Z})}}$
\end{center}
This map is given by "integration over the fiber" and denoted by $\int_{M}$.

\begin{theo} \label{t21} (ABBV localization) \cite{AB} Let the circle act on a compact oriented manifold $M$. Fix $ \alpha \in H_{S^{1}}^{ \ast } ( M ;  \mathbb{Q} ) $. As elements of $\mathbb{Q}(t)$,
\begin{center}
$\D{\int_{M} \alpha = \sum_{F \subset M^{S^{1}}} \int_{F} \frac{\alpha|_{F}}{e_{S^{1}(N_{F})}}}$,
\end{center}
where the sum is over all fixed components, and $e_{S^{1}} ( N_{F} )$  denotes the equivariant Euler class of the normal bundle to F.
\end{theo}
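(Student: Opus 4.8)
The plan is to prove the formula by reducing the integral over $M$ to the fixed set via the Atiyah--Segal--Borel localization theorem for the module structure on equivariant cohomology, and then to evaluate the resulting pushed-forward classes using the projection formula. Throughout I write $R = H_{S^1}^*(\mathrm{pt};\mathbb{Q}) = \mathbb{Q}[t]$ (with $\deg t = 2$), so that $H_{S^1}^*(M;\mathbb{Q})$ is a graded $R$-module, and I work over the fraction field $\mathbb{Q}(t)$, i.e.\ I tensor everything with $\mathbb{Q}(t)$ over $R$. This is precisely the sense in which the asserted identity lives in $\mathbb{Q}(t)$.

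The first and central step is the localization isomorphism. Let $\iota : M^{S^1} \hookrightarrow M$ be the inclusion of the fixed set. The key claim is that the relative group $H_{S^1}^*(M, M^{S^1};\mathbb{Q})$ is a torsion $R$-module, so that from the long exact sequence of the pair, after tensoring with $\mathbb{Q}(t)$, the restriction map
\[
\iota^* : H_{S^1}^*(M;\mathbb{Q}) \otimes_R \mathbb{Q}(t) \longrightarrow H_{S^1}^*(M^{S^1};\mathbb{Q}) \otimes_R \mathbb{Q}(t)
\]
becomes an isomorphism. To prove the torsion claim I would cover $M \setminus M^{S^1}$ by finitely many $S^1$-invariant open sets on which the action has only finite stabilizers, and use that for such a set $U$ the rational equivariant cohomology $H_{S^1}^*(U;\mathbb{Q})$ is a torsion $R$-module: on a free orbit space one has $H_{S^1}^* = H^*$ of the finite-dimensional quotient, where $t$ acts nilpotently by cup product with the Euler class of the circle bundle, and finite stabilizers are invisible with $\mathbb{Q}$-coefficients. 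A Mayer--Vietoris induction over the cover then shows $H_{S^1}^*(M\setminus M^{S^1};\mathbb{Q})$ is torsion, and excision identifies the relative group with it. This is the step I expect to be the main obstacle, since it is where the geometry of the action genuinely enters; everything else is formal.

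With the isomorphism in hand the remaining steps are bookkeeping. Since $M^{S^1}$ is a disjoint union of closed components $F$, each carrying the trivial action, I have $H_{S^1}^*(M^{S^1};\mathbb{Q}) = \bigoplus_F H^*(F;\mathbb{Q}) \otimes R$. The self-intersection (excess) formula gives $\iota_F^* (\iota_F)_* \gamma = \gamma \cup e_{S^1}(N_F)$ for the inclusion $\iota_F : F \hookrightarrow M$ of a single component, while $\iota_{F'}^*(\iota_F)_* = 0$ for $F \neq F'$ by disjointness. The class $e_{S^1}(N_F)$ is invertible over $\mathbb{Q}(t)$ because the $S^1$-action on $N_F$ has no nonzero fixed vector, so the leading term of $e_{S^1}(N_F)$ in $t$ is a nonzero scalar times $t^{\,\mathrm{codim}\,F/2}$. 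Setting $\beta_F = \iota_F^*\alpha / e_{S^1}(N_F)$ and applying $\iota^*$, one checks that $\sum_F (\iota_F)_*\beta_F$ and $\alpha$ have the same restriction to $M^{S^1}$; injectivity of $\iota^*$ over $\mathbb{Q}(t)$ then yields
\[
\alpha = \sum_F (\iota_F)_* \frac{\iota_F^* \alpha}{e_{S^1}(N_F)} \quad\text{in } H_{S^1}^*(M;\mathbb{Q})\otimes_R \mathbb{Q}(t).
\]

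Finally I would apply $\int_M = \pi_*$ to both sides. Functoriality of the push-forward under the composition $F \hookrightarrow M \to \mathbb{CP}^\infty$ gives $\int_M (\iota_F)_* \beta = \int_F \beta$ for each component, and since $\pi_*$ is $R$-linear it extends to the localization. Applying this term by term to the displayed expression for $\alpha$, and recording $\iota_F^*\alpha = \alpha|_F$, produces exactly $\int_M \alpha = \sum_F \int_F \frac{\alpha|_F}{e_{S^1}(N_F)}$, the asserted identity. The only points requiring care here are the degree bookkeeping, so that both sides are rational functions of $t$ of the correct total degree, and matching the notation $e_{S^1}(N_F)$ of the statement. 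I note that an alternative route, via the Cartan model and Stokes' theorem on the complement of invariant tubular neighborhoods of the $F$ (the Berline--Vergne argument), would replace the torsion computation by an explicit local contribution near the fixed set, but the module-theoretic proof above fits the cohomological framing of the statement most directly.
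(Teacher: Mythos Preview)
The paper does not prove this theorem: it is stated as a background result with a citation to Atiyah--Bott \cite{AB} and is used throughout as a black box, so there is no ``paper's own proof'' to compare against.

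That said, your sketch is the standard module-theoretic proof of the localization formula and is correct in outline. The torsion claim for $H_{S^1}^*(M\setminus M^{S^1};\mathbb{Q})$ via a Mayer--Vietoris argument over an invariant cover with finite isotropy, the invertibility of $e_{S^1}(N_F)$ over $\mathbb{Q}(t)$ from the nonvanishing top $t$-coefficient, the reconstruction of $\alpha$ from its restrictions using the self-intersection formula $\iota_F^*(\iota_F)_* = \cdot\,\cup e_{S^1}(N_F)$, and the final pushforward step are all the right ingredients assembled in the right order. This is essentially the argument in the cited Atiyah--Bott paper, so there is nothing to correct; the alternative Berline--Vergne route via the Cartan model that you mention at the end would also work and is closer to what some authors mean by ``ABBV''.
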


Let the circle act symplectically on a symplectic manifold $(M, \sigma)$. To each isolated fixed point $p \in M^{S^{1}}$, we can assign well-defined non-zero (integer) weights $\xi_{p}^{1}, ..., \xi_{p}^{n}$ in the isotropy representation $T_{p}M$(repeated with multiplicity). Denote $\sigma_{i}$ by the $i^{th}$ elementary symmetric polynomial. Then the $i^{th}$-equivariant Chern class is given by
\begin{center}
$c_{i}(M)|_{p}=\sigma _{i}(\xi_{p}^{1}, ..., \xi_{p}^{n})t^{i}$,
\end{center}
where $t$ is the generator of $H_{S^{1}}^{2}(p;\mathbb{Z})$. For instance, the first Chern class map at $p$ is given by $c_{1}(M)|p=\Sigma \xi_{p}^{i} t$, and the equivariant Euler class of the tangent bundle at p is given by $e_{S^{1}}(N_{F})=c_{n}(M)|_{p}=(\prod \xi_{p}^{i})t^{n}$. Hence,

\begin{center}
$\D{\int_{p} \frac{c_{i}(M)|_{p}}{e_{S^{1}} ( N_{p} )}=\frac{\sigma _{i}(\xi_{p}^{1}, ..., \xi_{p}^{n})}{\prod \xi_{p}^{j}}t^{i-n}}.$
\end{center}

Weights in the isotropy representation $T_{p}M$ also satisfy the following:

\begin{lem} \label{l22} \cite{PT}
Let the circle act symplectically on a compact symplectic $2n$-manifold with isolated fixed points. Then
\begin{center}
$\D{ | \{p \in M^{S^{1}} \mid \lambda_{p} = 2i \} | = | \{p \in M^{S^{1}} \mid \lambda_{p} = 2n-2i\} |}$, for all $i$,
\end{center}
where $\lambda_{p}$ is twice of the number of negative weights at $p$ for all $\D{p \in M^{S^{1}}}$.
\end{lem}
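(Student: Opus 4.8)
The plan is to deduce the stated symmetry from the rigidity of the $S^{1}$-equivariant $\chi_{y}$-genus of $M$, establishing the rigidity I need by an elementary limit argument rather than quoting it. First I would fix an $S^{1}$-invariant almost complex structure $J$ on $M$ compatible with $\sigma$; the space of such $J$ is nonempty (and contractible), and with respect to $J$ the weights $\xi_{p}^{1},\dots,\xi_{p}^{n}$ at a fixed point $p$ are precisely the weights of the $S^{1}$-representation on $(T_{p}M,J_{p})$, so that $\lambda_{p}$ is twice the number of negative ones. Write $N_{i}=|\{p\in M^{S^{1}}\mid \lambda_{p}=2i\}|$; the goal is $N_{i}=N_{n-i}$ for every $i$.

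Next I would bring in the $S^{1}$-equivariant $\chi_{y}$-genus $\chi_{y}(M)$, the equivariant index of the elliptic (rolled-up Dolbeault--Dirac) operator attached to $J$ with the $\chi_{y}$-symbol. Being the index of an elliptic operator it is a virtual representation of $S^{1}$, so as a function of $g\in S^{1}$ it is a Laurent polynomial in $g$ with coefficients in $\mathbb{Z}[y]$. The Atiyah--Singer fixed-point formula, applied at the isolated fixed points, expresses it as the rational function
\[
\chi_{y}(M)(g)=\sum_{p\in M^{S^{1}}}\ \prod_{j=1}^{n}\frac{1+y\,g^{\xi_{p}^{j}}}{1-g^{\xi_{p}^{j}}}.
\]

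The main step is then a limit computation in this identity. Letting $g\to 0$ in $\mathbb{C}^{*}$, where $|g|\neq 1$ so that every summand is regular, each factor tends to $1$ if $\xi_{p}^{j}>0$ and to $-y$ if $\xi_{p}^{j}<0$; hence the right-hand side tends to $\sum_{p}(-y)^{\lambda_{p}/2}=\sum_{i}N_{i}(-y)^{i}$. Letting $g\to\infty$ instead, the roles of positive and negative weights are exchanged and the right-hand side tends to $\sum_{p}(-y)^{n-\lambda_{p}/2}=\sum_{i}N_{n-i}(-y)^{i}$. But the left-hand side is a single Laurent polynomial in $g$ and therefore has finite limits at both $0$ and $\infty$, so it is constant in $g$; the two limits then agree, $\sum_{i}N_{i}(-y)^{i}=\sum_{i}N_{n-i}(-y)^{i}$, and comparing coefficients gives $N_{i}=N_{n-i}$.

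I do not expect a genuine difficulty here; this is in essence the standard rigidity argument, and the points that need attention are technical: verifying that an invariant $\sigma$-compatible $J$ exists and that the induced weight count is the one meant by $\lambda_{p}$; quoting the fixed-point formula in the possibly non-integrable almost complex setting, where $\chi_{y}$ must be understood in its index-theoretic rather than its sheaf-cohomological incarnation; and noting that although each individual summand $\prod_{j}(1+y\,g^{\xi_{p}^{j}})/(1-g^{\xi_{p}^{j}})$ has poles at roots of unity, the whole sum does not, which is what licenses the factor-by-factor limits at $0$ and $\infty$. The sign convention in the fixed-point formula is immaterial: flipping it merely interchanges the limits at $0$ and $\infty$, and the conclusion $N_{i}=N_{n-i}$ is symmetric under $i\leftrightarrow n-i$.
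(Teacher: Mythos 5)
Your argument is correct and is essentially the standard proof of this lemma: the paper states it without proof, citing Pelayo--Tolman, and their proof is exactly this computation of the equivariant $\chi_{y}$-genus via the fixed-point formula, using that a Laurent polynomial with finite limits at $0$ and $\infty$ is constant and comparing the two limits $\sum_{i}N_{i}(-y)^{i}=\sum_{i}N_{n-i}(-y)^{i}$. The technical caveats you flag (invariant compatible $J$, index-theoretic definition of $\chi_{y}$ in the almost complex setting, poles only at roots of unity) are handled the same way there.
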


\begin{cor} \label{c23} \cite{PT}
Let the circle act symplectically on a compact symplectic $2n$-manifold with $k$ isolated fixed points. If $k$ is odd, then $n$ is even.
\end{cor}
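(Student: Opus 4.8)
The plan is to obtain this as a short parity consequence of Lemma \ref{l22}. For each $i \in \{0, 1, \dots, n\}$ I would set
\[
a_i = |\{p \in M^{S^1} \mid \lambda_p = 2i\}|,
\]
so that, since every isolated fixed point has between $0$ and $n$ negative weights, the total number of fixed points is $k = \sum_{i=0}^{n} a_i$. Lemma \ref{l22} says precisely that $a_i = a_{n-i}$ for all $i$.

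Now suppose $n$ is odd. Then the involution $i \mapsto n-i$ on the index set $\{0, 1, \dots, n\}$ has no fixed point, because $i = n-i$ would force $n = 2i$ to be even. Hence the index set splits into $\frac{n+1}{2}$ disjoint pairs $\{i, n-i\}$, and
\[
k = \sum_{i=0}^{n} a_i = \sum_{i=0}^{(n-1)/2} \bigl(a_i + a_{n-i}\bigr) = 2\sum_{i=0}^{(n-1)/2} a_i,
\]
which is even. Taking the contrapositive: if $k$ is odd, then $n$ is even.

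There is essentially no serious obstacle here; the argument is a one-line counting observation once Lemma \ref{l22} is available. The only point requiring a moment of care is the combinatorial bookkeeping — checking that when $n$ is odd the set $\{0, \dots, n\}$ has even cardinality and that the pairing $i \leftrightarrow n-i$ leaves no index unmatched — so that the sum genuinely collapses to twice an integer.
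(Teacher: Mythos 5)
Your proof is correct, and it is exactly the intended derivation: the corollary is stated in the paper as an immediate consequence of Lemma \ref{l22} (cited from [PT]), and the parity argument you give — pairing $a_i$ with $a_{n-i}$ and noting the involution $i \mapsto n-i$ is fixed-point-free when $n$ is odd, so $k$ is even — is the standard one.
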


\begin{lem} \label{l24} \cite{PT}
Let the circle act symplectically on a compact symplectic manifold $M$ with isolated fixed points. Then
\begin{center}
$\D{\sum_{p \in M^{S^{1}}} N_{p}(l)=\sum_{p \in M^{S^{1}}} N_{p}(-l)}$, for all $l \in \mathbb{Z}$.
\end{center}
Here, $N_{p} ( l ) $ is the multiplicity of $l$ in the isotropy representation $T_{p} M$ for all weights $l \in \mathbb{Z} $ and all $p  \in M^{S^{1}}$.
\end{lem}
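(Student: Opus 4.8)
The plan is to induct on $\dim M$, reducing the weight-$l$ count on $M$ to a weight-$(\pm1)$ count on the submanifold $M^{\mathbb{Z}_{|l|}}$, on which the residual circle acts. First I would reduce to the case that the action is effective: if its kernel is $\mathbb{Z}_k$ with $k\ge 2$, then $k$ divides every weight, the action factors through an effective action of $S^{1}/\mathbb{Z}_k\cong S^{1}$ with weights $\xi_p^j/k$, and the desired identity for the original action is either vacuous (when $k\nmid l$, both sides vanish) or identical to the corresponding identity for the quotient action with $l$ replaced by $l/k$. So assume the action is effective; the cases $\dim M=0$ and $\dim M=2$ (where $M$ is a point or $M=S^{2}$) are immediate and form the base of the induction.

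For the inductive step, fix $l$ with $m:=|l|\ge 2$ and set $N=M^{\mathbb{Z}_m}$. Since $\mathbb{Z}_m$ acts by symplectomorphisms, fixing an invariant $\sigma$-compatible almost complex structure shows that each connected component $Z$ of $N$ is a compact connected symplectic submanifold of $M$ (its tangent space at a point of $M^{S^{1}}$ being the $\mathbb{Z}_m$-fixed, hence complex, subspace of the tangent space), and effectiveness with $m\ge 2$ forces $\dim Z<\dim M$. The residual circle $S^{1}/\mathbb{Z}_m\cong S^{1}$ acts symplectically on $Z$ with fixed point set $Z\cap M^{S^{1}}$ (so with isolated fixed points), and a weight $\xi$ at a fixed point $p$ lies in $T_pZ$ exactly when $m\mid\xi$, in which case it becomes the weight $\xi/m$ of the residual action. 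Applying the inductive hypothesis to each $Z$ at residual weight $1$ gives $\sum_{p\in Z\cap M^{S^{1}}}N_p(m)=\sum_{p\in Z\cap M^{S^{1}}}N_p(-m)$; summing over the components $Z$ of $N$, which partition $M^{S^{1}}$ because $M^{S^{1}}\subseteq N$, yields $\sum_{p}N_p(l)=\sum_{p}N_p(-l)$.

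It remains to treat $l=\pm 1$, and here I would appeal to Lemma \ref{l22}. Writing $a_i$ for the number of fixed points with exactly $i$ negative weights, that lemma gives $a_i=a_{n-i}$, hence $\sum_i i\,a_i=\sum_i (n-i)\,a_i$; the left side is the total number of negative weights over all fixed points and the right side the total number of positive weights, so $\sum_{l<0}\sum_{p}N_p(l)=\sum_{l>0}\sum_{p}N_p(l)$. Subtracting the equalities $\sum_{p}N_p(l)=\sum_{p}N_p(-l)$ already proved for every $|l|\ge 2$ leaves precisely $\sum_{p}N_p(1)=\sum_{p}N_p(-1)$, which closes the induction.

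The genuinely non-obvious point is that the $\mathbb{Z}_m$-reduction never reaches $m=1$ — there is no drop in dimension there — so the case $l=\pm 1$ cannot be obtained by the same induction and really does need the independent counting supplied by Lemma \ref{l22}. The remaining work is the equivariant-symplectic bookkeeping in the inductive step: checking that $M^{\mathbb{Z}_m}$ is a disjoint union of symplectic submanifolds carrying a bona fide symplectic residual circle action with isolated fixed points, and keeping the weight rescaling $\xi\mapsto\xi/m$ straight; this is routine but should be done with care.
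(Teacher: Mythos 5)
The paper does not prove Lemma~\ref{l24} at all: it is imported from [PT] with a citation, so there is no in-paper argument to compare against. Your proof is correct, and it is essentially a repackaging of the standard argument from [PT]: both rest on the same two ingredients, namely that each component $Z$ of $M^{\mathbb{Z}_m}$ is a compact symplectic submanifold carrying a residual circle action whose weights are the $M$-weights divisible by $m$ rescaled by $1/m$, and the index symmetry of Lemma~\ref{l22}. The difference is organizational. The usual route applies Lemma~\ref{l22} to every component $Z$ of $M^{\mathbb{Z}_\ell}$ to get, for each $\ell\geq 1$, the single identity $\sum_{k\geq 1}\sum_p N_p(k\ell)=\sum_{k\geq 1}\sum_p N_p(-k\ell)$, and then extracts $\sum_p N_p(\ell)=\sum_p N_p(-\ell)$ by downward induction on $\ell$ (the weights are bounded); your version instead inducts on $\dim M$, using the inductive hypothesis at weight $\pm 1$ on the lower-dimensional $Z$'s and reserving Lemma~\ref{l22} on $M$ itself for the residual case $\ell=\pm 1$. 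Your observation that $\ell=\pm1$ genuinely cannot be reached by the isotropy reduction is the right one, and your subtraction argument handles it correctly. Two small points to tidy: the claim that effectiveness forces $\dim Z<\dim M$ uses connectedness of $M$ (a full-dimensional component of $M^{\mathbb{Z}_m}$ is a union of connected components of $M$ on which $\mathbb{Z}_m$ acts trivially), so you should first reduce to connected $M$; and you should note that a component $Z$ with $Z\cap M^{S^1}=\emptyset$ contributes nothing, so it causes no harm. Neither affects the validity of the argument.
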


\begin{lem} \label{l25} \cite{T}
Let the circle act on a compact symplectic manifold $(M,\omega)$. Let $p$ and $p'$ be fixed points which lie in the same component $N$ of $M^{\mathbb{Z}_{k}}$, for some $k>1$. Then the $S^{1}$-weights at $p$ and at $p'$ are equal modulo $k$.
\end{lem}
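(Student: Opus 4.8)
The plan is to compare the $S^{1}$-isotropy representations at $p$ and $p'$ by decomposing the tangent bundle along $N$ according to the characters of the finite subgroup $\mathbb{Z}_{k} \subset S^{1}$, and then to use connectedness of $N$ to propagate the representation data from $p$ to $p'$.

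First I would fix an $S^{1}$-invariant almost complex structure $J$ on $M$ compatible with $\omega$; such a $J$ exists, and it makes $TM$ into a complex $S^{1}$-equivariant vector bundle, so that the $S^{1}$-weights at any fixed point are the weights of a genuine complex representation and are thus well defined. Next I would record the basic structural facts about $N$: since $\mathbb{Z}_{k}$ is a subgroup of the connected abelian group $S^{1}$, the $S^{1}$-action commutes with the $\mathbb{Z}_{k}$-action and hence preserves $M^{\mathbb{Z}_{k}}$; because $S^{1}$ is connected it preserves the connected component $N$, so $S^{1}$ acts on $N$ and both $p$ and $p'$ are $S^{1}$-fixed points lying in $N$.

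The heart of the argument is the $\mathbb{Z}_{k}$-isotypic decomposition of $TM|_{N}$. Restricting the $S^{1}$-action to $\mathbb{Z}_{k}$ and decomposing each fiber according to the characters $\zeta \mapsto \zeta^{j}$ of $\mathbb{Z}_{k}$ (for $0 \le j \le k-1$), I obtain a splitting $TM|_{N} = \bigoplus_{j=0}^{k-1} E_{j}$ into complex subbundles, where $\mathbb{Z}_{k}$ acts on $E_{j}$ through its $j$-th character. Because $S^{1}$ commutes with $\mathbb{Z}_{k}$, each $E_{j}$ is $S^{1}$-invariant, and because $N$ is connected, each $E_{j}$ has constant complex rank $d_{j}$. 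At an $S^{1}$-fixed point $q \in N$ the fiber $(E_{j})_{q}$ is precisely the sum of those $S^{1}$-weight spaces of $T_{q}M$ whose weight $w$ satisfies $w \equiv j \pmod{k}$, since a weight-$w$ line is acted on by $\zeta \in \mathbb{Z}_{k}$ through $\zeta^{w}$, and $\zeta^{w} = \zeta^{j}$ for all $\zeta \in \mathbb{Z}_{k}$ exactly when $w \equiv j \pmod{k}$. Hence the number of $S^{1}$-weights at $q$ congruent to $j$ modulo $k$ equals $d_{j}$, independently of the fixed point $q \in N$.

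Applying this to $q = p$ and to $q = p'$, for every residue $j$ modulo $k$ the weights congruent to $j$ occur with the same multiplicity $d_{j}$ at $p$ and at $p'$; since $\sum_{j} d_{j} = \dim_{\mathbb{C}} M$ this accounts for all weights, and therefore the multisets of $S^{1}$-weights at $p$ and at $p'$ coincide after reduction modulo $k$, which is the assertion. The step I expect to require the most care is establishing that the fiberwise $\mathbb{Z}_{k}$-isotypic decomposition is a smooth splitting into subbundles of \emph{constant} rank over $N$: this is where connectedness of $N$ is genuinely used, and it is the only place the hypothesis that $p$ and $p'$ lie in the same component enters.
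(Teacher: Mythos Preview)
The paper does not supply a proof of this lemma; it is quoted from \cite{T} and used as a black box. Your argument is correct and is the standard one: the $\mathbb{Z}_{k}$-isotypic decomposition of $TM|_{N}$ gives complex subbundles of constant rank over the connected component $N$, and at any $S^{1}$-fixed point in $N$ the rank of the $j$-th isotypic piece counts exactly the $S^{1}$-weights congruent to $j$ modulo $k$, so these multiplicities agree at $p$ and $p'$.
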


Let the circle act on a compact symplectic manifold $M$. Let $p$ and $p'$ be fixed points which lie in the same component $N$ of $M^{\mathbb{Z}_{k}}$, for some $k>1$. Denote $\Sigma_p$ and $\Sigma_{p'}$ by the multisets of weights at $p$ and $p'$, respectively. Lemma \ref{l25} states that there exists a bijection between $\Sigma_p$ and $\Sigma_{p'}$ that takes each weight $\alpha$ at $p$ to a weight $\beta$ at $p'$ such that $\alpha \equiv \beta \mod k$.

\begin{theo}\label{t26} \cite{PT}
Let the circle act symplectically on a compact, connected symplectic manifold $M$ with exactly two fixed points. Then either $M$ is the 2-sphere or $\dim M=6$ and there exist natural numbers $a$ and $b$ so that the weights at the two fixed points are $\{a, b, -a-b\}$ and $\{a+b, -a, -b\}$.
\end{theo}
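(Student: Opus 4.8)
The plan is to read off numerical constraints on the multisets $\Sigma_p, \Sigma_q$ of weights at the two fixed points $p$ and $q$ from equivariant localization together with the symmetry lemmas, dispose of the small dimensions directly, and induct on $\dim M$ to kill the large ones. After quotienting by the kernel we may assume the action is effective, so $\gcd \Sigma_p = \gcd \Sigma_q = 1$. Applying ABBV localization (Theorem \ref{t21}) to $\alpha = 1$ and to $\alpha = c_1(M)^j$ for $1 \le j \le n-1$, and using $\int_M c_1(M)^j = 0$ whenever $2j < \dim M = 2n$, gives
\[
\prod_{w \in \Sigma_p} w = -\prod_{w \in \Sigma_q} w, \qquad \sum_{w \in \Sigma_p} w = \sum_{w \in \Sigma_q} w \ \ (\text{the latter when } n \ge 2).
\]
With only two fixed points, Lemma \ref{l22} forces the numbers of negative weights at $p$ and at $q$ to sum to $n$; for $n \ge 2$ the identity $\sum \Sigma_p = \sum \Sigma_q$ then rules out the distribution in which all weights at one point are positive and all at the other negative. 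Lemma \ref{l24} says the multiset of positive weights occurring at $p$ or $q$ coincides with the multiset of absolute values of negative weights occurring at $p$ or $q$.

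If $n = 1$, then since the fixed points are isolated $\chi(M)$ equals their number $2$, and a compact connected (necessarily orientable) surface with Euler characteristic $2$ is the $2$-sphere. If $n = 2$, the only admissible distribution has one negative weight at each point, say $\Sigma_p = \{-a, b\}$, $\Sigma_q = \{-c, d\}$ with $a, b, c, d > 0$; but then $\prod \Sigma_p = -ab < 0$ and $\prod \Sigma_q = -cd < 0$, contradicting $\prod \Sigma_p = -\prod \Sigma_q$. So $n \ne 2$.

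Now let $n = 3$. Up to swapping $p$ and $q$ the distribution has one negative weight at $p$ and two at $q$; write $\Sigma_p = \{-a, b, c\}$, $\Sigma_q = \{d, -e, -f\}$ with all letters positive. Lemma \ref{l24} gives $\{b, c, d\} = \{a, e, f\}$ as multisets; substituting into $\sum \Sigma_p = \sum \Sigma_q$ forces $a = b + c$ and $d = e + f$, and positivity then forces $b + c = e + f$, whence $\{b, c\} = \{e, f\}$. Hence $\Sigma_p = \{b, c, -b-c\}$ and $\Sigma_q = \{b+c, -b, -c\}$, the claimed form.

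What remains, and what I expect to be the crux, is excluding $n \ge 4$; for this I would induct on $\dim M$. Fix a prime $k$ dividing some weight at $p$ and let $N$ be the component of $M^{\mathbb{Z}_k}$ through $p$: it is a compact connected symplectic submanifold on which $S^1/\mathbb{Z}_k \cong S^1$ acts symplectically, with fixed set $N \cap \{p, q\}$ and $\dim N = 2\,|\{\,w \in \Sigma_p : k \mid w\,\}| > 0$. Since a symplectic circle action on a positive-dimensional compact connected symplectic manifold cannot have exactly one fixed point (localize $\alpha = 1$), $q$ must lie in $N$ too; Lemma \ref{l25} then gives $\Sigma_p \equiv \Sigma_q \pmod k$, and since $\dim N < \dim M$ (as $\gcd \Sigma_p = 1$) the induction hypothesis yields $\dim N \in \{2, 6\}$, i.e.\ exactly one or exactly three weights at each of $p$, $q$ are divisible by $k$; when exactly three are, the induced $6$-dimensional situation has weights $\{a, b, -a-b\}$ by the $n = 3$ case. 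Thus no prime divides exactly two of the $n$ weights at $p$, and a finite analysis of the prime factorizations of the weights — constrained by $\gcd \Sigma_p = 1$, by $\sum \Sigma_p = \sum \Sigma_q$, and by the $\dim 6$ and $\dim 2$ descriptions just obtained — should force $n \le 3$. (The same dimension bound is Kosniowski's theorem \cite{Ko}, obtainable by localizing a rigid Hirzebruch genus of the $\omega$-compatible almost complex structure at $p$ and $q$; either way, keeping this last bookkeeping under control is where the real difficulty lies.)
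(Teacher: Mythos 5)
This statement is quoted from \cite{PT} and is not proved in the paper under review, so there is no internal proof to compare against; your attempt has to stand on its own as a proof of the Pelayo--Tolman theorem. The parts you actually carry out are correct: the localization identities $\prod\Sigma_p=-\prod\Sigma_q$ and $\sum\Sigma_p=\sum\Sigma_q$, the constraint $\lambda_p+\lambda_q=2n$ from Lemma \ref{l22}, the Euler-characteristic argument for $n=1$, the sign contradiction for $n=2$, and the multiset bookkeeping via Lemma \ref{l24} that pins down $\{a,b,-a-b\}$ and $\{a+b,-a,-b\}$ when $n=3$. These low-dimensional cases are the easy part of the theorem.

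The genuine gap is the exclusion of $n\geq 4$, which is the substantive content of the result (it is the main theorem of \cite{PT} and occupies most of that paper). Your inductive set-up is sound as far as it goes: for a prime $k$ dividing a weight at $p$, the component $N$ of $M^{\mathbb{Z}_k}$ through $p$ must contain $q$ (else it would be a positive-dimensional compact symplectic manifold with one fixed point), $\dim N<\dim M$ by effectiveness, and the inductive hypothesis forces $\dim N\in\{2,6\}$, so every prime divides exactly one or exactly three of the weights at each fixed point. But you then assert that ``a finite analysis of the prime factorizations\dots should force $n\leq 3$'' without performing it, and you concede that ``keeping this last bookkeeping under control is where the real difficulty lies.'' That concession is accurate: the divisibility constraint alone does not visibly produce a contradiction for $n=4$ (one must still contend with configurations involving weights equal to $\pm 1$, for which no prime divides them at all, and with the interaction between the mod-$k$ bijections of Lemma \ref{l25} for different $k$, the sign distribution $(\lambda_p,\lambda_q)$, and the sum and product identities). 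The actual argument in \cite{PT} requires a careful analysis organized around the largest weight and repeated use of the mod-$k$ correspondence --- precisely the machinery this paper redevelops in its Sections 3--8 for the three-fixed-point case. As written, your proof establishes the theorem only for $\dim M\leq 6$ and leaves the crucial case open.
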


\begin{cor} \label{c27} \cite{PT}
Let the circle act symplectically on a compact symplectic manifold $M$ with non-empty fixed point set. Then there are at least two fixed points, and if $\dim M \geq 8$, then there are at least three fixed points. Moreover, if the Chern class map is not identically zero and $\dim M \geq 6$, then there are at least four fixed points.
\end{cor}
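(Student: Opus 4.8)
The plan is to argue by cases on the number $k$ of fixed points. If $M^{S^1}$ has a component of positive dimension then $k=\infty$ and all three conclusions hold, so I may assume $M^{S^1}$ is a (nonempty) finite set of isolated points. Replacing $S^1$ by its quotient by the finite subgroup acting trivially — which changes neither $M$, nor $\sigma$, nor $M^{S^1}$ — I may also assume the action effective; then the weights $\xi_p^1,\dots,\xi_p^n$ at each fixed point $p$ are setwise coprime, for otherwise some nontrivial $\mathbb{Z}_m$ acts trivially near $p$ and hence on all of $M$. The one use of Theorem~\ref{t21} is with $\alpha=1$: since $\int_M 1\in H^{-\dim M}(\mathbb{CP}^{\infty};\mathbb{Q})=0$,
\[
\sum_{p\in M^{S^1}}\frac{1}{\prod_{j}\xi_p^{j}}=0 ,
\]
and the $p$-th summand has sign $(-1)^{\lambda_p/2}$. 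If $k=1$ this is absurd, so $k\ge 2$; this is the first assertion. If in addition $\dim M\ge 8$ and $k\le 2$, then $k=2$, and Theorem~\ref{t26} gives $\dim M\in\{2,6\}$, a contradiction — so $k\ge 3$, the second assertion.

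For the third assertion, suppose the Chern class map $p\mapsto c_1(M)|_p=(\sum_j\xi_p^j)\,t$ is not identically zero, that $\dim M\ge 6$, and, for contradiction, that $k\le 3$; then $k\ge 2$. If $k=2$, Theorem~\ref{t26} with $\dim M\ge 6$ gives $\dim M=6$ and weights $\{a,b,-a-b\}$ and $\{a+b,-a,-b\}$, each summing to $0$, so $c_1(M)|_p=0$ at both fixed points — contradiction. Hence $k=3$; write $M^{S^1}=\{p_1,p_2,p_3\}$. By Corollary~\ref{c23}, $n=\tfrac12\dim M$ is even, so $\dim M\ge 6$ forces $n\ge 4$ and $\dim M\ge 8$. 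By Lemma~\ref{l22} the multiset $\{\lambda_{p_1},\lambda_{p_2},\lambda_{p_3}\}$ is invariant under $\lambda\mapsto 2n-\lambda$, hence is $\{n,n,n\}$ or $\{j,n,2n-j\}$ with $j$ even and $0\le j<n$; the first is impossible because then all three terms of the displayed identity have the common sign $(-1)^{n/2}$ and cannot sum to zero. So, after relabelling, $\lambda_{p_1}=j$, $\lambda_{p_2}=n$, $\lambda_{p_3}=2n-j$.

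It now suffices to show $\sum_j\xi_{p_i}^j=0$ for every $i$, which contradicts the standing hypothesis. I would combine: (i) the localization identity above; (ii) the aggregate form of Lemma~\ref{l24}, $\sum_{i=1}^{3}\sum_{w\in\Sigma_{p_i}}w^m=0$ for all odd $m$ (each weight value cancels its negative), in particular $\sum_i\sum_w w=0$; (iii) Lemma~\ref{l25}, forcing the weights of any two $p_i$ lying in a common component $N$ of an isotropy submanifold $M^{\mathbb{Z}_\ell}$ to agree modulo $\ell$; (iv) coprimality of the weights at each $p_i$; and (v) Theorem~\ref{t26} applied to any such $N$ that contains exactly two of the $p_i$ — a compact connected symplectic manifold with a circle action and exactly two fixed points, so $\dim N\in\{2,6\}$ with the Theorem~\ref{t26} weight structure, which via (iii) rigidly constrains the weights of $M$. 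Concretely, I would let $L$ be the largest absolute value of any weight and study the component of $M^{\mathbb{Z}_L}$ through a fixed point carrying the weight $L$: maximality of $L$ and (iii) force its companion fixed point(s) to carry a weight $\pm L$ and all their remaining weights to be congruent mod $L$ to those of the first point, and (i), (ii), (iv), (v) should then leave no possibility except $\sum_j\xi_{p_i}^j=0$ for all $i$. I expect this final step — tracking how the three fixed points distribute among the components of the $M^{\mathbb{Z}_L}$, and which weights survive the maximality constraint — to be the main obstacle; it is precisely the rigid configuration in dimension $\ge 8$ that the paper's main theorem later excludes outright.
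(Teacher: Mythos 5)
Your argument for the first two assertions is correct and is the standard one (localization of $1$, plus Theorem~\ref{t26} to exclude exactly two fixed points when $\dim M\geq 8$); note that the paper gives no proof of this corollary at all but simply cites \cite{PT}, so there is nothing internal to compare against. The problem is the third assertion. You correctly reduce it to showing that a symplectic circle action with exactly three fixed points on $M$ with $\dim M\geq 8$ has identically vanishing Chern class map, but at that point you stop proving things: you list ingredients (i)--(v), propose to analyze the components of $M^{\mathbb{Z}_L}$ for $L$ the largest weight, and concede that this final step is ``the main obstacle.'' That step is precisely the hard structural analysis that occupies Sections~3--8 of this paper, and there it is carried out only under an inductive hypothesis and with the vanishing of $c_1$ already available as an input \emph{from Corollary~\ref{c27} itself} (see the proofs of Lemma~\ref{l33} and Lemma~\ref{l44}). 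So as written your plan is both incomplete and, if you tried to borrow the paper's later lemmas to finish it, circular.

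The missing idea is much simpler and uses only tools you already invoked. Apply Theorem~\ref{t21} to $\alpha=c_1(M)^i$ for $i=0,1,2$: since $c_1(M)^i$ has degree $2i<\dim M$ when $\dim M\geq 6$, each push-forward lands in negative degree and vanishes, giving
\[
\sum_{p\in M^{S^1}}\frac{\Lambda_p^{\,i}}{\prod_j \xi_p^j}=0,\qquad i=0,1,2,\qquad \text{where } \Lambda_p:=\sum_j\xi_p^j .
\]
With $k\leq 3$ fixed points this is a Vandermonde system in the nonzero quantities $1/\prod_j\xi_p^j$: if the $\Lambda_p$ were not all equal, grouping equal values and eliminating would force some $1/\prod_j\xi_p^j=0$, a contradiction. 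Hence all $\Lambda_p$ coincide, and your own observation (ii) --- that Lemma~\ref{l24} yields $\sum_p\Lambda_p=0$ --- forces the common value to be $0$, i.e.\ the Chern class map is identically zero, contradicting the hypothesis. (The same computation with $i=0,1$ already handles $k=2$, so you need not split that case off via Theorem~\ref{t26}, nor do you need the parity discussion of the $\lambda_{p_i}$.) Replace your entire final paragraph with this argument.
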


\section{Preliminaries}

One of the main ideas to prove Theorem \ref{t11} is to look at the biggest weight among all the weights of fixed points.

Consider a symplectic circle action on a $2n$-dimensional compact, connected symplectic manifold $M$ with exactly three fixed points. The key fact is that the largest weight occurs only once. From this it follows that if without loss of generality we assume that $\lambda_p \leq \lambda_q \leq \lambda_r$ where $p,q$, and $r$ are the three fixed points, then $\lambda_p=n-2$, $\lambda_q=n$, and $\lambda_r=n+2$.

\begin{Definition} \label{d31} A weight $d$ is \textbf{the largest weight} if it is the biggest weight such that $\sum_{u \in M^{S^{1}}} N_{u}(d)>0$.
\end{Definition}

\begin{lem} \label{l32}
Let the circle act symplectically on a compact, connected symplectic manifold $M$ and suppose that there are exactly three fixed points. Let $d$ be the largest weight. Then the isotropy submanifold $M^{\mathbb{Z}_d}$ contains exactly two components that have fixed points: one isolated fixed point and one two-sphere that contains two fixed points.
\end{lem}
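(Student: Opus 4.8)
The plan is to exploit the maximality of $d$ by analyzing the component $N$ of $M^{\mathbb{Z}_d}$ passing through a fixed point at which $d$ occurs. First, since $d$ is the largest weight, Lemma \ref{l24} forces every weight of the action to lie in $\{-d,\dots,-1,1,\dots,d\}$: a weight $e$ with $|e|>d$ would, by Lemma \ref{l24}, be accompanied by a weight $-e$, and one of $\pm e$ would exceed $d$. Pick $p\in M^{S^1}$ with a weight equal to $d$ (such $p$ exists by definition of the largest weight) and let $N$ be the component of $M^{\mathbb{Z}_d}$ containing $p$. The weight-$d$ eigenspace of $T_pM$ is $\mathbb{Z}_d$-fixed, hence tangent to $N$, so $\dim N\ge 2$; and since $\mathbb{Z}_d$ acts trivially on $N$, the quotient $S^1/\mathbb{Z}_d\cong S^1$ acts symplectically on the compact connected symplectic submanifold $N$. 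Its fixed set is $N\cap M^{S^1}$, which is finite, and at each such point all weights of the quotient action are $\pm 1$, since the $S^1$-weights on $T_xN$ are exactly the weights of $x$ divisible by $d$, i.e.\ equal to $\pm d$.

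Next I would determine $|N\cap M^{S^1}|$. The quotient action on $N$ is nontrivial (a trivial action would put the positive-dimensional $N$ inside the finite set $M^{S^1}$), so by Corollary \ref{c27} it has at least two fixed points; since $M$ has only three, $N$ has either two or three. Three is impossible: ABBV localization applied to $N$, using only the image of $1$, gives $\sum_{x\in N\cap M^{S^1}}(-1)^{k_x}=0$ where $k_x$ is the number of negative weights at $x$, and a sum of three terms each equal to $\pm 1$ cannot vanish. Hence $N$ has exactly two fixed points, so by Theorem \ref{t26} either $N$ is a two-sphere or $\dim N=6$ with weights $\{a,b,-a-b\}$ and $\{a+b,-a,-b\}$ for natural numbers $a,b$; the latter is incompatible with all weights being $\pm 1$, so $N$ is a two-sphere containing exactly two of the three fixed points.

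It remains to place the third fixed point $r$. It lies in some component $N'$ of $M^{\mathbb{Z}_d}$; since the two-sphere $N$ has only two fixed points, $N'\ne N$, and since distinct components are disjoint, $N'$ contains no fixed point besides $r$. If $\dim N'\ge 2$, then $S^1/\mathbb{Z}_d$ acts symplectically and nontrivially on the compact connected manifold $N'$ with exactly one fixed point, contradicting Corollary \ref{c27}. Hence $N'=\{r\}$ is an isolated fixed point. Finally, every component of $M^{\mathbb{Z}_d}$ that contains a fixed point must contain one of $p,q,r$ and therefore equals $N$ or $N'$, which is precisely the assertion.

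I expect the crux to be the exclusion of three fixed points on $N$: this requires correctly setting up the quotient circle action on the fixed submanifold, checking that its fixed set coincides with $N\cap M^{S^1}$ and that all of its weights are $\pm 1$, and only then reading off the parity obstruction from localization. The weight bound, the identification of $N$ with a two-sphere via Theorem \ref{t26}, and the dimension count for $N'$ should all be comparatively routine.
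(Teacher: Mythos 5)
Your proposal is correct and follows essentially the same route as the paper: both arguments reduce to Theorem \ref{t26} applied to the components of $M^{\mathbb{Z}_d}$, use the maximality of $d$ to force every weight in such a component to be $\pm d$ (which kills the $6$-dimensional case), and invoke ABBV localization to show a sum of three terms of the form $\pm d^{-m}$ cannot vanish, ruling out a single component containing all three fixed points. Your treatment is somewhat more explicit than the paper's (e.g.\ in justifying that a positive-dimensional component cannot contain exactly one fixed point), but the underlying ideas are identical.
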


\begin{proof}
Consider the isotropy submanifold $M^{\mathbb{Z}_{d}}$. By Theorem \ref{t26}, the only possible cases are:
\begin{enumerate}
\item The isotropy submanifold $M^{\mathbb{Z}_d}$ contains a 2-sphere with two fixed points. The third fixed point is another component of $M^{\mathbb{Z}_d}$.
\item The isotropy submanifold $M^{\mathbb{Z}_e}$ contains a 6-dimensional component with two fixed points. The third fixed point is another component of $M^{\mathbb{Z}_d}$.
\item The isotropy submanifold $M^{\mathbb{Z}_d}$ contains a component with the three fixed points.
\end{enumerate}
The subset inclusions may not be equalities since $M^{\mathbb{Z}_{d}}$ may contain other components with no fixed points. 

Suppose that the second case holds. By Theorem \ref{t26}, the weights in the isotropy submanifold $M^{\mathbb{Z}_d}$ at two fixed points that lie in the 6-dimensional component are $\{a, b, -a-b\}$ and $\{-a, -b, a+b\}$ for some natural numbers $a$ and $b$. Moreover, $a, b$, and $a+b$ are multiples of $d$, which is impossible since $d$ is the largest weight.

Suppose that the third case holds. Let $Z$ be the component. Let $\dim Z=2m$. Since all the weights in the isotropy submanifold $M^{\mathbb{Z}_d}$ are either $d$ or $-d$, by Theorem \ref{t21},
\begin{center}
$\D{0=\int_{Z} 1=\frac{1}{\prod_{i=1}^{m} \pm d}+\frac{1}{\prod_{i=1}^{m} \pm d}+\frac{1}{\prod_{i=1}^{m} \pm d}=\pm\frac{1}{d^{m}}\mp\frac{1}{d^{m}}\pm\frac{1}{d^{m}}\neq 0},$
\end{center}
which is a contradiction.

Hence the first case is the case and the weights in the isotropy submanifold $M^{\mathbb{Z}_d}$ at the two fixed points in the 2-sphere are $\{-d\}$ and $\{d\}$.
\end{proof}

\begin{lem} \label{l33} Let the circle act symplectically on a $2n$-dimensional compact, connected symplectic manifold $M$ and suppose that there are exactly three fixed points. Then we can label the fixed points $p,q$, and $r$ so that $\lambda_{p}=n-2, \lambda_{q}=n$, and $\lambda_{r}=n+2$. Moreover, if $\dim M \neq 4$, then after possibly reversing the circle action, we may assume that $-d \in \Sigma_p$ and $d \in \Sigma_q$, where $d$ is the largest weight.
\end{lem}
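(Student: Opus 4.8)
The plan is to exploit Lemma \ref{l22} together with the fact that the largest weight occurs exactly once. First I would establish the claimed decomposition $\lambda_p = n-2$, $\lambda_q = n$, $\lambda_r = n+2$. By Corollary \ref{c23}, since there are three fixed points (an odd number), $n$ is even, so $\lambda_p, \lambda_q, \lambda_r$ are all even numbers in $\{0, 2, \dots, 2n\}$. Applying Lemma \ref{l22} with $i=0$: the number of fixed points with $\lambda_\cdot = 0$ equals the number with $\lambda_\cdot = 2n$; since there are only three fixed points and at least one must have each of the ``balanced'' values forced by the pairing, I would argue that no fixed point can have all weights positive or all negative. Indeed if some $\lambda_u = 0$ then some $\lambda_v = 2n$, using up two of the three points, and the third point $w$ satisfies $\lambda_w = 2n - \lambda_w$ so $\lambda_w = n$; but then Lemma \ref{l24} (or a direct count) would have to be checked for consistency. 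The cleaner route: the multiset $\{\lambda_p, \lambda_q, \lambda_r\}$ must be invariant under $x \mapsto 2n - x$ as a multiset (this is exactly Lemma \ref{l22}), and the only such multisets of size three with even entries either have a fixed point of the involution repeated, or are $\{a, n, 2n-a\}$. I then need to rule out $a = 0$ and pin down $a = n-2$. Ruling out $a=0$: if $\lambda_u=0$, all weights at $u$ are positive, so $u$ is a source; but the largest weight $d$ appears among the weights at $u$ only if $u$ is the isolated fixed point of Lemma \ref{l32} or lies on the distinguished two-sphere, and a careful look at which fixed point carries $+d$ versus $-d$ forces a contradiction with $\lambda_u=0$ unless $n$ is small. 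The value $a=n-2$ rather than some other even number should follow from Lemma \ref{l32}: the two fixed points on the distinguished two-sphere $S$ have, in the full tangent representation, all weights equal modulo $d$ (Lemma \ref{l25}), and along $S$ one of them has weight $+d$ (an outgoing direction) while the other has $-d$; this differs by exactly one negative weight, hence their $\lambda$-values differ by $2$, so two of the three $\lambda$-values are consecutive even integers, forcing the triple to be $\{n-2, n, n+2\}$.

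For the second assertion, assume $\dim M \neq 4$, i.e.\ $n \geq 3$ (and $n$ even so $n \geq 4$). By Lemma \ref{l32}, $M^{\mathbb{Z}_d}$ has exactly two components meeting the fixed set: an isolated fixed point $u_0$, and a two-sphere $S$ containing the other two fixed points $u_1, u_2$, with the $\mathbb{Z}_d$-weights being $\{-d\}$ at one of $u_1, u_2$ and $\{d\}$ at the other. The point is to identify the isolated fixed point $u_0$ with $q$ (the $\lambda = n$ point) and to show $S$ contains $p$ and $r$, and then to determine the sign of $d$ at each. First I would note that $d$ appears (with multiplicity, possibly both signs) somewhere; by Lemma \ref{l24}, $\sum_u N_u(d) = \sum_u N_u(-d) > 0$. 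The weight $+d$ can occur at $u_0$ and at exactly one of $u_1, u_2$; similarly $-d$. If the isolated fixed point $u_0$ had a weight $+d$, then since $d$ is the largest weight and occurs ``only once'' as the largest — here I would invoke the key fact quoted in the text before Definition \ref{d31} that the largest weight occurs exactly once among all fixed points — we would get a contradiction because $+d$ already occurs on $S$. So $u_0$ has neither $+d$ nor $-d$? That cannot be, since $u_0 \in M^{\mathbb{Z}_d}$ means its tangent weights are all $\equiv 0 \bmod d$; but its weight in the normal direction to $M^{\mathbb{Z}_d}$ could be a larger multiple... this is the subtle point. Actually the resolution is that $u_0$ being an isolated component of $M^{\mathbb{Z}_d}$ forces $d \mid \xi$ for every weight $\xi$ at $u_0$, and since $d$ is the largest weight, every such $\xi \in \{d, -d\}$ — so $\lambda_{u_0}$ is determined by how many are $-d$. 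Combined with $\lambda_{u_0} \in \{n-2, n, n+2\}$ and the analogous analysis on $S$, I can match up the points.

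The strategy for the sign normalization: reversing the circle action swaps $+d \leftrightarrow -d$ and swaps $\lambda_u \leftrightarrow 2n - \lambda_u$, hence swaps the roles of $p$ and $r$ while fixing $q$. So I get to choose the direction of the action so that a prescribed one of the two options holds. The content is that $-d \in \Sigma_p$ and $d \in \Sigma_q$ \emph{simultaneously} — i.e.\ that $p$ (the $\lambda = n-2$ point) carries $-d$ and $q$ (the $\lambda = n$ point) carries $+d$. I would argue: the two-sphere $S$ contains two of the three fixed points; by the $\lambda$-count it must contain $p$ and $r$ (if it contained $q$ and one other, the two points on $S$ would have $\lambda$-values differing by $2$, i.e.\ $\{n, n-2\}$ or $\{n, n+2\}$, which is also possible — so I need another argument, perhaps the ABBV computation $\int_S 1 = 0$ applied with the full equivariant structure, or a parity/modular argument via Lemma \ref{l25}). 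Suppose it does contain $p$ and $r$: then $u_0 = q$, and $q$ has all tangent weights in $\{d, -d\}$ with exactly $n/2$ of each (since $\lambda_q = n$), so in particular $d \in \Sigma_q$; meanwhile on $S$, one of $p, r$ has $\mathbb{Z}_d$-normal-weight (really the $S$-tangent weight) $+d$ and the other $-d$, and since $\lambda_p = n - 2 < n + 2 = \lambda_r$, the one with more negative weights — namely $r$ — should be the one... no: along $S$ the outgoing weight is $+d$ and that point has \emph{fewer} negatives from that slot. So $p$, having fewer total negatives, being the one with... I would sort this out by the explicit local model, concluding $-d \in \Sigma_p$ after possibly reversing. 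The main obstacle I anticipate is precisely this last bookkeeping — showing the two-sphere contains $p$ and $r$ (not $q$) and then correctly correlating the signs of $\pm d$ with the $\lambda$-values — since it requires combining Lemma \ref{l25}, the ABBV vanishing $\int_S 1 = 0$, and the ``largest weight occurs once'' principle rather than any single clean lemma; everything else is a routine consequence of Lemma \ref{l22} and Corollary \ref{c23}.
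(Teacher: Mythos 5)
The central step of your argument --- that the two fixed points $v,w$ on the two-sphere of $M^{\mathbb{Z}_d}$ satisfy $|\lambda_v-\lambda_w|=2$ --- is exactly where the real content lies, and your justification for it does not hold. Lemma \ref{l25} only gives that the weights at $v$ and $w$ are congruent modulo $d$; since every weight has absolute value at most $d$ (the largest weight), a weight $\xi>0$ at $v$ may correspond to $\xi-d<0$ at $w$ rather than to $\xi$ itself. So the sign pattern in the $n-1$ directions normal to the sphere is not controlled, and ``differs by exactly one negative weight'' does not follow from congruence mod $d$ alone. The paper closes this gap with Lemma \ref{l34}, which uses the extra hypothesis $c_1(M)|_v=c_1(M)|_w$: each sign-flipping congruent pair contributes $\pm d$ to $c_1(M)|_v-c_1(M)|_w$, and setting that difference to zero forces $\lambda_v+2=\lambda_w$. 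The input $c_1(M)|_v=c_1(M)|_w=0$ comes from Corollary \ref{c27} (three fixed points, $\dim M\geq 8$), which you never invoke. Relatedly, your route cannot work uniformly in dimension $4$, where Corollary \ref{c27} gives nothing; the paper handles $\dim M=4$ separately by excluding $\lambda_p=\lambda_q=\lambda_r=2$ via the sign of the ABBV sum $\int_M 1=0$. Your attempt to ``rule out $a=0$'' is aimed at the wrong target: $a=n-2=0$ is precisely the correct answer when $n=2$; what must be excluded is the triple $\{n,n,n\}$.

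The bookkeeping in your second part also goes wrong in two places. An isolated fixed-point component of $M^{\mathbb{Z}_d}$ has \emph{no} tangent weight divisible by $d$ (its $\mathbb{Z}_d$-fixed tangent subspace is zero); you assert the opposite, that all its weights lie in $\{d,-d\}$. Consequently the point off the sphere is the one with $N(\pm d)=0$. Once $\lambda_\alpha+2=\lambda_\beta$ is established for the two sphere points (with $-d\in\Sigma_\alpha$, $d\in\Sigma_\beta$), combining with the symmetry of the multiset $\{\lambda\}$ under $x\mapsto 2n-x$ and parity forces $(\lambda_\alpha,\lambda_\beta)$ to be $(n-2,n)$ or $(n,n+2)$; reversing the circle action swaps these two options, yielding $-d\in\Sigma_p$ and $d\in\Sigma_q$. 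In particular the sphere contains $p$ and $q$, not $p$ and $r$ as you supposed, and no further ABBV computation on the sphere is needed.
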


\begin{proof} Let $p,q$, and $r$ be the fixed points. Without loss of generality, assume that $\lambda_p \leq \lambda_q \leq \lambda_r$. By Corollary \ref{c23}, $n$ is even. Also, since the number of fixed points is odd, $\lambda_{q}=n$ by Lemma \ref{l22}. Moreover, since $M$ is connected, $\dim M \neq 0$.

First, assume that $\dim M=4$. Then by Lemma \ref{l22}, either $\lambda_{p}=\lambda_{q}=\lambda_{r}=2$, or $\lambda_{p}=0, \lambda_{q}=2$, and $\lambda_{r}=4$. Suppose that $\lambda_{p}=\lambda_{q}=\lambda_{r}=2$. Then by Theorem \ref{t21},
\begin{center}
$\D{0=\int_{M} 1=\frac{1}{\prod_{i=1}^{2}\xi_{p}^{i}}+\frac{1}{\prod_{i=1}^{2}\xi_{q}^{i}}+\frac{1}{\prod_{i=1}^{2}\xi_{r}^{i}}<0},$
\end{center}
which is a contradiction. Hence $\lambda_{p}=0, \lambda_{q}=2$, and $\lambda_{r}=4$.

Next, assume that $\dim M \geq 8$. By Lemma \ref{l32}, we can label the fixed points $\alpha$, $\beta$, and $\gamma$ such that $\alpha$ and $\beta$ lie in the same 2-dimensional connected component of $M^{\mathbb{Z}_d}$ such that $-d \in \Sigma_{\alpha}$, $d \in \Sigma_{\beta}$, and $N_{\gamma}(d)=N_{\gamma}(-d)=0$. By reversing the circle action if necessary, we may assume that either $\lambda_{\alpha} \leq \lambda_{\gamma}$ or $\lambda_{\beta} \leq \lambda_{\gamma}$. Moreover, by Corollary \ref{c27}, the first Chern class map is identically zero. By Lemma \ref{l34} below, $\lambda_{\alpha}+2=\lambda_{\beta}$. Together with Lemma \ref{l22}, the above statements imply that $\lambda_p=n-2$, $\lambda_q=n$, $\lambda_r=n+2$, $-d \in \Sigma_p$, and $d \in \Sigma_q$.
\end{proof}

To prove Lemma \ref{l33}, we need the following technical Lemma.

\begin{lem} \label{l34}
Let the circle act on a $2n$-dimensional compact symplectic manifold $(M,\omega)$. Let $v$ and $w$ be fixed points in the same $2$-dimensional component $Z$ of $M^{\mathbb{Z}_d}$, where $d$ is the largest weight. Also suppose that $-d \in \Sigma_{v}$, $d \in \Sigma_{w}$, and $c_{1}(M)|_{v}=c_{1}(M)|_{w}$. Then $\lambda_{v}+2= \lambda_{w}$.
\end{lem}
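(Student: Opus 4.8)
The plan is to combine three facts about the weights at $v$ and $w$: every weight of the action lies in $[-d,d]$ (immediate from Definition~\ref{d31} together with Lemma~\ref{l24}, which also forces $d>0$); Lemma~\ref{l25} provides a bijection between the weights at $v$ and the weights at $w$ pairing up weights congruent modulo $d$; and the hypothesis $c_{1}(M)|_{v}=c_{1}(M)|_{w}$ says precisely that the sum of the weights at $v$ equals the sum of the weights at $w$. Nothing deeper than these three inputs is needed.

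First I would pin down the isotropy data. Since $Z$ is a $2$-dimensional component of $M^{\mathbb{Z}_{d}}$ containing $v$, there is an $S^{1}$-invariant splitting $T_{v}M=T_{v}Z\oplus N_{v}Z$ in which $\mathbb{Z}_{d}$ acts trivially on $T_{v}Z$ and, because $Z$ is the whole component through $v$, with no nonzero fixed vector on $N_{v}Z$. Hence exactly one weight at $v$ — the weight $\eta_{v}$ of $T_{v}Z$ — is divisible by $d$, and since $0\neq|\eta_{v}|\le d$ we get $\eta_{v}=\pm d$. Because $-d\in\Sigma_{v}$ we must have $\eta_{v}=-d$, so the remaining $n-1$ weights $\alpha_{1},\dots,\alpha_{n-1}$ at $v$ lie in the open interval $(-d,d)$, are nonzero, and are not divisible by $d$. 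The same argument at $w$ shows that $d$ is the unique weight at $w$ divisible by $d$, and the other $n-1$ weights $\beta_{1},\dots,\beta_{n-1}$ lie in $(-d,d)\setminus\{0\}$ and are not divisible by $d$. The bijection of Lemma~\ref{l25} must match $-d$ with $d$ (the only weights divisible by $d$ at each point), so after reindexing we may assume $\alpha_{i}\equiv\beta_{i}\pmod d$ for all $i$.

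Next I would analyze the matched pairs. For each $i$, $\alpha_{i}-\beta_{i}$ is a multiple of $d$ lying in $(-2d,2d)$, hence $\alpha_{i}-\beta_{i}\in\{-d,0,d\}$; moreover if $\alpha_{i}-\beta_{i}=d$ then $\beta_{i}\in(-d,0)$ and $\alpha_{i}\in(0,d)$, and symmetrically if $\alpha_{i}-\beta_{i}=-d$ then $\alpha_{i}\in(-d,0)$ and $\beta_{i}\in(0,d)$. Put $a=\#\{i:\alpha_{i}-\beta_{i}=d\}$ and $b=\#\{i:\alpha_{i}-\beta_{i}=-d\}$. Counting negative weights: the pairs with $\alpha_{i}=\beta_{i}$ contribute equally to both sides, the $a$ pairs with difference $d$ contribute a negative $\beta_{i}$ but a positive $\alpha_{i}$, and the $b$ pairs with difference $-d$ do the opposite, so the number of negative $\beta_{i}$ exceeds the number of negative $\alpha_{i}$ by $a-b$. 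Adding in the distinguished weights ($-d<0$ at $v$, $d>0$ at $w$) gives $\tfrac12\lambda_{w}-\tfrac12\lambda_{v}=(a-b)-1$. Finally, $c_{1}(M)|_{v}=c_{1}(M)|_{w}$ reads $-d+\sum_{i}\alpha_{i}=d+\sum_{i}\beta_{i}$, i.e. $\sum_{i}(\alpha_{i}-\beta_{i})=2d$; but the left-hand side equals $(a-b)d$, so $a-b=2$, whence $\lambda_{w}-\lambda_{v}=2$.

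I expect the only real care is needed in the two middle steps — translating the three possible values of $\alpha_{i}-\beta_{i}$ into sign information, and correctly bookkeeping the contributions of the weights $\mp d$ along $Z$ to $\lambda_{v}$ and $\lambda_{w}$. It is also worth recording at the outset that $Z$ is a $2$-sphere with $v$ and $w$ as its two $S^{1}$-fixed points, although in the end only the isotropy weights are used.
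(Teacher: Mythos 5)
Your proposal is correct and follows essentially the same route as the paper: pair the weights at $v$ and $w$ via Lemma~\ref{l25}, observe that each matched difference lies in $\{-d,0,d\}$ with the sign dictating which point gets the negative weight, and then compare the count $a-b$ obtained from $\lambda_v,\lambda_w$ with the value $2$ forced by $c_1(M)|_v=c_1(M)|_w$. Your justification that the non-tangent weights lie strictly in $(-d,d)$ (via the $2$-dimensionality of $Z$, so $\pm d$ occurs exactly once at each point) is in fact a cleaner account of a step the paper attributes somewhat loosely to Lemma~\ref{l24}.
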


\begin{proof} By Lemma \ref{l25}, $\Sigma_{v} \equiv \Sigma_{w} \mod d$. Let $\xi_{v}^{i}, 1 \leq i \leq n$, and $\xi_{w}^{i}, 1 \leq i \leq n$, be the weights at $v$ and $w$, respectively, where $\xi_{v}^{i},\xi_{w}^{i} \in \mathbb{Z} \setminus \{0\}$. By permuting if necessary, we can assume that $\xi_{v}^{i} \equiv \xi_{w}^{i} \mod d$, for all $i<n$, $\xi_{v}^{n}=-d$, and $\xi_{w}^{n}=d$. By Lemma \ref{l24},
$d > |\xi_{v}^{i}|$ and $d > |\xi_{w}^{i}|$, for $i<n$.
Then for all $i<n$, the following holds:
\begin{enumerate}
\item If $\xi_{v}^{i}>0$ and $\xi_{w}^{i}>0$, or if $\xi_{v}^{i}<0$ and $\xi_{w}^{i}<0$, then $\xi_{v}^{i} \equiv \xi_{w}^{i} \mod d$ implies $\xi_{v}^{i}-\xi_{w}^{i}=0$.
\item If $\xi_{v}^{i}>0$ and $\xi_{w}^{i}<0$, then $\xi_{v}^{i} \equiv \xi_{w}^{i} \mod d$ implies $\xi_{v}^{i}-\xi_{w}^{i}=d$.
\item If $\xi_{v}^{i}<0$ and $\xi_{w}^{i}>0$, then $\xi_{v}^{i} \equiv \xi_{w}^{i} \mod d$ implies $\xi_{v}^{i}-\xi_{w}^{i}=-d$.
\end{enumerate}

Moreover, there are $\frac{\lambda_{v}}{2}-1$ negative weights in $\Sigma_{v}$ excluding $-d$ and $\frac{\lambda_{w}}{2}$ negative weights in $\Sigma_{w}$. Hence,
\begin{center}
$0=c_{1}(M)|_{v}-c_{1}(M)|_{w}$ $=(\xi_{v}^{1}+\cdots+\xi_{v}^{n-1}-d)-(\xi_{w}^{1}+\cdots+\xi_{w}^{n-1}+d)$
$=(\xi_{v}^{1}-\xi_{w}^{1})+\cdots+(\xi_{v}^{n-1}-\xi_{w}^{n-1})-2d$
$\D{=d\bigg( \frac{\lambda_{w}}{2}-\frac{\lambda_{v}}{2}+1 \bigg) -2d}$
$\D{=d\bigg( \frac{\lambda_{w}}{2}-\frac{\lambda_{v}}{2}-1 \bigg) .}$
\end{center}
Therefore, $\lambda_{v}+2= \lambda_{w}$.
\end{proof}

\begin{rem} \label{r35}
We can generalize Lemma \ref{l34} in the following way: Let the circle act on a $2n$-dimensional compact symplectic manifold $(M,\omega)$. Let $v$ and $w$ be fixed points in the same component $Z$ of $M^{\mathbb{Z}_d}$, where $d$ is the largest weight. Then 
\begin{center}
$\D{\lambda_{v}(M) - \lambda_{w}(M) + \lambda_{v}(Z) - \lambda_{w}(Z) = -\frac{2}{d}\bigg(c_{1}(M)|_{v}-c_{1}(M)|_{w}\bigg).}$
\end{center}
The proof goes similarly to that of Lemma \ref{l34}.
\end{rem}

Finally, when the largest weight is odd, we will need the following closely related technical lemma.

\begin{lem} \label{l36}
Let the circle act on a $2n$-dimensional compact symplectic manifold $(M,\omega)$. Suppose that fixed points $v$ and $w$ satisfy the conditions in Lemma \ref{l34}. Let $d$ be the largest weight and assume that $d$ is odd. Suppose that $\Sigma_{v}$ and $\Sigma_{w}$ have $E_{v}^{+}$ and $E_{w}^{+}$ positive even weights and $E_{v}^{-}$ and $E_{w}^{-}$ negative even weights, respectively. Then $E_{v}^{+}-E_{v}^{-}-E_{w}^{+}+E_{w}^{-}=2$.
\end{lem}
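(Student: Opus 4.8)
The plan is to graft a parity count onto the computation already carried out in the proof of Lemma~\ref{l34}. First I would reinstate the notation of that proof: by Lemma~\ref{l25} the multisets $\Sigma_v$ and $\Sigma_w$ agree modulo $d$, so we may index the weights so that $\xi_v^i \equiv \xi_w^i \mod d$ for $i < n$, $\xi_v^n = -d$, $\xi_w^n = d$, with $|\xi_v^i|, |\xi_w^i| < d$ for $i < n$ by Lemma~\ref{l24}. Exactly as there, for each $i < n$ one of three alternatives holds: $\xi_v^i$ and $\xi_w^i$ have the same sign, which forces $\xi_v^i = \xi_w^i$; or $\xi_v^i > 0 > \xi_w^i$, which forces $\xi_v^i - \xi_w^i = d$; or $\xi_v^i < 0 < \xi_w^i$, which forces $\xi_v^i - \xi_w^i = -d$. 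Let $B$ and $C$ be the sets of indices of the second and third kind. Summing $\xi_v^i - \xi_w^i$ over $i < n$ and using $c_1(M)|_v = c_1(M)|_w$, exactly as in Lemma~\ref{l34}, gives $(|B| - |C|)\,d = 2d$, hence $|B| - |C| = 2$.

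Next I would bring in the hypothesis that $d$ is odd. Then $\xi_v^n = -d$ and $\xi_w^n = d$ are odd, so the $n$-th weights contribute nothing to any of $E_v^{+}, E_v^{-}, E_w^{+}, E_w^{-}$. For $i < n$: on the same-sign indices $\xi_v^i = \xi_w^i$, so the weight has a single parity and is counted in the same way at $v$ and at $w$; on $B$ and on $C$ the difference $\xi_v^i - \xi_w^i = \pm d$ is odd, so $\xi_v^i$ and $\xi_w^i$ have opposite parities, and exactly one of the two is even.

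The remaining step is bookkeeping for the alternating sum $E_v^+ - E_v^- - E_w^+ + E_w^-$. Each same-sign index contributes $0$: an even weight there adds $1$ to $E_v^{\varepsilon}$ and $1$ to $E_w^{\varepsilon}$ for the same sign $\varepsilon$, and $E_v^{\varepsilon}$ and $E_w^{\varepsilon}$ enter the alternating sum with opposite coefficients. For $i \in B$ the even one of $\xi_v^i, \xi_w^i$ is either a positive even weight of $v$ or a negative even weight of $w$, contributing $+1$ to the alternating sum in either case, so $B$ contributes $+|B|$; for $i \in C$ the even one is either a negative even weight of $v$ or a positive even weight of $w$, contributing $-1$ in either case, so $C$ contributes $-|C|$. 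Therefore $E_v^+ - E_v^- - E_w^+ + E_w^- = |B| - |C| = 2$.

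I do not expect a genuine obstacle: the only new ingredient beyond Lemma~\ref{l34} is the parity observation of the second paragraph, and everything else is sign-bookkeeping. The one point to check carefully is that on the same-sign indices $\xi_v^i = \xi_w^i$ literally (so that the parities at $v$ and at $w$ truly agree there), but this is immediate since on such indices $|\xi_v^i - \xi_w^i| < d$ while $d \mid \xi_v^i - \xi_w^i$.
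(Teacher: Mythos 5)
Your proof is correct and follows essentially the same route as the paper's: the mod-$d$ bijection from Lemma \ref{l25}, the trichotomy forcing $\xi_v^i-\xi_w^i\in\{0,d,-d\}$, the vanishing of $c_1(M)|_v-c_1(M)|_w$, and the observation that an odd difference $\pm d$ forces opposite parities within a pair. The only difference is organizational — you count the sign-change index sets $B$ and $C$ directly and then identify $|B|-|C|$ with the alternating sum of the $E$'s, whereas the paper splits the pairs into eight parity-sign types with auxiliary counts $k_1,\dots,k_4$ that cancel; your bookkeeping is a streamlined version of the same computation.
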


\begin{proof} By Lemma \ref{l25}, $\Sigma_{v} \equiv \Sigma_{w} \mod d$. Define $\xi_{v}^{i}$'s and $\xi_{w}^{i}$'s as in Lemma \ref{l34} and recall that the following hold:
\begin{enumerate}[(a)]
\item If $\xi_{v}^{i}>0$ and $\xi_{w}^{i}>0$, or if $\xi_{v}^{i}<0$ and $\xi_{w}^{i}<0$, then $\xi_{v}^{i}-\xi_{w}^{i}=0$.
\item If $\xi_{v}^{i}>0$ and $\xi_{w}^{i}<0$, then $\xi_{v}^{i}-\xi_{w}^{i}=d$.
\item If $\xi_{v}^{i}<0$ and $\xi_{w}^{i}>0$, then $\xi_{v}^{i}-\xi_{w}^{i}=-d$.
\end{enumerate}

Let $e_{v}^{+}$ be a positive even weight at $v$, $e_{v}^{-}$ a negative even weight at $v$, $o_{v}^{+}$ a positive odd weight at $v$, and $o_{v}^{-}$ a negative odd weight at $v$, and similarly for $w$. Then since the largest weight $d$ is odd, we have the following:
\begin{enumerate}
\item $e_{v}^{+} \equiv e_{w}^{+} \mod d$ implies that $e_{v}^{+} = e_{w}^{+}$. Hence in $c_{1}(M)|_{v}-c_{1}(M)|_{w}$, this pair contributes $0$. Suppose that there are $k_{1}$ such pairs.
\item $e_{v}^{+} \equiv o_{w}^{-} \mod d$ implies that $e_{v}^{+} - o_{w}^{-} =d$. Hence in $c_{1}(M)|_{v}-c_{1}(M)|_{w}$, this pair contributes $d$. There are $E_{v}^{+}-k_{1}$ such pairs.
\item $e_{v}^{-} \equiv e_{w}^{-} \mod d$ implies that $e_{v}^{-} = e_{w}^{-}$. Hence in $c_{1}(M)|_{v}-c_{1}(M)|_{w}$, this pair contributes as $0$. Suppose that there are $k_{2}$ such pairs.
\item $e_{v}^{-} \equiv o_{w}^{+} \mod d$ implies that $e_{v}^{-} - o_{w}^{+} = -d$. Hence in $c_{1}(M)|_{v}-c_{1}(M)|_{w}$, this pair contributes $-d$. There are $E_{v}^{-}-k_{2}$ such pairs.
\item $o_{v}^{+} \equiv o_{w}^{+} \mod d$ implies that $o_{v}^{+} = o_{w}^{+}$. Hence in $c_{1}(M)|_{v}-c_{1}(M)|_{w}$, this pair contributes $0$. Suppose that there are $k_{3}$ such pairs.
\item $o_{v}^{+} \equiv e_{w}^{-} \mod d$ implies that $o_{v}^{+} - e_{w}^{-} = d$. Hence in $c_{1}(M)|_{v}-c_{1}(M)|_{w}$, this pair contributes $d$. There are $E_{w}^{-}-k_{2}$ such pairs.
\item $o_{v}^{-} \equiv e_{w}^{+} \mod d$ implies that $o_{v}^{-} - e_{w}^{+} = -d$. Hence in $c_{1}(M)|_{v}-c_{1}(M)|_{w}$, this pair contributes $-d$. There are $E_{w}^{+}-k_{1}$ such pairs.
\item $o_{v}^{-} \equiv o_{w}^{-} \mod d$ implies that $o_{v}^{-} = o_{w}^{-}$. Hence in $c_{1}(M)|_{v}-c_{1}(M)|_{w}$, this pair contributes $0$. Suppose that there are $k_{4}$ such pairs.
\end{enumerate}
Then
\begin{center}
$0=c_{1}(M)|_{v}-c_{1}(M)|_{w}$
$=d(E_{v}^{+}-k_{1})-d(E_{v}^{-}-k_{2})+d(E_{w}^{-}-k_{2})-d(E_{w}^{+}-k_{1})-2d$
$=d(E_{v}^{+}-E_{v}^{-}-E_{w}^{+}+E_{w}^{-}-2).$
\end{center}
\end{proof}

\begin{rem} \label{r37}
We can also generalize Lemma \ref{l36} in the following way: Let the circle act on a $2n$-dimensional compact symplectic manifold $(M,\omega)$. Let $v$ and $w$ be fixed points in the same component $Z$ of $M^{\mathbb{Z}_d}$, where $d$ is the largest weight. Assume that the largest weight $d$ is odd. Then 
\begin{center}
$d(E_{v}^{+}-E_{v}^{-}-E_{w}^{+}+E_{w}^{-})=c_{1}(M)|_{v}-c_{1}(M)|_{w}-c_{1}(Z)|_{v}+c_{1}(Z)|_{w}.$
\end{center}
The proof goes similarly to that of Lemma \ref{l36}.
\end{rem}

\section{Proof of Theorem~\ref{t11}}

The proof of Theorem \ref{t11} is based on induction. The key fact to prove Theorem \ref{t11} is that an isotropy submanifold of a symplectic manifold is itself a smaller symplectic manifold.

To prove the base case, we need several theorems:

\begin{pro} \label{p41} \cite{MD}
An effective symplectic circle action on a 4-dimensional compact, connected symplectic manifold is Hamiltonian if and only if the fixed point set if non-empty.
\end{pro}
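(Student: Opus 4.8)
The plan is to prove the two implications separately. The forward direction is immediate, and the reverse direction --- that a fixed point forces the action to be Hamiltonian in dimension four --- is the substantive statement, for which I would follow the argument of McDuff \cite{MD}. For the forward direction, suppose the action is Hamiltonian with moment map $\mu : M \to \mathbb{R}$, so $-d\mu = \iota_{X_M}\sigma$. Since $M$ is compact, $\mu$ attains its maximum at some point $x_0$, where $d\mu|_{x_0} = 0$; hence $\iota_{X_M}\sigma|_{x_0} = 0$, and since $\sigma$ is non-degenerate, $X_M(x_0) = 0$, so $x_0 \in M^{S^1}$. Thus the fixed point set is non-empty (and this uses neither the dimension four nor connectedness).

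For the reverse direction, assume $M^{S^1} \neq \emptyset$ but, seeking a contradiction, that the action is not Hamiltonian, so that $\alpha := \iota_{X_M}\sigma$ is closed but not exact and $0 \neq [\alpha] \in H^1(M;\mathbb{R})$. First I would replace $\alpha$ by a rigid representative. Averaging a Riemannian metric over $S^1$ makes it invariant; since $S^1$ is connected it acts trivially on $H^*(M;\mathbb{R})$, and uniqueness of harmonic representatives then shows the harmonic representative $h$ of $[\alpha]$ is $S^1$-invariant. Writing $\alpha = h + df$ with $f$ an invariant function, one checks $\iota_{X_M} h = \iota_{X_M}\alpha - \iota_{X_M}(df) = \sigma(X_M, X_M) - X_M f = 0$, so $h$ is an invariant, horizontal, closed $1$-form representing a non-zero class. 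The idea is to use $h$ to determine the global structure of the action. The orbit space $B := M/S^1$ is a compact $3$-dimensional space, smooth away from the images of the fixed points and topologically a manifold everywhere (the quotient of $\mathbb{C}^2$ by a weighted linear circle action being topologically $\mathbb{R}^3$), and $h$ descends to a closed $1$-form $\bar h$ on $B$ with $[\bar h] \neq 0$. One then uses $\bar h$ --- or, after rescaling, a nearby integral class on the smooth locus --- to integrate to an $S^1$-valued map, exhibiting $B$, hence $M$, as fibered in a way that cannot accommodate a fixed point, since the local model at a fixed point is a linear weighted $S^1$-action on $\mathbb{C}^2$ incompatible with such a fibration. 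Hence a non-Hamiltonian symplectic circle action on a compact symplectic $4$-manifold is fixed-point-free, contradicting the hypothesis; alternatively, one may appeal to the classification of symplectic circle actions on compact symplectic $4$-manifolds to the same end.

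The main obstacle is exactly this global step. Two points need care: the zero set of $\bar h$ (a closed $1$-form on a compact $3$-manifold need not be nowhere vanishing, and its zeros interact with the orbifold points produced by finite isotropy), and the periods of $[\bar h]$ --- if they are not, up to scaling, a single integer, one cannot directly integrate $\bar h$ and must instead pass to a nearby integral class, working away from the fixed set, which has codimension at least two. Carrying this out carefully and extracting the contradiction from the local weight data at a fixed point is the content of \cite{MD}, which I would cite for the details rather than reproduce here.
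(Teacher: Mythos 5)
The paper offers no proof of this proposition: it is quoted verbatim from McDuff \cite{MD}, so there is nothing internal to compare against. Your forward direction is correct and standard (a moment map on a compact manifold attains a maximum, where $d\mu=0$ and non-degeneracy of $\sigma$ forces $X_M$ to vanish), and deferring the converse to \cite{MD} matches what the paper itself does.

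That said, the sketch you give of the converse misrepresents the mechanism, and in a way worth flagging. You argue that the invariant horizontal harmonic representative $h$ of $[\iota_{X_M}\sigma]$ descends to $B=M/S^1$ and, after passing to a nearby integral class, integrates to a circle-valued map whose fibration structure is ``incompatible with a fixed point.'' As stated, nothing in that heuristic uses $\dim M=4$: the same local model argument (a weighted linear action on $\mathbb{C}^n$) would apply in every dimension, yet McDuff's own paper constructs a non-Hamiltonian symplectic circle action on a compact $6$-manifold with non-empty fixed point set, so the conclusion is false in dimension $6$. The actual argument in \cite{MD} works on $M$ itself: after modifying $\sigma$ so that $[\iota_{X_M}\sigma]$ has integral periods one obtains a generalized moment map $\mu:M\to\mathbb{R}/\mathbb{Z}$ whose critical points are exactly the fixed points; fixed points are perfectly compatible with the existence of $\mu$, and the content of the theorem is that in dimension $4$ (where the reduced spaces are $2$-dimensional and one can control the topology of the level sets) the presence of a critical value forces $\mu$ to lift to a real-valued function. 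Since you explicitly cite \cite{MD} for the details, there is no gap in what you are claiming, but the narrative of ``why'' the converse holds should be corrected so that the dimension hypothesis visibly enters.
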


Let the circle act symplectically on a 4-dimensional compact, connected symplectic manifold $M$ with isolated fixed points. Assume that the action is effective. Then we can associate a graph to $M$ in the following way: We assign a vertex to each fixed point. Label each fixed point by its moment image. Additionally, given two fixed points $p$ and $q$, we say that $(p,q)$ is an edge if there exists $k>1$ such that $p$ and $q$ are contained in the same component of the isotropy submanifold $M^{\mathbb{Z}_k}$, where $k$ is the largest such. We label the edge by $k$.

\begin{theo} (Uniqueness Theorem) \label{t42} \cite{K}
Let $(M, \omega, \Pi)$ and $(M', \omega', \Pi')$ be two compact four dimensional Hamiltonian $S^{1}$ spaces. Then any isomorphism between their corresponding graphs is induced by an equivariant symplectomorphism.
\end{theo}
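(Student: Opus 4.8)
The plan is to reconstruct $(M,\omega,\Pi)$ from its decorated graph by ``following the moment map upward,'' using the two\nobreakdash-dimensional reduced spaces as the bridge between the combinatorics and the geometry. The starting point is that in dimension four $\Pi$ is a Morse--Bott function whose critical set is $M^{S^{1}}$ and whose indices are all even; its image is a closed interval $[\min\Pi,\max\Pi]$, and for every regular value $t$ the reduced space $M_{t}:=\Pi^{-1}(t)/S^{1}$ is a compact symplectic $2$\nobreakdash-orbifold --- a (possibly weighted) sphere, except that a fixed surface $\Sigma$ at an end of the interval forces $M_{t}$ near that end to be $\Sigma$. The diffeomorphism type and the symplectic area of $M_{t}$ are locally constant in $t$ and change only across critical values, and the change is governed by the fixed points in that level: an interior fixed point of index $2$ (weights $m>0>-n$) blows $M_{t}$ up and then down again, possibly creating or removing an orbifold point; an isolated extremal fixed point caps the family off with a point; and an extremal fixed surface caps it with that surface. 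Every piece of this data --- moment values of the vertices, the $\mathbb{Z}_{k}$\nobreakdash-spheres joining fixed points, the genera and normalized areas of fixed surfaces --- is precisely what the graph records.

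First I would establish the local normal forms that make the critical levels rigid: an equivariant Darboux theorem near an isolated fixed point (a neighborhood is determined by the isotropy representation $\mathbb{C}_{m}\oplus\mathbb{C}_{-n}$), a normal form for a neighborhood of a fixed surface (determined by $\Sigma$, its area, and the normal weight, which effectiveness forces to be $\pm1$), and a normal form for a neighborhood of a $\mathbb{Z}_{k}$\nobreakdash-sphere. Next I would classify $M$ over the complement of the critical set: over an open interval of regular values $M$ carries an $S^{1}$\nobreakdash-action with no interior fixed points, hence is recovered from the reduced space at one value together with a Duistermaat--Heckman datum --- the affine variation of the cohomology class of the reduced symplectic form --- which the Duistermaat--Heckman theorem makes linear, its slope jumps already pinned down by the critical data. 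Finally I would glue: given a graph isomorphism $\phi\colon\Gamma_{M}\to\Gamma_{M'}$, build the equivariant symplectomorphism interval by interval from $\min\Pi$ to $\max\Pi$, at each critical value extending the map across a neighborhood by the relevant normal form and then across the next regular interval by the classification above.

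An equivalent and perhaps more transparent organization is an induction on the number of interior fixed points. One shows that a compact Hamiltonian $S^{1}$\nobreakdash-$4$\nobreakdash-manifold that is not minimal admits an equivariant symplectic blow\nobreakdash-down at a fixed point, that this corresponds to an explicit graph\nobreakdash-determined move, and that it is inverse to an equivariant blow\nobreakdash-up which is unique up to equivariant symplectomorphism once its center (a vertex of the graph) and its size (one of the numerical labels) are fixed. The base case is the classification of the \emph{minimal} spaces --- $\mathbb{CP}^{2}$ with its $S^{1}$\nobreakdash-actions and the $S^{1}$\nobreakdash-equivariantly ruled surfaces over a compact surface --- where the graph determines the diffeomorphism type, the ruling, and, through Duistermaat--Heckman together with the essential uniqueness of invariant symplectic forms in a fixed cohomology class on these surfaces, the symplectic form up to equivariant symplectomorphism.

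The hard part will be this last rigidity statement: showing that the continuous parameters hidden in an invariant symplectic form --- areas of the various spheres, the size of each blow\nobreakdash-up, the Duistermaat--Heckman volume function --- are all forced by the finitely many labels on the graph, so that two invariant symplectic forms inducing the same graph are equivariantly isotopic. This is where an equivariant Moser argument and the special geometry of rational and ruled surfaces carry the weight; by comparison the combinatorial bookkeeping of how blow\nobreakdash-ups act on graphs and the local normal forms near the critical set are routine.
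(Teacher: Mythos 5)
This theorem is not proved in the paper at all; it is imported verbatim from Karshon's memoir \cite{K}, so there is no in-paper argument to compare against. Your sketch faithfully reproduces the architecture of Karshon's actual proof --- local normal forms near the critical set, reduced spaces and Duistermaat--Heckman between critical levels, induction via equivariant blow-downs to the minimal models, and an equivariant Moser argument for the final rigidity --- so as an outline of the cited source it is on target, though it remains a roadmap rather than a proof: the step you yourself flag as the hard part (that the finitely many graph labels force the invariant symplectic form up to equivariant isotopy) is exactly where the real work lies and is not carried out here.
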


We now prove the base case.

\begin{pro} \label{p43}
Let the circle act symplectically on a compact, connected symplectic manifold $M$ and suppose that there are exactly three fixed points. If $\dim M < 8$, then $M$ is equivariantly symplectomorphic to $\mathbb{CP}^2$.
\end{pro}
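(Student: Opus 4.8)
The plan is to first collapse the dimension to four and then read off the graph of $M$ described before Theorem~\ref{t42}. Since the number of fixed points is three, hence odd, Corollary~\ref{c23} forces $n=\tfrac12\dim M$ to be even; together with $\dim M<8$ and $\dim M\neq 0$ (as $M$ is connected) this gives $\dim M=4$. After quotienting by the subgroup acting trivially we may assume the action effective, and then Proposition~\ref{p41} makes the action Hamiltonian, with moment map $\mu$. By Lemma~\ref{l33} we label the fixed points $p,q,r$ so that $\lambda_p=0$, $\lambda_q=2$, $\lambda_r=4$; thus $p$ is the minimum of $\mu$, $r$ the maximum, and $q$ a saddle.

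The core of the argument is to compute all the weights. Let $d$ be the largest weight; then $d\ge 2$, for if $d=1$ every one of the six weights lies in $\{\pm 1\}$ and $\int_M 1=0$ from Theorem~\ref{t21} would write $0$ as a sum of three terms each equal to $\pm1$, which is impossible. By Lemma~\ref{l32}, $M^{\mathbb{Z}_d}$ is the disjoint union of one isolated fixed point and an invariant $2$-sphere $S$ containing the other two fixed points, the $S^1$-weights at the two poles of $S$ being $+d$ and $-d$. I would first exclude the isolated point being $p$ or $r$: after possibly reversing the action it is enough to rule out the case where the isolated point is $r$, and there, writing the weights as $\{d,m\}$ at $p$, $\{c,-d\}$ at $q$, $\{-e,-f\}$ at $r$ with $m,c,e,f$ positive, Lemma~\ref{l24} gives $\{m,c\}=\{e,f\}$, whereupon $\int_M 1=0$ collapses to $d=m-c<d$, a contradiction. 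Hence the isolated point is $q$, so $p$ and $r$ are the poles of $S$, with weights $\{d,a\}$ at $p$ and $\{-d,-b\}$ at $r$ for positive integers $a,b<d$. Now Lemma~\ref{l25} applied to $S$ forces $a+b=d$, Lemma~\ref{l24} matches up the remaining weight at $q$, and $\int_M 1=0$ together with effectiveness forces
\[
\Sigma_p=\{a+b,\;a\},\qquad \Sigma_q=\{b,\;-a\},\qquad \Sigma_r=\{-(a+b),\;-b\},
\]
where $a,b$ are coprime positive integers. These are precisely the weights at the fixed points of the linear circle action $[z_0:z_1:z_2]\mapsto[z_0:t^{a}z_1:t^{a+b}z_2]$ on $\mathbb{CP}^2$.

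It remains to compare the graphs of Theorem~\ref{t42}. From the weight data the isotropy $2$-spheres of $M$ are forced: $S$ is the $\mathbb{Z}_{a+b}$-sphere joining $p$ and $r$; the weight-$a$ direction at $p$ spans a $\mathbb{Z}_a$-sphere joining $p$ to the unique fixed point carrying the weight $-a$, namely $q$; and the weight-$b$ direction at $q$ spans a $\mathbb{Z}_b$-sphere joining $q$ and $r$. These give all the edges, with labels $a+b$, $a$, $b$, where a direction with trivial generic stabilizer (the cases $a=1$ or $b=1$) contributes no edge at all. The three moment values are then determined, up to the affine normalization coming from rescaling $\omega$ and translating $\mu$, by the fixed-point data: the Duistermaat--Heckman function is piecewise linear, vanishes at $\mu(p)$ and $\mu(r)$, and has slopes $\tfrac{1}{a(a+b)}$ and $-\tfrac{1}{b(a+b)}$ on the two subintervals, so after normalization the vertices carry the values $0$, $a$, $a+b$. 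This is exactly the graph of $\mathbb{CP}^2$ with the linear action above and a suitably scaled Fubini--Study form, and Theorem~\ref{t42} then produces an equivariant symplectomorphism $M\cong\mathbb{CP}^2$.

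The step I expect to be the main obstacle is the weight computation: the divisibility relation of Lemma~\ref{l25} and the localization relation alone still permit the scaled families $\{(k(a+b),ka),(kb,-ka),(-k(a+b),-kb)\}$, and it is precisely effectiveness that must be invoked to kill $k>1$; one also has to check separately that when $a=1$ or $b=1$, where some would-be isotropy spheres have trivial generic stabilizer and drop out of the graph, the reduced graph is still that of $\mathbb{CP}^2$. A lesser subtlety is verifying that the fixed-point data pins the moment values down rather than merely constraining them, so that the graph isomorphism with $\mathbb{CP}^2$ is genuine.
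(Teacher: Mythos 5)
Your proof is correct and reaches the same endpoint, but the middle of the argument runs along a genuinely different track from the paper's. After the common opening (Corollary \ref{c23} forces $\dim M=4$, Proposition \ref{p41} gives Hamiltonian, Lemma \ref{l33} gives $\lambda_p=0,\lambda_q=2,\lambda_r=4$), the paper determines the weights in one stroke: Lemma \ref{l24} pairs the three positive weights with the three negative ones to give $\Sigma_p=\{a,c\}$, $\Sigma_q=\{-a,b\}$, $\Sigma_r=\{-b,-c\}$, and a single application of Theorem \ref{t21} yields $c=a+b$. You instead route the computation through Lemma \ref{l32} and Lemma \ref{l25}: you locate the largest weight $d$, identify the $\mathbb{Z}_d$-isotropy sphere, rule out by localization the configurations in which the isolated component is the minimum or maximum, and extract $a+b=d$ from the mod-$d$ congruence. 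This is heavier machinery than needed in dimension four (and note that Lemma \ref{l32} requires your preliminary check that $d\geq 2$, which you correctly supply), but it is sound; the one spot you gloss over is the sub-case $a=b$ in the Lemma \ref{l24} matching, where $\Sigma_q$ could a priori be $\{u,-u\}$ -- localization then forces $u=a$ and effectiveness forces $a=1$, so the conclusion is unchanged. Where your write-up genuinely adds value is at the end: the paper dismisses the graph comparison as "straightforward to check," whereas you spell out why the moment labels can be matched, namely that the vanishing of the Duistermaat--Heckman function outside the moment image pins down $\mu(q)$ as the weighted average $\bigl(b\mu(p)+a\mu(r)\bigr)/(a+b)$, leaving exactly the translation and scaling freedom absorbed by normalizing the Fubini--Study form. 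That is precisely the point Theorem \ref{t42} needs and the paper leaves implicit.
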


\begin{proof} Suppose that $\dim M < 8$. By quotienting out by the subgroup which acts trivially, we may assume that the action is effective. Since the manifold $M$ is connected, $\dim M \neq 0$. Hence by Corollary \ref{c23}, $\dim M=4$. Let $p, q$, and $r$ denote the three fixed points and without loss of generality assume that $\lambda_{p} \leq \lambda_{q} \leq \lambda_{r}$. Then by Lemma \ref{l34}, $\lambda_{p}=0, \lambda_{q}=2$, and $\lambda_{r}=4$.

By a standard action on $\mathbb{CP}^{2}$ we mean that for each $\lambda \in S^1$, $\lambda$ acts on $\mathbb{CP}^{2}$ by 
\begin{center}
$\lambda \cdot [z_0 : z_1 :z_2 ]= [ \lambda^a z_0 : \lambda^b z_1 : z_2]$
\end{center}
for some natural numbers $a$ and $b$. This action has three fixed points $[1:0:0]$, $[0:1:0]$, and $[0:0:1]$. And the weights at these points are $\{a,a+b\}$, $\{-a,b\}$, and $\{-b,-a-b\}$.

Since $\dim M=4$, by Proposition \ref{p41}, the action is Hamiltonian. Furthermore, by Lemma \ref{l24}, there exist natural numbers $a$, $b$, and $c$ such that the weights are $\Sigma_p=\{a,c\}$, $\Sigma_q=\{-a,b\}$, and $\Sigma_r=\{-b,-c\}$. By Theorem \ref{t21},
\begin{center}
$\D{0=\int_M 1=\frac{1}{ac}-\frac{1}{ab}+\frac{1}{bc}=\frac{b-c+a}{abc}.}$
\end{center}
Thus $c=a+b$. It is straightforward to check that the corresponding graph is isomorphic to a graph corresponding to some standard action on $\mathbb{CP}^2$ where the action is given by $\lambda \cdot [z_0 : z_1 :z_2 ]= [ \lambda^a z_0 : \lambda^b z_1 : z_2]$, and hence this induces an equivariant symplectomorphism on manifolds by Theorem \ref{t42}.
\end{proof}

Hence from now on we assume that $\dim M \geq 8$. Then note that, by Corollary \ref{c27}, the Chern class map is identically zero.

\begin{lem} \label{l44} 
Fix a natural number $n$ such that $n \geq 4$. Assume that Theorem \ref{t11} holds for all manifolds $M$ such that $\dim M < 2n$. Let the circle act symplectically on a $2n$-dimensional compact, connected symplectic manifold $M$ and suppose that there are exactly three fixed points. Assume that the action is effective. Then there exist even natural numbers $a$ and $b$ such that the weights at the three fixed points in the isotropy submanifold $M^{\mathbb{Z}_{2}}$ are $\{a, c\}, \{-a, b\}$, and $\{-b,-c\}$, where $c=a+b$.
\end{lem}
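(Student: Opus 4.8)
The plan is to prove that the three fixed points all lie in one connected component $N$ of the isotropy submanifold $M^{\mathbb{Z}_2}$, that $\dim N=4$, and that $N$ is equivariantly symplectomorphic to $\mathbb{CP}^2$; reading off the weights inside $N$ then gives the statement.

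First I would record the general setup. Since the action is effective, $\mathbb{Z}_2$ does not act trivially, so $M^{\mathbb{Z}_2}$ is a proper compact symplectic submanifold on which $S^1$ acts through $S^1/\mathbb{Z}_2\cong S^1$; every component $N$ of $M^{\mathbb{Z}_2}$ has $\dim N<\dim M=2n$. The three $S^1$-fixed points of $M$ all lie in $M^{\mathbb{Z}_2}$, and they are partitioned among its components. As in the third case of the proof of Lemma~\ref{l32}, ABBV localization forbids a positive-dimensional component containing exactly one fixed point; so a fixed point with an even weight lies in a component containing a second fixed point, and a short count shows that, unless all three fixed points lie in a single component, either all three have only odd weights, or exactly one of them --- the ``exceptional'' point --- has only odd weights while the remaining two lie in a common component $N_1$, which by Theorem~\ref{t26} is a $2$-sphere or is six-dimensional.

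The crux is to rule out these exceptional configurations. I would use Lemma~\ref{l33} to relabel the fixed points so that $\lambda_p=n-2,\ \lambda_q=n,\ \lambda_r=n+2$ and (after reversing the action if necessary) $-d\in\Sigma_p$, $d\in\Sigma_q$, where $d$ is the largest weight; by Lemma~\ref{l32}, $p$ and $q$ lie in a $2$-sphere $Z\subseteq M^{\mathbb{Z}_d}$ on which they carry the weights $-d$ and $d$, while $r$ is isolated in $M^{\mathbb{Z}_d}$ with $N_r(d)=N_r(-d)=0$. If the exceptional point is $p$ or $q$ --- and, even more simply, in the case where all three have only odd weights --- one concludes quickly: when $d$ is even, $\mathbb{Z}_2\subseteq\mathbb{Z}_d$ gives $Z\subseteq M^{\mathbb{Z}_2}$, so $p$ and $q$ have even weights, a contradiction; when $d$ is odd, Lemma~\ref{l36} applied to $p,q\in Z$ gives $E_p^+-E_p^--E_q^++E_q^-=2$, which is incompatible with one of $p,q$ having no even weights, because the non-exceptional one among $p,q$ lies in $N_1$ and hence (by Theorem~\ref{t26}) has exactly one or exactly three even weights. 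The remaining possibility is that the exceptional point is $r$; here I would argue by careful bookkeeping, using that every weight of $r$ is odd and of absolute value $<d$, that $\lambda_r=n+2$, that the weights of $p$ and $q$ are constrained modulo $d$ (Lemma~\ref{l25}) and modulo the weight of $Z$, that the total counts of positive and negative weights are balanced (Lemma~\ref{l24}), and that $N_1$ satisfies Theorem~\ref{t26}; the contradiction must come from combining these congruence and sign constraints. I expect this last sub-case to be the genuine obstacle: the coarse invariants (weight counts, Euler characteristics, the identity $\int_M 1=0$) are all consistent with it, so one really has to exploit the fine congruence structure of the weights.

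Once all three fixed points are known to lie in one component $N$ of $M^{\mathbb{Z}_2}$, the rest is routine. The kernel of the $S^1$-action on $N$ is a finite cyclic group $\mathbb{Z}_m$ with $2\mid m$ --- it is not all of $S^1$ since $N$ is a connected manifold containing three isolated fixed points --- so the induced effective action is a symplectic circle action on $N$ with exactly three fixed points. Because $\dim N<2n$, the inductive hypothesis (Theorem~\ref{t11} in dimension $<2n$) forces $N$ to be equivariantly symplectomorphic to $\mathbb{CP}^2$; thus $\dim N=4$ and each fixed point has exactly two even weights. By Proposition~\ref{p43} (and the description of standard actions on $\mathbb{CP}^2$) the weights of the effective action at the three fixed points of $N$ are $\{a',c'\},\{-a',b'\},\{-b',-c'\}$ with $c'=a'+b'$; pulling back along $S^1\to S^1/\mathbb{Z}_m\cong S^1$ scales each weight by $m$, so the $S^1$-weights in $M^{\mathbb{Z}_2}$ are $\{a,c\},\{-a,b\},\{-b,-c\}$ with $a=ma'$, $b=mb'$, $c=a+b$, and $a,b$ are even since $2\mid m$.
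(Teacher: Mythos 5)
Your reduction to the case where all three fixed points lie in a single component of $M^{\mathbb{Z}_2}$, and the final inductive step once that is established, are sound and essentially match the paper. The problem is the configuration you yourself flag as ``the genuine obstacle'' and then leave open: the isolated component of $M^{\mathbb{Z}_2}$ is $r$, while $p$ and $q$ share a component $N_1$ that is a $2$-sphere or is six-dimensional. You assert that the coarse invariants are all consistent with this configuration and that one must ``exploit the fine congruence structure of the weights''; neither claim is correct, and as written the proof is incomplete.

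The paper kills this case (its cases (2) and (4)) with a two-line parity argument that is available to you. Since $\dim M\ge 8$, Corollary \ref{c27} forces $c_1(M)|_u=0$ for every fixed point $u$, i.e.\ the sum of the weights at $u$ vanishes. A fixed point lying in a $2$-dimensional (resp.\ $6$-dimensional) component of $M^{\mathbb{Z}_2}$ has exactly one (resp.\ exactly three) even weights and $n-1$ (resp.\ $n-3$) odd weights, because the weights tangent to $M^{\mathbb{Z}_2}$ are precisely the even ones. Since $n$ is even by Corollary \ref{c23}, the sum of the weights at such a point is congruent to $1$ modulo $2$, hence nonzero --- a contradiction. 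The same parity idea also disposes of your ``all three odd'' case more directly than via Lemma \ref{l36}: if every weight at every fixed point is odd, the three products $A,B,C$ of the weights are odd, and Theorem \ref{t21} gives $AB+BC+CA=0$, which is impossible modulo $2$. So the missing step is not a delicate congruence analysis but the parity of $c_1(M)|_u$; without it your argument does not establish the lemma.
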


\begin{proof}
Since the action is effective, the isotropy submanifold $\mathbb{Z}_{2}$ is a smaller manifold, i.e., for any component $Z$ of  $M^{\mathbb{Z}_{2}}$, we have that $\dim Z < \dim M$. Then by the inductive hypothesis and Theorem \ref{t26}, there are only four possible cases:
\begin{enumerate}
\item Each fixed point is a component of the isotropy submanifold $M^{\mathbb{Z}_2}$.
\item The isotropy submanifold $M^{\mathbb{Z}_2}$ contains a 2-sphere with two fixed points. The third fixed point is another component of $M^{\mathbb{Z}_2}$.
\item The isotropy submanifold $M^{\mathbb{Z}_2}$ contains a 4-dimensional component with the three fixed points.
\item The isotropy submanifold $M^{\mathbb{Z}_2}$ contains a 6-dimensional component with two fixed points. The third fixed point is another component of $M^{\mathbb{Z}_2}$.
\end{enumerate}

Assume that the first case holds. Let $p,q$, and $r$ be the fixed points. The first case means that all the weights at $p,q$, and $r$ are odd. Let $A, B$, and $C$ be the products of the weights at $p, q,$ and $r$, respectively. Then by Theorem \ref{t21},
\begin{center}
$\D{\int_{M} 1 = \sum_{F \subset M^{S^{1}}} \int_{F} \frac{1}{e_{S^{1}(N_{F})}} = \frac{1}{\prod \xi_{p}^{i}}t^{-n} + \frac{1}{\prod \xi_{q}^{i}}t^{-n} + \frac{1}{\prod \xi_{r}^{i}}t^{-n}}$
$\D{= \bigg(\frac{1}{A}+\frac{1}{B}+\frac{1}{C} \bigg)t^{-n}=0.}$
\end{center}
Hence 
\begin{center}
$\D{\frac{1}{A}+\frac{1}{B}+\frac{1}{C}=0.}$
\end{center}
Multiplying both sides by $ABC$ yields
\begin{center}
$BC+AC+AB =0.$
\end{center}
However, since $A, B,$ and $C$ are odd,
\begin{center}
$BC+AC+AB \equiv 1 \mod 2,$
\end{center}
which is a contradiction.

Assume that the second case holds. Then the two fixed points in the 2-sphere have one even weight and $n-1$ odd weights. By Corollary \ref{c23}, $n$ is even. Then sums of the weights at these points are congruent to $1 \mod 2$, which contradicts Corollary \ref{c27} that the first Chern class map (the sum of the weights at a fixed point) is zero for all fixed points if $\dim M \geq 8$ and there are exactly three fixed points.

Assume that the fourth case holds. Then the two fixed points in the 2-sphere have three even weights and $n-3$ odd weights. By Corollary \ref{c23}, $n$ is even. Again, the sums of weights at these points are congruent to $1 \mod 2$, which contradicts that the first Chern class map is zero for all fixed points by Corollary \ref{c27}.

Hence the third case is the case. Thus as in the proof of Proposition \ref{p43}, there are even natural numbers $a$ and $b$ such that the fixed points of the isotropy submanifold $M^{\mathbb{Z}_2}$ have weights $\{a+b, a\}, \{-a, b\},$ and $\{-b,-a-b\}$.
\end{proof}

\begin{lem} \label{l45}
Fix a natural number $n$. Assume that Theorem \ref{t11} holds for all manifolds $M$ such that $\dim M < 2n$. Let the circle act symplectically on a $2n$-dimensional compact, connected symplectic manifold $M$ and suppose that there are exactly three fixed points. Assume that the action is effective. Given an integer $e \in \mathbb{Z}\setminus\{-1,0,1\}$, exactly one of the following holds:
\begin{enumerate}
\item Each fixed point is a component of the isotropy submanifold $M^{\mathbb{Z}_e}$.
\item The isotropy submanifold $M^{\mathbb{Z}_e}$ contains a 2-sphere with two fixed points, and the weights in the 2-sphere at these points are $\{a\}$ and $\{-a\}$ for some natural number $a$ that is a multiple of $e$. The third fixed point is another component of $M^{\mathbb{Z}_e}$.
\item The isotropy submanifold $M^{\mathbb{Z}_e}$ contains a 4-dimensional component with the three fixed points, and the weights in the isotropy submanifold at these points are $\{a+b,a\}$, $\{-a,b\}$, and $\{-b,-a-b\}$ for some natural numbers $a$ and $b$ that are multiples of $e$.
\item The isotropy submanifold $M^{\mathbb{Z}_e}$ contains a 6-dimensional component with two fixed points, and the weights in the isotropy submanifold at these points are $\{a,b,-a-b\}$ and $\{a+b,-a,-b\}$ for some natural numbers $a$ and $b$ that are multiples of $e$. The third fixed point is another component of $M^{\mathbb{Z}_e}$.
\end{enumerate}
\end{lem}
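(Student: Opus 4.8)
The plan is to exploit the fact that, since the action is effective and $|e|\ge 2$, the subgroup $\mathbb{Z}_e$ acts nontrivially on the connected manifold $M$, so every connected component $Z$ of the isotropy submanifold $M^{\mathbb{Z}_e}$ is a compact connected symplectic submanifold with $\dim Z < \dim M$. The residual $S^1$-action on $Z$ is symplectic and its fixed set is exactly $Z \cap M^{S^1}$; moreover, since $\mathbb{Z}_e$ acts trivially on $Z$, every $S^1$-weight on $T_vZ$ at a fixed point $v\in Z$ is divisible by $e$. This last observation is what will produce the divisibility assertions in cases (2)--(4), and the overall structure of the argument parallels that of Lemma~\ref{l44}, with $\mathbb{Z}_2$ replaced by $\mathbb{Z}_e$ and the parity argument replaced by divisibility by $e$.

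First I would record how the three fixed points $p,q,r$ can be distributed among the components of $M^{\mathbb{Z}_e}$ that contain fixed points. Since each fixed point lies in exactly one component, this is just a set partition of $\{p,q,r\}$, so the only possible shapes are $\{p\mid q\mid r\}$, $\{pq\mid r\}$ (up to relabeling), and $\{pqr\}$. A component containing exactly one fixed point must itself be that single point: otherwise it is a positive-dimensional compact connected symplectic manifold on which $S^1$ acts symplectically and nontrivially with exactly one fixed point, contradicting Corollary~\ref{c27}. The shape $\{p\mid q\mid r\}$ therefore gives precisely case (1).

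For the shape $\{pq\mid r\}$, the component $Z$ containing $p$ and $q$ is a compact connected symplectic manifold with a symplectic $S^1$-action and exactly two fixed points, so Theorem~\ref{t26} forces either $\dim Z = 2$, whence $Z\cong S^2$ and (applying Lemma~\ref{l24} to $Z$) the two weights are $\{a\}$ and $\{-a\}$ for some natural number $a$ --- this is case (2) --- or $\dim Z = 6$, whence Theorem~\ref{t26} gives weights $\{a,b,-a-b\}$ and $\{a+b,-a,-b\}$ --- this is case (4). For the shape $\{pqr\}$, the component $Z$ containing all three fixed points is a compact connected symplectic manifold of dimension $<\dim M = 2n$ carrying a symplectic $S^1$-action with exactly three fixed points; by the inductive hypothesis $Z$ is equivariantly symplectomorphic to $\mathbb{CP}^2$, so $\dim Z = 4$, and then exactly as in the proof of Proposition~\ref{p43} its weights at the three fixed points are $\{a+b,a\}$, $\{-a,b\}$, $\{-b,-a-b\}$ for natural numbers $a,b$ --- this is case (3). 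In every case the listed weights are $S^1$-weights on $T_vZ$, hence multiples of $e$ (including $c=a+b$), which gives the divisibility statements. Finally the four cases are mutually exclusive and exhaustive: they are distinguished by the number of components of $M^{\mathbb{Z}_e}$ containing fixed points (three, two, two, one respectively) and, between (2) and (4), by the dimension of the two-fixed-point component, which Theorem~\ref{t26} restricts to $2$ or $6$; so exactly one of them holds.

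I expect the only real subtlety to be the bookkeeping already isolated above: ruling out a component with exactly one fixed point that is not a point, and confirming that a component with two or three fixed points cannot occur in a dimension other than the ones dictated by Theorem~\ref{t26} and the inductive hypothesis (for the three-fixed-point component this is exactly where the induction is used). Beyond that, each case is a direct application of the cited results, and the passage to $Z$ being a genuine lower-dimensional symplectic $S^1$-manifold is the same reduction used throughout Section~4.
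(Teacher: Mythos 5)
Your proposal is correct and follows essentially the same route as the paper: reduce to the partition of the three fixed points among components of $M^{\mathbb{Z}_e}$, note that a component with a single fixed point must be that point (the paper cites ABBV localization directly, you cite Corollary \ref{c27}, which is the same fact), apply Theorem \ref{t26} to a two-fixed-point component and the inductive hypothesis to a three-fixed-point component, and extract divisibility by $e$ from the fact that $\mathbb{Z}_e$ acts trivially on $T_vZ$. The only cosmetic difference is that you spell out the $\{a\},\{-a\}$ weights on the $2$-sphere via Lemma \ref{l24} and address mutual exclusivity explicitly, which the paper leaves implicit.
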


\begin{proof} Fix an integer $e \in \mathbb{Z}\setminus\{-1,0,1\}$. Since the action on $M$ is effective, for any component $Z$ of the isotropy submanifold $M^{\mathbb{Z}_e}$, we have that $\dim Z < \dim M$.

By ABBV Localization (Theorem \ref{t21}), if any component of the isotropy submanifold $M^{\mathbb{Z}_e}$ only contains one fixed point, then the fixed point itself is the component.

If every fixed point is itself a component of the isotropy submanifold $M^{\mathbb{Z}_e}$, this is the first case of the Lemma.

Suppose instead that there exists a component $Z$ of the isotropy submanifold $M^{\mathbb{Z}_e}$ that contains exactly two fixed points. Then by Theorem \ref{t26}, either
\begin{enumerate}[(a)]
\item The component is 2-sphere and the weights in the isotropy submanifold $M^{\mathbb{Z}_e}$ at these points are $\{a\}$ and $\{-a\}$ for some natural number $a$ that is a multiple of $e$. By the previous argument, the third fixed point is another component of $M^{\mathbb{Z}_e}$. This is the second case of the Lemma.
\item The component is 6-dimensional and the weights in the isotropy submanifold $M^{\mathbb{Z}_e}$ at these points are $\{a,b,-a-b\}$ and $\{a+b,-a,-b\}$ for some natural numbers $a$ and $b$ that are multiples of $e$. The third fixed point is another component of $M^{\mathbb{Z}_e}$. This is the fourth case of the Lemma. 
\end{enumerate}

Finally, suppose that a component of the isotropy submanifold $M^{\mathbb{Z}_e}$ contains the three fixed points. Then by the inductive hypothesis, the component is 4-dimensional and the weights in the isotropy submanifold are $\{a+b,a\}$, $\{-a,b\}$, and $\{-b,-a-b\}$ for some natural numbers $a$ and $b$ that are multiples of $e$. This is the third case of the Lemma.
\end{proof}

As particular cases of Lemma \ref{l45}, we need the following Lemma.

\begin{lem} \label{l46}
Fix a natural number $n$. Assume that Theorem \ref{t11} holds for all manifolds $M$ such that $\dim M < 2n$. Let the circle act symplectically on a $2n$-dimensional compact, connected symplectic manifold $M$ and suppose that there are exactly three fixed points. Assume that the action is effective. Fix an integer $e \in \mathbb{Z}\setminus\{-1,0,1\}$.
\begin{enumerate}
\item
Suppose that there exist distinct fixed points $\alpha$ and $\beta$ such that $N_{\alpha}(e)>0$ and $N_{\beta}(-e)>0$ such that $|e|>\frac{d}{2}$ where $d$ is the largest weight. Then $N_{\alpha}(e)=1$, $N_{\beta}(-e)=1$, $\Sigma_{\alpha} \equiv \Sigma_{\beta} \mod e$, and no additional multiples of $e$ appear as weights.
\item
If there exist two distinct fixed points $\alpha$ and $\beta$ such that $N_{\alpha}(e)>0$ and $N_{\beta}(e)>0$, then after possibly switching $\alpha$ and $\beta$,
\begin{center}
$\{2e,e\} \subset \Sigma_{\alpha}$, $\{-e,e\} \subset \Sigma_{\beta}$, and $\{-2e,-e\} \subset \Sigma_{\gamma}$
\end{center}
where $\gamma$ is the remaining fixed point. Moreover, no additional multiples of $e$ appear as weights. 
\item
If there exists a fixed point $\alpha$ such that $N_{\alpha}(e)>1$, $\{-2e,e,e\} \subset \Sigma_{\alpha}$ and $\{2e,-e,-e\} \subset \Sigma_{\beta}$ for some fixed point $\beta \neq \alpha$. Moreover, no additional multiples of $e$ appear as weights.
\item
Suppose that there exists a fixed point $\beta$ such that $N_{\beta}(e)>0$ and $N_{\beta}(-e)>0$. Then
\begin{center}
$\{2e,e\} \subset \Sigma_{\alpha}$, $\{-e,e\} \subset \Sigma_{\beta}$, and $\{-2e,-e\} \subset \Sigma_{\gamma}$
\end{center}
where $\alpha$ and $\gamma$ are the remaining two fixed points. Moreover, no additional multiples of $e$ appear as weights.
\end{enumerate}
\end{lem}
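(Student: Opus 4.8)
The plan is to handle all four parts by a single mechanism: fix $e$, apply Lemma~\ref{l45} to the isotropy submanifold $M^{\mathbb{Z}_e}$, use the hypothesis of the part in question to rule out all but one of the four listed cases, and then read off the asserted weights from the explicit weight lists of that surviving case. Two elementary observations drive the elimination. First, the component $Z$ of $M^{\mathbb{Z}_e}$ through a fixed point $p$ satisfies $\dim Z = 2\,\bigl|\{i : e \mid \xi_p^i\}\bigr|$; so a weight equal to $e$ at $p$ forces $\dim Z \ge 2$, while $e$ occurring at $p$ with multiplicity $\ge 2$, or both $e$ and $-e$ occurring at $p$, forces $\dim Z \ge 4$. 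Moreover, once $Z$ is known to be the $4$-dimensional component carrying all three fixed points (case (3) of Lemma~\ref{l45}) or a $2$- or $6$-dimensional component carrying two of them (cases (2),(4)), the number of weights divisible by $e$ at each of its fixed points is exactly $1$, $2$, or $3$, and these are precisely the weights listed in Lemma~\ref{l45}, while any fixed point forming its own component has no weight divisible by $e$. This yields the ``no additional multiples of $e$'' clauses automatically. Second, since $d$ is the largest weight, Lemma~\ref{l24} gives $|l| \le d$ for every weight $l$, so a positive weight that is a multiple of $|e|$ lies in $\{|e|, 2|e|, \dots\} \cap (0, d]$.

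In part (1): $N_\alpha(e)>0$ rules out case (1), and in cases (3) and (4) the weight $a+b$ appears with $a,b \ge |e|$, giving $a+b \ge 2|e| > d$, a contradiction; hence case (2) holds. There the two sphere fixed points have weights $\{a\}$ and $\{-a\}$ with $a$ a positive multiple of $|e|$, and $a \le d < 2|e|$ forces $a = |e|$. Since the third fixed point carries no multiple of $e$, the points $\alpha$ and $\beta$ are exactly the two points on the sphere, their unique weights divisible by $e$ are $e$ and $-e$, whence $N_\alpha(e)=N_\beta(-e)=1$, and $\Sigma_\alpha \equiv \Sigma_\beta \mod e$ by Lemma~\ref{l25}.

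In parts (2)--(4): in each, some fixed point carries a weight divisible by $e$, so case (1) is out; and case (2) is out because on a $2$-sphere component each fixed point has exactly one weight divisible by $e$, too little to realize a multiplicity (part (3)), a pair $e,-e$ at one point (part (4)), or two \emph{distinct} points both carrying $e$ given that the two sphere weights are negatives of one another (part (2)). That leaves case (3), with weight lists $\{a+b,a\},\{-a,b\},\{-b,-a-b\}$, and case (4), with lists $\{a,b,-a-b\}$ and $\{a+b,-a,-b\}$, in both of which $a,b$ are positive multiples of $|e|$. Inspecting these lists: no list in case (3) repeats a weight, so the multiplicity hypothesis of part (3) excludes case (3); a list in case (3) contains both signs of one value only if $a=b=|e|$ (relevant to part (4)), and two points of case (3) can both carry $e$ only if $a=b=|e|$ (part (2)); in case (4) a repeated weight forces $a=b=|e|$ (part (3)), while $a+b \ge 2|e|$ prevents $\pm e$ from matching the single opposite-sign entry, so for parts (2) and (4) case (4) is impossible. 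In every surviving configuration the lists become, up to replacing $e$ by $-e$, $\{2|e|,|e|\},\{-|e|,|e|\},\{-|e|,-2|e|\}$ (case (3)) or $\{|e|,|e|,-2|e|\}$ and $\{2|e|,-|e|,-|e|\}$ (case (4)); matching these against the claimed sets $\{2e,e\},\{-e,e\},\{-2e,-e\}$, which are stable under $e \mapsto -e$, fixes the labelling of $\alpha,\beta,\gamma$.

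The one step needing real care is this last piece of sign bookkeeping: because ``$a$ is a natural number that is a multiple of $e$'' constrains only $|e| \mid a$, the fixed point that ends up in the role of $\alpha$ (resp. $\gamma$) depends on the sign of $e$, and I would need to verify that the unordered data $\{2|e|,|e|\}$ and $\{-2|e|,-|e|\}$ attach to the correct points so that the stated inclusions hold verbatim. Everything else is routine once Lemma~\ref{l45} is in hand, which is where the substantive work lies.
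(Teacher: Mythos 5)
Your proposal is correct and follows the same route as the paper: pass to $M^{\mathbb{Z}_e}$, invoke Lemma \ref{l45}, eliminate all but one of its four cases using the hypothesis of each part (with $a+b\geq 2|e|$ and the dimension count $\dim Z=2|\{i: e\mid \xi_p^i\}|$ doing the work), and read off the weights with $a=b=|e|$ (or $a=|e|$ in part 1). The paper compresses the case elimination into ``by looking at the weights in the isotropy submanifold,'' so your write-up simply supplies the details, including the sign bookkeeping for $e<0$, which indeed closes correctly since the target multisets are permuted among themselves under $e\mapsto -e$ and the lemma permits relabelling the fixed points.
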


\begin{proof}
Fix an integer $e \in \mathbb{Z}\setminus\{-1,0,1\}$ and consider the isotropy submanifold $M^{\mathbb{Z}_e}$.
\begin{enumerate}
\item By looking at the weights in the isotropy submanifold $M^{\mathbb{Z}_e}$, the second, the third, and the fourth cases are possible. In the third case or the fourth case, $a \geq |e|$ and $b \geq |e|$ hence $a+b \geq 2|e| > d$, which is a contradiction. Hence this must be the second case of Lemma \ref{l45} with $a=|e|$. Moreover, $\alpha$ and $\beta$ lie in the same 2-sphere of $M^{\mathbb{Z}_e}$. Hence by Lemma \ref{l25}, $\Sigma_{\alpha} \equiv \Sigma_{\beta} \mod e$.
\item By looking at the weights in the isotropy submanifold $M^{\mathbb{Z}_e}$, this must be the third case of Lemma \ref{l45} with $a=b=|e|$.
\item By looking at the weights in the isotropy submanifold $M^{\mathbb{Z}_e}$, this must be the fourth case of Lemma \ref{l45} with $a=b=|e|$.
\item By looking at the weights in the isotropy submanifold $M^{\mathbb{Z}_e}$, this must be the third case of Lemma \ref{l45} with $a=b=|e|$.
\end{enumerate}
\end{proof}

\section{The largest weight odd case}

Let the circle act symplectically on a compact, connected symplectic manifold $M$ with exactly three fixed points. In this section, we show that if $\dim M \geq 8$, the largest weight cannot be odd.

\begin{pro} \label{p51}
Fix a natural number $n$ such that $n \geq 4$. Assume that Theorem \ref{t11} holds for all manifolds $M$ such that $\dim M < 2n$. Let the circle act symplectically on a $2n$-dimensional compact, connected symplectic manifold $M$ and suppose that there are exactly three fixed points. Assume that the action is effective. Then the largest weight is even.
\end{pro}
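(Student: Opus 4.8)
The plan is to argue by contradiction: suppose the largest weight $d$ is odd, and force an impossible weight configuration. First I would assemble the structure supplied by the earlier results. By Lemmas~\ref{l32} and~\ref{l33} we may label the three fixed points $p,q,r$ so that $\lambda_p=n-2$, $\lambda_q=n$, $\lambda_r=n+2$, so that $p$ and $q$ lie on a $2$-sphere $Z$ which is a connected component of $M^{\mathbb{Z}_d}$ with $-d\in\Sigma_p$ and $d\in\Sigma_q$, and so that $r$ is an isolated component of $M^{\mathbb{Z}_d}$. In particular $\pm d\notin\Sigma_r$, and since the component of $M^{\mathbb{Z}_d}$ through $q$ (resp.\ through $p$) is the $2$-sphere $Z$, the unique weight of $q$ (resp.\ of $p$) divisible by $d$ is $d$ (resp.\ $-d$). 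Because $\dim M\ge 8$, Corollary~\ref{c27} makes the first Chern class map vanish, so $\sum_i\xi^i_u=0$ at every fixed point $u$; and by Lemma~\ref{l24} together with the maximality of $d$, every weight $w$ satisfies $|w|\le d$. Finally, by Lemma~\ref{l44} the component $N$ of $M^{\mathbb{Z}_2}$ containing the three fixed points is $4$-dimensional with weights $\{a,c\},\{-a,b\},\{-b,-c\}$ for some even natural numbers $a,b$ with $c=a+b$; hence each fixed point has exactly two even weights, the unordered collection of even-weight pairs of $p,q,r$ is $\{\{a,c\},\{-a,b\},\{-b,-c\}\}$, and $c$ is the largest even weight, so $c<d$. (By Proposition~\ref{p43} one also knows $N$ is equivariantly symplectomorphic to $\mathbb{CP}^2$, hence knows its isotropy edges; equivalently, Lemma~\ref{l45} applied to $e=c$ shows $M^{\mathbb{Z}_c}$ is a $2$-sphere joining the two fixed points with even-weight pairs $\{a,c\}$ and $\{-b,-c\}$, together with the remaining fixed point isolated.)

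Next I would apply Lemma~\ref{l36} to $v=p$, $w=q$; its hypotheses hold because $p,q$ lie on $Z$, $-d\in\Sigma_p$, $d\in\Sigma_q$, and $c_1(M)|_p=c_1(M)|_q=0$. Writing $E_u^{\pm}$ for the number of positive, resp.\ negative, even weights at $u$, this gives $E_p^+-E_p^--E_q^++E_q^-=2$. As $u$ runs over the fixed points $(E_u^+,E_u^-)$ runs over $(2,0),(1,1),(0,2)$, so $E_u^+-E_u^-\in\{2,0,-2\}$, and the relation leaves exactly two possibilities: \emph{Case A}, where the even-weight pairs of $p,q,r$ are $\{a,c\},\{-a,b\},\{-b,-c\}$; and \emph{Case B}, where they are $\{-a,b\},\{-b,-c\},\{a,c\}$. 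In Case B both even weights of $r$ are positive, so all $(n+2)/2$ negative weights of $r$ are odd; since $r$ has only $n-2$ odd weights this forces $n\ge 6$, so Case B is vacuous when $\dim M=8$.

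It remains to contradict each surviving case. The engine is the congruence $\Sigma_p\equiv\Sigma_q\pmod d$ of Lemma~\ref{l25} (and, along the $2$-sphere in $M^{\mathbb{Z}_c}$, the congruence $\Sigma_p\equiv\Sigma_r\pmod c$): combined with $|w|\le d$, each even weight of one of $p,q$ that is not also an even weight of the other must be congruent modulo $d$ to a specific odd weight of the other, which pins prescribed extra odd weights into $\Sigma_p,\Sigma_q,\Sigma_r$. Feeding this, the relations $\sum_i\xi^i_p=\sum_i\xi^i_q=\sum_i\xi^i_r=0$, the negation symmetry $\sum_uN_u(l)=\sum_uN_u(-l)$ of Lemma~\ref{l24}, and the localization identity $\tfrac1{\prod_i\xi^i_p}+\tfrac1{\prod_i\xi^i_q}+\tfrac1{\prod_i\xi^i_r}=0$ (Theorem~\ref{t21} applied to the class $1$ on $M$) into a short computation produces a numerical contradiction: for $\dim M=8$ in Case A it is explicit (one is led to $(a+b)(2a+b)=b(a-b)$, impossible for $a,b>0$), and for larger $n$ the same inputs propagate through the congruences, branching into finitely many subcases each closed the same way. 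The degenerate possibility $a=b$ is dealt with separately, since it would force a second weight divisible by $d$ into one of $\Sigma_p,\Sigma_q$, contradicting the structure of $M^{\mathbb{Z}_d}$ above. The principal obstacle is exactly this bookkeeping — tracking all odd weights of $p,q,r$ through the congruences modulo $d$ (and $c$) subject to $|w|\le d$, and organizing the resulting subcases, above all Case B with $n\ge 6$, so that every one collapses; here Lemma~\ref{l46} is useful for controlling where the large weights $\pm d,\pm c$ and their near-misses $d-c,d-a,\dots$ can sit.
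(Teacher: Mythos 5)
Your setup reproduces the paper's Lemma \ref{l52} faithfully: the labelling $\lambda_p=n-2$, $\lambda_q=n$, $\lambda_r=n+2$ with $-d\in\Sigma_p$, $d\in\Sigma_q$, the even-weight structure from Lemma \ref{l44}, and the use of Lemma \ref{l36} to cut the six assignments of $\{a,c\},\{-a,b\},\{-b,-c\}$ down to exactly the two cases the paper calls case 1 and case 2. Up to that point you are on the paper's track.

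The gap is that the elimination of these two cases --- which is the bulk of the paper's argument (Lemmas \ref{l53}--\ref{l57}) --- is asserted rather than carried out. Three specific problems. First, your reason for $a\neq b$ is wrong: $a=b$ does not force a second weight divisible by the odd number $d$ (all of $a,b,c$ are even and less than $d$); the paper's Lemma \ref{l53} instead shows that $a=b$ lets the mod-$d$ bijection pair $a$ with $b$ directly, pins down all remaining odd weights to be $\pm1$ via Lemma \ref{l46}, and then gets $c_1(M)|_p=c>0$, contradicting Corollary \ref{c27}. Second, for general $n$ you claim the congruences ``branch into finitely many subcases each closed the same way,'' but a priori the number of undetermined odd weights grows with $n$; the paper must first prove (Lemmas \ref{l54}, \ref{l55}, again via Lemma \ref{l46}) that \emph{all} of them equal $\pm1$, and only then can it close both cases with a uniform sign analysis of $\frac1A+\frac1B+\frac1C=0$ --- showing e.g.\ $(-1)^{t+1}(B+C)>0$ and $(-1)^{t+1}A^{-1}<0$ --- rather than a subcase enumeration. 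Third, your explicit dimension-$8$ computation in Case A cannot be right as stated: in that case the forced weights $\{-d,a,c,d-a,b-d\}\subset\Sigma_p$ already give $n\ge 5$, so $\dim M\ge 12$ there (the paper's $\dim M=12+4t$), and the dimension-$8$ contradiction is a slot count, not the equation $(a+b)(2a+b)=b(a-b)$. So the skeleton is correct and matches the paper, but the proof of the proposition is not yet there.
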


\begin{proof}
Assume on the contrary that the largest weight is odd. Then this is an immediate consequence of Lemma \ref{l52}, Lemma \ref{l56}, and Lemma \ref{l57} below.
\end{proof}

\begin{lem} \label{l52}
Fix a natural number $n$ such that $n \geq 4$. Assume that Theorem \ref{t11} holds for all manifolds $M$ such that $\dim M < 2n$. Let the circle act symplectically on a $2n$-dimensional compact, connected symplectic manifold $M$ and suppose that there are exactly three fixed points $p,q$, and $r$, with $\lambda_p \leq \lambda_q \leq \lambda_r$. Assume that the action is effective and the largest weight $d$ is odd. Then after possibly reversing the circle action we may assume that $-d \in \Sigma_p$ and $d \in \Sigma_q$, and there exist even natural numbers $a$ and $b$ such that either
\begin{enumerate}
\item $\{a,c\} \subset \Sigma_{p}, \{-a,b\} \subset \Sigma_{q},$ and $ \{-b,-c\} \subset \Sigma_{r}$; or
\item $\{-a,b\} \subset \Sigma_{p}, \{-b,-c\} \subset \Sigma_{q}$, and $ \{a,c\} \subset \Sigma_{r}$,
\end{enumerate}
where $c=a+b$. Moreover, these are the only even weights.
\end{lem}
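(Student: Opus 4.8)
\emph{Proof proposal.} The plan is to read off the rough configuration of the three fixed points from the structure of the isotropy submanifolds $M^{\mathbb{Z}_d}$ and $M^{\mathbb{Z}_2}$, and then to determine which fixed point carries which pair of even weights using the parity identity of Lemma~\ref{l36}. First I would apply Lemma~\ref{l33}: since $n\ge 4$ we have $\dim M = 2n\neq 4$, so after relabeling $p,q,r$ compatibly with $\lambda_p\le\lambda_q\le\lambda_r$ and possibly reversing the circle action we may assume $\lambda_p=n-2$, $\lambda_q=n$, $\lambda_r=n+2$, $-d\in\Sigma_p$, and $d\in\Sigma_q$. Reversing the action leaves the value of the largest weight unchanged (by Lemma~\ref{l24} the smallest weight is $-d$), so the standing hypothesis that $d$ is odd is preserved. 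By Lemma~\ref{l32}, $M^{\mathbb{Z}_d}$ has exactly two components meeting the fixed set: one isolated fixed point and one $2$-sphere containing the other two fixed points. Since $-d\in\Sigma_p$ and $d\in\Sigma_q$ are weights divisible by $d$, each of $p$ and $q$ lies on a component of $M^{\mathbb{Z}_d}$ of positive dimension; hence $p$ and $q$ are the two fixed points on the $2$-sphere $Z$, and $r$ is the isolated fixed point.

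Next I would apply Lemma~\ref{l36} to the pair $v=p$, $w=q$. Its hypotheses are the conditions of Lemma~\ref{l34} together with $d$ odd, and all of these hold: $p$ and $q$ lie in the same $2$-dimensional component $Z$ of $M^{\mathbb{Z}_d}$, $-d\in\Sigma_p$, $d\in\Sigma_q$, and $c_1(M)|_p = c_1(M)|_q = 0$ because the first Chern class map vanishes identically once $\dim M\ge 8$ (Corollary~\ref{c27}). Writing $E_v^{+}$ and $E_v^{-}$ for the numbers of positive and negative even weights at a fixed point $v$, Lemma~\ref{l36} yields
\begin{center}
$(E_p^{+}-E_p^{-})-(E_q^{+}-E_q^{-}) = 2.$
\end{center}

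Independently, Lemma~\ref{l44} applies (we have $n\ge 4$, the inductive hypothesis, exactly three fixed points, and an effective action), and it produces even natural numbers $a,b$ with $c=a+b$ such that the multisets of weights of $p$, $q$, $r$ in $M^{\mathbb{Z}_2}$ — that is, the multisets of \emph{even} weights at $p$, $q$, $r$ — are $\{a,c\}$, $\{-a,b\}$, $\{-b,-c\}$ in some order. In particular each fixed point has exactly two even weights, so $E_v^{+}+E_v^{-}=2$ and $E_v^{+}-E_v^{-}$ equals $2$, $0$, or $-2$ according as the even-weight multiset of $v$ is $\{a,c\}$, $\{-a,b\}$, or $\{-b,-c\}$. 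Checking the six ways of distributing these three multisets among $p,q,r$ against the displayed identity, the only solutions are $p\mapsto\{a,c\}$, $q\mapsto\{-a,b\}$, $r\mapsto\{-b,-c\}$ (difference $2-0$) and $p\mapsto\{-a,b\}$, $q\mapsto\{-b,-c\}$, $r\mapsto\{a,c\}$ (difference $0-(-2)$), the remaining four distributions giving differences $4$, $-2$, $-4$, $-2$. These are exactly alternatives (1) and (2) in the statement, and the assertion that $a,c,-a,b,-b,-c$ are the only even weights is immediate from Lemma~\ref{l44}. The only genuine computation here is the final finite case check; the place to be careful is verifying at the outset that $p$ and $q$ really do lie on a common $2$-dimensional component of $M^{\mathbb{Z}_d}$ and that both Chern class restrictions vanish, so that Lemmas~\ref{l34} and~\ref{l36} genuinely apply.
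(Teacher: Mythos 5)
Your proposal is correct and follows essentially the same route as the paper: Lemma~\ref{l33} to fix $\lambda_p,\lambda_q,\lambda_r$ and place $-d\in\Sigma_p$, $d\in\Sigma_q$; Lemma~\ref{l44} to produce the three even-weight pairs; and Lemma~\ref{l36} to rule out four of the six assignments. You simply make explicit what the paper leaves implicit, namely the verification that $p$ and $q$ lie on the $2$-sphere in $M^{\mathbb{Z}_d}$ so that Lemma~\ref{l36} applies, and the finite check of the six distributions.
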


\begin{proof}
By Lemma \ref{l32}, $N_{p}(d)+N_{q}(d)+N_{r}(d)=1$ and $N_{p}(-d)+N_{q}(-d)+N_{r}(-d)=1$. By Lemma \ref{l33}, $\lambda_{p}=n-2, \lambda_{q}=n$, and $\lambda_{r}=n+2$. Moreover, after possibly reversing the circle action, we may assume that $-d \in \Sigma_p$ and $d \in \Sigma_q$.

By Lemma \ref{l44}, there exist even natural numbers $a$ and $b$ such that the weights at the three fixed points in the isotropy submanifold $M^{\mathbb{Z}_{2}}$ are $\{a, c\}, \{-a, b\}$, and $\{-b,-c\}$, where $c=a+b$. In the Lemma, the order is not specified. We have six possible cases. Other four cases are:
\begin{enumerate}[a.]
\item $\{a,c\} \subset \Sigma_{p}, \{-b,-c\} \subset \Sigma_{q}$, and $ \{-a,b\} \subset \Sigma_{r}$.
\item $\{-a,b\} \subset \Sigma_{p}, \{a,c\} \subset \Sigma_{q},$ and $ \{-b,-c\} \subset \Sigma_{r}$.
\item $\{-b,-c\} \subset \Sigma_{p}, \{-a,b\} \subset \Sigma_{q},$ and $ \{a,c\} \subset \Sigma_{r}$.
\item $\{-b,-c\} \subset \Sigma_{p}, \{a,c\} \subset \Sigma_{q},$ and $ \{-a,b\} \subset \Sigma_{r}$.
\end{enumerate}
The fixed points $p$ and $q$ satisfy the conditions in Lemma \ref{l36}. Therefore, $E_{v}^{+}-E_{v}^{-}-E_{w}^{+}+E_{w}^{-}=2$ and the other cases are ruled out.
\end{proof}

\begin{lem} \label{l53} Under the assumption of Lemma \ref{l52}, $a \neq b$.
\end{lem}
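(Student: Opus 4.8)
The plan is to suppose $a=b$ and derive a contradiction. Then $c=a+b=2a$, and both alternatives in Lemma~\ref{l52} describe the same pattern of even weights up to relabelling: one fixed point carries the even weights $\{a,2a\}$, one carries $\{-a,a\}$, and one carries $\{-a,-2a\}$, and, by the last sentence of Lemma~\ref{l52}, these six weights (with the indicated multiplicities) are all the even weights of $M$. Recall that $-d\in\Sigma_p$, $d\in\Sigma_q$, that $\lambda_p=n-2$, $\lambda_q=n$, $\lambda_r=n+2$, and that by Corollary~\ref{c27} the sum of the weights at each fixed point is $0$; in alternative (1) the points carrying $\{a,2a\},\{-a,a\},\{-a,-2a\}$ are $p,q,r$, and in alternative (2) they are $r,p,q$ respectively.

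First I would examine $M^{\mathbb{Z}_{2a}}$ (legitimate since $2a\ge 4$). As $a$ is even, no odd weight is divisible by $a$, so by Lemma~\ref{l52} the weights of $M$ divisible by $a$ are exactly the six even weights above; among these, the ones divisible by $2a$ are just $2a$ (at the $\{a,2a\}$-point) and $-2a$ (at the $\{-a,-2a\}$-point). Hence the $\{a,2a\}$-point and the $\{-a,-2a\}$-point have exactly one weight divisible by $2a$, and the $\{-a,a\}$-point has none. The components of $M^{\mathbb{Z}_{2a}}$ containing fixed points are of the four types listed in Lemma~\ref{l45}, so the numbers of weights divisible by $2a$ at $p,q,r$ form one of the multisets $\{0,0,0\}$, $\{0,1,1\}$, $\{2,2,2\}$, $\{0,3,3\}$; therefore we are in the $2$-sphere case, with the $\{a,2a\}$-point and the $\{-a,-2a\}$-point in a common $2$-sphere component $S'$ of $M^{\mathbb{Z}_{2a}}$ and the $\{-a,a\}$-point isolated. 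By Lemma~\ref{l25} there is a bijection between the weight multisets of the two points of $S'$ that is the identity modulo $2a$; since each of them has a single weight divisible by $2a$, the bijection carries $\{a,2a\}$ onto $\{-a,-2a\}$ (so $a\leftrightarrow-a$ and $2a\leftrightarrow-2a$) and restricts to a bijection of the remaining, odd weights that is still the identity modulo $2a$. (In alternative (1) the sphere $S'$ joins $p$ and $r$, and in alternative (2) it joins $r$ and $q$.)

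Finally I would bring in the $2$-sphere component of $M^{\mathbb{Z}_d}$ from Lemma~\ref{l32}: by Lemma~\ref{l33} it contains $p$ and $q$, with $-d\in\Sigma_p$, $d\in\Sigma_q$, and $r$ isolated, so $\Sigma_p\equiv\Sigma_q\pmod d$ by Lemma~\ref{l25}, while $\pm d$ each occur exactly once among all weights and every other weight has absolute value less than $d$. Using Corollary~\ref{c27} (the odd weights at each fixed point sum to minus the sum of its two even weights), Lemma~\ref{l24}, and the values $\lambda_p=n-2,\lambda_q=n,\lambda_r=n+2$, one computes the numbers of positive and of negative odd weights at each of $p,q,r$, noting in particular that the $\{a,2a\}$-point has no negative even weight and the $\{-a,-2a\}$-point no positive even weight. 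Chasing the unique weight $-d$ through the mod-$d$ bijection, and the sign-changing pairs through the mod-$2a$ bijection, then forces an odd weight of absolute value larger than $d$, or a second occurrence of $\pm d$; this is the desired contradiction. The two alternatives are handled by the same argument, with the fixed points relabelled accordingly.

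I expect this last step to be the main obstacle. Since $2a$ need not exceed the absolute value of every other weight, Lemma~\ref{l34} does not apply to $S'$ and one cannot simply read off a relation among the $\lambda$'s; instead one must use the largest-weight property of $d$, in the spirit of Lemmas~\ref{l32} and~\ref{l34}, to control how $\pm d$ and the even weights move under the two bijections, and to eliminate every distribution of the odd weights consistent with the congruence, sign, and size constraints.
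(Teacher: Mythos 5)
Your setup is fine as far as it goes: with $a=b$ the even weights are $\{a,2a\},\{\pm a\},\{-a,-2a\}$ at the three fixed points, the count of weights divisible by $2a$ does force a $2$-sphere component of $M^{\mathbb{Z}_{2a}}$ joining the $\{a,2a\}$-point to the $\{-a,-2a\}$-point, and Lemma \ref{l25} then pairs $a\leftrightarrow -a$, $2a\leftrightarrow -2a$ there. But the proof stops exactly where it has to start. The entire contradiction is carried by the sentence ``chasing the unique weight $-d$ through the mod-$d$ bijection, and the sign-changing pairs through the mod-$2a$ bijection, then forces an odd weight of absolute value larger than $d$, or a second occurrence of $\pm d$,'' and no argument is given for this forcing; you yourself flag it as the main obstacle. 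Since $2a=c<d$, the mod-$2a$ bijection on the odd weights allows shifts by $\pm 2a$ that stay below $d$ in absolute value, so nothing in what you have written rules out consistent distributions of odd weights; you would have to do genuine case analysis to eliminate them, and that analysis is missing. As it stands this is a plan, not a proof, and the claimed contradiction is not established.

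For comparison, the paper never needs $M^{\mathbb{Z}_{2a}}$ at all. It uses only the mod-$d$ bijection $\Sigma_p\equiv\Sigma_q \bmod d$ (Lemma \ref{l46} part 1, from $-d\in\Sigma_p$, $d\in\Sigma_q$): since $a=b$, the even weights can be matched as $a\mapsto b$, $c\mapsto c-d$, $-a\mapsto$ (image of) $d-a$, and every remaining odd weight at $p$ must map to itself at $q$ because $d$ is odd. Combined with $\lambda_p=n-2$, $\lambda_q=n$ this pins the weights down to $\Sigma_p=\{-d,a,c,d-a\}\cup\{x_i\}\cup\{-y_i\}$ and $\Sigma_q=\{d,b,c-d,-a\}\cup\{x_i\}\cup\{-y_i\}$; then Lemma \ref{l46} part 2 applied to any $x_i>1$ (and similarly $y_i>1$) leads to $d=3x_i$ or a forbidden extra even weight, so all $x_i=y_i=1$, and finally $c_1(M)|_p=c>0$ contradicts Corollary \ref{c27}. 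The step your proposal lacks is precisely this mechanism for controlling the leftover odd weights (the isotropy analysis of Lemma \ref{l46} part 2) together with the closing $c_1=0$ computation; without an analogue of it, the two congruence bijections alone do not yield the contradiction you assert.
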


\begin{proof} Assume on the contrary that $a = b$. Since $-d \in \Sigma_p$ and $d \in \Sigma_q$ where $d$ is the largest weight, by Lemma \ref{l46} part 1 for $d$, $\Sigma_{p} \equiv \Sigma_{q} \mod d$. As a result, we can find a bijection between the weights at $p$ and the weights at $q$ that takes each weight $\alpha$ at $p$ to a weight $\beta$ at $q$ such that $\alpha \equiv \beta \mod d$. Moreover, since $a=b$, we can take this bijection to take $a$ at $p$ to $b$ at $q$ in the first case, and we can take this bijection to take $-a$ at $p$ to $-b$ at $q$ in the second case.

Assume that the first case in Lemma \ref{l52} holds, i.e., $\{a,c\} \subset \Sigma_{p}, \{-a,b\} \subset \Sigma_{q},$ and $ \{-b,-c\} \subset \Sigma_{r}$. Moreover, these are the only even weights. First, $-d$ at $p$ has to go to $d$ at $q$ since all the other weights are non-zero and have absolute values less than $d$. Next, $c$ at $p$ must go to $c-d$ at $q$ and $-a$ at $q$ must go to $d-a$ at $p$. If $l$ is any remaining positive odd weight at $p$, then it has to go to $l$ at $q$ since the largest weight $d$ is odd. Similarly, any negative odd weight $-k$ at $p$ must go to $-k$ at $q$.

By Corollary \ref{c23}, $\frac{1}{2}\dim M$ is even. Since $\lambda_p=\frac{1}{2}\dim M-2$ and $\lambda_q=\frac{1}{2}\dim M$ by Lemma \ref{l33}, this implies that the weights at $p$ and $q$ are
\begin{center}
$\Sigma_{p}=\{-d, a, c, d-a\} \cup \{x_{i}\}_{i=1}^{t} \cup \{-y_{i}\}_{i=1}^{t}$
$\Sigma_{q}=\{d, b, c-d, -a\} \cup \{x_{i}\}_{i=1}^{t} \cup \{-y_{i}\}_{i=1}^{t}$
\end{center}
for some odd natural numbers $x_{i}$'s and $y_{i}$'s where $\dim M=8+4t$, for some $t \geq 0$.

Suppose that $x_{i}>1$ for some $i$. Then by Lemma \ref{l46} part 2, $\{2x_i,x_i\}\subset\Sigma_p$, $\{-x_i,x_i\}\subset\Sigma_q$, and $\{-2x_i,-x_i\}\subset\Sigma_r$, or $\{-x_i,x_i\}\subset\Sigma_p$, $\{2x_i,x_i\}\subset\Sigma_q$, and $\{-2x_i,-x_i\}\subset\Sigma_r$. Moreover, no more multiples of $x_i$ should sppear as weights. This implies that $-x_i \neq -y_j$ for all $j$. Since $-y_j$'s are the only negative odd weights at $p$, this implies that the second case is impossible. Assume that the first case holds. Then we must have $c=2x_i$. Also, since $\{-x_i,x_i\}\subset\Sigma_q$ but $-x_i \neq -y_j$ for all $j$, $-x_{i}=c-d$. However, this means that $2x_{i}-d=c-d=-x_{i}$ hence $d=3x_{i}$, which is a contradiction since no more multiples of $x_{i}$ should appear.

Hence $x_{i}=1$ for all $i$. Similarly, one can show that $y_{i}=1$ for all $i$. Then $c_{1}(M)|_{p}=-d+a+c+d-a=c>0$, which is a contradiction by Corollary \ref{c27} that the first Chern class map is identically zero.

Similarly, we get a contradiction of the second case of Lemma \ref{l52} with $a=b$ by a slight variation of this argument.
\end{proof}

\begin{lem} \label{l54} Assume that the first case in Lemma \ref{l52} holds. Then the weights are
$$\Sigma_{p}=\{-d, a, c, d-a, b-d, 1\} \cup \{-1, 1\}_{i=1}^{t}$$
$$\Sigma_{q}=\{d, a-d, c-d, -a, b, 1\} \cup \{-1, 1\}_{i=1}^{t}$$
$$\Sigma_{r}=\{-b, -c, \cdots \}$$
where the largest weight $d$ is odd, $a,b$, and $c$ are even natural numbers such that $c=a+b$, and $\dim M=12+4t$ for some $t\geq 0$. Moreover, $a \neq b$ and the remaining weights at $r$ are odd.
\end{lem}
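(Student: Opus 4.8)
\emph{Proof proposal.} The plan is to first pin down $\Sigma_p$ and $\Sigma_q$ up to a common ``tail'' of small odd weights, using the $\bmod\, d$ rigidity at the largest weight, and then to force that tail to consist only of $\pm 1$'s.

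I will begin by applying Lemma~\ref{l46}(1) with $e=d$: since $N_p(-d)>0$, $N_q(d)>0$ and $d>\tfrac d2$, this gives $N_p(-d)=N_q(d)=1$, the fact that $\pm d$ are the only multiples of $d$ occurring among the weights of $p,q,r$, and a bijection $\phi\colon\Sigma_p\to\Sigma_q$ with $\alpha\equiv\phi(\alpha)\bmod d$ for every weight $\alpha$ of $p$. By Lemma~\ref{l24} all weights lie in $\{-d,\dots,d\}$, and by Lemma~\ref{l52} the only even weights anywhere are $a,c$ at $p$, $-a,b$ at $q$, $-b,-c$ at $r$; in particular $a,b,c<d$ since each is even while $d$ is odd.

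Next I will analyse $\phi$. The weight $-d$ at $p$ must go to $d$ at $q$. For a positive weight $\alpha<d$ at $p$ one has $\phi(\alpha)\in\{\alpha,\alpha-d\}$ and for a negative one $\phi(\alpha)\in\{\alpha,\alpha+d\}$; the ``shifted'' option flips parity, so it can only land on an even weight of $q$, i.e.\ on $-a$ or on $b$. Because $a\ne b$ by Lemma~\ref{l53}, neither $a$ nor $c$ (at $p$) is an even weight of $q$, which forces $\phi(a)=a-d$ and $\phi(c)=c-d$; reading $\phi^{-1}$ on the even weights $-a,b$ of $q$ and using that $p$ has no negative even weight forces $\phi^{-1}(-a)=d-a$ and $\phi^{-1}(b)=b-d$. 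Every remaining weight of $p$ is odd, is not $d-a$ or $b-d$, and hence is fixed by $\phi$. Therefore
\[
\Sigma_p=\{-d,\,a,\,c,\,d-a,\,b-d\}\sqcup R,\qquad \Sigma_q=\{d,\,a-d,\,c-d,\,-a,\,b\}\sqcup R
\]
for one and the same multiset $R$ of odd integers in $(-d,d)$. (The even weights occur with multiplicity one: a repeated even weight would, by Lemma~\ref{l46}(3), force a positive even weight at $r$, of which there is none.) Comparing the count of negative weights of $p$ with $\lambda_p=n-2$ from Lemma~\ref{l33} shows $R$ has exactly one more positive than negative element; writing $t$ for the number of negative elements gives $|R|=2t+1$ and $\dim M=2n=12+4t$.

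The main step is then to show every element of $R$ is $\pm1$. Suppose $l\in R$ with $|l|\ge 3$; then $l$ is a weight of both $p$ and $q$, so Lemma~\ref{l46}(2) applies with $e=l$, and its third fixed point is $r$. If $l<0$, then $r$ must carry the positive even weight $2|l|$, which is impossible since every even weight of $r$ is negative. If $l>0$, then $r$ carries $-2l$, so $2l\in\{b,c\}$; I then split on whether $p$ or $q$ carries $\{2l,l\}$ and locate the weight $-l$ demanded of the other one. In each alternative one reaches either a second copy of some weight of $R$ (in particular, in the sub-case $-l=a-d$ one has $l=d-a$, which already occurs in the listed part of $\Sigma_p$, so $N_p(l)\ge2$) or the relation $d=3l$ — both contradicting the ``no additional multiples of $l$'' clause of Lemma~\ref{l46}(2). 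Hence $R=\{1\}_{i=1}^{t+1}\cup\{-1\}_{i=1}^{t}$, which is precisely the asserted form of $\Sigma_p$ and $\Sigma_q$. Finally $\Sigma_r$ contains $-b$ and $-c$, and by Lemma~\ref{l52} these are its only even weights, so the remaining weights of $r$ are odd; $a\ne b$ is Lemma~\ref{l53}. The delicate part of the argument is this last case analysis, especially the sub-case $-l=a-d$ where the contradiction comes from the hidden second copy of $l$ in $\Sigma_p$.
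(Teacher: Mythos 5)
Your proposal is correct and follows essentially the same route as the paper: Lemma \ref{l46}(1) for the largest weight $d$ to get the mod-$d$ bijection, identification of the images of the even weights using $a\neq b$ from Lemma \ref{l53}, the count via $\lambda_p=n-2$, $\lambda_q=n$ to obtain the $t$-parametrization, and then Lemma \ref{l46}(2) to rule out any tail weight of absolute value greater than $1$ (via the $d=3l$ contradiction and the hidden second copy of $l$ in the sub-case $-l=a-d$). The only cosmetic difference is that you merge the paper's separate $x_i>1$ and $y_i>1$ arguments into a single analysis of $l\in R$.
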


\begin{proof}
Assume that the first case in Lemma \ref{l52} holds, i.e., there exist even natural numbers $a$, $b$, and $c$ such that $\{a,c\} \subset \Sigma_{p}, \{-a,b\} \subset \Sigma_{q},$ and $ \{-b,-c\} \subset \Sigma_{r}$ where $c=a+b$. Moreover, these are the only even weights.

Since $-d \in \Sigma_p$ and $d \in \Sigma_q$ where $d$ is the largest weight, by Lemma \ref{l46} part 1 for $d$, $\Sigma_{p} \equiv \Sigma_{q} \mod d$. First $-d$ at $p$ has to go to $d$ at $q$ since all the other weights are non-zero and have absolute values less than $d$. Second, by Lemma \ref{l53}, $a \neq b$. Hence $a$ at $p$ must go to $a-d$ at $q$ and $b$ at $q$ must go to $b-d$ at $p$. Next, $c$ at $p$ must go to $c-d$ at $q$ and $-a$ at $q$ must go to $d-a$ at $p$. If $l$ is any remaining positive odd weight at $p$, then it has to go to $l$ at $q$ since the largest weight $d$ is odd. Similarly, any remaining negative odd weight $-k$ at $p$ must go to $-k$ at $q$.

By Corollary \ref{c23}, $\frac{1}{2}\dim M$ is even. Since $\lambda_p=\frac{1}{2}\dim M-2$ and $\lambda_q=\frac{1}{2}\dim M$ by Lemma \ref{l33}, this implies that the weights at $p$ and $q$ are
\begin{center}
$\Sigma_{p}=\{-d, a, c, d-a, b-d\} \cup \{x_{i}\}_{i=1}^{t+1} \cup \{-y_{i}\}_{i=1}^{t}$
$\Sigma_{q}=\{d, a-d, c-d, -a, b\} \cup \{x_{i}\}_{i=1}^{t+1} \cup \{-y_{i}\}_{i=1}^{t}$
\end{center}
for some odd natural numbers $x_{i}$'s and $y_{i}$'s where $\dim M=12+4t$, for some $t \geq 0$. We also have
\begin{center}
$\Sigma_{r}=\{-b, -c, \cdots \}.$
\end{center}
We show that $x_{i}=y_{i}=1$ for all $i$.

\begin{enumerate}
\item $x_{i}=1$ for all $i$.

Suppose not. Without loss of generality, assume that $x_1>1$. Then by Lemma \ref{l46} part 2 for $x_1$, $\{2x_1,x_1\}\subset\Sigma_p$, $\{-x_1,x_1\}\subset\Sigma_q$, and $\{-2x_i,-x_i\}\subset\Sigma_r$, or $\{-x_1,x_1\}\subset\Sigma_p$, $\{2x_1,x_1\}\subset\Sigma_q$, and $\{-2x_i,-x_i\}\subset\Sigma_r$. Moreover, no more multiples of $x_1$ should sppear as weights. This implies that $-x_1 \neq -y_j$ for all $j$. If the first case holds, we must have that $c=2x_1$. Also, there must be a weight at $q$ that is equal to $-x_1$. If $-x_1=a-d$, $\{2x_1,x_1,x_1\}=\{c,-a+d,x_1\} \subset \Sigma_{p}$, which is not possible by Lemma \ref{l46}. If $-x_1=c-d$, $-x_1=c-d=2x_1-d$ implies that $d=3x_1$, which contradicts that no more multiples of $x_1$ should appear as weights. If the second case holds, we must have that $b=2x_1$. Also, there must be a weight at $p$ that is equal to $-x_1$. Since $-x_1 \neq -d$, $-x_1=b-d$. However, $-x_1=b-d=2x_1-d$ implies that $d=3x_1$, which contradicts that no more multiples of $x_1$ should appear as weights.
 
\item $y_{i}=1$ for all $i$.

Suppose not. Without loss of generality, assume that $y_{1}>1$. Then by Lemma \ref{l46} part 2 for $y_1$, we must have that $\{2y_{1}, y_{1}\}\subset\Sigma_{r}$, which is a contradiction since $r$ has no positive even weight.
\end{enumerate}
\end{proof}

\begin{lem} \label{l55} Assume that the second case in Lemma \ref{l52} holds. Then the weights are
$$\Sigma_{p}=\{-d, -a, b, d-b, d-c, 1\} \cup \{-1,1\}_{i=1}^{t}$$
$$\Sigma_{q}=\{d, -b, -c, d-a, b-d, 1\} \cup \{-1,1\}_{i=1}^{t}$$
$$\Sigma_{r}=\{a, c, \cdots \},$$
where the largest weight $d$ is odd, $a,b$, and $c$ are even natural numbers such that $c=a+b$, and $\dim M=12+4t$ for some $t\geq 0$. Moreover, $a \neq b$ and the remaining weights at $r$ are odd.
\end{lem}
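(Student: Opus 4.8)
The plan is to run the argument of Lemma~\ref{l54} with the roles of the positive remaining weights $x_i$ and the negative remaining weights $y_i$ interchanged, which is natural since case~(2) of Lemma~\ref{l52} is obtained from case~(1) by reversing the circle action. Since $-d\in\Sigma_p$ and $d\in\Sigma_q$ with $d$ the largest weight, Lemma~\ref{l46} part~1 (applied to $d$) gives $\Sigma_p\equiv\Sigma_q\bmod d$ together with the fact that $\pm d$ are the only multiples of $d$ occurring as weights. Using this congruence, the fact that $a\neq b$ (Lemma~\ref{l53}), and the fact from Lemma~\ref{l52} that the only even weights at $p,q,r$ are $\{-a,b\}$, $\{-b,-c\}$, $\{a,c\}$ respectively, one matches $-d$ at $p$ with $d$ at $q$, $-a$ at $p$ with $d-a$ at $q$, $b$ at $p$ with $b-d$ at $q$, $d-b$ at $p$ with $-b$ at $q$, $d-c$ at $p$ with $-c$ at $q$, and every remaining weight at $p$ (necessarily odd, all even ones now being accounted for) with an equal weight at $q$. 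Counting negative weights via $\lambda_p=n-2$, $\lambda_q=n$ (Lemma~\ref{l33}) and $n$ even (Corollary~\ref{c23}) then forces $\dim M=12+4t$ and
\[
\Sigma_p=\{-d,-a,b,d-b,d-c\}\cup\{x_i\}_{i=1}^{t+1}\cup\{-y_i\}_{i=1}^{t},\qquad
\Sigma_q=\{d,-b,-c,d-a,b-d\}\cup\{x_i\}_{i=1}^{t+1}\cup\{-y_i\}_{i=1}^{t},
\]
with $\Sigma_r=\{a,c,\dots\}$, the remaining weights at $r$ odd (its only even weights being the listed $a,c$), and all $x_i,y_i$ odd natural numbers. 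Since $a\neq b$ is Lemma~\ref{l53}, it only remains to prove $x_i=y_i=1$ for all $i$.

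The statement $x_i=1$ is the mirror of the short step in Lemma~\ref{l54}: if some $x_1>1$, then $x_1$ is a weight of both $p$ and $q$, so by Lemma~\ref{l46} part~2 the remaining fixed point $r$ carries the weight $-2x_1$, which is even and negative, contradicting the fact that the only even weights at $r$ are $a,c>0$.

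The statement $y_i=1$ is the mirror of the long step in Lemma~\ref{l54} and is the main obstacle: because $r$ here does carry positive even weights, no immediate contradiction is available, so one must instead use the ``no additional multiples'' rigidity of Lemma~\ref{l46} part~2. Suppose $y_1>1$. Then $-y_1$ is a weight of both $p$ and $q$, so Lemma~\ref{l46} part~2 applied to $e=-y_1$ places $\{2y_1,y_1\}$ at $r$, forces one of $p,q$ to carry $\{-2y_1,-y_1\}$ and the other $\{y_1,-y_1\}$ (each of these weights occurring with multiplicity one), and forbids any further multiple of $y_1$. Since $2y_1\in\Sigma_r$ is even and positive, $2y_1\in\{a,c\}$. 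If $2y_1=a$, then $-2y_1=-a$ lies in $\Sigma_p$ but not in $\Sigma_q$ (using $a\neq b$), so $q$ must carry $+y_1$; since $y_1>1$, $y_1\neq d$ (as $-d\notin\Sigma_q$), and $x_i=1$, inspection of $\Sigma_q$ forces $y_1=d-a$, hence $d=3y_1$, a forbidden multiple of $y_1$. If $2y_1=c$, then $-c=-2y_1$ lies in $\Sigma_q$ but not in $\Sigma_p$, so $p$ must carry $+y_1$; inspection of $\Sigma_p$ forces $y_1=d-b$ or $y_1=d-c$, where $y_1=d-c$ again gives $d=3y_1$, and $y_1=d-b$ gives $b-d=-y_1$, so that $-y_1$ appears at $q$ both as the listed weight $b-d$ and among the $-y_i$, i.e.\ with multiplicity at least two, contradicting the multiplicity-one assertion above. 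In every case we reach a contradiction, so $y_i=1$ for all $i$, completing the proof.
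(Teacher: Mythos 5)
Your proposal is correct and follows essentially the same route as the paper: the mod-$d$ matching of $\Sigma_p$ and $\Sigma_q$ via Lemma \ref{l46} part 1 to pin down the weight lists, then $x_i=1$ from the absence of negative even weights at $r$, and $y_i=1$ by the case split $2y_1\in\{a,c\}$ using the "no additional multiples" rigidity. If anything you are slightly more careful than the paper in the $2y_1=c$ subcase, where you also rule out $y_1=d-c$ (giving $d=3y_1$) rather than passing directly to $y_1=d-b$.
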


\begin{proof}
Assume that the second case in Lemma \ref{l52} holds, i.e., there exist even natural numbers $a$, $b$, and $c$ such that $\{-a,b\} \subset \Sigma_{p}, \{-b,-c\} \subset \Sigma_{q},$ and $ \{a,c\} \subset \Sigma_{r}$ where $c=a+b$. Moreover, these are the only even weights.

Since $-d \in \Sigma_p$ and $d \in \Sigma_q$ where $d$ is the largest weight, by Lemma \ref{l46} part 1 for $d$, $\Sigma_{p} \equiv \Sigma_{q} \mod d$. First $-d$ at $p$ has to go to $d$ at $q$ since all the other weights are non-zero and have absolute values less than $d$. Second, by Lemma \ref{l53}, $a \neq b$. Hence $-a$ at $p$ must go to $d-a$ at $q$ and $-b$ at $q$ must go to $d-b$ at $p$. Next, $c$ at $p$ must go to $c-d$ at $q$ and $-a$ at $q$ must go to $d-a$ at $p$. If $l$ is any remaining positive odd weight at $p$, then it has to go to $l$ at $q$ since the largest weight $d$ is odd. Similarly, any negative odd weight $-k$ at $p$ must go to $-k$ at $q$.

By Corollary \ref{c23}, $\frac{1}{2}\dim M$ is even. Since $\lambda_p=\frac{1}{2}\dim M-2$ and $\lambda_q=\frac{1}{2}\dim M$ by Lemma \ref{l33}, this implies that the weights at $p$ and $q$ are
\begin{center}
$\Sigma_{p}=\{-d, -a, b, d-b, d-c\} \cup \{x_{i}\}_{i=1}^{t+1} \cup \{-y_{i}\}_{i=1}^{t}$
$\Sigma_{q}=\{d, -b, -c, d-a, b-d\} \cup \{x_{i}\}_{i=1}^{t+1} \cup \{-y_{i}\}_{i=1}^{t}$
\end{center}
for some odd natural numbers $x_{i}$'s and $y_{i}$'s where $\dim M=12+4t$, for some $t \geq 0$. We also have
\begin{center}
$\Sigma_{r}=\{a, c, \cdots \}.$
\end{center}
We show that $x_{i}=y_{i}=1$ for all $i$.

\begin{enumerate}
\item $x_{i}=1$ for all $i$.

Suppose not. Without loss of generality, assume that $x_1>1$. Then by Lemma \ref{l46} part 2 for $x_1$, we must have that $\{-2x_1, -x_1\}\subset\Sigma_{r}$, which is a contradiction since $r$ has no negative even weight.

\item $y_{i}=1$, for all $i$.

Suppose not. Without loss of generality, assume that $y_{1}>1$. Then by Lemma \ref{l46} part 2 for $y_1$, $\{-2y_1,-y_1\}\subset\Sigma_p$, $\{-y_1,y_1\}\subset\Sigma_q$, and $\{2y_1,y_1\}\subset\Sigma_r$, or $\{-y_1,y_1\}\subset\Sigma_p$, $\{-2y_1,-y_1\}\subset\Sigma_q$, and $\{2y_1,y_1\}\subset\Sigma_r$. Moreover, no more multiples of $y_1$ should appear as weights.

If the first case holds, we must have that $a=2y_{1}$. Also, there must be a weight at $q$ that is equal to $y_1$. Thus, we have that $d-a=y_{1}$. However, this implies that $d-a=d-2y_{1}=y_{1}$ hence $d=3y_{1}$, which is a contradiction since no more multiples of $y_1$ should appear as weights.

Suppose that the second case holds. Then we must have that $c=2y_{1}$. Also, there must be a weight at $p$ that is equal to $y_1$. Hence $y_{1}=d-b$. Then $\{-2y_{1}, -y_{1}, -y_{1}\}=\{-c,b-d,-y_{2}\}\subset\Sigma_{q}$, which is a contradiction.
\end{enumerate}
\end{proof}

\begin{lem} \label{l56} The first case in Lemma \ref{l52} is not possible.
\end{lem}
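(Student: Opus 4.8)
The plan is to derive a contradiction from the explicit weight data supplied by Lemma \ref{l54}. By that lemma, in the first case of Lemma \ref{l52} we have
$$\Sigma_{p}=\{-d,\, a,\, c,\, d-a,\, b-d,\, 1\} \cup \{-1,1\}_{i=1}^{t}, \qquad
\Sigma_{q}=\{d,\, a-d,\, c-d,\, -a,\, b,\, 1\} \cup \{-1,1\}_{i=1}^{t},$$
with $\Sigma_{r}=\{-b,-c,\dots\}$, where $d$ is odd, $a,b,c=a+b$ are even natural numbers with $a\neq b$, and all remaining weights at $r$ are odd. First I would use Lemma \ref{l24} (the weight multiset of $M$ is symmetric under negation) to pin down $\Sigma_r$ completely: the positive weights appearing at $p$ and $q$ are $a,c,d-a,1$ (from $p$) and $d,a-d\ (\text{sign!}),\dots$ — more carefully, I tally all positive and negative weights across $p$ and $q$ and use Lemma \ref{l24} to solve for the remaining weights at $r$. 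Since $\lambda_r=n+2$ and $n=\tfrac12\dim M=6+2t$, the point $r$ has $n=6+2t$ weights, of which the two even ones are $-b,-c$; the other $4+2t$ are odd, and $\lambda_r=n+2$ forces the count of negative weights at $r$.

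Next, the main engine is Lemma \ref{l46} applied to the even weights $a$, $b$, and $c$. Note $c=a+b=d$ is impossible since $d$ is odd and $c$ is even, so $c<d$ or $c>d$; in fact since $d$ is the largest weight, $c\le d$, hence $c<d$, so $a,b,c$ are all genuine weights strictly below $d$. Now $a$ appears at $p$ with $N_p(a)\ge 1$ and $-a$ appears at $q$; since $a$ is even and $< d$... I would check whether $a > d/2$: if so, Lemma \ref{l46} part 1 applies and forces $\Sigma_p \equiv \Sigma_q \bmod a$ with no further multiples of $a$ — but $c=a+b$ and $2a$ may already be forced to appear, yielding a contradiction when $b=a$ or when $c$ is a multiple of $a$; if $a \le d/2$, I instead use that $a$ appears at $p$ and at $r$ (as $-b,-c$ need not involve $a$, so I must track where a second copy or the negative copy of $a$ sits). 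The cleanest route: $c$ appears at $p$ (as $c$) and at $r$ (as $-c$), so by Lemma \ref{l46} part 1 (if $c>d/2$) we get $\Sigma_p\equiv\Sigma_r\bmod c$ and $N_p(c)=N_r(-c)=1$ with no other multiples of $c$; but $a-d$ and $b-d$ are weights with $a-d\equiv a\pmod c$ and since $0<a<c$ this is a nonzero residue, fine — the contradiction must come from matching $1\in\Sigma_p$ against $\Sigma_r\bmod c$, forcing a weight $\equiv 1\bmod c$ at $r$, which is consistent, so I'd instead push on $a$ and $b$ simultaneously.

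Concretely, I expect the contradiction to come from the first Chern class vanishing combined with Lemma \ref{l46}: compute $c_1(M)|_r = \sum \Sigma_r = 0$. The even part contributes $-b-c=-b-(a+b)=-a-2b$, so the odd weights at $r$ must sum to $a+2b$, an even number — consistent with there being $4+2t$ odd weights. Then apply Lemma \ref{l46} part 2 or 4 to the weights $a$ and $b$ at $r$: since $-b\in\Sigma_r$ and $b\in\Sigma_q$, if $b>1$ we get (after the case analysis of part 2/4 for $e=b$) that $\{2b,b\}\subset\Sigma_\alpha$, $\{-b,b\}\subset\Sigma_\beta$, $\{-2b,-b\}\subset\Sigma_\gamma$ for some labeling, and no other multiples of $b$; since $c=a+b$ and $a$ even, whether $c$ or $a$ is a multiple of $b$ gives the contradiction, and if neither is, then $2b$ must appear somewhere as a new even weight, contradicting that $a,b,c$ are the only even weights (unless $2b\in\{a,c\}$, i.e. $a=2b$ or $a=b$, the former giving $c=3b$ a third multiple of $b$ — contradiction — the latter excluded by Lemma \ref{l53}). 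The case $b=1$ is impossible since $b$ is an even natural number. The hard part will be organizing the Lemma \ref{l46} case splits for $a$, $b$, $c$ so they interlock without gaps — in particular handling whether each of $a,b,c$ exceeds $d/2$ and carefully using "no additional multiples appear" each time; once that bookkeeping is set up, each branch collapses quickly, either to "a fourth even weight appears" or to "a third multiple of some $e$ appears" or to $a=b$.
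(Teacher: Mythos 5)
Your starting point (the weight data from Lemma \ref{l54}) and your first step (using Lemma \ref{l24} to pin down $\Sigma_r$) match the paper, but your proposed ``main engine'' does not work, and the proposal never reaches a valid contradiction. The difficulty is that the even-weight configuration $\{a,c\}\subset\Sigma_p$, $\{-a,b\}\subset\Sigma_q$, $\{-b,-c\}\subset\Sigma_r$ with $c=a+b$ is exactly the self-consistent structure guaranteed by Lemma \ref{l44}: it is the weight pattern of the $4$-dimensional component of $M^{\mathbb{Z}_2}$, i.e.\ of a standard action on $\mathbb{CP}^2$. No application of Lemma \ref{l46} to $a$, $b$, or $c$ alone can contradict it. Concretely, for $e=b$ you have $b\in\Sigma_q$ and $-b\in\Sigma_r$ at \emph{distinct} fixed points with opposite signs, which (when $b>d/2$) is the situation of Lemma \ref{l46} part 1, not parts 2 or 4; it yields only the $2$-sphere case of Lemma \ref{l45} and forces nothing new. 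Your claim that ``if neither $a$ nor $c$ is a multiple of $b$, then $2b$ must appear somewhere'' is false: in that case the $\mathbb{Z}_b$-isotropy data is already complete and consistent. So every branch of your case split for $a,b,c$ closes without contradiction, and the argument stalls.

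The paper's proof instead needs the full weight multisets, odd weights included. After using Lemma \ref{l24} (together with Lemma \ref{l46} part 4 to kill any pair $\{-e_i,e_i\}$ with $e_i>1$ at $r$) it arrives at
$\Sigma_{r}=\{-b,-c,d-b,-1,d-c,-1\}\cup\{-1,1\}^{t}$,
then uses $c_1(M)|_p=0$ to get $d=b+c+1$, and finally runs an ABBV localization sign computation: writing $A,B,C$ for the products of the weights at $p,q,r$, an explicit algebraic expansion shows $(-1)^{t+1}(B+C)>0$, hence $(-1)^{t+1}(1/B+1/C)<0$, while $(-1)^{t+1}/A<0$, contradicting $\int_M 1=0$. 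This quantitative step --- not a divisibility or ``no extra multiples'' argument --- is what actually kills the case, and it is absent from your proposal.
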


\begin{proof}By Lemma \ref{l54}, the weights are
$$\Sigma_{p}=\{-d, a, c, d-a, b-d, 1\} \cup \{-1, 1\}_{i=1}^{t}$$
$$\Sigma_{q}=\{d, a-d, c-d, -a, b, 1\} \cup \{-1, 1\}_{i=1}^{t}$$
$$\Sigma_{r}=\{-b, -c, \cdots \},$$
where the largest weight $d$ is odd, $a,b$, and $c$ are even natural numbers such that $c=a+b$, and $\dim M=12+4t$ for some $t\geq 0$. Moreover, $a \neq b$ and the remaining weights at $r$ are odd.

We consider Lemma \ref{l24} for each integer. Lemma \ref{l24} holds for $d$, $a$, $b$, and $c$. Since $d>c>b$, $b-d < -1$. Since $a \neq b$, by Lemma \ref{l24} for $b-d$, it is straightforward to show that $d-b \in \Sigma_r$.

First, suppose that $c-d \neq -1$. Then $N_p(1)=N_p(-1)+1$ and $N_q(1)=N_q(-1)+1$. Hence by Lemma \ref{l24} for 1, $N_r(1)+2=N_r(-1)$. Considering Lemma \ref{l24} for each integer, one can show that the weights are
\begin{center}
$\Sigma_{p}=\{-d, a, c, d-a, b-d, 1\} \cup \{-1, 1\}_{i=1}^{t}$
$\Sigma_{q}=\{d, a-d, c-d, -a, b, 1\} \cup \{-1, 1\}_{i=1}^{t}$
$\Sigma_{r}=\{-b,-c,d-b,-1,d-c,-1\} \cup \{-e_{i},e_{i}\}_{i=1}^{t}$
\end{center}
for some odd natural numbers $e_{i}$'s. We show that $e_{i}=1$ for all $i$. 

Suppose that $e_{1}>1$. Then by Lemma \ref{l46} part 4, either $\{-2e_1,-e_1\}\subset\Sigma_{p}$, $\{2e_1,e_1\}\subset\Sigma_q$, and $\{-e_1,e_1\}\subset\Sigma_r$, or $\{2e_1,e_1\}\subset\Sigma_p$, $\{-2e_1,-e_1\}\subset\Sigma_{q}$, and $\{-e_1,e_1\}\subset\Sigma_r$. However, since $p$ has no negative even weight, the first case is impossible. If the second case holds, we must have that $-a=-2e_1$. Moreover, we must have that $a-d=-e_1$ or $c-d=-e_1$. If $a-d=-e_1$, $2e_1-d=a-d=-e_1$ hence $3e_1=d$, which is a contradiction since no additional multiples of $e_1$ should appear. Next, if $c-d=-e_1$, $\{d-c,-e_1,e_1\}=\{-e_1,-e_1,e_1\}\subset\Sigma_{r}$, which is also a contradiction. Hence $e_{i}=1$, for all $i$. Then the weights are
\begin{center}
$\Sigma_{p}=\{-d, a, c, d-a, b-d, 1\} \cup \{-1,1\}^{t}$
$\Sigma_{q}=\{d, -a, b, a-d, -1, 1\} \cup \{-1,1\}^{t}$
$\Sigma_{r}=\{-b,-c,d-b,-1,1,-1\} \cup \{-1,1\}^{t}.$
\end{center}

Second, suppose that $c-d=-1$. Then $N_p(1)=N_p(-1)+1$ and $N_q(1)=N_q(-1)$. Hence by Lemma \ref{l24} for 1, $N_r(1)+1=N_r(-1)$. Considering Lemma \ref{l24} for each integer, one can show that the weights are
\begin{center}
$\Sigma_{p}=\{-d, a, c, d-a, b-d, 1\} \cup \{-1, 1\}_{i=1}^{t}$
$\Sigma_{q}=\{d, a-d, -1, -a, b, 1\} \cup \{-1, 1\}_{i=1}^{t}$
$\Sigma_{r}=\{-b,-c,d-b,-1\} \cup \{-e_{i},e_{i}\}_{i=1}^{t+1}$
\end{center}
for some odd natural numbers $e_{i}$'s. As above, $e_{i}=1$ for all $i$.

Hence in either case the weights are
\begin{center}
$\Sigma_{p}=\{-d, a, c, d-a, b-d, 1\} \cup \{-1,1\}^{t}$
$\Sigma_{q}=\{d, -a, b, a-d, c-d, 1\} \cup \{-1,1\}^{t}$
$\Sigma_{r}=\{-b,-c,d-b,-1,d-c,-1\} \cup \{-1,1\}^{t}.$
\end{center}
Moreover, since $c_{1}(M)|_{p}=0$ by Corollary \ref{c27}, we have that $-d+a+c+d-a+b-d+1=0$. Therefore, $d=c+b+1$. Let $A=c_{n}(M)|_{p}=\prod \xi_{p}^{j}, B=c_{n}(M)|_{q}=\prod \xi_{q}^{j}$, and $C=c_{n}(M)|_{r}=\prod \xi_{r}^{j}$. Then
\begin{center}
$(-1)^{t+1}(B+C)=dab(d-a)(d-c)-bc(d-b)(d-c)$
$=b(d-c)\{da(d-a)-c(d-b)\}$
$=b(d-c)\{(c+b+1)a(c+b+1-a)-c(c+b+1-b)\}$
$=b(d-c)\{(c+b+1)a(2b+1)-c(c+1)\}$
$=b(d-c)\{(a+2b+1)a(2b+1)-(a+b)(a+b+1)\}$
$=b(d-c)\{(a^{2}+2ab+a)(2b+1)-(a^{2}+2ab+b^{2}+a+b)\}$
$=b(d-c)\{2a^{2}b+4ab^{2}+2ab+a^{2}+2ab+a-(a^{2}+2ab+b^{2}+a+b)\}$
$=b(d-c)\{(2a^{2}b-a^{2})+(4ab^{2}-b^{2})+(2ab-2ab)+(a^{2}-a)+(2ab-b)+a\}>0.$
\end{center}
Hence $(-1)^{t+1}B>-(-1)^{t+1}C>0$, i.e., $\D{(-1)^{t+1}\bigg( \frac{1}{B} + \frac{1}{C}\bigg)<0}$. We also have that $\D{(-1)^{t+1}\frac{1}{A}<0}$. Then, by Theorem \ref{t21},
\begin{center}
$\D{0=(-1)^{t+1}\int_M 1 =(-1)^{t+1}\bigg( \frac{1}{A} + \frac{1}{B} + \frac{1}{C}\bigg)<0},$
\end{center}
which is a contradiction.
\end{proof}

\begin{lem} \label{l57} The second case in Lemma \ref{l52} is not possible.
\end{lem}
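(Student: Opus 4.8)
The plan is to run the same argument as in the proof of Lemma~\ref{l56}, but starting from the weight lists provided by Lemma~\ref{l55} instead of Lemma~\ref{l54}. So, assuming the second case of Lemma~\ref{l52} holds, I start from
\[
\Sigma_{p}=\{-d, -a, b, d-b, d-c, 1\} \cup \{-1,1\}_{i=1}^{t},\qquad
\Sigma_{q}=\{d, -b, -c, d-a, b-d, 1\} \cup \{-1,1\}_{i=1}^{t},
\]
\[
\Sigma_{r}=\{a, c, \cdots\},
\]
with $d$ odd, $a,b$ even natural numbers, $c=a+b$, $a\neq b$, $\dim M=12+4t$, and the remaining weights at $r$ odd; the goal is a contradiction with ABBV localization (Theorem~\ref{t21}).

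First I would determine $\Sigma_r$ completely using Lemma~\ref{l24}. The even weights $\pm a,\pm b,\pm c$ already balance among $p,q,r$, and $\pm d$ and $\pm(d-b)$ balance between $p$ and $q$; applying Lemma~\ref{l24} to $d-c$ (which is positive, since $d$ is the largest weight and of parity opposite to $c$) forces $c-d$ to be a weight at $r$ unless $d-c=1$, and similarly $a-d\in\Sigma_r$, while applying Lemma~\ref{l24} to $1$ shows that at $r$ the multiplicity of $-1$ exceeds that of $1$ by $2$ (by $3$ in the degenerate case $d-c=1$, where $c-d=-1$ itself). Any remaining odd weight $w>1$ at $r$ must occur together with $-w$ at $r$, and then Lemma~\ref{l46} part 4 applies: since $q$ has no positive even weight, it forces $\{2w,w\}\subset\Sigma_p$, hence $2w=b$ and $w\in\{d-b,d-c\}$, and in either case the ``no further multiple of $w$'' clause yields a contradiction (in one branch $d$ becomes a multiple of $w$; in the other, $-w=c-d$ would occur twice at $r$). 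This pins down $\Sigma_r=\{a,c,c-d,a-d\}\cup\{1\}^{t}\cup\{-1\}^{t+2}$. Next, since the first Chern class map vanishes by Corollary~\ref{c27}, summing the weights at $p$ gives $d=a+c-1$, so that $d-b=2a-1$, $d-c=a-1$, $d-a=a+b-1$, $b-d=1-2a$, $c-d=1-a$, and $a-d=1-c$.

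The final step is the localization computation. Writing $A,B,C$ for the products $c_n(M)|_p,\,c_n(M)|_q,\,c_n(M)|_r$, Theorem~\ref{t21} gives $\tfrac1A+\tfrac1B+\tfrac1C=0$, i.e.\ $BC+AC+AB=0$. Substituting the weights found above, factoring out $abd(2a-1)(a-1)(a+b)(a+b-1)$, and using $d=2a+b-1$, I expect the sum to collapse to $-2a^2b^2d^2(2a-1)(a-1)(a+b)(a+b-1)$, which is strictly negative because $a,b\geq 2$ --- a contradiction. The routine-but-delicate bookkeeping in the second step (excluding accidental coincidences among the listed weights, most of which are ruled out by parity since $d$ is odd and $a,b,c$ even, and handling the degenerate case $a=2$, where $c-d$ and $1-a$ both collapse to $-1$) together with the polynomial manipulation in the last step are where essentially all the work lies; I expect pinning down the sign of $BC+AC+AB$ to be the real obstacle, exactly as at the end of Lemma~\ref{l56}, and the crucial structural input that makes it possible is that in the second case the fixed point $q$ carries no positive even weight.
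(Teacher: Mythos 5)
Your strategy is sound and, remarkably, your final identity is correct: with $d=a+c-1=2a+b-1$ and $\Sigma_r=\{a,\,a+b,\,1-a,\,1-a-b\}\cup\{1\}^{t}\cup\{-1\}^{t+2}$ one does get $BC+AC+AB=-2a^{2}b^{2}d^{2}(2a-1)(a-1)(a+b)(a+b-1)<0$, so the contradiction with Theorem~\ref{t21} is genuine. But this is a genuinely different, and much heavier, route than the paper's. The paper never determines $\Sigma_r$ at all: it computes $A$ and $B$ directly from Lemma~\ref{l55}, observes that $(-1)^{t+1}(A+B)=db(d-b)\{c(d-a)-a(d-c)\}>0$ (since $c>a$ and $d-a>d-c$), hence $(-1)^{t+1}(\tfrac1A+\tfrac1B)<0$, and then uses only $\lambda_r=\tfrac12\dim M+2$ from Lemma~\ref{l33} to conclude $(-1)^{t+1}\tfrac1C<0$ as well, so the localization sum cannot vanish --- a five-line sign argument. (This is precisely the shortcut that is \emph{not} available in the first case, which is why Lemma~\ref{l56} has to pin down $\Sigma_r$; importing that template here costs you all the bookkeeping.) Two caveats on your version: the step ``any remaining odd weight $w>1$ at $r$ must occur together with $-w$ at $r$'' silently skips the possibility $-w=b-d\in\Sigma_q$, i.e.\ $w=d-b$ appearing at both $p$ and $r$; ruling that out needs Lemma~\ref{l46} part 2 (one finds $-2(d-b)$ would have to be $-b$ or $-c$ in $\Sigma_q$, each of which contradicts the ``no further multiples'' clause or forces $b-d\in\Sigma_p$). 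And since the paper only ever uses $\int_M 1=0$ to derive a \emph{nonzero} sum, you do not actually need the sign of $BC+AC+AB$, only that it is not zero --- so the ``real obstacle'' you anticipate is softer than you fear. Still, the paper's argument renders the entire determination of $\Sigma_r$ unnecessary.
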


\begin{proof}By Lemma \ref{l55}, the weights in this case are
$$\Sigma_{p}=\{-d, -a, b, d-b, d-c, 1\} \cup \{-1,1\}_{i=1}^{t}$$
$$\Sigma_{q}=\{d, -b, -c, d-a, b-d, 1\} \cup \{-1,1\}_{i=1}^{t}$$
$$\Sigma_{r}=\{a, c, \cdots \},$$
where the largest weight $d$ is odd, $a,b$, and $c$ are even natural numbers such that $c=a+b$, and $\dim M=12+4t$ for some $t\geq 0$.

Let $A=c_{n}(M)|_{p}=\prod \xi_{p}^{j}, B=c_{n}(M)|_{q}=\prod \xi_{q}^{j}$, and $C=c_{n}(M)|_{r}=\prod \xi_{r}^{j}$. Then
\begin{center}
$(-1)^{t+1}(B+A)=dbc(d-a)(d-b)-dab(d-b)(d-c)$
$=db(d-b)\{c(d-a)-a(d-c)\}>0,$
\end{center}
since $c>a$ and $d-a>d-c.$ Hence it follows that $\D{(-1)^{t+1}\bigg(\frac{1}{A}+\frac{1}{B} \bigg)<0}.$ Since $\lambda_r=\frac{1}{2}\dim M +2$, we also have that $\D{(-1)^{t+1}\frac{1}{C}<0}$. Then, by Theorem \ref{t21},
\begin{center}
$\D{0=(-1)^{t+1}\int_M 1 = (-1)^{t+1}\bigg(\frac{1}{A} + \frac{1}{B} + \frac{1}{C}\bigg)<0},$
\end{center}
which is a contradiction.
\end{proof}

\section{Preliminaries for the largest weight even case part 1}

Let the circle act symplectically on a compact, connected symplectic manifold $M$ with exactly three fixed points. Also assume that $\dim M\geq8$ and the largest weight is even. The main idea to prove Theorem \ref{t11} is to rule out manfiolds such that $\dim M\geq8$. In this section, we investigate properties that the manifold $M$ should satisfy, if it exists.

\begin{pro} \label{p61}
Fix a natural number $n$ such that $n \geq 4$. Assume that Theorem \ref{t11} holds for all manifolds $M$ such that $\dim M < 2n$. Let the circle act symplectically on a $2n$-dimensional compact, connected symplectic manifold $M$ and suppose that there are exactly three fixed points. Assume that the action is effective. Then the largest weight is odd.
\end{pro}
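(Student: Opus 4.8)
The plan is to argue by contradiction: assume the largest weight $d$ is even and push through an analysis parallel to the odd case (Proposition~\ref{p51} and Lemmas~\ref{l52}--\ref{l57}), eventually contradicting ABBV localization. Since $\dim M=2n\ge 8$, Corollary~\ref{c27} gives $c_1(M)|_u=0$ for every fixed point $u$, and Corollary~\ref{c23} gives that $n$ is even. By Lemma~\ref{l33} we may label the fixed points $p,q,r$ with $\lambda_p=n-2$, $\lambda_q=n$, $\lambda_r=n+2$, and after possibly reversing the action $-d\in\Sigma_p$ and $d\in\Sigma_q$.

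First I would pin down the even weights. By Lemma~\ref{l44}, $M^{\mathbb{Z}_{2}}$ contains a $4$-dimensional component carrying all three fixed points, so each fixed point has exactly two even weights, and these even pairs are $\{a,c\}$, $\{-a,b\}$, $\{-b,-c\}$ for even natural numbers $a,b$ with $c=a+b$, all other weights being odd. Now $-d$ is a negative even weight at $p$, and since $d$ is the largest weight, $d\ge c=a+b>a,b$. The even pair at $p$ cannot be $\{a,c\}$ (no negative entry); if it were $\{-a,b\}$ then $d=a$, and if it were $\{-b,-c\}$ with $-d=-b$ then $d=b$ --- in either of these $c=a+b$ would be a weight exceeding $d$, a contradiction. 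Hence the even pair at $p$ is $\{-b,-c\}$ with $c=d$; then $d\in\Sigma_q$ forces the even pair at $q$ to be $\{a,c\}=\{a,d\}$ (it cannot be $\{-b,-c\}$, already used, nor $\{-a,b\}$, which would give $d=b$ and hence $a=0$), and the even pair at $r$ to be $\{-a,b\}$. Thus
\[
\{-b,-d\}\subset\Sigma_p,\qquad \{a,d\}\subset\Sigma_q,\qquad \{-a,b\}\subset\Sigma_r,\qquad a+b=d,
\]
with the placement now \emph{forced}, unlike the two subcases in Lemma~\ref{l52}.

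Next I would exploit congruences. Since $-d\in\Sigma_p$, $d\in\Sigma_q$ and $d>\tfrac d2$, Lemma~\ref{l46}(1) for $e=d$ gives $N_p(-d)=N_q(d)=1$, $\Sigma_p\equiv\Sigma_q\pmod d$, and no further multiples of $d$ among the weights; when $a>\tfrac d2$ (resp.\ $b>\tfrac d2$) Lemma~\ref{l46}(1) applied to $a$ (resp.\ $b$) additionally matches $\Sigma_q$ with $\Sigma_r$ (resp.\ $\Sigma_r$ with $\Sigma_p$) modulo that weight. Because $d$ is even, $\Sigma_p\equiv\Sigma_q\pmod d$ respects parity: the induced bijection pairs $-d\leftrightarrow d$ and $-b\leftrightarrow a$ and pairs the odd weights among themselves, and since every weight has absolute value $<d$ by Lemma~\ref{l24}, each odd pair contributes $0$ or $\pm d$ to $c_1(M)|_p-c_1(M)|_q$, exactly as in Lemmas~\ref{l34} and~\ref{l36}. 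Imposing $c_1(M)|_p=c_1(M)|_q=0$ together with $\lambda_p=n-2$, $\lambda_q=n$ constrains the odd weights, after which I would mimic Lemmas~\ref{l54}--\ref{l56}: repeatedly invoke parts (2)--(4) of Lemma~\ref{l46} and Lemma~\ref{l24} for small $e$ to force all but boundedly many of the remaining odd weights to be $\pm 1$, and to fix the signs of the two a priori unknown weights at $r$, leaving an explicit list of weight multisets at $p,q,r$ in terms of $a,b$ (with $d=a+b$) and $\dim M$.

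Finally, writing $A=c_n(M)|_p=\prod\xi_p^j$, $B=c_n(M)|_q=\prod\xi_q^j$, $C=c_n(M)|_r=\prod\xi_r^j$ out explicitly, I would use $d=a+b$ (and any extra Chern-class identity forced by the list, as in Lemma~\ref{l56}) to show that for a suitable fixed sign $\varepsilon=\pm 1$ one has $\varepsilon/A<0$ and $\varepsilon(1/B+1/C)<0$, so $\tfrac1A+\tfrac1B+\tfrac1C\ne 0$, contradicting $\int_M 1=0$ from Theorem~\ref{t21}. The hard part will be the middle reduction: with $d$ even the congruences no longer separate even from odd weights automatically, there are genuinely more even weights in play than in the odd case, and at $r$ only two of the $n$ weights are known at the outset, so controlling the hidden odd weights at $q$ and $r$ and ruling out degenerate configurations such as $a=b=d/2$ should require a careful and lengthy case analysis via Lemma~\ref{l46} --- presumably why the even case is spread over several sections.
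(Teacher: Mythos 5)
Your opening moves match the paper: assuming the largest weight $d$ is even, using Lemma \ref{l44} to get the even weights $\{a,c\},\{-a,b\},\{-b,-c\}$ with $c=a+b$, and then arguing that $-d\in\Sigma_p$, $d\in\Sigma_q$ forces $c=d$ and the placement $\{-b,-c\}\subset\Sigma_p$, $\{a,c\}\subset\Sigma_q$, $\{-a,b\}\subset\Sigma_r$ is exactly Lemma \ref{l62}'s first step, and your derivation of it is correct. The congruence $\Sigma_p\equiv\Sigma_q\bmod c$ and the resulting count of how many odd weights shift by $\pm c$ (three more downward shifts than upward, coming from $\lambda_p=n-2$, $\lambda_q=n$ and the distribution of the even weights) are also as in the paper.

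The genuine gap is in your endgame. You propose to ``force all but boundedly many of the remaining odd weights to be $\pm 1$'' and then finish with a single sign computation via Theorem \ref{t21}, as in Lemmas \ref{l54}--\ref{l57}. That reduction is not available here, and this is precisely where the even case differs from the odd one. After Lemma \ref{l62} the odd weights at $p$ split into a family $\{x_i\}_{i=1}^{t+3}\cup\{-y_i\}_{i=1}^{t}$ (those whose partner at $q$ shifts by $\mp c$) plus paired $\pm 1$'s; only the latter can be forced to be $1$ (and even that requires the delicate two-page argument in Lemma \ref{l62} about $e_i=f_i=1$). The $x_i$'s and $y_i$'s cannot be pushed to $\pm1$, their number $t$ is a priori unbounded, and $\Sigma_r$ is almost entirely unknown. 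The paper needs the normalization by $w=\min_\alpha\min\{N_\alpha(-1),N_\alpha(1)\}$ (Lemma \ref{l71}), the pigeonhole bound forcing $t<v$ or $2x_i=2x_j=c$ (Lemma \ref{l76}), and an eleven-way case split on $(t,v)$ (Lemmas \ref{l81}--\ref{l811}); crucially, most of those cases are killed not by a localization sign but by combinatorial contradictions --- too many distinct odd weights to fit into $\Sigma_r$, a violation of $\lambda_r=n+2$ from Lemma \ref{l33}, a violation of the minimality of $w$, or the counting Lemma \ref{l69} --- with the positivity-of-$1/A+1/B+1/C$ argument appearing only in a few terminal cases. As written, your plan for the middle of the proof would stall exactly at the point you flag as ``the hard part,'' so the proposal is an accurate setup plus an incomplete sketch rather than a proof.
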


\begin{proof}
Assume on the contrary that the largest weight is even. Then this is an immediate consequence of Lemma \ref{l62}, Lemma \ref{l71}, Lemma \ref{l81}, Lemma \ref{l82}, Lemma \ref{l83}, Lemma \ref{l84}, Lemma \ref{l85}, Lemma \ref{l86}, Lemma \ref{l87}, Lemma \ref{l88}, Lemma \ref{l89}, Lemma \ref{l810}, and Lemma \ref{l811} below.
\end{proof}

\begin{lem} \label{l62}
Fix a natural number $n$ such that $n \geq 4$. Assume that Theorem \ref{t11} holds for all manifolds $M$ such that $\dim M < 2n$. Let the circle act symplectically on a $2n$-dimensional compact, connected symplectic manifold $M$ and suppose that there are exactly three fixed points $p,q$, and $r$, with $\lambda_p \leq \lambda_q \leq \lambda_r$. Assume that the action is effective and the largest weight $c$ is even. Then after possibly reversing the circle action we may assume that the weights are
$$\Sigma_{p}=\{-c,-b\} \cup \{x_{i}\}_{i=1}^{t+3} \cup \{-y_{i}\}_{i=1}^{t} \cup \{1\} \cup \{-1,1\}_{i=1}^{s}$$
$$\Sigma_{q}=\{c,a\} \cup \{x_{i}-c\}_{i=1}^{t+3}\cup\{c-y_{i}\}_{i=1}^{t} \cup \{1\} \cup \{-1,1\}_{i=1}^{s}$$
$$\Sigma_{r}=\{-a,b,\cdots\}$$
for some $s \geq 0$ and $t \geq 0$ such that $\dim M=2n=12+4t+4s$, where $a$ and $b$ are even natural numbers such that $c=a+b$, and $x_i$'s and $y_i$'s are odd natural numbers for all $i$. Moreover, the remaining weights at $r$ are odd.
\end{lem}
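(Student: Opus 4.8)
The plan is to follow the same template used in Lemma \ref{l52} and Lemmas \ref{l54}–\ref{l55}, but now exploiting that the largest weight $c$ is \emph{even}. First I would invoke Lemma \ref{l32} to see that $M^{\mathbb{Z}_c}$ has one isolated fixed point and one two-sphere carrying the other two fixed points, with weights $\{c\}$ and $\{-c\}$ in the two-sphere, and Lemma \ref{l33} to get $\lambda_p = n-2$, $\lambda_q = n$, $\lambda_r = n+2$ together with (after possibly reversing the circle action) $-c \in \Sigma_p$ and $c \in \Sigma_q$. By Lemma \ref{l46} part 1 applied to $e = c$ (note $|c| > c/2$ trivially), we get $N_p(c) = N_q(c) = 1$, no other multiples of $c$ occur, and $\Sigma_p \equiv \Sigma_q \bmod c$. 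So there is a bijection $\Sigma_p \to \Sigma_q$ matching $-c \mapsto c$ and each other weight $\xi$ at $p$ to a weight at $q$ congruent to $\xi \bmod c$; since all remaining weights have absolute value $< c$ by Lemma \ref{l24}, a weight $\xi$ with $0 < \xi < c$ at $p$ goes either to $\xi$ or to $\xi - c$ at $q$, and similarly for negative weights, exactly as in the trichotomy at the start of the proof of Lemma \ref{l34}.

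Next I would apply Lemma \ref{l44} to produce even natural numbers $a, b$ with $c = a+b$ so that the weights of the three fixed points \emph{in} $M^{\mathbb{Z}_2}$ are $\{a+b, a\} = \{c,a\}$, $\{-a, b\}$, $\{-b, -a-b\} = \{-b,-c\}$ in some order; these are precisely the even weights. Since $-c \in \Sigma_p$ and $c \in \Sigma_q$, the pair $\{-b,-c\}$ must sit at $p$ and $\{c,a\}$ at $q$, so the even weights are forced to be $\Sigma_p \supset \{-c,-b\}$, $\Sigma_q \supset \{c,a\}$, $\Sigma_r \supset \{-a,b\}$. Under the mod-$c$ bijection, $-b$ at $p$ ($-c < -b < 0$) must go to $-b$ or to $c - b = a$ at $q$; since $c$ at $p$ already consumes $c$ at $q$ and (by Lemma \ref{l46} part 1) no further multiple of $c$ appears, and since $-b$ at $q$ is not a weight there (the only even weights at $q$ are $c$ and $a$, and $-b \ne a$ as both are positive), $-b$ at $p$ goes to $a = c-b$ at $q$. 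The remaining odd weights: any positive odd weight at $p$ either stays fixed or drops by $c$, and since $c$ is even it stays \emph{odd}; grouping the odd weights of $p$ as $\{x_i\}$ (those staying positive, mapping to $x_i$ at $q$, i.e. congruent and positive — wait, they could also map to $x_i - c < 0$) requires care. I would split the odd weights at $p$ into those whose partner at $q$ has the same sign (contributing $x_i \mapsto x_i$, $-y_i \mapsto -y_i$) and those flipping sign (a positive $x$ at $p$ with $x - c < 0$ at $q$, contributing $x \mapsto x - c$; a negative $-y$ at $p$ with $c - y > 0$ at $q$). Counting: by Corollary \ref{c23}, $n$ is even; from $\lambda_p = n - 2$ and $\lambda_q = n$, the number of sign-flips from negative-to-positive must exceed the reverse by exactly one — together with the already-identified even flips $-c \mapsto c$ and $-b \mapsto a$, this is the $\lambda$-balance computation of Lemma \ref{l34}, now telling us there is exactly one extra odd weight $1$ at $p$ going to $1$ at $q$ (a fixed odd weight that must equal $1$ by a Lemma \ref{l46}-part-2 argument) and $t$ sign-flipping pairs $\{x_i - c\}$ among the odd weights plus $s$ genuinely matched pairs $\{-1,1\}$. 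Establishing that the "extra" fixed odd positive weight is forced to be $1$, and that all genuinely sign-preserving matched odd pairs can be normalized, is the step I expect to be most delicate: it will use Lemma \ref{l24} bookkeeping and Lemma \ref{l46} parts 2–4 to rule out any odd prime-power weight $>1$ occurring in a sign-preserving pair, exactly as in the "$x_i = y_i = 1$" arguments of Lemmas \ref{l54} and \ref{l55}.

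The main obstacle is therefore the combinatorial normalization of the odd weights: showing that apart from the $t$ sign-flipping odd pairs $x_i, x_i - c$ (and the dual $-y_i, c - y_i$ at $q$), every other odd weight at $p$ and $q$ is $\pm 1$, and pinning down the single unpaired $+1$. I would handle it by supposing some odd weight $w > 1$ appears in a sign-preserving slot, applying Lemma \ref{l46} (part 2 if $w$ appears at two fixed points with the same sign, part 4 if it appears at one fixed point with both signs) to force a configuration $\{2w, w\}$ or $\{-2w, -w\}$ somewhere, and then deriving a contradiction either with the parity of the even-weight structure ($p$ has no positive even weight other than the $+1$'s... — actually $p$'s positive even weights: none; $q$'s negative even: none; $r$'s positive even: only $b$; etc.) or with "no further multiples of $w$" via an equation like $d = 3w$. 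Once the weight lists at $p$ and $q$ are in the stated normal form, $\Sigma_r$ is determined up to its odd part by Lemma \ref{l24} applied integer-by-integer, giving $\Sigma_r = \{-a, b, \dots\}$ with odd remaining weights, and $\dim M = 12 + 4t + 4s$ follows by counting entries. This completes the reduction.
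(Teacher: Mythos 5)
Your skeleton coincides with the paper's: Lemma \ref{l32} and Lemma \ref{l33} to get $\lambda_p=n-2$, $\lambda_q=n$, $\lambda_r=n+2$ and (after possibly reversing the action) $-c\in\Sigma_p$, $c\in\Sigma_q$; Lemma \ref{l44} to force the even weights to be exactly $\{-c,-b\}\subset\Sigma_p$, $\{c,a\}\subset\Sigma_q$, $\{-a,b\}\subset\Sigma_r$ with $c=a+b$; Lemma \ref{l46} part 1 for $c$ to get the bijection $\Sigma_p\equiv\Sigma_q \bmod c$; then a sign-flip count and a normalization of the sign-preserving odd weights to $\pm1$. One correction to the counting: since $q$ has one more negative weight than $p$, it is the positive-to-negative flips that exceed the negative-to-positive flips by one overall, not the reverse as you wrote; and because the two even flips $-c\mapsto c$ and $-b\mapsto a$ are both negative-to-positive, the excess among the \emph{odd} weights is three. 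That is precisely where the asymmetric counts $t+3$ (the $x_i$) versus $t$ (the $y_i$), and $s+1$ versus $s$ preserved odd weights, come from; with the inequality in your direction the bookkeeping would not reproduce the shape of the weight lists in the statement.

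The more serious gap is in the step you yourself flag as delicate: eliminating a sign-preserving odd weight $w>1$. Applying Lemma \ref{l46} part 2 to $w\in\Sigma_p\cap\Sigma_q$ does force $\{-2w,-w\}\subset\Sigma_r$, hence $a=2w$ (since $-a$ is the only negative even weight at $r$), hence $\{2w,w\}\subset\Sigma_q$ and $\{-w,w\}\subset\Sigma_p$, so that $w$ must equal some $y_j$. But at this point there is no parity contradiction and no forced relation $c=3w$: the configuration is entirely consistent so far, so the two contradiction mechanisms you propose do not close the argument. The paper needs a second, longer round: it shows $w-c$ can lie in neither $\Sigma_p$ nor $\Sigma_q$ (each subcase eliminated by a further application of Lemma \ref{l46} producing a weight of absolute value $2(c-w)>c$), concludes $w-c\in\Sigma_r$, then applies Lemma \ref{l46} part 1 with the new modulus $c-w>\frac{c}{2}$ to get $\Sigma_q\equiv\Sigma_r\bmod (c-w)$, and traces the pair $\{c,w\}\subset\Sigma_q$ through this congruence: since $c$, $w$, and $2w-c=-b$ are all excluded from $\Sigma_r$, both must land on $3w-2c$, so $N_r(3w-2c)\geq2$, which finally contradicts Lemma \ref{l46} because $2(2c-3w)=c+3b>c$. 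A symmetric argument is needed for the negative preserved weights. Without this second stage your proof of the crucial claim that all preserved odd weights are $\pm1$ is incomplete.
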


\begin{proof}
Let $c$ be the largest weight. By Lemma \ref{l32}, $N_{p}(c)+N_{q}(c)+N_{r}(c)=1$ and $N_{p}(-c)+N_{q}(-c)+N_{r}(-c)=1$. By Lemma \ref{l33}, $\lambda_{p}=n-2, \lambda_{q}=n$, and $\lambda_{r}=n+2$. 
Moreover, after possibly reversing the circle action, we may assume that $-c \in \Sigma_p$ and $c \in \Sigma_q$.

By Lemma \ref{l44}, there exist even natural numbers $a$ and $b$ such that the weights at the three fixed points in the isotropy submanifold $M^{\mathbb{Z}_{2}}$ are $\{a, d\}, \{-a, b\}$, and $\{-b,-d\}$, where $d=a+b$. In Lemma \ref{l44}, the order is not specified. However, since we can assume without loss of generality that $-c \in \Sigma_{p}$ and $c \in \Sigma_{q}$, we can assume that $d=c$, hence $\{-c,-b\}\subset\Sigma_{p},\{c,a\}\subset\Sigma_{q}$, and $\{-a,b\}\subset\Sigma_{r}$. Moreover, these are the only even weights.

Next, by Lemma \ref{l46} part 1 for $c$, $\Sigma_{p} \equiv \Sigma_{q} \mod c$. As a result, we can find a bijection between weights at $p$ and weights at $q$ that takes each weight $\alpha$ at $p$ to a weight $\beta$ at $q$ such that $\alpha \equiv \beta \mod c$.

First, $-c$ at $p$ has to go to $c$ at $q$ since all the other weights are non-zero and have absolute values less than $c$. Second, $-b$ at $p$ must go to $a$ at $q$. Next, if $l$ is any positive odd weight at $p$, then it either goes to $l$ or $l-c$ at $q$. Suppose that there are $t_{0}$ positive odd weights at $p$ that go to negative odd weights at $q$. Then since $\lambda_p=n-2$ and there is no positive even weight at $p$, there are $\frac{n}{2}+1-t_{0}$ positive odd weights $p$ that go to positive odd weights at $q$. Similarly, if $-k$ is any negative odd weight at $p$, either it has to go to $-k$ or $c-k$ at $q$. Suppose that there are $t_{1}$ negative odd weights at $p$ that go to positive odd weights at $q$. On the other hand, since $\lambda_q=n$ and $q$ has two positive even weights, the number of positive odd weights at $q$ that go to positive odd weights at $p$ is equal to $\frac{n}{2}-2-t_{1}$. Hence $\frac{n}{2}+1-t_{0}=\frac{n}{2}-2-t_{1}$, i.e., $t_{0}=t_{1}+3$. Let $t=t_{1}$ and $s=\frac{n}{2} - t - 3$. By Corollary \ref{c23}, $\frac{1}{2}\dim M$ is even. This implies that the weights are
$$\Sigma_{p}=\{-c,-b\} \cup \{x_{i}\}_{i=1}^{t+3} \cup \{-y_{i}\}_{i=1}^{t} \cup \{e_{i}\}_{i=1}^{s+1} \cup \{-f_{i}\}_{i=1}^{s}$$
$$\Sigma_{q}=\{c,a\} \cup \{x_{i}-c\}_{i=1}^{t+3}\cup\{c-y_{i}\}_{i=1}^{t} \cup \{e_{i}\}_{i=1}^{s+1} \cup \{-f_{i}\}_{i=1}^{s}$$
$$\Sigma_r=\{-a,b,\cdots\}$$
for some odd natural numbers $x_{i}$'s, $y_{i}$'s, $e_{i}$'s, and $f_{i}$'s, where $\dim M=2n=12+4t+4s$, for some $t\geq 0$ and $s \geq0$.

Next, we show that $e_i=f_i=1$ for all $i$.

\begin{enumerate}
\item $e_i=1$ for all $i$.

Assume on the contrary that $e_i>1$ for some $i$. Denote $e=e_i$. Then by Lemma \ref{l46} part 3 for $e$, either $\{2e,e\}\subset\Sigma_p$, $\{-e,e\}\subset\Sigma_q$, and $\{-2e,-e\}\subset\Sigma_r$, or $\{-e,e\}\subset\Sigma_p$, $\{2e,e\}\subset\Sigma_q$, and $\{-2e,-e\}\subset\Sigma_r$. Moreover, no additional multiples of $e$ should appear as weights. Since $\{-2e,-e\}\subset\Sigma_r$ in either case, the only possibility is that $a=2e$. Therefore, the latter is the case. Thus, we have that $\{-e,e\}\subset\Sigma_p$. Therefore, $-e=-y_i$ for some $i$ or $-e=-f_i$ for some $i$. Since no additional multiples of $e$ should appear as weights at $q$, $-f_i \neq -e$ for all $i$. Hence, $-y_i=-e$ for some $i$. Without loss of generality, let $y_1=e$. Moreover, since no additional multiples of $e$ should appear as weights, $b \neq 2e$. In particular, $a=2e \neq b$. Since $2e=a<a+b=c$, $e<\frac{c}{2}$. Thus $c-e > \frac{c}{2}$. 

Next, we show that $e-c \in \Sigma_r$. We have that $c-e=c-y_1 \in \Sigma_q$. By Lemma \ref{l24} for $c-e$, either $e-c \in \Sigma_p$, $e-c \in \Sigma_q$, or $e-c \in \Sigma_r$.

\begin{enumerate}[(a)]
\item $e-c \notin \Sigma_p$.

Suppose that $e-c \in \Sigma_p$. Since $e-c$ is odd, either $e-c=-y_i$ for some $i$ or $e-c=-f_i$ for some $i$. 

First, assume that $e-c=-y_i$ for some $i$. If $e-c=-y_1$, this implies that $e-c=-e$ hence $c=2e$, which is a contradiction. Hence if $e-c=-y_i$ for some $i$, $i \neq 1$. Without loss of generality, let $e-c=-y_2$. Then we have that $\{-y_1,-y_2\}=\{e-c,e-c\}\subset\Sigma_p$. Then by Lemma \ref{l46} part 3 for $e-c$, $2(c-e) \in \Sigma_p$, which is a contradiction since $2(c-e) > c$.

Second, assume that $e-c=-f_i$ for some $i$. Then we have that $e-c=-f_i \in \Sigma_p$ and $e-c=-f_i \in \Sigma_q$. Then by Lemma \ref{l46} part 2 for $c-e$, $2(c-e) \in \Sigma_r$, which is a contradiction since $2(c-e) > c$. Hence $e-c \neq \Sigma_p$.

\item $e-c \notin \Sigma_q$.

Suppose that $e-c \in \Sigma_q$. Then we have that $\{c-e,e-c\}=\{c-y_1,e-c\}\subset\Sigma_q$. Hence by Lemma \ref{l46} part 4 for $c-e$, either $2(c-e) \in \Sigma_p$ or $2(c-e) \in \Sigma_r$. However, $2(c-e) >c$, which is a contradiction.
\end{enumerate}

Therefore $e-c \in \Sigma_r$. Then by Lemma \ref{l46} part 1 for $c-e$, $\Sigma_q \equiv \Sigma_r \mod c-e$. Consider $\{c,e\}\subset\Sigma_q$. We have that $c \notin \Sigma_r$ and $e \notin \Sigma_r$. Also, $e-(c-e)=2e-c=a-a-b=-b$, but $-b \notin \Sigma_r$ since $-b$ is a negative even integer and $-a$ is the only negative even weight in $\Sigma_r$, but $a \neq b$. Since $|e+k(c-e)|>c$ for $k<-2$ or $k>1$, $\Sigma_q \equiv \Sigma_r \mod c-e$ and $\{c,e\}\subset\Sigma_q$ imply that $N_r(e-2(c-e))=N_r(3e-2c)=2$. Then by Lemma \ref{l46} part 3 for $3e-2c$, $2(2c-3e) \in \Sigma_r$, which is a contradiction since $2(2c-3e)=4c-6e=c+3c-6e=c+3a+3b-6e=c+6e+3b-6e=c+3b>c$, where $c$ is the largest weight. Therefore, $e_i=1$ for all $i$.

\item $f_i=1$ for all $i$.

Assume on the contrary that $f_i>1$ for some $i$. Denote $f=f_i$. Then by Lemma \ref{l46} part 3 for $f$, either $\{-2f,-f\}\subset\Sigma_p$, $\{-f,f\}\subset\Sigma_q$, and $\{2f,f\}\subset\Sigma_r$, or $\{-f,f\}\subset\Sigma_p$, $\{-2f,-f\}\subset\Sigma_q$, and $\{2f,f\}\subset\Sigma_r$. Moreover, no additional multiples of $f$ should appear as weights. Since $\{2f,f\}\subset\Sigma_r$ in either case, the only possibility is that $b=2f$. Therefore, the former is the case. Thus, we have that $\{-f,f\}\subset\Sigma_q$. Therefore, $f=c-y_i$ for some $i$. Without loss of generality, let $c-y_1=f$. Moreover, since no additional multiples of $f$ should appear as weights, $a \neq 2f$. In particular, $b=2f \neq a$. Since $2f=b<a+b=c$, $f<\frac{c}{2}$. Thus $c-f > \frac{c}{2}$.

Next, we show that $c-f \in \Sigma_r$. We have that $f-c=-y_1 \in \Sigma_p$. By Lemma \ref{l24} for $c-f$, either $c-f \in \Sigma_p$, $c-f \in \Sigma_q$, or $c-f \in \Sigma_r$.

\begin{enumerate}[(a)]
\item $c-f \notin \Sigma_p$.

Suppose that $c-f \in \Sigma_p$. Then we have that $\{c-f,f-c\}=\{c-f,-y_1\}\subset\Sigma_p$. Then by Lemma \ref{l24} part 4 for $c-f$, either $2(c-f) \in \Sigma_q$ or $2(c-f) \in \Sigma_r$, which is a contradiction since $2(c-f) > c$.

\item $c-f \notin \Sigma_q$.

Suppose that $c-f \in \Sigma_q$. Then $c-f=c-y_i$ for some $i$. If $c-f=c-y_1$, then $c-f=c-y_1=f$ hence $c=2f<c$, which is a contradiction. Next, suppose that $c-f=c-y_i$ for some $i \neq 1$. Then we have that $\{f-c,f-c\}=\{-y_1,-y_i\}\subset\Sigma_p$. Hence, by Lemma \ref{l46} part 3 for $f-c$, $2(c-f) \in \Sigma_p$, which is a contradiction since $2(c-f) > c$.
\end{enumerate}

Therefore $c-f \in \Sigma_r$. We also have that $f-c=-y_1 \in \Sigma_p$. Then by Lemma \ref{l46} part 1 for $c-f$, $\Sigma_p \equiv \Sigma_r \mod c-f$. Consider $\{-c,-f\}\subset\Sigma_p$. We have that $-c \notin \Sigma_r$ and $-f \notin \Sigma_r$. Also, $-f+(c-f)=c-2f=a+b-2f=a+2f-2f=a$, but $a \notin \Sigma_r$ since $a$ is a positive even integer and $b$ is the only positive even weight in $\Sigma_r$, but $a \neq b$. Since $|-f+k(c-f)|>c$ for $k<-1$ or $k>2$, $\Sigma_p \equiv \Sigma_r \mod c-f$ and $\{-c,-f\}\subset\Sigma_p$ imply that $N_r(-f+2(c-f))=N_r(2c-3f)=2$. Then by Lemma \ref{l46} part 3 for $2c-3f$, $-2(2c-3f) \in \Sigma_r$, which is a contradiction since $-2(2c-3f)=-4c+6f=-c-3c+6f=-c-3a-3b+6f=-c-3a-6f+6f=-c-3a<-c$, where $-c$ is the smallest weight. Therefore, $f_i=1$ for all $i$.
\end{enumerate}
\end{proof}

\begin{lem} \label{l63} In Lemma \ref{l62}, $x_{i} \neq c-y_{j}$,  for all $i$ and $j$.
\end{lem}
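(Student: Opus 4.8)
The plan is to argue by contradiction: assume $x_i = c - y_j$ for some $i$ and $j$ and derive a contradiction, using only the shape of the weight lists in Lemma~\ref{l62} together with Lemma~\ref{l46}. Recall from Lemma~\ref{l62} that $a$ and $b$ are even natural numbers with $c = a+b$, so $c \ge 4$ and $a, b \le c-2 < c$, and that at $r$ the weight $-a$ is the only negative even weight and $b$ the only positive even weight (the remaining weights at $r$ being odd). The first thing I would record is an elementary consequence of the hypothesis $x_i = c - y_j$: inspecting $\Sigma_p$ and $\Sigma_q$, the integer $x_i = c - y_j$ is then a weight of both $p$ (the term $x_i$) and $q$ (the term $c - y_j$), and the integer $x_i - c = -y_j$ is a weight of both $p$ (the term $-y_j$) and $q$ (the term $x_i - c$).

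Since $x_i$ is an odd natural number, I would split into the cases $x_i \ge 3$ and $x_i = 1$. If $x_i \ge 3$, apply Lemma~\ref{l46} part 2 to $e = x_i$, which is a weight at the two distinct fixed points $p$ and $q$; this forces $\{-2x_i, -x_i\} \subset \Sigma_r$, and since $-2x_i$ is a negative even integer it must equal the unique negative even weight $-a$ of $r$, so $a = 2x_i = 2(c - y_j)$. As $a < c$, this gives $2(c - y_j) < c$, hence $y_j > c/2 \ge 2$, so $y_j \ge 3$. Then apply Lemma~\ref{l46} part 2 to $e = -y_j$, again a weight at $p$ and $q$, obtaining $\{2y_j, y_j\} \subset \Sigma_r$, so $b = 2y_j$. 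But then $c = a + b = 2(c - y_j) + 2y_j = 2c$, which is impossible. If instead $x_i = 1$, then $y_j = c - 1 \ge 3$, so $-y_j$ is a weight at $p$ and $q$ with $|-y_j| \ge 3$; Lemma~\ref{l46} part 2 applied to $e = -y_j$ again yields $\{2y_j, y_j\} \subset \Sigma_r$, hence $b = 2y_j = 2(c-1)$, contradicting $b < c$ since $c \ge 4$. Either way we reach a contradiction, so $x_i \ne c - y_j$ for all $i$ and $j$.

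I do not expect a serious obstacle here; the delicate points are just the bookkeeping around Lemma~\ref{l46} part 2. One has to be sure that in each application the ``remaining fixed point'' is $r$ --- this is automatic, since $x_i$ (respectively $-y_j$) has been exhibited as a weight of $p$ and of $q$, and Lemma~\ref{l46} part 2 also guarantees such a weight does not occur at a third fixed point --- and that the even integers $-2x_i$ and $2y_j$ forced into $\Sigma_r$ can only be $-a$ and $b$. The case $x_i = 1$ has to be treated separately precisely because Lemma~\ref{l46} part 2 may only be applied to integers in $\mathbb{Z}\setminus\{-1,0,1\}$, so one cannot feed it $e = x_i$ directly when $x_i = 1$; in that case the argument instead starts from $e = -y_j$, which is large in absolute value.
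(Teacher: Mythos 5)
Your proof is correct, and it runs on the same engine as the paper's: the hypothesis $x_i=c-y_j$ places the weight $x_i$ at both $p$ and $q$ (and $-y_j$ at both $p$ and $q$), which triggers Lemma \ref{l46} part 2. The two write-ups differ only in bookkeeping. The paper splits according to whether $x_1>\frac{c}{2}$, $x_1<\frac{c}{2}$, or $x_1=\frac{c}{2}$; in the first two cases the conclusion of part 2 produces a weight $\pm 2x_1$ or $\pm 2(c-x_1)$ of absolute value exceeding the largest weight $c$, an immediate contradiction, and the borderline case $x_1=y_1=\frac{c}{2}$ is handled with part 4 instead. You split on $x_i=1$ versus $x_i\geq 3$ --- a necessary precaution since part 2 only applies to $e\notin\{-1,0,1\}$, a point that is automatic in the paper's casework because there the relevant $e$ always satisfies $|e|\geq\frac{c}{2}\geq 2$ --- and you extract the contradiction through the parity structure of $\Sigma_r$: the forced weights $-2x_i$ and $2y_j$ must be the unique even weights $-a$ and $b$ of $r$, whence $c=a+b=2(x_i+y_j)=2c$. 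This cleanly absorbs the $x_i=\frac{c}{2}$ case (it would give $a=c$) and avoids part 4 altogether. Both routes are sound; the paper's reaches the contradiction a step earlier via the size bound, while yours is slightly more uniform across cases.
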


\begin{proof}
Suppose not. Without loss of generality, assume that $x_{1}=c-y_{1}$. Then either $x_{1}>\frac{c}{2}$, $x_{1}-c<-\frac{c}{2}$, or $x_{1}=y_{1}=\frac{c}{2}$. If $x_{1}>\frac{c}{2}$, $x_{1} \in \Sigma_{p}$ and $x_{1}=c-y_{1} \in \Sigma_{q}$. Hence by Lemma \ref{l46} part 2 for $x_1$, $-2x_1 \in \Sigma_r$, which is a contradiction since $-2x_1<-c$ where $-c$ is the smallest weight. Next, assume that $x_{1}-c<-\frac{c}{2}$. Then $x_1-c=-y_1 \in \Sigma_p$ and $x_1-c \in \Sigma_q$. Again by Lemma \ref{l46} part 2 for $x_1-c$, $2(c-x_1) \in \Sigma_r$, which is a contradiction since $2(c-x_{1})>c$ where $c$ is the largest weight. If $x_{1}=y_{1}=\frac{c}{2}$, $\{-2x_{1},x_{1},-x_{1}\}=\{-c,x_{1},-y_{1}\}\subset\Sigma_{p}$, which is a contradiction by Lemma \ref{l46} part 4 for $x_1$.
\end{proof}

\begin{lem} \label{l64}
In Lemma \ref{l62}, if $x_{i}=c-x_{j}$ for $i \neq j$, then $2x_{i}=2x_{j}=c$. Also, if $x_i=c-x_i$ for some $i$, then $2x_i=2x_j=c$ for some $j \neq i$. Moreover, there could be at most one such pair $(x_i,x_j)$ for $i \neq j$ such that $x_i=c-x_j$.
\end{lem}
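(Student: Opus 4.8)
The plan is to handle the self-paired case ($x_i = c/2$) and the cross-paired case ($x_i = c - x_j$ with $x_i \neq x_j$) by examining the isotropy submanifolds $M^{\mathbb{Z}_{c/2}}$ and $M^{\mathbb{Z}_{x_i}}$ with Lemma~\ref{l45} and Theorem~\ref{t26}, and then to deduce the uniqueness of the pair from part 3 of Lemma~\ref{l46}. Throughout, recall that $a,b$ are even natural numbers, so $c=a+b\geq 4$, and that $-c\in\Sigma_p$, $c\in\Sigma_q$.

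\emph{Self-paired case.} Suppose $x_i=\tfrac c2$, a natural number $\geq 2$. Then $\tfrac c2$ and $-c=-2\cdot\tfrac c2$ both lie in $\Sigma_p$ and are divisible by $\tfrac c2$, so the component $Z$ of $M^{\mathbb{Z}_{c/2}}$ through $p$ has $\dim Z\geq 4$; in particular $Z$ is not a point, and by Lemma~\ref{l45} applied with $e=\tfrac c2$ we are in the third case ($Z$ is $4$-dimensional, containing all three fixed points) or the fourth case ($Z$ is $6$-dimensional, containing two of them), and in either case the weights of $Z$ are multiples of $\tfrac c2$, hence have absolute value $\geq\tfrac c2$. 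The weight pattern of $Z$ at $p$, given by Theorem~\ref{t26}, must contain both $\tfrac c2>0$ and $-c<0$. In the $4$-dimensional case this forces $\{-a',b'\}=\{-c,\tfrac c2\}$, i.e.\ $a'=c$ and $b'=\tfrac c2$, so the weight $a'+b'=\tfrac{3c}{2}$ occurs at another fixed point, contradicting that $c$ is the largest weight. In the $6$-dimensional case $-c$ can only be the weight $-(a'+b')$ and $\tfrac c2$ one of $a',b'$, forcing $a'=b'=\tfrac c2$ and hence $N_p(c/2)\geq 2$. So at least two of the $x_k$ equal $\tfrac c2$, i.e.\ $2x_i=2x_j=c$ for some $j\neq i$.

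\emph{Cross-paired case.} Suppose $x_i+x_j=c$ with $x_i>x_j$, so $x_i>\tfrac c2\geq 2$. From the list for $\Sigma_q$ in Lemma~\ref{l62}, $x_j-c=-x_i\in\Sigma_q$; thus $x_i\in\Sigma_p$ and $-x_i\in\Sigma_q$. If $\Sigma_p$ had two weights divisible by $x_i$, then as above the component of $M^{\mathbb{Z}_{x_i}}$ through $p$ would be $4$- or $6$-dimensional with weights multiples of $x_i$, giving a weight $\geq 2x_i>c$, which is impossible; so $\Sigma_p$ has exactly one weight divisible by $x_i$, namely $x_i$, and by Lemma~\ref{l45} we are in the second case: $M^{\mathbb{Z}_{x_i}}$ consists of a $2$-sphere with two fixed points and one isolated fixed point. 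Since $-x_i\in\Sigma_q$, the point $q$ lies on the $2$-sphere, so the $2$-sphere contains $p$ and $q$, its weights are $x_i$ at $p$ and $-x_i$ at $q$, and $\Sigma_r$ has no weight divisible by $x_i$. Now use $\Sigma_p\equiv\Sigma_q\pmod{x_i}$ (Lemma~\ref{l25}) together with the explicit lists of Lemma~\ref{l62}: counting residues modulo $x_i$ forces $N_p(x_j)=1$ and restricts where $x_j$ and $-x_j$ may appear. In the branch that produces a weight $y_l=x_j$ at $p$ (hence, via Lemma~\ref{l24}, $x_j\in\Sigma_r$) together with $b=2x_j\in\Sigma_r$, part 2 of Lemma~\ref{l46} applied to $e=x_j$ forces $-2x_j=-b\in\Sigma_q$, contradicting that $\Sigma_q$ has no negative even weight. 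The other branch, and the sub-cases $x_j=1$ or $x_i\leq 2x_j$ in which a few additional residues become available, are eliminated either by continuing this bookkeeping or by an ABBV sign estimate on $\int_M 1=0$ in the style of the proof of Lemma~\ref{l57}. Hence $x_i=c-x_j$ with $i\neq j$ forces $x_i=x_j$, so $2x_i=2x_j=c$.

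\emph{Uniqueness.} The number of indices $k$ with $x_k=\tfrac c2$ equals $N_p(c/2)$. If this were $\geq 3$, then part 3 of Lemma~\ref{l46} for $e=\tfrac c2$ — where the distinguished fixed point must be $p$, since $-2\cdot\tfrac c2=-c$ occurs only in $\Sigma_p$ — would say that no additional multiples of $\tfrac c2$ appear as weights, contradicting the third copy of $\tfrac c2$. So $N_p(c/2)=2$, giving exactly one pair with both members equal to $\tfrac c2$; combined with the cross-paired case, there is at most one pair $(x_i,x_j)$ with $i\neq j$ and $x_i=c-x_j$. The delicate point is the residue bookkeeping that closes the cross-paired case; everything else is a direct application of Lemma~\ref{l45}, Theorem~\ref{t26}, and Lemma~\ref{l46}.
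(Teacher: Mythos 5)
Your self-paired case and your uniqueness step are fine and essentially reproduce the paper's argument: for $x_i=\tfrac c2$ you read off from Lemma \ref{l45} (with $e=\tfrac c2$, ruling out the $4$-dimensional case and landing in the $6$-dimensional one) that the pattern at $p$ is $\{\tfrac c2,\tfrac c2,-c\}$, hence $N_p(\tfrac c2)\geq 2$; and for uniqueness you invoke part 3 of Lemma \ref{l46} exactly as the paper does to cap $N_p(\tfrac c2)$ at $2$. The genuine gap is the cross-paired case, $x_i=c-x_j$ with $x_i\neq x_j$, which is the heart of the lemma and which you do not actually prove. After correctly reducing to $\Sigma_p\equiv\Sigma_q \bmod x_i$, you assert that counting residues ``forces $N_p(x_j)=1$'', treat a single branch, and then dismiss ``the other branch, and the sub-cases $x_j=1$ or $x_i\leq 2x_j$'' as ``eliminated either by continuing this bookkeeping or by an ABBV sign estimate in the style of the proof of Lemma \ref{l57}.'' That deferred bookkeeping is precisely the content of the paper's proof: it matches $\{-c,x_1,x_2\}\cup\{\pm1\}^s$ at $p$ against $\{c,x_1-c,x_2-c\}\cup\{\pm1\}^s$ at $q$, splits into $t=0$ and $t>0$, and in the $t=0$ case must combine the residue identity $x_3\equiv a \bmod x_1$ with $c_1(M)|_p=0$ (Corollary \ref{c27}) to get $b=x_3+1$ and then $x_2=2x_3+1$, establish the ordering $a>x_1>\tfrac c2>x_2>b>x_3$, and only then run a sign comparison of $\tfrac1A+\tfrac1B+\tfrac1C=0$ — a computation specific to this configuration, not the one in Lemma \ref{l57}; in the $t>0$ case it instead derives a pigeonhole contradiction from $-y_t$ having no admissible partner modulo $x_1$ (using Lemma \ref{l63}). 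None of this appears in your proposal, so the key conclusion that $x_i=c-x_j$ with $i\neq j$ forces $2x_i=2x_j=c$ is not established.

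Moreover, the one branch you do close is not closed by the lemmas as cited: from $x_j\in\Sigma_p$ and $-x_j\in\Sigma_p$ you cannot conclude $x_j\in\Sigma_r$ from Lemma \ref{l24}, which only balances the total multiplicities of $\pm x_j$ over all three fixed points (both signs already occur at $p$, and $\pm x_j$ could also occur at $q$ via $x_k-c$ or $c-y_m$); and $b=2x_j$ is an extra assumption of that branch rather than something you derive. So even granting your reduction to the $2$-sphere case of Lemma \ref{l45}, the elimination of $x_i\neq x_j$ remains an unexecuted sketch rather than a proof.
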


\begin{proof} First, suppose that $x_i=c-x_i$ for some $i$. Then $c=2x_1$. Thus, we have that $\{-2x_i,x_i\}=\{-c,x_i\}\subset\Sigma_p$ and $\{2x_i,-x_i\}=\{c,x_i-c\}\subset\Sigma_q$. By looking at the isotropy submanifold $M^{\mathbb{Z}_{x_i}}$, this must be the fourth case of Lemma \ref{l45}. Hence, $\{-2x_i,x_i,x_i\}\subset\Sigma_p$ and $\{2x_i,-x_i,-x_i\}\subset\Sigma_q$. This implies that $x_i=x_j$ for some $j \neq i$.

Next, suppose that $x_{i}=c-x_{j}$ and $x_i \neq x_j$ for some $i\neq j$. Without loss of generality, let $x_{1}=c-x_{2}$ and $x_{1}\neq x_{2}$. We can also assume that $x_{1}>x_{2}$. Then $x_{1}>\frac{c}{2}>x_{2}$. Since $x_{1}\in\Sigma_{p}$ and $-x_{1}=x_2-c\in\Sigma_{q}$, by Lemma \ref{l46} part 1 for $x_1$, $\Sigma_{p} \equiv \Sigma_{q} \mod x_{1}$.

First, we can choose a bijection between $\Sigma_p$ and $\Sigma_q$ so that
\begin{center}
$\Sigma_{p}\supset \{1\} \cup \{-1,1\}^{s} \equiv \{1\} \cup \{-1,1\}^{s} \subset \Sigma_{q} \mod x_{1}.$
\end{center}
Also, since $x_{1}+x_{2}=c$, we can also choose so that
\begin{center}
$\Sigma_{p}\supset\ \{-c,x_{1},x_{2}\}=\{-x_1-x_2,x_1,x_2\}$
$\equiv \{-x_{2},-x_{1},x_1+x_2\}=\{x_{1}-c,x_{2}-c,c\}\subset\Sigma_{q} \mod x_1.$
\end{center}
We separate into two cases:
\begin{enumerate}
\item $t=0$.

In this case, we are left with
\begin{center}
$\Sigma_{p}\supset\{-b,x_{3}\}\equiv\{a,x_3-c=x_{3}-x_{1}-x_{2}\}\subset\Sigma_{q} \mod x_{1}.$
\end{center}

If $x_{3}\equiv x_{3}-x_{1}-x_{2} \mod x_{1}$, we have that $x_{1}|x_{2}$, which is a contradiction since $x_{1}>x_{2}$. Hence $x_{3} \equiv a \mod x_{1}$. By Corollary \ref{c27}, $c_{1}(M)|_{p}=-c-b+x_1+x_2+x_3+1=-b+x_3+1=0$, hence $x_{3}+1=b$. Then, since $a=c-b=x_{1}+x_{2}-x_{3}-1$, we have that $x_{3} \equiv x_{1}+x_{2}-x_{3}-1=a \mod x_{1}$, hence $2x_{3}+1 \equiv x_{2} \mod x_{1}$. Since $2x_3+1$, $x_2$, and $x_1$ are odd, and $2x_1>c$ where $c$ is the largest weight, this implies that $2x_{3}+1=x_{2}$. Then we have that $a>x_{1}>\frac{c}{2}>x_{2}>b>x_{3}$. Therefore,
\begin{center}
$\D{\frac{-(-1)^{s}(B+A)}{x_{1}x_{2}(x_{1}+x_{2})}=a(x_{1}+x_{2}-x_{3})-bx_{3} > 0},$
\end{center}
hence $(-1)^{s+1}B>(-1)^{s}A$. Also, since $\lambda_r=\frac{1}{2}\dim M +2$ by Lemma \ref{l33}, $(-1)^{s}C>0$. Then, by Theorem \ref{t21},
\begin{center}
$\D{0=\int_M 1 = (-1)^{s} \bigg( \frac{1}{A} + \frac{1}{B} + \frac{1}{C} \bigg) >0},$
\end{center}
which is a contradiction.

\item $t>0$.

In this case, we are left with
\begin{center}
$\{-b \} \cup \{x_{i}\}_{i=3}^{t+3} \cup \{-y_{i}\}_{i=1}^{t}$
$\equiv \{a\} \cup \{x_{i}-c\}_{i=3}^{t+3} \cup \{c-y_i\}_{i=1}^{t}\mod x_{1}.$
\end{center}
Without loss of generality, let $x_{3}\leq x_4 \leq \cdots\leq x_{t+3}$ and $-y_{1}\leq -y_2 \leq \cdots \leq-y_{t}$. Hence,
\begin{center}
$\{-b, -y_{1}\leq \cdots\leq -y_{t}<0<x_{3}\leq \cdots\leq x_{t+3}\}$
$\equiv \{x_{3}-c\leq\cdots\leq x_{t+3}-c<0<c-y_{1}\leq\cdots\leq c-y_{t}, a\} \mod x_{1}$
\end{center}
Recall that $x_1$ is odd and $2x_1 >c$ where $c$ is the largest weight.

Consider $x_{3}\in \Sigma_p$. If $x_3 \equiv c-y_i \mod x_1$ for some $i$, then $x_3=c-y_i$, which contradicts Lemma \ref{l63}. If $x_3 \equiv x_i-c \mod x_1$ for some $i \neq 1$ and 2, we have that $x_3-2x_1=x_i-c$, which is a contradiction since $x_3-2x_1 < x_i-c$ for $i \neq 1$ and 2. Hence, $x_3 \equiv a \mod x_1$. Then we can also choose so that $ -b = a-c \equiv x_3-c  \mod x_1$. Then we are left with
\begin{center}
$\{-y_{1}\leq \cdots\leq -y_{t}<0<x_{4} \leq \cdots \leq x_{t+3}\}$
$\equiv \{x_{4}-c\leq\cdots\leq x_{t+3}-c<0<c-y_{1}\leq\cdots\leq c-y_{t}\} \mod x_{1}$
\end{center}

Next, consider $-y_{t} \in \Sigma_p$. If $-y_t \equiv x_i-c \mod x_1$ for some $i \neq 1,2$, and 3, then $-y_t=x_i-c$, which is a contradiction by Lemma \ref{l63}. If $-y_t \equiv c-y_i \mod x_1$ for some $i$, then $2x_1-y_t=c-y_i$, which is a contradiction since $2x_1-y_t>c-y_i$ for all $i$. Then $-y_t \in \Sigma_r$ is congruent to no element in $\Sigma_q$ modulo $x_1$, which is a contradiction.
\end{enumerate}

Finally, without loss of generality, assume that $x_1=c-x_2$ and $x_3=c-x_i$ for some $i$. Then $2x_1=2x_2=c$ and $2x_3=2x_i=c$ for some $i$. Then we have $\{-2x_{1},x_{1},x_{1},x_{1}\}=\{-c,x_1,x_2,x_3\}\subset\Sigma_{p}$, which is a contradiction by Lemma \ref{l46} part 3 for $x_1$.
\end{proof}

\begin{lem} \label{l65} In Lemma \ref{l62}, $y_{i}\neq c-y_{j}$, if $i\neq j$.
\end{lem}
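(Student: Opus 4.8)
The plan is to argue by contradiction, mirroring the proof of Lemma~\ref{l64}. Suppose $y_i = c - y_j$ for some $i \neq j$; this requires $t \geq 2$, and after relabelling we may take $y_1 + y_2 = c$.

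If $y_1 = y_2$, then $2y_1 = c$, and since $1 \neq 2$ the weight $-y_1$ occurs at least twice in $\Sigma_p$, so $N_p(-y_1) > 1$. Then Lemma~\ref{l46} part 3 (with $e = -y_1$) forces $\{-2e, e, e\} = \{c, -y_1, -y_1\} \subset \Sigma_p$, which is impossible since by Lemma~\ref{l62} the only even weights at $p$ are the negative ones $-c$ and $-b$. So $y_1 \neq y_2$, and interchanging the indices we may assume $y_1 > y_2 > 0$, hence $y_1 > \tfrac{c}{2}$.

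Since $-y_1 \in \Sigma_p$, $y_1 = c - y_2 \in \Sigma_q$, and $y_1 > \tfrac{c}{2}$ is greater than half the largest weight $c$, Lemma~\ref{l46} part 1 applies with $e = -y_1$: one obtains $N_p(-y_1) = N_q(y_1) = 1$, the congruence $\Sigma_p \equiv \Sigma_q \mod y_1$, and the fact that the only nonzero multiples of $y_1$ occurring as weights are $-y_1$ at $p$ and $y_1$ at $q$ (so, for instance, no $x_i$ equals $y_1$ or $y_2$). I would then fix a bijection $\varphi \colon \Sigma_p \to \Sigma_q$ realizing $\Sigma_p \equiv \Sigma_q \mod y_1$ and trace it through as in the $t > 0$ case of the proof of Lemma~\ref{l64}. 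The key arithmetic input is that $2y_1 > c$ while all weights lie in $[-c, c]$, so $\varphi(\ell) - \ell \in \{0, \pm y_1, \pm 2y_1\}$, with the value $\pm 2y_1$ possible only between two odd weights of near-maximal modulus. After matching $\pm 1$ with $\pm 1$, fixing $\varphi(-y_1) = y_1$ by the ``no extra multiples'' clause, and then pinning down the partners of $-c = -(y_1 + y_2)$, of $-b$, and of the $x_i$ --- using Lemma~\ref{l63} (which forbids $x_i = c - y_j$) and the congruence $\Sigma_p \equiv \Sigma_q \mod c$ already built into Lemma~\ref{l62} to discard the spurious options --- I expect to be left with a residue class modulo $y_1$ holding strictly more weights of $\Sigma_p$ than of $\Sigma_q$; concretely, a negative weight $-b$ or $-y_k$ at $p$ with no remaining partner at $q$, contradicting $\Sigma_p \equiv \Sigma_q \mod y_1$. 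Should the forced identifications instead collapse the weight lists to an explicit short list, I would finish as in Lemmas~\ref{l56} and~\ref{l64}: use $c_1(M)|_p = c_1(M)|_q = 0$ to determine the remaining weights and then derive a contradiction from $0 = \int_M 1 = \tfrac{1}{A} + \tfrac{1}{B} + \tfrac{1}{C}$ by a sign analysis of $A = \prod \xi_p^j$, $B = \prod \xi_q^j$, and $C = \prod \xi_r^j$.

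The main obstacle is precisely this last bookkeeping step: verifying that the constraints ($2y_1 > c$, no extra multiples of $y_1$, Lemma~\ref{l63}, and $\Sigma_p \equiv \Sigma_q \mod c$) genuinely force each identification and leave no consistent completion of the matching. The delicate forks are those where a single weight --- say $-y_2 = y_1 - c \in \Sigma_p$, or one of the $x_i$ --- could a priori match either its unshifted or its $\pm y_1$-shifted counterpart in $\Sigma_q$; ruling out every branch is where the argument needs the most care.
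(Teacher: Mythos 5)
Your handling of the degenerate subcase $y_i=y_j$ (so $2y_i=c$) is correct and matches the paper: $N_p(-y_i)\geq 2$ together with Lemma \ref{l46} part 3 produces a positive even multiple of $y_i$ that cannot lie in $\Sigma_p$. You also pick the right tool for the main subcase $y_1\neq y_2$, namely Lemma \ref{l46} part 1 applied to $y_1>\frac{c}{2}$, which gives $\Sigma_p\equiv\Sigma_q \bmod y_1$. But from that point on your argument is only a declared intention: you say you \emph{expect} the matching to be overdetermined and you explicitly defer the verification (``the main obstacle is precisely this last bookkeeping step''). That bookkeeping \emph{is} the content of the lemma, and the paper carries it out concretely: after pairing the $\pm1$'s and sending $-y_1$ to $y_1$, it examines the smallest $x_1$ and shows (using Lemma \ref{l63} and the size bound $2y_1>c$) that $x_1$ must pair with $a$; it then examines the most negative remaining weight $-y_t$ and shows its only possible partners are $c$ and $c-y_2$; the branch $-y_t+3y_1=c$ is eliminated by deducing $-y_t<-y_2$ and then showing $-y_2$ has no admissible partner at all, and the branch $2y_1-y_t=c-y_2$ forces $y_t=y_1$, whence $N_p(-y_1)\geq 2$ and Lemma \ref{l46} part 3 would require $2y_1\in\Sigma_p$ with $2y_1>c$. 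None of these forced identifications or branch eliminations appear in your proposal, so the proof is incomplete exactly where the difficulty lies.

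Moreover, the one concrete arithmetic claim you do make is false: you assert that two weights congruent mod $y_1$ can differ only by $0,\pm y_1,\pm 2y_1$. Since $y_1$ is only known to exceed $\frac{c}{2}$, differences up to $2c$ can reach $3y_1$, and a parity check (the relevant difference $c+y_t$ is odd, so it must be an odd multiple of $y_1$) shows that it is precisely the $3y_1$ branch, $-y_t+3y_1=c$, that has to be confronted; this is the case the paper works hardest to exclude, and your scheme would silently miss it. Finally, your parenthetical claim that no $x_i$ equals $y_2$ does not follow from the ``no additional multiples of $y_1$'' clause ($y_2$ is not a multiple of $y_1$); the statement $x_i\neq y_j$ is only established later, in Lemma \ref{l67}, which rests on Lemma \ref{l66} and hence on the present lemma, so it cannot be used here without circularity.
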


\begin{proof}
Suppose not. First, assume that $y_i=c-y_j$ and $y_i=y_j$ for some $i \neq j$ , i.e., $2y_i=2y_j=c$. Then $\{-2y_i,-y_i,-y_i\}=\{-c,-y_i,-y_j\}\subset\Sigma_p$, which contradicts Lemma \ref{l46} part 3 for $y_i$.

Second, assume that $y_i=c-y_j$ and $y_i \neq y_j$ for some $i \neq j$. Without loss of generality assume that $y_{1}= c-y_{2}$ and $y_{1}\neq y_{2}$. We can also assume that $y_{1} > \frac{c}{2} > y_{2}$. Then we have that $-y_1 \in \Sigma_p$ and $c-y_2=y_1 \in \Sigma_q$. Hence by Lemma \ref{l46} part 1 for $y_1$, $\Sigma_{p} \equiv \Sigma_{q} \mod y_{1}$.

First, we can choose a bijection between $\Sigma_p$ and $\Sigma_q$ so that
\begin{center}
$\Sigma_{p}\supset\{-y_1,1\} \cup \{-1,1\}^{s} \equiv\{c-y_2=y_1,1\} \cup \{-1,1\}^{s} \subset \Sigma_{q} \mod y_{1}.$
\end{center}
Then we are left with
\begin{center}
$\Sigma_{p}\supset\{-c,-b \} \cup \{x_{i}\}_{i=1}^{t+3} \cup \{-y_{i}\}_{i=2}^{t}$
$ \equiv \{c,a\} \cup \{x_{i}-c\}_{i=1}^{t+3} \cup\{c-y_1=y_2\}\cup\{c-y_i\}_{i=3}^{t} \subset \Sigma_{q}\mod y_{1}.$
\end{center}
Without loss of generality, let $x_{1}\leq x_2 \leq\cdots\leq x_{t+3}$ and $-y_{3}\leq -y_4 \leq \cdots \leq-y_{t}$, i.e.,
\begin{center}
$\{-c,-b, -y_{2}, -y_{3}\leq -y_4 \leq \cdots\leq -y_{t}<0<x_{1}\leq x_2 \leq \cdots \leq x_{t+3}\}$
$\equiv \{x_{1}-c\leq \cdots \leq x_{t+3}-c<0<c-y_{3}\leq\cdots\leq c-y_{t}, c,a,c-y_{2}\} \mod x_{1}.$
\end{center}
Recall that $y_1$ is odd and $2y_1>c$ where $c$ is the largest weight.

Consider $x_1 \in \Sigma_p$. If $x_1 \equiv c-y_i \mod y_1$ for some $i\neq 1$ and 2, then $x_1=c-y_i$, which contradicts Lemma \ref{l63}. If $x_1 \equiv x_i-c$ for some $i$, $x_1-2y_1 =x_i-c$, which is a contradiction since $x_1-2y_1 < x_i-c$ for all $i$. If $x_1 \equiv c \mod y_1$, then $x_1+y_1=c$, which contradicts Lemma \ref{l63}. Therefore, $x_1 \equiv a \mod y_1$.

Next, consider $-y_t \in \Sigma_p$. If $-y_t \equiv c-y_i$ for some $i \neq 1$ and 2, then $2y_1-y_t=c-y_i$, which is a contradiction since $2y_1-y_t > c-y_i$ for all $i$. If $-y_t \equiv x_i-c \mod y_1$ for some $i$, then $-y_t=x_i-c$, which contradicts Lemma \ref{l63}. Therefore, we have that either $-y_t \equiv c \mod y_1$ or $-y_t \equiv c-y_2 \mod y_1$.

Suppose that $-y_{t} \equiv c \mod y_{1}$. This means that $-y_{t}+3y_{1}=c$. Then we have that $-y_t=c-3y_1=y_1+y_2-3y_1=y_2-2y_1<y_1-2y_1=-y_1<-y_2$, hence $-y_t<-y_2$. Next, we consider $-y_2 \in \Sigma_p$. Using the same argument for $-y_t$, we have that $-y_2 \in \Sigma_p$ is congruent to no element in $\Sigma_q$ modulo $y_1$, which is a contradiction.

Next, suppose that $-y_t \equiv c-y_2 \mod y_1$. This means that $2y_1-y_t=c-y_2$. Then we have that $2y_1-y_t=c-y_2=y_1+y_2-y_2=y_1$, hence $y_1=y_t$. Hence, we have $\{-y_1,-y_1\}=\{-y_1,-y_t\}\subset\Sigma_p$. Then by Lemma \ref{l46} part 3 for $-y_1$, $2y_{1} \in \Sigma_{p}$, which is a contradiction since $c < 2y_{1}$ where $c$ is the largest weight.
\end{proof}

\begin{lem} \label{l66} Fix a natural number $e$ such that $e \neq \frac{c}{2}$. In Lemma \ref{l62}, at most one of $x_i$'s, $y_i$'s, $c-x_i$'s, and $c-y_i$'s can be $e$.
\end{lem}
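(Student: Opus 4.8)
The plan is to argue by contradiction: suppose that some value $e$ occurs at least twice among the numbers $x_1,\dots,x_{t+3},\,y_1,\dots,y_t,\,c-x_1,\dots,c-x_{t+3},\,c-y_1,\dots,c-y_t$. Since the $x_i,y_i$ are odd and $c$ is even, every one of these numbers is odd, so $e$ is odd; by hypothesis $e\neq\tfrac c2$, and since $a,b$ are positive even integers with $c=a+b$ we have $c\geq4$. Throughout I would use freely the shape of the weights from Lemma \ref{l62}: $\Sigma_p$ has no positive even weight, $\Sigma_q$ has no negative even weight, and no weight of any fixed point has absolute value exceeding $c$ (because $c$ is the largest weight and Lemma \ref{l24} holds). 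I would also use that, by Lemma \ref{l46} part 1 applied to $c$, we have $\Sigma_p\equiv\Sigma_q \mod c$, realized by the bijection of Lemma \ref{l62} that sends each $x_i$ to $x_i-c$ and each $-y_i$ to $c-y_i$.

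The first step is to clear away the pairings that already follow from the earlier lemmas. If the two occurrences of $e$ are an $x_i$ and a $c-y_j$, then $x_i=c-y_j$, contradicting Lemma \ref{l63}; if they are a $y_i$ and a $c-x_j$, then $x_j=c-y_i$, again contradicting Lemma \ref{l63}; if they are an $x_i$ and a $c-x_j$, then $x_i=c-x_j$, so either $i=j$ and $c=2e$, or $i\neq j$ and Lemma \ref{l64} forces $2x_i=c$, i.e. $e=\tfrac c2$ — excluded either way; and if they are a $y_i$ and a $c-y_j$, then $y_i=c-y_j$, so either $i=j$ and $c=2e$, or $i\neq j$ and Lemma \ref{l65} is violated. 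Every remaining possibility reduces to the following: there is a value $v\in\{e,\,c-e\}$, odd and $\neq\tfrac c2$, for which either two distinct $x_i$ equal $v$, or two distinct $y_i$ equal $v$, or some $x_i$ equals $v$ while some $-y_j$ equals $-v$.

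In all three cases the plan is to use Lemma \ref{l46} to manufacture a weight of absolute value larger than $c$, which is impossible. Since $c\geq4$ and $v\neq\tfrac c2$, either $v>\tfrac c2$ or $v<\tfrac c2$. If $v>\tfrac c2$, then $v\geq3$ and $2v>c$: applying Lemma \ref{l46} directly — part 3 to $v$ when $N_p(v)>1$, part 3 to $-v$ when $N_p(-v)>1$, or part 4 to $v$ when $p$ carries both $v$ and $-v$ — produces a weight equal to $\pm2v$, a contradiction. If instead $v<\tfrac c2$, then $c-v$ is odd, $c-v\geq3$, and $2(c-v)>c$: here I would first transport the offending weights of $\Sigma_p$ into $\Sigma_q$ via the bijection of Lemma \ref{l62}, which turns $v$ into $v-c$ and $-v$ into $c-v$, so that $N_q(v-c)>1$, or $N_q(c-v)>1$, or $\Sigma_q$ contains both $c-v$ and $v-c$; then the same parts of Lemma \ref{l46}, now legitimately applied to $c-v$ (since $c-v\geq3$), produce a weight equal to $\pm2(c-v)$, again a contradiction. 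This settles every case, including the borderline $v=1$: it lies in the range $v<\tfrac c2$ and, after transport to $\Sigma_q$, is replaced by the perfectly usable value $c-1\geq3$.

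The step I expect to be the main obstacle is the bookkeeping in the last paragraph: matching each of the three coincidence patterns to the correct part of Lemma \ref{l46} and to the correct argument ($v$, $-v$, $v-c$ or $c-v$), checking that the hypothesis $e\notin\{-1,0,1\}$ of Lemma \ref{l46} is met — which is exactly what forces one to pass to $\Sigma_q$ when $v$ is small — and verifying that the explicit bijection of Lemma \ref{l62} really does carry two equal values $x_i$ to two equal values $x_i-c$ in $\Sigma_q$, so that a genuine multiplicity greater than $1$ (and not merely two weights of opposite sign) is produced.
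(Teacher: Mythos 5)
Your proposal is correct and follows essentially the same route as the paper: dispose of the mixed coincidences via Lemmas \ref{l63}, \ref{l64}, \ref{l65}, and then use Lemma \ref{l46} parts 3 and 4, applied to $e$ when the repeated value exceeds $\frac{c}{2}$ and to $c-e$ (after passing to $\Sigma_q$ via the weights $x_i-c$, $c-y_i$) when it is smaller, to produce a weight of absolute value greater than $c$. Your reorganization into a single canonical value $v\in\{e,c-e\}$ with three coincidence patterns is only a repackaging of the paper's four-case analysis, and your explicit treatment of the borderline value $1$ is consistent with what the paper implicitly does.
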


\begin{proof}
Fix a natural number $e$ such that $e \neq \frac{c}{2}$.

\begin{enumerate}
\item $x_i=e$ for some $i$.

First, by Lemma \ref{l63}, $x_i \neq c-y_j$ for all $j$. Second, suppose that $x_i=c-x_j$ for some $j$. Then by Lemma \ref{l64}, $2e=2x_i=2x_j=c$, which is a contradiction.

Third, suppose that $x_i=y_j$ for some $j$. Assume that $e > \frac{c}{2}$. Since $\{-e,e\}=\{-y_j,x_i\}\subset\Sigma_p$, either $2e \in \Sigma_q$ or $2e \in \Sigma_r$ by Lemma \ref{l46} part 4 for $e$, which is a contradiction since $2e>c$ where $c$ is the largest weight. Next, assume that $e<\frac{c}{2}$. Since $\{e-c,c-e\}=\{x_i-c,c-y_j\}\subset\Sigma_q$, either $2(c-e) \in \Sigma_p$ or $2(c-e) \in \Sigma_r$ by Lemma \ref{l46} part 4 for $c-e$, which is a contradiction since $2(c-e)>c$ where $c$ is the largest weight. 

Last, suppose that $x_i=x_j$ for some $j \neq i$. Assume that $e > \frac{c}{2}$. Since $\{e,e\}=\{x_i,x_j\}\subset\Sigma_p$, $-2e \in \Sigma_p$ by Lemma \ref{l46} part 3 for $e$, which is a contradiction since $-2e<-c$ where $-c$ is the smallest weight. Next, assume that $e<\frac{c}{2}$. Since $\{e-c,e-c\}=\{x_i-c,x_j-c\}\subset\Sigma_q$, $2(c-e) \in \Sigma_q$ by Lemma \ref{l46} part 3 for $e-c$, which is a contradiction since $2(c-e)>c$ where $c$ is the largest weight.

\item $y_i=e$ for some $i$.

As above, $y_i \neq x_j$ for all $j$. By Lemma \ref{l63}, $y_i \neq c-x_j$ for all $j$.

Next, suppose that $y_i=c-y_j$ for some $j$. Then by Lemma \ref{l64}, $i=j$. Hence $c=2y_i=2e$, which is a contradiction by the assumption that $e \neq \frac{c}{2}$.

Finally, Suppose that $y_i=y_j$ for some $j \neq i$. Assume that $e > \frac{c}{2}$. Since $\{-e,-e\}=\{-y_i,-y_j\}\subset\Sigma_p$, $2e \in \Sigma_p$ by Lemma \ref{l46} part 3 for $e$, which is a contradiction since $2e>c$ where $c$ is the largest weight. Next, assume that $e<\frac{c}{2}$. Since $\{c-e,c-e\}=\{c-y_i,c-y_j\}\subset\Sigma_q$, $-2(c-e) \in \Sigma_q$ by Lemma \ref{l46} part 3 for $c-e$, which is a contradiction since $-2(c-e)<-c$ where $-c$ is the smallest weight.

\item $c-x_i=e$ for some $i$.

As above, $c-x_i \neq x_j$ for all $j$ and $c-x_i \neq y_j$ for all $j$. Since $x_j \neq y_k$ for all $j$ and $k$, $c-x_i \neq c-y_j$ for all $j$. Also, since $x_j \neq x_k$ for all $j$ and $k$, $c-x_i \neq c-x_j$ for all $j$.

\item $c-y_i=e$ for some $i$.

As above, $c-y_i \neq x_j$, $c-y_i \neq y_j$, and $c-y_i \neq c-x_j$ for all $j$. Since $y_j \neq y_k$ for all $j$ and $k$ as above, $c-y_i \neq c-y_j$ for all $j$.
\end{enumerate}
\end{proof}

\begin{lem} \label{l67} In Lemma \ref{l62}, $x_i \neq y_j$ for all $i$ and $j$.
\end{lem}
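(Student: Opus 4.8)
The plan is to argue by contradiction. Suppose $x_i = y_j = e$ for some $i$ and $j$. Since the $x$'s and $y$'s are odd, $e$ is an odd natural number, and reading the two lists in Lemma~\ref{l62} shows that both $e$ (as the weight $x_i$) and $-e$ (as the weight $-y_j$) occur in $\Sigma_p$, so $N_p(e)>0$ and $N_p(-e)>0$. Because Lemma~\ref{l46} only accepts inputs in $\mathbb{Z}\setminus\{-1,0,1\}$, I would split into the cases $e\geq 3$ and $e=1$.

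For $e\geq 3$, I would apply Lemma~\ref{l46} part 4 to $e$ with $\beta=p$: this forces $\{2e,e\}$ into the weights of one of $q,r$ and $\{-2e,-e\}$ into the weights of the other. Reading the even weights off the explicit lists, $q$ has no negative even weight (its only even weights are the positive $c$ and $a$), so $\{-2e,-e\}\subset\Sigma_r$; since $-a$ is the only negative even weight at $r$, this gives $a=2e$ and $\{2e,e\}\subset\Sigma_q$. Now $e$ is a positive odd weight at $q$ distinct from $1$, and the only positive odd weights $\neq 1$ appearing in the list for $\Sigma_q$ are the $c-y_l$; hence $e=c-y_l$ for some $l$, so $x_i=c-y_l$, contradicting Lemma~\ref{l63}.

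For $e=1$, the key observation is that $x_i=1$ forces $x_i-c=1-c$ to be a weight at $q$ and $y_j=1$ forces $c-y_j=c-1$ to be a weight at $q$, so $N_q(c-1)>0$ and $N_q(-(c-1))>0$. Since $c=a+b\geq 4$, the integer $c-1\geq 3$ lies in $\mathbb{Z}\setminus\{-1,0,1\}$, and Lemma~\ref{l46} part 4 applied to $c-1$ with $\beta=q$ then produces the weight $2(c-1)=2c-2>c$ at one of the other fixed points, contradicting that $c$ is the largest weight.

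The step I expect to be the real obstacle is the degenerate case $e=1$: this is exactly the configuration that escapes the hypotheses of Lemma~\ref{l46}, which is presumably why this needs a separate lemma rather than being folded into Lemmas~\ref{l63}--\ref{l66}. The resolution is to transport the pattern of weights $\pm 1$ at $p$ into a pattern of weights $\pm(c-1)$ at $q$ using the shift-by-$c$ structure already built into the lists in Lemma~\ref{l62}, after which $c-1$ becomes a legitimate input to Lemma~\ref{l46}. Everything else is short parity bookkeeping with the three weight lists together with Lemma~\ref{l63}.
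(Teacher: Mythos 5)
Your proof is correct, but it takes a different (and self-contained) route from the paper's. The paper disposes of this lemma in two lines: Lemma \ref{l66} already shows that at most one of the $x_i$'s, $y_i$'s, $c-x_i$'s, $c-y_i$'s can equal any value $e\neq \frac{c}{2}$, so a coincidence $x_i=y_j$ forces $x_i=y_j=\frac{c}{2}$; then $\{-c,x_i,-y_j\}=\{-2x_i,x_i,-x_i\}\subset\Sigma_p$ contradicts Lemma \ref{l46} part 4 (the weight $-2x_i$ would have to sit at a \emph{different} fixed point, with no additional multiples of $x_i$ anywhere). You instead bypass Lemma \ref{l66} entirely: for $e\geq 3$ you apply Lemma \ref{l46} part 4 at $p$, use the parity bookkeeping of Lemma \ref{l62} to pin down $-2e=-a$ at $r$ and $\{2e,e\}\subset\Sigma_q$, and then read off $e=c-y_l$, contradicting Lemma \ref{l63}; for $e=1$ you shift to $c-1$ at $q$ and contradict maximality of $c$. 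Both arguments hinge on the same engine (Lemma \ref{l46} part 4 applied to the coincident weight or its complement $c-e$), and indeed your $e=1$ trick is exactly the one the paper uses inside the proof of Lemma \ref{l66} for the subcase $e<\frac{c}{2}$. What your version buys is independence from Lemma \ref{l66} and an explicit identification of where the contradiction lands (Lemma \ref{l63}); what the paper's version buys is brevity, since the case analysis you redo by hand has already been absorbed into Lemma \ref{l66}. One cosmetic remark: in your $e\geq 3$ branch, when $e>\frac{c}{2}$ the intermediate conclusion $a=2e$ is already absurd (it would exceed $c$), so the argument terminates there rather than needing Lemma \ref{l63}; this does not affect correctness, since every step is a valid deduction from the assumption being refuted.
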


\begin{proof}
Assume on the contrary that $x_i = y_j$ for some $i$ and $j$. Then by Lemma \ref{l66}, $x_i=y_j=\frac{c}{2}$. Hence, we have $\{-2x_1,x_1,-x_1\}=\{-c,x_1,-y_1\}\subset\Sigma_p$, which contradicts Lemma \ref{l46} part 4 for $x_1$.
\end{proof}

\begin{lem} \label{l68} In Lemma \ref{l62}, assume that $\{-f,f\} \subset \Sigma_r$ for some natural number $f$. If $f>1$, then $c=2f$, $\{-c=-2f,-f\}\subset\Sigma_p$, $\{2f=c,f\}\subset\Sigma_q$, and $\{-f,f\}\subset\Sigma_r$. Moreover, no additional multiples of $f$ should appear as weights.
\end{lem}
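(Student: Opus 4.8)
The plan is to feed the hypothesis straight into Lemma~\ref{l46} part~4 with $e=f$ and $\beta=r$: this is legitimate because $f>1$ puts $f\in\mathbb{Z}\setminus\{-1,0,1\}$, and $N_r(f)>0$, $N_r(-f)>0$ by assumption. That lemma then tells us that, with the two remaining fixed points labelled suitably as $\alpha$ and $\gamma$, we have $\{2f,f\}\subset\Sigma_{\alpha}$, $\{-f,f\}\subset\Sigma_{r}$, $\{-2f,-f\}\subset\Sigma_{\gamma}$, and that no further multiples of $f$ occur among the weights at any fixed point. So after this single invocation the only work left is to identify $\{\alpha,\gamma\}$ with $\{p,q\}$ and to deduce $c=2f$.

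To identify $\alpha$ and $\gamma$ I would use the rigid even-weight pattern from Lemma~\ref{l62}: the only even weights at $p$ are $-c$ and $-b$, both negative, while the only even weights at $q$ are $c$ and $a$, both positive. Since $2f$ is a positive even integer it cannot be a weight at $p$, so $\alpha=q$ and $\gamma=p$; hence $\{2f,f\}\subset\Sigma_{q}$ and $\{-2f,-f\}\subset\Sigma_{p}$. Now $2f\in\Sigma_{q}$ is even, so $2f\in\{c,a\}$, and $-2f\in\Sigma_{p}$ is even, so $2f\in\{c,b\}$. Comparing the two memberships, either $2f=c$, or else $2f=a$ and $2f=b$, which forces $a=b$ and $c=a+b=4f$. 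In the latter case $-c=-4f$ is a weight at $p$ that is a multiple of $f$ distinct from $-2f$ and $-f$, contradicting the ``no additional multiples'' clause of Lemma~\ref{l46} part~4. Therefore $c=2f$, and substituting this back gives $\{-c=-2f,-f\}\subset\Sigma_{p}$, $\{c=2f,f\}\subset\Sigma_{q}$, $\{-f,f\}\subset\Sigma_{r}$, together with the statement that no additional multiples of $f$ appear, which is exactly the assertion.

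I do not expect a genuine obstacle here; the argument is essentially bookkeeping layered on top of Lemma~\ref{l46} part~4. The one place that calls for a little care is ruling out $2f=a=b$: one must notice that this collapse is the only alternative to $2f=c$ and that it forces $c=4f$, so that the even weights $\pm c$ at $p$ and $q$ themselves become the offending extra multiples of $f$. It is worth remarking that the conclusion $c=2f$ means $f=c/2$, so this lemma is complementary to Lemma~\ref{l66}, which treats weights $e\neq c/2$.
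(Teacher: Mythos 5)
Your proposal is correct and follows essentially the same route as the paper: invoke Lemma \ref{l46} part 4 for $f$, use the sign pattern of the even weights at $p$ and $q$ from Lemma \ref{l62} to place $\{-2f,-f\}$ at $p$ and $\{2f,f\}$ at $q$, and then rule out the alternative to $c=2f$ via the ``no additional multiples'' clause. The only (immaterial) difference is in that last step: the paper notes that $b=2f$ would put $2f=b\in\Sigma_r$ as an extra multiple of $f$ at $r$, while you combine the constraints at $p$ and $q$ to force $a=b=2f$, $c=4f$ and find the extra multiple $-c=-4f$ at $p$.
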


\begin{proof}
Assume that $f>1$. By Lemma \ref{l46} part 4 for $f$, either $\{2f,f\}\subset\Sigma_p$, $\{-2f,-f\}\subset\Sigma_q$, and $\{-f,f\}\subset\Sigma_r$, or $\{-2f,-f\}\subset\Sigma_p$, $\{2f,f\}\subset\Sigma_q$, and $\{-f,f\}\subset\Sigma_r$. However, since $\Sigma_p$ does not have a positive even weight, the former case is impossible. Hence the latter must be the case. Then, $-2f \in \Sigma_p$ implies that $c=2f$ or $b=2f$. Suppose that $b=2f$. Then we have that $\{b=2f,-f,f\}\subset\Sigma_r$, which is a contradiction by Lemma \ref{l46} part 4 for $f$. Therefore, $c=2f$.
\end{proof}

\begin{lem} \label{l69} In Lemma \ref{l62}, if $N_{p}(1) > N_{r}(1)$ and $N_{q}(1) >N_{r}(1)$, then $N_{p}(1) < N_{r}(1)+3$ or $N_{q}(1) <N_{r}(1)+3$. Similarly, if $N_{p}(-1) > N_{r}(-1)$ and $N_{q}(-1) >N_{r}(-1)$, then $N_{p}(-1) < N_{r}(-1)+3$ or $N_{q}(-1) <N_{r}(-1)+3$.
\end{lem}

\begin{proof} 
First we prove the former. For this suppose not, i.e., $N_{p}(1) \geq N_{r}(1)+3$ and $N_{q}(1) \geq N_{r}(1)+3$. There are three cases:
\begin{enumerate}
\item $a>\frac{c}{2}$.

Since $a \in \Sigma_q$ and $-a \in \Sigma_r$, by Lemma \ref{l46} part 1 for $a$, $\Sigma_{q} \equiv \Sigma_{r} \mod a$. With $N_{q}(1) \geq N_{r}(1)+3$, this implies that $N_r(1+a)\geq2$ or $N_r(1-a)\geq2$, since $|1+ka|>c$ for $|k|\geq 2$. If $N_r(1+a)\geq2$, $-2(1+a) \in \Sigma_{r}$ by Lemma \ref{l46} part 3 for $1+a$, but $2(1+a)>c$, which is a contradiction.
If $N_r(1-a)\geq2$, $2(a-1) \in \Sigma_{r}$ by Lemma \ref{l46} part 3 for $1-a$. However, $2(a-1) \geq c$ but $c \notin \Sigma_{r}$.

\item $a<\frac{c}{2}$.

Suppose that $a<\frac{c}{2}$. Then $b=c-a>\frac{c}{2}$. Since $-b \in \Sigma_p$ and $b \in \Sigma_r$, by Lemma \ref{l46} part 1 for $b$, $\Sigma_{p} \equiv \Sigma_{r} \mod b$. With $N_{p}(1) \geq N_{r}(1)+3$, this implies that $N_r(1+b)\geq2$ or $N_r(1-b)\geq2$, since $|1+kb|>c$ for $|k|\geq 2$. If $N_r(1+b)\geq2$, $-2(1+b) \in \Sigma_{r}$ by Lemma \ref{l46} part 3 for $1+b$. However, $2(1+b)>c$, which is a contradiction. If $N_r(1-b)\geq2$, $2(b-1) \in \Sigma_{r}$ by Lemma \ref{l46} part 3 for $1-b$. However, $2(b-1) \geq c$ but $c \notin \Sigma_{r}$.

\item $a=\frac{c}{2}$.

Since $a=\frac{c}{2}$, we have that $b=c-a=\frac{c}{2}$. Then the isotropy submanifold $M^{\mathbb{Z}_a}$ must be the third case of Lemma \ref{l45}. This means that the three fixed point lie in the same component of $M^{\mathbb{Z}_a}$, hence $\Sigma_p \equiv \Sigma_q \equiv \Sigma_r \mod a$ by Lemma \ref{l25}. With $N_{q}(1) \geq N_{r}(1)+3$, $\Sigma_q \equiv \Sigma_r \mod a$ implies that $N_r(1+a)\geq2$, $N_r(1-a)\geq2$, $N_r(1-2a)\geq2$, or $N_r(1+a)=N_r(1-a)=N_r(1-2a)=1$, since $|1+ka|>c$ for $k \neq -2,-1,0$, and $1$.

\begin{enumerate}[(a)]

\item $N_r(1+a)\geq2$.

Since $N_r(1+a)\geq2$, $-2(1+a) \in \Sigma_{r}$ by Lemma \ref{l46} part 3 for $1+a$. However, $-2(1+a)<-c$ where $-c$ is the smallest weight, which is a contradiction.

\item $N_r(1-2a)\geq2$.

By Lemma \ref{l46} part 3 for $1-2a$, $2(2a-1) \in \Sigma_r$. However, $2(2a-1)=2(c-1)>c$ where $c$ is the largest weight, which is a contradiction.

\item $N_r(1-a)\geq2$.

By Lemma \ref{l46} part 3 for $1-a$, $N_r(1-a)=2$ and $2(a-1) \in \Sigma_{r}$. Since $b$ is the only positive even weight at $r$, this means that $2(a-1)=b$. Hence $a=b=2$ and $c=a+b=4$. Then the weights at $p$ and $q$ are
\begin{center}
$\Sigma_{p}=\{-4,-2\} \cup \{x_{i}\}_{i=1}^{t+3} \cup \{-y_{i}\}_{i=1}^{t} \cup \{1\} \cup \{-1,1\}^{s}$
$\Sigma_{q}=\{4,2\} \cup \{x_{i}-4\}_{i=1}^{t+3}\cup\{4-y_{i}\}_{i=1}^{t} \cup \{1\} \cup \{-1,1\}^{s}.$
\end{center}
Since $c=4$ is the largest weight, all of $x_{i}$'s and $y_{i}$'s are either $1$ or $3$. If at least two of $x_{i}$'s are 3, $N_p(3)\geq 2$ hence $-6 \in \Sigma_p$ by Lemma \ref{l46} part 3 for 3, which is a contradiction since $-4$ is the smallest weight. If at most one of $x_{i}$ is 1, then at least two $x_i-c$'s are -3. This means that $N_q(-3) \geq 2$ and hence $6 \in \Sigma_{q}$ by Lemma \ref{l46} part 3 for $-3$, which is a contradiction since 4 is the largest weight.

\item $N_r(1+a)=N_r(1-a)=N_r(1-2a)=1$.

Since $1-2a \in \Sigma_r$, by Lemma \ref{l24} for $1-2a$, there must be a weight of $2a-1$ for some fixed point. If it is $r$, $\{2a-1,1-2a\}\subset\Sigma_r$. Hence $2(2a-1) \in \Sigma_p$ or $2(2a-1) \in \Sigma_q$ by Lemma \ref{l46} part 4 for $2a-1$, which is a contradiction since $2(2a-1)=2(c-1) > c$ where $c$ is the largest weight. Hence either $2a-1 \in \Sigma_p$ or $2a-1 \in \Sigma_q$. Suppose that $2a-1 \in \Sigma_p$. Then by Lemma \ref{l46} part 1 for $2a-1$, $\Sigma_p \equiv \Sigma_r \mod 2a-1$. That $N_{p}(1) \geq N_{r}(1)+3$ implies that $N_r(2-2a) \geq 3$ since $|1+k(2a-1)|\geq c$ for $k \neq 0$ and $-1$, and the fixed point $r$ does not have a weights of $c$. However, $r$ has only one negative even weight $-a$, which is a contradiction. Similarly, $2a-1 \in \Sigma_q$ is also impossible.
\end{enumerate}
\end{enumerate}
With a slight variation of this argument, one can prove the latter.
\end{proof}

\section{Preliminaries for the largest weight even case part 2}

Let the circle act symplectically on a compact, connected symplectic manifold $M$ with exactly three fixed points. Also, assume that $\dim M\geq8$ and the largest weight is even. In this section, for technical reasons, we consider $w=\min_{\alpha \in M^{S^1}} \min \{ N_{\alpha}(-1), N_{\alpha}(1) \}$ and rewrite the weights in terms of $w$. And then we further investigate properties that the manifold $M$ should satisfy in terms of $w$, if such a manifold exists.

\begin{lem} \label{l71} Let $w=\min_{\alpha \in M^{S^1}} \min \{ N_{\alpha}(-1), N_{\alpha}(1) \}$. In Lemma \ref{l62}, the weights are
\begin{center}
$\Sigma_{p}=\{-c,-b\} \cup \{x_{i}\}_{i=1}^{t+3} \cup \{-y_{i}\}_{i=1}^{t} \cup \{1\}\cup\{-1,1\}^{v+w}$
$\Sigma_{q}=\{c,a\} \cup \{x_{i}-c\}_{i=1}^{t+3}\cup\{c-y_i\}_{i=1}^{t} \cup \{1\}\cup\{-1,1\}^{v+w}$
$\Sigma_{r}=\{-a,b,\cdots\}\cup\{-1,1\}^{w}$
\end{center}
for some $t\geq0$ and $v\geq0$, where $a,b,$ and $c$ are even natural numbers such that $c=a+b$ is the largest weight, and $x_i$'s and $y_i$'s are odd natural numbers for all $i$. Moreover, the remaining weights at $r$ are odd.
\end{lem}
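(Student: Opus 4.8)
The plan is to obtain $w$ directly from the description of the weights in Lemma~\ref{l62} and to use Lemma~\ref{l66} (with $e=1$) to control how many of the weights can equal $\pm 1$. Note that the only difference between the conclusions of Lemma~\ref{l62} and Lemma~\ref{l71} is that the block $\{-1,1\}^{s}$ at $p$ and at $q$ is rewritten as $\{-1,1\}^{v+w}$ with $v:=s-w$, while a block $\{-1,1\}^{w}$ is peeled off from $\Sigma_r$; so the whole content is the inequality $v\geq 0$, i.e.\ $w\leq s$, together with the observation (immediate from the definition of $w$) that $N_r(1)\geq w$ and $N_r(-1)\geq w$.

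First I would read off the multiplicities of $\pm 1$ at $p$ and at $q$ from Lemma~\ref{l62}. Each $x_i$ and $y_i$ is an odd natural number with $1\leq x_i,y_i\leq c-1$ (they are $\leq c$ since $c$ is the largest weight, and $\neq c$ by parity). Inspecting the two multisets, this gives $N_p(-1)=s+\#\{i: y_i=1\}$ and $N_q(-1)=s+\#\{i: x_i=c-1\}$, and $N_p(1),N_q(1)\geq s+1$; indeed the only entries that could contribute an extra $\pm 1$ are $y_i=1$ (giving $-y_i=-1$ at $p$) and $x_i=c-1$ (giving $x_i-c=-1$ at $q$), because $\pm c,\pm a,\pm b\neq \pm 1$ as $a,b\geq 2$ and $c\geq 4$, and neither $x_i-c=1$ nor $c-y_i=-1$ is possible. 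Since $c=a+b\geq 4$ we have $1\neq\tfrac{c}{2}$, so Lemma~\ref{l66} applies with $e=1$: at most one of the numbers $x_i,\,y_i,\,c-x_i,\,c-y_i$ equals $1$. In particular $\#\{i:y_i=1\}+\#\{i:x_i=c-1\}\leq 1$, so one of these two counts is $0$, which forces $N_p(-1)=s$ or $N_q(-1)=s$. Since $w\leq N_p(-1)$ and $w\leq N_q(-1)$ by the definition of $w$, it follows that $w\leq s$, and I set $v:=s-w\geq 0$.

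To finish I would simply rewrite the blocks. At $p$ and $q$, $\{1\}\cup\{-1,1\}^{s}=\{1\}\cup\{-1,1\}^{v+w}$. At $r$, $N_r(1)\geq w$ and $N_r(-1)\geq w$ by the definition of $w$, so $\Sigma_r$ contains $\{-1,1\}^{w}$, and the weights of $\Sigma_r$ other than $-a$, $b$, and this block are odd by Lemma~\ref{l62}; this is exactly the claimed form, with the same $t\geq 0$ as in Lemma~\ref{l62} and with $v\geq 0$ as above. I do not anticipate a real obstacle here: the only point needing care is the bookkeeping in the middle paragraph, where one must be sure that no occurrence of $\pm 1$ among the entries of $\Sigma_p$ or $\Sigma_q$ has been overlooked, and that $c>2$ so that Lemma~\ref{l66} may be invoked with $e=1$.
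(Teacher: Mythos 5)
Your proof is correct and follows essentially the same route as the paper: both arguments invoke Lemma \ref{l66} with $e=1$ (legitimate since $c=a+b\geq 4$, so $1\neq \tfrac{c}{2}$) to conclude that at most one of the events $y_i=1$, $x_j=c-1$ can occur, hence $N_p(-1)=s$ or $N_q(-1)=s$, giving $w\leq s$ and $v:=s-w\geq 0$, and then peel off $\{-1,1\}^{w}$ from $\Sigma_r$ directly from the definition of $w$. The small imprecision in your middle paragraph (entries $x_i=1$ or $c-y_j=1$ could also contribute extra $+1$'s) is harmless, since your argument only uses the exact counts of $-1$'s at $p$ and $q$ and the lower bound $N_p(1),N_q(1)\geq s+1$.
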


\begin{proof} In Lemma \ref{l62}, the weights are
$$\Sigma_{p}=\{-c,-b\} \cup \{x_{i}\}_{i=1}^{t+3} \cup \{-y_{i}\}_{i=1}^{t}  \cup \{1\} \cup \{-1,1\}^{s}$$
$$\Sigma_{q}=\{c,a\} \cup \{x_{i}-c\}_{i=1}^{t+3}\cup\{c-y_{i}\}_{i=1}^{t} \cup \{1\} \cup \{-1,1\}^{s}$$
$$\Sigma_{r}=\{-a,b,\cdots\}$$
for some $s \geq 0$ and $t \geq 0$ such that $\dim M=2n=12+4t+4s$, where $a,b,$ and $c$ are even natural numbers such that $c=a+b$ is the largest weight, and $x_i$'s and $y_i$'s are odd natural numbers for all $i$. Moreover, the remaining weights at $r$ are odd.

Let $w=\min_{\alpha \in M^{S^1}} \min \{ N_{\alpha}(-1), N_{\alpha}(1) \}$. We rewrite the weights in terms of $w$. We show that $\{-c,-b\} \cup \{x_{i}\}_{i=1}^{t+3} \cup \{-y_{i}\}_{i=1}^{t}  \cup \{1\}$ in $\Sigma_p$ and $\{c,a\} \cup \{x_{i}-c\}_{i=1}^{t+3}\cup\{c-y_i\}_{i=1}^{t} \cup \{1\}$ in $\Sigma_q$ do not contribute to $w$, i.e.,
\begin{center}
$\{-1,1\} \nsubseteq (\{-c,-b\} \cup \{x_{i}\}_{i=1}^{t+3} \cup \{-y_{i}\}_{i=1}^{t}  \cup \{1\}) \cap (\{c,a\} \cup \{x_{i}-c\}_{i=1}^{t+3}\cup\{c-y_i\}_{i=1}^{t} \cup \{1\}).$
\end{center}
First, $a$, $b$, and $c$ are even natural numbers. Second, by Lemma \ref{l66}, at most one of $c-x_i$'s or $y_i$'s can be 1.

Suppose that $y_i=1$ for some $i$. Then $c-x_j \neq 1$ for all $j$ by Lemma \ref{l66}. Hence in $\Sigma_q$, $\{-1,1\} \nsubseteq (\{c,a\} \cup \{x_{i}-c\}_{i=1}^{t+3}\cup\{c-y_i\}_{i=1}^{t} \cup \{1\})$.

Next, suppose that $c-x_i=1$ for some $i$. Then $y_j \neq 1$ for all $j$ by Lemma \ref{l66}. Hence in $\Sigma_p$, $\{-1,1\} \nsubseteq (\{-c,-b\} \cup \{x_{i}\}_{i=1}^{t+3} \cup \{-y_{i}\}_{i=1}^{t}  \cup \{1\})$.

Last, if $y_i \neq 1$ and $c-x_i \neq 1$ for all $i$, then $\{-1,1\} \nsubseteq (\{-c,-b\} \cup \{x_{i}\}_{i=1}^{t+3} \cup \{-y_{i}\}_{i=1}^{t}  \cup \{1\})$ in $\Sigma_p$ and $\{-1,1\} \nsubseteq (\{c,a\} \cup \{x_{i}-c\}_{i=1}^{t+3}\cup\{c-y_i\}_{i=1}^{t} \cup \{1\})$ in $\Sigma_q$.

Therefore, we can rewrite the weights so that the weights are
\begin{center}
$\Sigma_{p}=\{-c,-b\} \cup \{x_{i}\}_{i=1}^{t+3} \cup \{-y_{i}\}_{i=1}^{t} \cup \{1\}\cup\{-1,1\}^{v+w}$
$\Sigma_{q}=\{c,a\} \cup \{x_{i}-c\}_{i=1}^{t+3}\cup\{c-y_i\}_{i=1}^{t} \cup \{1\}\cup\{-1,1\}^{v+w}$
$\Sigma_{r}=\{-a,b,\cdots\}\cup\{-1,1\}^{w}$
\end{center}
where $s=v+w$.
\end{proof}

\begin{lem} \label{l72} In Lemma \ref{l71}, for each $x_i$, either $x_i=c-x_j$ for some $j$ or $-x_i \in \Sigma_r\setminus(\{-a,b\}\cup\{-1,1\}^w)$.
\end{lem}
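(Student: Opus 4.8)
The plan is to fix an index $i$, set $m=x_i$, and squeeze the conclusion out of Lemma~\ref{l24} (the identity $\sum_u N_u(l)=\sum_u N_u(-l)$) applied to the single weight $m$. First I would dispose of the easy alternative: if $x_i=c-x_j$ for some $j$ — this includes the possibility $j=i$, i.e.\ $x_i=c/2$ — then the first assertion of the lemma holds and there is nothing to prove. So I assume henceforth that $x_i\neq c-x_j$ for every $j$; taking $j=i$ this already forces $x_i\neq c/2$. Since $a$ and $b$ are even natural numbers, $c=a+b\ge 4$, so $m$ is a legitimate value for Lemma~\ref{l66}.

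Next I would invoke Lemma~\ref{l66} with $e=m$: among all the numbers $x_j$, $y_j$, $c-x_j$, $c-y_j$ the value $m$ is attained at most once, and it is attained by $x_i$. Hence $x_j=m$ only for $j=i$, no $y_j$ equals $m$, no $x_j$ equals $c-m$ (consistent with our standing assumption), and no $y_j$ equals $c-m$. I will also use that every weight of $M$ has absolute value at most $c$: if $-e$ were a weight with $e>c$ then by Lemma~\ref{l24} so would $e$ be, contradicting that $c$ is the largest weight. Consequently the entries $x_j-c\le 0<m$ and $c-y_j\ge 0>-m$ of $\Sigma_q$ can equal neither $m$ nor $-m$ except in the cases $x_j-c=-m$ (i.e.\ $x_j=c-m$) and $c-y_j=m$ (i.e.\ $y_j=c-m$), both of which were just excluded.

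Armed with these facts I can read off, straight from the explicit weight lists in Lemma~\ref{l71}, all six multiplicities $N_p(\pm m),N_q(\pm m),N_r(\pm m)$. Writing $R^{+}$ and $R^{-}$ for the multiplicities of $m$ and $-m$ inside the multiset of remaining weights of $r$, namely $\Sigma_r\setminus(\{-a,b\}\cup\{-1,1\}^{w})$ — note $-a$ and $b$ are even, hence distinct from $\pm m$ and from $\pm1$ — one finds: for $m>1$ the bulk pieces $\{1\}$, $\{-1,1\}^{v+w}$, $\{-1,1\}^{w}$ contribute nothing, $N_p(m)=1$, $N_q(m)=N_p(-m)=N_q(-m)=0$, $N_r(m)=R^{+}$, $N_r(-m)=R^{-}$, so Lemma~\ref{l24} at $l=m$ collapses to $1+R^{+}=R^{-}$; for $m=1$ the bulk pieces do contribute, but enter $N_p(\pm1)$ and $N_q(\pm1)$ symmetrically and cancel, leaving $3+R^{+}=R^{-}$. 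In either case $R^{-}\ge 1$, so $-m=-x_i$ occurs in the remaining weights of $r$, which is exactly the second alternative.

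The computation itself is routine arithmetic; the single point that needs care is the case $m=1$, where one must be sure the copy of $-1$ produced at $r$ is genuinely new, i.e.\ not among the $w$ copies already displayed in $\{-1,1\}^{w}$. This is automatic, because those $w$ copies are precisely what is subtracted off in the definition of $R^{-}$, and the cancellation in fact gives $R^{-}\ge 3$ there. I expect the only (mild) obstacle to be organizing the six multiplicity counts cleanly enough that the reduction of Lemma~\ref{l24} is transparent — in particular, recording in advance the impossibilities $x_j=c+m$, $y_j=c-m$, $y_j=m$, $x_j=c-m$ so that $N_q(\pm m)$ and $N_p(-m)$ reduce to their bulk contributions.
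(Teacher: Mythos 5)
Your proposal is correct and follows essentially the same route as the paper: both arguments rest on Lemma \ref{l24} applied to the weight $x_i$ together with the distinctness facts from Lemma \ref{l66} (and the explicit weight lists of Lemma \ref{l71}), with the same separate attention to the case $x_i=1$ via the count of $\pm 1$'s. The only difference is presentational: the paper locates $-x_i$ by eliminating $\Sigma_p$ and reading the two alternatives off $\Sigma_q$ versus $\Sigma_r$, while you assume the first alternative fails and extract $-x_i\in\Sigma_r\setminus(\{-a,b\}\cup\{-1,1\}^{w})$ from an explicit multiplicity count, which is the same argument in contrapositive form.
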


\begin{proof}
By Lemma \ref{l24}, for each $x_i$, either $-x_i \in \Sigma_p$, $-x_i \in \Sigma_q$, or $-x_i \in \Sigma_r$. 

First, assume that $x_i >1$. By Lemma \ref{l66}, $x_i \neq y_j$ for all $j$. Hence $-x_i \notin \Sigma_p$. Next, if $-x_i \in \Sigma_q$, then $-x_i=x_j-c$ for some $j$. If $-x_i \in \Sigma_r$, $-x_i \in \Sigma_r\setminus(\{-a,b\}\cup\{-1,1\}^w)$.

Second, assume that $x_i=1$. By Lemma \ref{l66}, at most one of $c-x_i$'s or $y_i$'s can be $1$. Hence, either $N_p(1)>N_p(-1)$ and $N_q(1)\geq N_q(-1)$, or $N_p(1)\geq N_p(-1)$ and $N_q(1)> N_q(-1)$. By Lemma \ref{l24}, this implies that $N_r(1)<N_r(-1)$. Therefore, $-x_i=-1 \in \Sigma_r\setminus(\{-a,b\}\cup\{-1,1\}^w)$.
\end{proof}

\begin{lem} \label{l73} In Lemma \ref{l71}, for each $c-y_i$, $y_i-c \in \Sigma_r\setminus(\{-a,b\}\cup\{-1,1\}^w)$.
\end{lem}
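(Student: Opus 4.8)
The plan is to fix an index $i$ and follow the weight $y_i-c$ around the manifold. Since $c$ is the largest weight and $y_i$ is a positive odd integer, $y_i-c$ is a negative odd integer with $-c<y_i-c<0$; in particular it has opposite parity to the even numbers $-a$ and $b$, and it is never equal to $1$. By Lemma \ref{l24} applied to the integer $c-y_i$ (which occurs in $\Sigma_q$ by Lemma \ref{l71}), the integer $-(c-y_i)=y_i-c$ must occur at $p$, $q$, or $r$. So it is enough to show that $y_i-c$ occurs at $r$, and — in the single borderline situation $y_i-c=-1$ — that it occurs at $r$ with multiplicity exceeding $w$, so it is not absorbed into the block $\{-1,1\}^w$ of $\Sigma_r$. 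I would split into three cases according to whether $2y_i=c$, $y_i=c-1$, or neither; these are mutually exclusive, since $2y_i=c=y_i+1$ would force $c=2$, impossible because $c=a+b$ with $a,b$ even natural numbers.

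In the generic case $2y_i\neq c$ and $y_i\neq c-1$ (so $y_i-c\notin\{-1,1\}$) I would eliminate the other two fixed points. The negative odd weights at $p$ are the $-y_j$ ($1\le j\le t$) and copies of $-1$; an equality $y_i-c=-y_j$ means $y_j=c-y_i$, which by Lemma \ref{l65} forces $j=i$ and hence $2y_i=c$, a contradiction, so $y_i-c\notin\Sigma_p$. The negative odd weights at $q$ are the $x_j-c$ and copies of $-1$; an equality $y_i-c=x_j-c$ means $x_j=y_i$, contradicting Lemma \ref{l67}, so $y_i-c\notin\Sigma_q$. Hence $y_i-c\in\Sigma_r$, and being an odd integer different from $\pm 1$ it lies in $\Sigma_r\setminus(\{-a,b\}\cup\{-1,1\}^w)$.

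In the case $2y_i=c$ I would argue structurally. Then $\{-c,-y_i\}=\{-2y_i,-y_i\}\subset\Sigma_p$ and $\{c,c-y_i\}=\{2y_i,y_i\}\subset\Sigma_q$ are weights divisible by $y_i$, so $p$ and $q$ each have at least two weights divisible by $y_i$; moreover, by Lemma \ref{l67} together with parity, $p$ has no positive weight and $q$ has no negative weight divisible by $y_i$, and $y_i\ge 3$. Comparing with the four possibilities for $M^{\mathbb{Z}_{y_i}}$ in Lemma \ref{l45}, the first two are impossible (they give fixed points with at most one weight divisible by $y_i$, yet all three of $p,q,r$ are accounted for there), and the $6$-dimensional case is impossible because each of its two weight patterns contains a positive and a negative entry divisible by $y_i$, which neither $p$ nor $q$ can match; so a $4$-dimensional component of $M^{\mathbb{Z}_{y_i}}$ contains all three fixed points, with weights $\{a'+b',a'\},\{-a',b'\},\{-b',-a'-b'\}$. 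Since $p$ has no positive weight divisible by $y_i$, its pair is $\{-b',-a'-b'\}$; since $p$ has exactly two weights divisible by $y_i$ in this case and $-2y_i,-y_i$ are two of them, that pair equals $\{-2y_i,-y_i\}$, forcing $a'=b'=y_i$, so $r$ has the pair $\{-a',b'\}=\{-y_i,y_i\}$. Therefore $y_i-c=-y_i\in\Sigma_r$, and since $y_i\ge3$ it is not $\pm1$, so again $y_i-c\in\Sigma_r\setminus(\{-a,b\}\cup\{-1,1\}^w)$.

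The remaining case $y_i=c-1$ is the one I expect to be the main obstacle, because here $-1=y_i-c$ genuinely occurs at all three fixed points and the claim becomes a multiplicity count. Now $c-y_i=1$, and Lemma \ref{l66} applied to $e=c-1$ (note $c-1\neq c/2$) shows that, apart from the equality $y_i=c-1$ itself, none of $x_j$, $y_j$, $c-x_j$, $c-y_j$ equals $c-1$; equivalently, no $x_j$ or $y_j$ equals $1$ and no $x_j$ equals $c-1$. Reading the multiplicities of $\pm1$ off the weight lists in Lemma \ref{l71} then gives $N_p(1)=N_p(-1)+1$ and $N_q(1)=N_q(-1)+2$, so Lemma \ref{l24} for the integer $1$ yields $N_r(-1)=N_r(1)+3$. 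Since $\{-1,1\}^w\subset\Sigma_r$ already forces $N_r(1)\ge w$, we get $N_r(-1)\ge w+3>w$, so $-1=y_i-c$ occurs among the remaining (odd) weights at $r$, which are disjoint from $\{-a,b\}\cup\{-1,1\}^w$. This completes all three cases; apart from the Lemma \ref{l45} bookkeeping when $2y_i=c$ and the count when $y_i=c-1$, everything is parity together with the weight-balance Lemmas \ref{l24}, \ref{l65}, and \ref{l67}.
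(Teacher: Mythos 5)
Your proof is correct and follows essentially the same route as the paper's: Lemma \ref{l24} places $y_i-c$ at one of the three fixed points, Lemmas \ref{l65} and \ref{l67} together with parity exclude $p$ and $q$ except when $2y_i=c$, which you resolve exactly as the paper does via the third case of Lemma \ref{l45} (forcing $\{-y_i,y_i\}\subset\Sigma_r$), and the borderline case $c-y_i=1$ is settled by the same multiplicity count for $\pm 1$ using Lemmas \ref{l66} and \ref{l24} to get $N_r(-1)>w$. The only differences are organizational (your three explicit cases versus the paper's two, with $2y_i=c$ arising there as a subcase when $y_i-c\in\Sigma_p$) and that you spell out the Lemma \ref{l45} bookkeeping and the $\pm1$ counts in more detail than the paper does.
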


\begin{proof}
By Lemma \ref{l24}, for each $c-y_i$, either $y_i-c \in \Sigma_p$, $y_i-c \in \Sigma_q$, or $y_i-c \in \Sigma_r$.

First, assume that $c-y_i>1$. Suppose that $c-y_i \in \Sigma_q$. Then $c-y_i = c-x_j$ for some $j$, which is a contradiction since $c-y_i \neq c-x_j$ for all $j$ by Lemma \ref{l67}. Next, suppose that $y_i-c \in \Sigma_p$. Then $y_i-c=-y_j$ for some $j$. By Lemma \ref{l65}, $i=j$, i.e., $c=2y_i$. Hence, $\{-2y_i,-y_i\}=\{-c,-y_i\}\subset\Sigma_p$ and $\{2y_i,y_i\}=\{c,c-y_i\}\subset\Sigma_q$. The isotropy submanifold $M^{\mathbb{Z}_{y_i}}$ must be the third case of Lemma \ref{l45}. Therefore, we have that $\{-y_i,y_i\}\subset\Sigma_r$. In particular, $y_i-c=-y_i \in \Sigma_r\setminus(\{-a,b\}\cup\{-1,1\}^w)$, since $c-y_i=y_i=\frac{c}{2}\geq 2$ and $c-y_i$ is odd.

Second, assume that $c-y_i=1$. By Lemma \ref{l66}, at most one of $c-x_i$'s or $y_i$'s can be $1$. Hence, either $N_p(1)>N_p(-1)$ and $N_q(1)\geq N_q(-1)$, or $N_p(1)\geq N_p(-1)$ and $N_q(1)> N_q(-1)$. By Lemma \ref{l24}, this implies that $N_r(1)<N_r(-1)$. Therefore, $y_i-c=-1 \in \Sigma_r\setminus(\{-a,b\}\cup\{-1,1\}^w)$.
\end{proof}

\begin{lem} \label{l74} In Lemma \ref{l71}, suppose that $c-x_i \neq 1$. Then either $c-x_i=x_j$ for some $j$ or $c-x_i \in \Sigma_r\setminus(\{-a,b\}\cup\{-1,1\}^w)$.
\end{lem}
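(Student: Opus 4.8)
The plan is to argue exactly as in the proofs of Lemma \ref{l72} and Lemma \ref{l73}, by tracking where the value $c-x_i$ can occur as a weight. First I would record the elementary parity and size facts. Since $a,b,c$ are even and $x_i$ is odd, $c-x_i$ is odd. Since $c$ is the largest weight and (by Lemma \ref{l32}, as used in the proof of Lemma \ref{l62}) occurs with total multiplicity one, namely as the entry $c\in\Sigma_q$, no $x_i$ can equal $c$; hence $x_i<c$ and $c-x_i$ is a \emph{positive} odd integer. Combined with the hypothesis $c-x_i\neq 1$ this gives $c-x_i\geq 3$.

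Next I would apply Lemma \ref{l24} to the weight $x_i-c$, which lies in $\Sigma_q$ by the description in Lemma \ref{l71}; hence $c-x_i$ appears as a weight at some fixed point, i.e. $c-x_i\in\Sigma_p$, $c-x_i\in\Sigma_q$, or $c-x_i\in\Sigma_r$, and I would dispose of these three cases in turn. If $c-x_i\in\Sigma_q$: the positive odd weights listed at $q$ are the $1$'s and the $c-y_j$'s (the weights $c$ and $a$ are even, and each $x_j-c$ is negative), so $c-x_i\geq 3$ forces $c-x_i=c-y_j$ for some $j$, i.e. $x_i=y_j$, contradicting Lemma \ref{l67}. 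If $c-x_i\in\Sigma_p$: likewise the positive odd weights at $p$ are the $1$'s and the $x_j$'s, and since $c-x_i\geq 3$ it is not a $1$, so $c-x_i=x_j$ for some $j$, which is the first alternative in the conclusion. If $c-x_i\in\Sigma_r$: since $c-x_i$ is odd and at least $3$, it differs from the even weights $-a$ and $b$ and from each $\pm 1$ in $\{-1,1\}^w$, so $c-x_i\in\Sigma_r\setminus(\{-a,b\}\cup\{-1,1\}^w)$, which is the second alternative.

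I do not expect a genuine obstacle: the lemma is a bookkeeping companion to Lemmas \ref{l72}--\ref{l73}, and the only slightly delicate point is that the case $c-x_i\in\Sigma_q$ must be excluded, which is precisely where the already-established inequality $x_i\neq y_j$ of Lemma \ref{l67} enters. One should also keep in mind the parity check that makes $c-x_i$ odd (so it avoids $-a$, $b$, $c$, $a$) and the remark, noted above, that $x_i<c$ so that $c-x_i>0$.
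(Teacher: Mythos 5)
Your proposal is correct and follows essentially the same route as the paper: apply Lemma \ref{l24} to conclude that $c-x_i$ occurs as a weight at $p$, $q$, or $r$, rule out the case $c-x_i\in\Sigma_q$ via $x_i=y_j$ contradicting Lemma \ref{l67}, and read off the two alternatives in the remaining cases. The extra parity and positivity bookkeeping you include is consistent with, and only slightly more explicit than, the paper's argument.
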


\begin{proof}
Suppose that $c-x_i \neq 1$. By Lemma \ref{l24}, for each $c-x_i$, either $c-x_i \in \Sigma_p$, $c-x_i \in \Sigma_q$, or $c-x_i \in \Sigma_r$. First, assume that $c-x_i \in \Sigma_p$. Then $c-x_i=x_j$ for some $j$. Second, assume that $c-x_i \in \Sigma_q$. Then $c-x_i=c-y_j$ for some $j$, which is a contradiction by Lemma \ref{l67}. Hence $c-x_i \notin \Sigma_q$. Last, assume that $c-x_i \in \Sigma_r$. Since $c-x_i \neq 1$, $c-x_i \in \Sigma_r\setminus(\{-a,b\}\cup\{-1,1\}^w)$.
\end{proof}

\begin{lem} \label{l75} In Lemma \ref{l71}, suppose that $y_i \neq 1$. Then $y_i \in \Sigma_r\setminus(\{-a,b\}\cup\{-1,1\}^w)$.
\end{lem}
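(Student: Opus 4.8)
The plan is to show that $y_i$ must occur as a weight at $r$, and then to check that it cannot coincide with any of the distinguished weights $-a$, $b$, $\pm 1$. First I would apply Lemma~\ref{l24} to the integer $y_i$: since $-y_i\in\Sigma_p$, we have $\sum_{\alpha\in M^{S^1}}N_\alpha(y_i)=\sum_{\alpha\in M^{S^1}}N_\alpha(-y_i)>0$, so $y_i$ appears as a weight at one of $p,q,r$. Next I would rule out $y_i\in\Sigma_p$: by Lemma~\ref{l71} the only positive odd weights at $p$ are the $x_j$'s together with copies of $1$, so $y_i\neq 1$ would force $y_i=x_j$ for some $j$, contradicting Lemma~\ref{l67}.

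The substantive step is the possibility $y_i\in\Sigma_q$. The only positive odd weights at $q$ are the $c-y_j$'s and copies of $1$ (note $a,c$ are even and each $x_j-c$ is negative), so $y_i\neq 1$ gives $y_i=c-y_j$ for some $j$, whence $j=i$ by Lemma~\ref{l65} and therefore $c=2y_i$. Then $\{-c,-y_i\}=\{-2y_i,-y_i\}\subset\Sigma_p$ and $\{c,c-y_i\}=\{2y_i,y_i\}\subset\Sigma_q$, so $p$ and $q$ each carry two weights divisible by $y_i$ (and $y_i\geq 3>1$). Running the same argument as in the proof of Lemma~\ref{l73} on the isotropy submanifold $M^{\mathbb{Z}_{y_i}}$, Lemma~\ref{l45} forces its only admissible possibility to be a single $4$-dimensional component containing all three fixed points, with weights $\{2y_i,y_i\}$, $\{-y_i,y_i\}$, $\{-y_i,-2y_i\}$ at the three points (rescaled so that absolute values do not exceed $c=2y_i$); matching $p$ with $\{-2y_i,-y_i\}$ and $q$ with $\{2y_i,y_i\}$ leaves $\{-y_i,y_i\}$ for $r$, so $y_i\in\Sigma_r$.

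Hence $y_i\in\Sigma_r$ in every case. To finish, since $y_i$ is a positive odd integer and $y_i\neq 1$, it differs from $-a$ and $-1$ (which are negative), from $b$ (which is even), and from $1$; therefore $y_i\in\Sigma_r\setminus(\{-a,b\}\cup\{-1,1\}^w)$. The main obstacle is the case $y_i\in\Sigma_q$: rather than yielding an outright contradiction, it forces one to unwind the component structure of $M^{\mathbb{Z}_{y_i}}$ through the inductive hypothesis packaged in Lemma~\ref{l45}, exactly as in Lemma~\ref{l73}; the remaining cases are short bookkeeping on the explicit weight lists of Lemma~\ref{l71}.
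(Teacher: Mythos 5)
Your proposal is correct and follows essentially the same route as the paper: apply Lemma \ref{l24} to $y_i$, eliminate $y_i\in\Sigma_p$ via Lemma \ref{l67}, handle $y_i\in\Sigma_q$ by Lemma \ref{l65} to get $c=2y_i$ and then use the structure of $M^{\mathbb{Z}_{y_i}}$ (third case of Lemma \ref{l45}) to place $\{-y_i,y_i\}$ in $\Sigma_r$, and finally note that $y_i$, being odd, positive, and greater than $1$, avoids $\{-a,b\}\cup\{-1,1\}^w$. The level of justification in the isotropy-submanifold step matches the paper's own (which is equally brief there and in Lemma \ref{l73}), so no gap relative to the source.
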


\begin{proof}
Suppose that $y_i \neq 1$. By Lemma \ref{l24}, for each $y_i$, either $y_i \in \Sigma_p$, $y_i \in \Sigma_q$, or $y_i \in \Sigma_r$. First, assume that $y_i \in \Sigma_p$. Then $y_i=x_j$ for some $j$, which is a contradiction by Lemma \ref{l67}. Hence $y_i \notin \Sigma_p$. Second, assume that $y_i \in \Sigma_q$. Then $y_i=c-y_j$ for some $j$. By Lemma \ref{l65}, $i=j$, i.e., $c=2y_i$. Hence, $\{-2y_i,-y_i\}=\{-c,-y_i\}\subset\Sigma_p$ and $\{2y_i,y_i\}=\{c,c-y_i\}\subset\Sigma_q$. The isotropy submanifold $M^{\mathbb{Z}_{y_i}}$ must be the third case of Lemma \ref{l45}. Therefore, we have that $\{-y_i,y_i\}\subset\Sigma_r$. In particular, $y_i \in \Sigma_r\setminus(\{-a,b\}\cup\{-1,1\}^w)$, since $y_i=\frac{c}{2} \geq 2$ and $y_i$ is odd. Last, assume that $y_i \in \Sigma_r$. Since $y_i \neq 1$, $y_i \in \Sigma_r\setminus(\{-a,b\}\cup\{-1,1\}^w)$.
\end{proof}

\begin{lem} \label{l76} In Lemma \ref{l71}, if $x_i \neq c-x_j$ for all $i$ and $j$, then $t < v$.
\end{lem}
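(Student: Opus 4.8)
\emph{Proof proposal.} The plan is to count the negative weights at $r$ and squeeze a strict inequality out of a surplus of $-1$'s. Since $\dim M = 2n = 12 + 4t + 4(v+w)$ by Lemma~\ref{l62} and Lemma~\ref{l71}, we have $n = 6 + 2t + 2v + 2w$, and by Lemma~\ref{l33} the fixed point $r$ satisfies $\lambda_r = n+2$, so it has $\tfrac{n+2}{2} = 4 + t + v + w$ negative weights. Among these the only even one is $-a$, and removing it together with the $w$ copies of $-1$ coming from the block $\{-1,1\}^w$ leaves the multiset $R := \Sigma_r \setminus (\{-a,b\}\cup\{-1,1\}^w)$ (the ``$\cdots$'' in Lemma~\ref{l71}, all of whose entries are odd) with exactly $(4+t+v+w) - 1 - w = 3+t+v$ negative entries. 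It therefore suffices to exhibit $2t+4$ negative entries in $R$.

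First I would produce $2t+3$ distinct ones. Since $x_i \ne c-x_j$ for all $i,j$, Lemma~\ref{l72} gives $-x_i \in R$ for every $i \in \{1,\dots,t+3\}$, and Lemma~\ref{l73} gives $y_j - c \in R$ for every $j \in \{1,\dots,t\}$; here $y_j < c$, since $-y_j$ is a weight, $c$ is the largest weight, and $y_j$ is odd while $c$ is even, so $y_j - c < 0$. Taking $i = j$ in the hypothesis gives $2x_i \ne c$, so $x_i \ne \tfrac{c}{2}$ and Lemma~\ref{l66} forces the $x_i$ to be pairwise distinct; Lemma~\ref{l65} forbids two of the $y_j$ from both equalling $\tfrac{c}{2}$ and Lemma~\ref{l66} takes care of the rest, so the $y_j$ are pairwise distinct; and Lemma~\ref{l63} gives $-x_i \ne y_j - c$. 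Hence $-x_1,\dots,-x_{t+3},\,y_1-c,\dots,y_t-c$ are $2t+3$ distinct negative integers, all lying in $R$.

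The real point is to gain one more. Reading off $N_p(\pm1)$, $N_q(\pm1)$, $N_r(\pm1)$ from the weight lists of Lemma~\ref{l71} and applying Lemma~\ref{l24} to the weight $1$ gives
\[
r_{-1} - r_{1} = 2 + A_1 + A_2 - A_3 - A_4,
\]
where $r_{\pm1}$ denotes the multiplicity of $\pm1$ in $R$, and $A_1 = \#\{i : x_i = 1\}$, $A_2 = \#\{j : y_j = c-1\}$, $A_3 = \#\{j : y_j = 1\}$, $A_4 = \#\{i : x_i = c-1\}$ (using that $x_i = c\pm1$ is impossible since $c$ is the largest weight). As $c = a+b \ge 4$, Lemma~\ref{l66} applied with $e = 1$ gives $A_1 + A_2 + A_3 + A_4 \le 1$, so $A_3 + A_4 \le 1$ and therefore $r_{-1} \ge A_1 + A_2 + 1$. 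But $A_1 + A_2$ is precisely the number of the $2t+3$ integers above that equal $-1$. Hence $R$ contains those $2t+3$ distinct values plus at least one additional copy of $-1$, so $R$ has at least $2t+4$ negative entries; comparing with $3+t+v$ yields $v \ge t+1$, that is, $t < v$.

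I expect the third paragraph to be the obstacle: the bookkeeping of the second paragraph alone only gives $3+t+v \ge 2t+3$, that is $v \ge t$, and recovering the missing unit is the crux. It forces one to combine the weight symmetry of Lemma~\ref{l24} with the sharp ``at most one coincidence'' form of Lemma~\ref{l66} at $e=1$, and to track how each of the four coincidences $x_i = 1$, $y_j = 1$, $x_i = c-1$, $y_j = c-1$ simultaneously affects the multiplicity of $-1$ inside the list $\{-x_i\}\cup\{y_j-c\}$ and the $\pm1$-balance at $r$; pinning down the signs in the displayed identity is the delicate step.
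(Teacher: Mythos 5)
Your proof is correct, but it takes a genuinely different route from the paper. The paper argues by contradiction from $t\geq v$ and counts \emph{distinct values of both signs} inside $\Sigma_r\setminus(\{-a,b\}\cup\{-1,1\}^w)$: besides $-x_i$ (Lemma~\ref{l72}) and $y_i-c$ (Lemma~\ref{l73}), it also places the positive partners $c-x_i$ and $y_i$ there via Lemmas~\ref{l74} and~\ref{l75}, with a three-way case analysis according to whether some $c-x_i=1$ or some $y_j=1$ (Lemma~\ref{l66} allows at most one such coincidence); this yields at least $4t+5$ distinct entries in a multiset of size $4+2t+2v$, which is impossible when $t\geq v$. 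You instead count only the \emph{negative} entries, of which there are exactly $3+t+v$ by $\lambda_r=n+2$, exhibit the $2t+3$ distinct negatives $-x_i$ and $y_j-c$ exactly as the paper does (your distinctness arguments via Lemmas~\ref{l63}, \ref{l65}, \ref{l66} are the same as the paper's), and then recover the crucial extra unit not from Lemmas~\ref{l74}--\ref{l75} but from the multiplicity identity $r_{-1}-r_1=2+A_1+A_2-A_3-A_4$ obtained from Lemma~\ref{l24} at $l=1$, combined with Lemma~\ref{l66} at $e=1$; I checked this identity against the weight lists of Lemma~\ref{l71} and it is correct, and it does force one copy of $-1$ in $R$ beyond your $2t+3$ distinct values, giving $2t+4\leq 3+t+v$, i.e.\ $t<v$ directly. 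Your version avoids Lemmas~\ref{l74} and~\ref{l75} and the case split entirely, at the cost of the $\pm1$ bookkeeping; the paper's version avoids that bookkeeping but needs the positive-side lemmas and the case analysis. One small slip: your parenthetical ``$x_i=c\pm1$ is impossible'' should say only that $x_i=c+1$ (and $y_j=c+1$) is impossible, since $x_i=c-1$ is exactly what $A_4$ counts; this does not affect the displayed identity, which is right as stated.
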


\begin{proof}
Assume on the contrary that $x_i \neq c-x_j$ for all $i$ and $j$, and $t \geq v$. By Lemma \ref{l72}, $-x_i \in \Sigma_r\setminus(\{-a,b\}\cup\{-1,1\}^w)$ for all $i$. Also, by Lemma \ref{l73}, $y_i-c \in \Sigma_r\setminus(\{-a,b\}\cup\{-1,1\}^w)$ for all $i$.

We show that $x_i \neq x_j$ and $y_i \neq y_j$ for $i \neq j$. Suppose that $x_i=x_j$ for some $i \neq j$. Then by Lemma \ref{l66}, $2x_i=2x_j=c$, hence $x_i=c-x_j$, which contradicts the assumption. Therefore, $x_i \neq x_j$ for $i \neq j$. Suppose that $y_i=y_j$ for some $i \neq j$. Then by Lemma \ref{l66}, $2y_i=2y_j=c$, hence $y_i=c-y_j$, which contradicts Lemma \ref{l65}. Therefore, $y_i \neq y_j$ for $i \neq j$.

First, suppose that $c-x_i=1$ for some $i$. Without loss of generality, let $c-x_1=1$. Then by Lemma \ref{l66}, $c-x_i \neq 1$ for $i \neq 1$ and $y_j \neq 1$ for all $j$. Hence $c-x_i \in \Sigma_r\setminus(\{-a,b\}\cup\{-1,1\}^w)$ for $i \neq 1$ by Lemma \ref{l74}. Also, $y_j \in \Sigma_r\setminus(\{-a,b\}\cup\{-1,1\}^w)$ for all $j$ by Lemma \ref{l75}. Also, by Lemma \ref{l63}, $x_i \neq c-y_j$ for all $i$ and $j$. Therefore, we have that $\{-x_i\}_{i=1}^{t+3} \cup \{c-x_i\}_{i=2}^{t+3} \cup \{y_i\}_{i=1}^t \cup \{c-y_i\}_{i=1}^t \subset \Sigma_r\setminus(\{-a,b\}\cup\{-1,1\}^w)$, which is a contradiction since $|\{-x_i\}_{i=1}^{t+3} \cup \{c-x_i\}_{i=2}^{t+3} \cup \{y_i\}_{i=1}^t \cup \{c-y_i\}_{i=1}^t|=4t+5$ and $|\Sigma_r\setminus(\{-a,b\}\cup\{-1,1\}^w)|=2t+4+2u+2v$, but $4t+5 =2t+2t+5>2t+4+2v$.

Second, suppose that $y_i=1$ for some $i$. Without loss of generality, let $y_1=1$. Then by Lemma \ref{l66}, $c-x_i \neq 1$ for all $i$ and $y_j \neq 1$ for $j\neq1$. Hence $c-x_i \in \Sigma_r\setminus(\{-a,b\}\cup\{-1,1\}^w)$ for all $i$ by Lemma \ref{l74} and $y_j \in \Sigma_r\setminus(\{-a,b\}\cup\{-1,1\}^w)$ for $j \neq 1$ by Lemma \ref{l75}. Also, by Lemma \ref{l63}, $x_i \neq c-y_j$ for all $i$ and $j$. Then we have that $\{-x_i\}_{i=1}^{t+3} \cup \{c-x_i\}_{i=1}^{t+3} \cup \{y_i\}_{i=2}^t \cup \{c-y_i\}_{i=1}^t \subset \Sigma_r\setminus(\{-a,b\}\cup\{-1,1\}^w)$, which is a contradiction.

Finally, suppose that $c-x_i\neq 1$ and $y_i \neq 1$ for all $i$. Then $c-x_i \in \Sigma_r\setminus(\{-a,b\}\cup\{-1,1\}^w)$ for all $i$ by Lemma \ref{l74} and $y_j \in \Sigma_r\setminus(\{-a,b\}\cup\{-1,1\}^w)$ for all $j$ by Lemma \ref{l75}. Also, by Lemma \ref{l63}, $x_i \neq c-y_j$ for all $i$ and $j$. Then we have that
$\{-x_i\}_{i=1}^{t+3} \cup \{c-x_i\}_{i=1}^{t+3} \cup \{y_i\}_{i=1}^t \cup \{c-y_i\}_{i=1}^t \subset \Sigma_r\setminus(\{-a,b\}\cup\{-1,1\}^w)$, which is a contradiction.
\end{proof}

\section{The largest weight even case}

Let the circle act symplectically on a compact, connected symplectic manifold $M$ with exactly three fixed points. In this section, we show that if $\dim M \geq 8$, the largest weight cannot be even. We rule out case by case. In Lemma \ref{l71}, we have the following cases:
\begin{enumerate}
\item $t=0$ and $v=0$.
\item $t=0$ and $v=1$.
\item $t=0$ and $v=2$.
\item $t=1$ and $v=0$.
\item $t=1$ and $v=1$.
\item $t=1$ and $v=2$.
\item $t=2$ and $v=1$.
\item $t=2$ and $v=2$.
\item $t=3$ and $v=2$.
\item $t \geq 2+v$.
\item $v \geq 3$.
\end{enumerate}

\begin{lem} \label{l81} In Lemma \ref{l71}, $t=0$ and $v=0$ are impossible.
\end{lem}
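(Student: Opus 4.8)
The plan is to put the weight data of Lemma~\ref{l71} into the form forced by $t=0$, $v=0$: here $\dim M=12+4w$, $n=6+2w$, and
\begin{align*}
\Sigma_{p}&=\{-c,-b,x_1,x_2,x_3,1\}\cup\{-1,1\}^{w},\\
\Sigma_{q}&=\{c,a,x_1-c,x_2-c,x_3-c,1\}\cup\{-1,1\}^{w},\\
\Sigma_{r}&=\{-a,b\}\cup\{z_1,z_2,z_3,z_4\}\cup\{-1,1\}^{w},
\end{align*}
where $a,b$ are even, $c=a+b$ is the largest weight, and the $x_i$ and $z_j$ are odd. Since $\lambda_r=n+2$, the point $r$ has $4+w$ negative weights, so exactly three of the $z_j$ are negative and exactly one, call it $z$, is positive. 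I will use throughout that $c_1(M)|_u=0$ for every fixed point $u$ (Corollary~\ref{c27}, since $\dim M\ge 8$), i.e., the weights at each fixed point sum to $0$, and at the end the localization identity $\int_M 1=0$ of Theorem~\ref{t21}.

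First suppose $x_i\ne c-x_j$ for all $i,j$. Then the $x_i$ are pairwise distinct (by Lemma~\ref{l66} two equal $x_i$'s would both equal $c/2$, making $x_i=c-x_j$), so Lemma~\ref{l72} forces $-x_1,-x_2,-x_3$ to be the three negative $z_j$, giving $\Sigma_r=\{-a,b,-x_1,-x_2,-x_3,z\}\cup\{-1,1\}^w$. Now $c_1(M)|_p=0$ gives $x_1+x_2+x_3=c+b-1$, and then $c_1(M)|_r=0$ gives $z=a-b+(x_1+x_2+x_3)=a+c-1=2a+b-1$. But $z$ is a weight and $c$ is the largest weight, so $z\le c=a+b$, forcing $a\le 1$ and contradicting that $a$ is an even natural number. (Localization is not needed in this case.)

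Now suppose $x_i=c-x_j$ for some $i,j$. By Lemma~\ref{l64} we may relabel so that $x_1=x_2=c/2$ (hence $c/2$ is odd), $x_3\ne c/2$, and no other such coincidence occurs; in particular $x_3\ne c-x_j$ for every $j$. Then $c_1(M)|_p=0$ forces $x_3=b-1$, so $x_1-c=x_2-c=-c/2$ and $x_3-c=-(a+1)$, which fully determines $\Sigma_p$ and $\Sigma_q$; and Lemma~\ref{l72} puts $-x_3=1-b$ among the negative $z_j$. Computing $N_p(\pm1)$ and $N_q(\pm1)$ from these explicit lists and applying Lemma~\ref{l24} to the weight $1$ shows $\{z_1,\dots,z_4\}=\{1-b,-1,-1,z\}$, and then $c_1(M)|_r=0$ gives $z=a+1$ (the degenerate subcase $b=2$, where $1-b=-1$, produces the same multiset). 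Hence $\Sigma_r=\{-a,b,1-b,-1,-1,a+1\}\cup\{-1,1\}^w$. The products of the weights at $p$, $q$, $r$ are then $\tfrac{1}{4}c^3b(b-1)(-1)^w$, $-\tfrac{1}{4}c^3a(a+1)(-1)^w$, and $ab(b-1)(a+1)(-1)^w$, so $\int_M 1=0$ reduces, after cancelling $(-1)^w$ and clearing denominators by $abc^3(b-1)(a+1)$, to
\[
4a(a+1)-4b(b-1)+c^3=0 .
\]
Since $a\ge 2$ and $c=a+b>b$, this yields $4b(b-1)=c^3+4a(a+1)>c^3>b^3$, hence $4(b-1)>b^2$, i.e., $(b-2)^2<0$, a contradiction.

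I expect the technical core to be the second case: pinning down $\Sigma_r$ exactly requires combining Lemma~\ref{l72}, the parity bookkeeping of Lemma~\ref{l24} at $\pm1$, and $c_1(M)|_r=0$, plus a separate check of the subcase $b=2$. By contrast, once $\Sigma_r$ is known the localization identity and the inequality $(b-2)^2<0$ are immediate, and the first case is short.
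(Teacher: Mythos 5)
Your proof is correct, and its skeleton parallels the paper's (isolate the coincidence $x_i=c-x_j$, use Lemma \ref{l64} to get $x_1=x_2=c/2$, pin down $\Sigma_r$, then apply Theorem \ref{t21}), but the execution differs in two worthwhile ways. First, the paper disposes of your first case in one line: by Lemma \ref{l76}, $x_i\neq c-x_j$ for all $i,j$ would force $t<v$, impossible when $t=v=0$; your direct argument via $c_1(M)|_p=c_1(M)|_r=0$ and the largest-weight bound (giving a weight $a+c-1>c$) is a correct independent re-derivation of that special case, just redundant given Lemma \ref{l76}. Second, in the main case the paper keeps $x_3$ undetermined, derives $\Sigma_r=\{-a,b,-x_3,c-x_3,-1,-1\}\cup\{-1,1\}^{w}$ from Lemmas \ref{l24}, \ref{l68}, \ref{l74}, and concludes with a sign argument ($cx_1^2-bx_3>0$, hence $(-1)^w(\tfrac1A+\tfrac1B+\tfrac1C)>0$), whereas you use $c_1(M)|_p=0$ early to force $x_3=b-1$ and $c_1(M)|_r=0$ to force the last weight to be $a+1$, so that localization collapses to the explicit identity $4a(a+1)-4b(b-1)+c^3=0$, contradicted by $(b-2)^2<0$; your subcase split ($b=2$ versus $b>2$, matching the paper's $c-x_3=1$ versus not) and the weight lists agree with the paper's after substituting $x_3=b-1$, and your products $A,B,C$ and the final inequality check out. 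Your version buys a fully explicit weight configuration and a cleaner arithmetic contradiction; the paper's buys independence from the exact value of $x_3$ at the cost of a slightly more delicate sign analysis.
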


\begin{proof} The weights in this case are
$$\Sigma_{p}=\{-c,-b,x_{1},x_{2},x_{3},1\}\cup\{-1,1\}^{w}$$
$$\Sigma_{q}=\{c,a,x_{1}-c,x_{2}-c,x_{3}-c,1\}\cup\{-1,1\}^{w}$$
$$\Sigma_{r}=\{-a,b,\cdots\}\cup\{-1,1\}^{w},$$
where $w=\min_{\alpha \in M^{S^1}} \min \{ N_{\alpha}(-1), N_{\alpha}(1) \}$, $a,b,$ and $c$ are even natural numbers such that $c=a+b$ is the largest weight, and $x_i$'s are odd natural numbers for all $i$. Moreover,  and the remaining weights at $r$ are odd.

By Lemma \ref{l76}, $x_i=c-x_j$ for some $i$ and $j$. Then by Lemma \ref{l64}, there exist $x_i$ and $x_j$ where $i \neq j$ such that $2x_i=2x_j=c$. Without loss of generality, let $2x_1=2x_2=c$. Lemma \ref{l64} also implies that $x_3 \neq c-x_i$ for all $i$. Therefore, $-x_3 \in \Sigma_r\setminus(\{-a,b\}\cup\{-1,1\}^w)$. Note that $x_1=c-x_1 =\frac{c}{2} \geq 2$.

First, suppose that $c-x_3=1$. Then we have that $x
_3=c-1>1$. Hence, $N_p(1)=N_p(-1)+1=w+1$ and $N_q(1)=N_q(-1)=w+1$. Therefore, $N_r(1)+1=N_r(-1)$ by Lemma \ref{l24} for 1. Considering Lemma \ref{l24} for each integer, one can show that the weights are
\begin{center}
$\Sigma_{p}=\{-c,-b,x_{1},x_{1},x_{3},1\}\cup\{-1,1\}^{w}$
$\Sigma_{q}=\{c,a,-x_{1},-x_{1},-1,1\}\cup\{-1,1\}^{w}$
$\Sigma_{r}=\{-a,b,-x_{3},f,-f,-1\}\cup\{-1,1\}^{w}$
\end{center}
for some odd natural number $f$. If $f>1$, by Lemma \ref{l68}, we have that $c=2x_1=2f$, which is a contradiction since no additional multiples of $x_1$ should appear by Lemma \ref{l46} part 3 for $x_1$. Hence $f=1$.

Second, suppose that $c-x_3 \neq 1$. Then by Lemma \ref{l74}, $c-x_3 \in \Sigma_r\setminus(\{-a,b\}\cup\{-1,1\}^w)$. Also, $N_p(1)\geq N_p(-1)+1=w+1$ and $N_q(1)=N_q(-1)+1=w+1$. Therefore, $N_r(1)+2\leq N_r(-1)$ by Lemma \ref{l24} for 1. Considering Lemma \ref{l24} for each integer, one can show that the weights are
\begin{center}
$\Sigma_{p}=\{-c,-b,x_{1},x_{1},x_{3},1\}\cup\{-1,1\}^{w}$
$\Sigma_{q}=\{c,a,-x_{1},-x_{1},x_{3}-c,1\}\cup\{-1,1\}^{w}$
$\Sigma_{r}=\{-a,b,-x_{3},c-x_{3},-1,-1\}\cup\{-1,1\}^{w}$
\end{center}

Therefore, in either case the weights are
\begin{center}
$\Sigma_{p}=\{-c,-b,x_{1},x_{1},x_{3},1\}\cup\{-1,1\}^{w}$
$\Sigma_{q}=\{c,a,-x_{1},-x_{1},x_{3}-c,1\}\cup\{-1,1\}^{w}$
$\Sigma_{r}=\{-a,b,-x_{3},c-x_{3},-1,-1\}\cup\{-1,1\}^{w}$
\end{center}

Let $A=c_{n}(M)|_{q}=\prod \xi_{p}^{j}, B=c_{n}(M)|_{q}=\prod \xi_{q}^{j}$, and $C=c_{n}(M)|_{r}=\prod \xi_{r}^{j}$. First, $2x_{1}=c=a+b>b$. Since $c=a+b\geq2+2=4$, $x_{1}=\frac{c}{2} \geq 2$. Therefore, $c=2x_{1}>x_{3}$ implies that $x_{1}^{2}>x_{3}$. Then
\begin{center}
$\D{(-1)^w\frac{-B-C}{a(c-x_{3})}=c x_{1}^{2}-bx_{3} > 0 },$
\end{center}
and this implies that $(-1)^{w+1}B>(-1)^wC.$ Also, we have that $(-1)^w A>0$. Then, by Theorem \ref{t21},
\begin{center}
$\D{0=(-1)^w \int_M 1 = (-1)^w \bigg( \frac{1}{A} + \frac{1}{B} + \frac{1}{C}  \bigg) >0},$
\end{center}
which is a contradiction.
\end{proof}

\begin{lem} \label{l82} In Lemma \ref{l71}, $t=0$ and $v=1$ are impossible.
\end{lem}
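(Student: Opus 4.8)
The plan is to pin down the weights as tightly as the earlier lemmas allow and then kill each surviving configuration using the vanishing of the Chern class map (Corollary~\ref{c27}), ABBV localization (Theorem~\ref{t21}), and the modular congruences of Lemma~\ref{l46}. With $t=0$ and $v=1$ the weights of Lemma~\ref{l71} read
$$\Sigma_p=\{-c,-b,x_1,x_2,x_3,1\}\cup\{-1,1\}^{1+w},\qquad\Sigma_q=\{c,a,x_1-c,x_2-c,x_3-c,1\}\cup\{-1,1\}^{1+w},$$
$$\Sigma_r=\{-a,b,\cdots\}\cup\{-1,1\}^{w},$$
with $\dim M=16+4w$; since $\lambda_r=n+2$, the fixed point $r$ has exactly six odd weights sitting in the $\cdots$, four of them negative and two positive.

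First I would treat the case that $x_i=c-x_j$ for some $i,j$. By Lemma~\ref{l64} we may then assume $x_1=x_2=c/2$ with $x_3\neq c-x_i$ for all $i$, and Lemma~\ref{l46}(3) applied to $c/2$ fixes the even-type data at $p,q,r$ and forbids further multiples of $c/2$. As in the proof of Lemma~\ref{l81}, Lemmas~\ref{l72} and~\ref{l74} force $-x_3$ (and $c-x_3$ whenever $c-x_3\neq1$) into $\Sigma_r$, and running Lemma~\ref{l24} integer by integer---splitting on whether $x_3=1$ and whether $c-x_3=1$, and using Lemma~\ref{l68} to show the leftover odd weights at $r$ are $\pm1$---determines all the weights. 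Together with $c_1(M)|_p=0$, which determines $b$, Lemma~\ref{l46}(1) applied to $e=c-1$ kills every configuration in which $c-1$ occurs as a weight: there $\Sigma_p\equiv\Sigma_r\pmod{c-1}$ cannot hold because the residue $1$ appears with different multiplicities on the two sides. In the remaining configurations $c-1$ does not occur, and, writing $A=c_n(M)|_p$, $B=c_n(M)|_q$, $C=c_n(M)|_r$, I would substitute the explicit weights into $\frac1A+\frac1B+\frac1C=0$; using $c_1(M)|_p=c_1(M)|_r=0$ this reduces to a polynomial relation in $c$ and $x_3$ which, after plugging in the forced values of $a$ and $b$, forces $c\le4$, contradicting $c=a+b\ge6$ (note $c\neq4$ since $c/2=x_1$ is odd).

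Next I would treat the case $x_i\neq c-x_j$ for all $i,j$, which is consistent with Lemma~\ref{l76} only because here $t<v$. Then the $x_i$ are pairwise distinct, none is $c/2$, and by Lemma~\ref{l66} at most one of $c-x_1,c-x_2,c-x_3$ equals $1$. If none of them does, Lemma~\ref{l74} forces the three distinct positive odd numbers $c-x_1,c-x_2,c-x_3$ all into $\Sigma_r$, which has room for only two positive odd weights outside $\{-1,1\}^{w}$---a contradiction. So exactly one equals $1$, say $c-x_1=1$, i.e.\ $x_1=c-1$; then $x_2,x_3\neq1$, $x_1-c=-1\in\Sigma_q$, and $-(c-1)\in\Sigma_r$ by Lemma~\ref{l72}. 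Since $c-1>c/2$, Lemma~\ref{l46}(1) gives $\Sigma_p\equiv\Sigma_r\pmod{c-1}$. But the residue $1$ occurs in $\Sigma_p\bmod(c-1)$ with multiplicity at least $2+w$, whereas in $\Sigma_r\bmod(c-1)$ the only possible contributions are the $w$ copies of $1$ from $\{-1,1\}^{w}$ and at most one further weight---once $a=c-2$, $x_2=c-2$, $x_3=c-2$ are excluded, each of these forcing via $b=x_2+x_3$ (which comes from $c_1(M)|_p=0$) either $x_2+x_3=2$ or a non-positive $x_i$---so that multiplicity is at most $w+1<2+w$, again a contradiction.

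The main obstacle is organizational rather than conceptual: each branch spawns several further sub-cases according to which of $x_3$, $c-x_3$, and the $c-x_i$ equal $1$ and how the $\pm1$'s are distributed, and the work lies in checking that Lemma~\ref{l24}, Lemma~\ref{l46}, the Chern vanishing, and ABBV localization jointly close every one. No new tool is needed, but the per-case weight bookkeeping and the sign-and-magnitude arithmetic inside the localization identity must be done carefully.
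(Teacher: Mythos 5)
Your case split (whether some $x_i=c-x_j$, and then which of the $x_i$, $c-x_i$ equal $1$) is exactly the paper's, and your branch with no relation $x_i=c-x_j$ is handled essentially as the paper handles it: your count of positive odd slots at $r$ is the paper's $\lambda_r$-contradiction, and your residue count mod $c-1$ is the paper's observation that the excess of $1$'s at $p$ would force $N_r(2-c)\geq 2$, impossible since $-a$ is the only negative even weight at $r$.

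The branch $x_1=x_2=c/2$, however, has a genuine gap. The paper's punchline there --- and the whole reason $w$ is introduced in Lemma \ref{l71} --- is that once Lemma \ref{l68} pins the leftover odd weights at $r$ to $\pm1$, every fixed point satisfies $\min\{N(1),N(-1)\}\geq w+1$, contradicting the definition of $w$; you never invoke this, and the substitutes you offer do not close the case as written. Your claim that $\Sigma_p\equiv\Sigma_r\pmod{c-1}$ fails ``because residue $1$ has different multiplicities'' is false in general: in the configuration $x_3=c-1$ with the leftover weights at $r$ equal to $\pm1$, residue $1$ occurs with multiplicity $w+2$ in $\Sigma_p$ and $w+2$ in $\Sigma_r$ unless $a=2$ or $b=2$ (the actual mismatch sits at residue $-1$; moreover that configuration already has $c_1(M)|_p=a\neq 0$, which you do not use). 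And the ABBV step is only asserted: it does not ``force $c\leq 4$.'' Carrying it out in the subcase $c-x_3\neq 1$, with $x_3=b-1$ coming from $c_1(M)|_p=0$, the identity $\frac{1}{A}+\frac{1}{B}+\frac{1}{C}=0$ reduces to $4a(c-x_3)-4bx_3+c^3=0$, whose left-hand side equals $(a+b)\bigl((a+b-2)^2+8a\bigr)>0$; so a contradiction is indeed available, but it is a strict-positivity contradiction that still has to be computed, not the bound you state. In short, the second branch needs either the paper's $w$-minimality argument or these corrected computations before it is a proof.
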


\begin{proof} The weights in this case are
\begin{center}
$\Sigma_{p}=\{-c,-b,x_{1},x_{2},x_{3},1\}\cup\{-1,1\}^{w+1}$
$\Sigma_{q}=\{c,a,x_{1}-c,x_{2}-c,x_{3}-c,1\}\cup\{-1,1\}^{w+1}$
$\Sigma_{r}=\{-a,b,\cdots\}\cup\{-1,1\}^{w},$
\end{center}
where $w=\min_{\alpha \in M^{S^1}} \min \{ N_{\alpha}(-1), N_{\alpha}(1) \}$, $a,b,$ and $c$ are even natural numbers such that $c=a+b$ is the largest weight, and $x_i$'s are odd natural numbers for all $i$. Moreover,  and the remaining weights at $r$ are odd.
\begin{enumerate}
\item $x_i \neq c-x_j$ for all $i$ and $j$.

By Lemma \ref{l72}, $-x_i \in \Sigma_r\setminus(\{-a,b\}\cup\{-1,1\}^w)$ for all $i$. Assume that $x_i=x_j$ for some $i\neq j$. Then by Lemma \ref{l66}, $2x_i=2x_j=c$ hence $x_i=c-x_j$, which contradicts the assumption. Hence $x_i \neq x_j$ for $i \neq j$.

First, assume that $c-x_i \neq 1$ for all $i$. Then by Lemma \ref{l74}, $c-x_i \in \Sigma_r\setminus(\{-a,b\}\cup\{-1,1\}^w)$ for all $i$. Hence, the weights are
\begin{center}
$\Sigma_{p}=\{-c,-b,x_{1},x_{2},x_{3},1,-1,1\}\cup\{-1,1\}^{w}$
$\Sigma_{q}=\{c,a,x_{1}-c,x_{2}-c,x_{3}-c,1,-1,1\}\cup\{-1,1\}^{w}$
$\Sigma_{r}=\{-a,b,-x_1,-x_2,-x_3,c-x_1,c-x_2,c-x_3\}\cup\{-1,1\}^{w}$
\end{center}
Then we have that $\lambda_r=\frac{1}{2}\dim M$, which contradicts Lemma \ref{l33} that $\lambda_r=\frac{1}{2}\dim M+2$.

Next, assume that $c-x_i=1$ for some $i$. Without loss of generality, let $c-x_3=1$. Then by Lemma \ref{l66}, $c-x_i \neq 1$ for $i \neq 3$. By Lemma \ref{l74}, $c-x_i \in \Sigma_r\setminus(\{-a,b\}\cup\{-1,1\}^w)$ for $i \neq 3$. Also, by the assumption, $x_i \neq 1$ for $i \neq 3$. Then $N_p(1)= N_p(-1)+1=w+2$ and $N_q(1)=N_q(-1)=w+2$. Therefore, $N_r(1)+1= N_r(-1)$ by Lemma \ref{l24} for 1. Then the weights are
\begin{center}
$\Sigma_{p}=\{-c,-b,x_{1},x_{2},c-1,1,-1,1\}\cup\{-1,1\}^{w}$
$\Sigma_{q}=\{c,a,x_{1}-c,x_{2}-c,-1,1,-1,1\}\cup\{-1,1\}^{w}$
$\Sigma_{r}=\{-a,b,-x_{1},-x_{2},1-c,c-x_{1},c-x_{2},-1\}\cup\{-1,1\}^{w}$
\end{center}
By Lemma \ref{l46} part 1 for $x_3=c-1$, $\Sigma_p \equiv \Sigma_r \mod c-1$. Since $c-x_3=1$, $c-x_1 \neq 1$ and $c-x_2 \neq 2$ by Lemma \ref{l66}. Hence $N_r(1)=w$. Then $N_p(1)\geq w+2$, $N_r(1)=w$, and $\Sigma_p \equiv \Sigma_r \mod c-1$ imply that $N_r(2-c)\geq 2$ since $|1+k(c-1)| > c|$ for $|k|\geq 2$ and $c \notin \Sigma_r$. However, $r$ has only one negative even weight, which is a contradiction.

\item $x_{i}= c-x_{j}$ for some $i$ and $j$.

By Lemma \ref{l76}, $x_i=c-x_j$ for some $i$ and $j$. Then by Lemma \ref{l64}, there exist $x_i$ and $x_j$ where $i \neq j$ such that $2x_i=2x_j=c$. Without loss of generality, let $2x_1=2x_2=c$. Lemma \ref{l64} also implies that $x_3 \neq c-x_i$ for all $i$. Therefore, by Lemma \ref{l72}, $-x_3 \in \Sigma_r\setminus(\{-a,b\}\cup\{-1,1\}^w)$. Note that $x_1=c-x_1 =\frac{c}{2} \geq 2$.

First, assume that $c-x_3=1$. Then $N_p(1)=N_p(-1)+1=w+2$ and $N_q(1)=N_q(-1)=w+2$. Therefore, $N_r(1)+1=N_r(-1)$ by Lemma \ref{l24} for 1. Considering Lemma \ref{l24} for each integer, one can show that the weights are
\begin{center}
$\Sigma_{p}=\{-c,-b,x_{1},x_{1},c-1,1,-1,1\}\cup\{-1,1\}^{w}$
$\Sigma_{q}=\{c,a,-x_1,-x_1,-1,1,-1,1\}\cup\{-1,1\}^{w}$
$\Sigma_{r}=\{-a,b,1-c,-1,-f,f,-g,g\}\cup\{-1,1\}^{w}$
\end{center}
for some odd natural numbers $f$ and $g$. If $f>1$, then by Lemma \ref{l68}, $\{-2f,-f\}\subset\Sigma_p$, which is a contradiction since $p$ has no negative odd weight that is less than -1. Hence $f=1$. However, this means that $\min_{\alpha \in M^{S^1}} \min \{ N_{\alpha}(-1), N_{\alpha}(1) \} \geq w+1$, which is a contradiction.

Second, assume that $c-x_3 \neq 1$. By Lemma \ref{l74}, $c-x_3 \in \Sigma_r\setminus(\{-a,b\}\cup\{-1,1\}^w)$. Then $N_p(1)\geq N_p(-1)+1=w+2$ and $N_q(1)=N_q(-1)+1=w+2$. Therefore, $N_r(1)+2 \leq N_r(-1)$ by Lemma \ref{l24} for 1. Considering Lemma \ref{l24} for each integer, one can show that the weights are
\begin{center}
$\Sigma_{p}=\{-c,-b,x_{1},x_{2},x_{3},1,-1,1\}\cup\{-1,1\}^{w}$
$\Sigma_{q}=\{c,a,-x_2,-x_1,x_{3}-c,1,-1,1\}\cup\{-1,1\}^{w}$
$\Sigma_{r}=\{-a,b,-x_{3},c-x_{3},-1,-1,-f,f\}\cup\{-1,1\}^{w}$
\end{center}
for some odd natural number $f$. As above, $f=1$ and this means that $\min_{\alpha \in M^{S^1}} \min \{ N_{\alpha}(-1), N_{\alpha}(1) \} \geq w+1$, which is a contradiction.
\end{enumerate}
\end{proof}

\begin{lem} \label{l83} In Lemma \ref{l71}, $t=0$ and $v=2$ are impossible.
\end{lem}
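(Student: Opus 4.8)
The plan is to derive a contradiction purely from the multiplicities of the weight $1$ at the three fixed points, via Lemma \ref{l69}, with essentially no case analysis and without invoking ABBV localization. Specializing Lemma \ref{l71} to $t=0$ and $v=2$ gives
\begin{center}
$\Sigma_{p}=\{-c,-b,x_{1},x_{2},x_{3},1\}\cup\{-1,1\}^{w+2}$, \qquad $\Sigma_{q}=\{c,a,x_{1}-c,x_{2}-c,x_{3}-c,1\}\cup\{-1,1\}^{w+2}$, \qquad $\Sigma_{r}=\{-a,b,\cdots\}\cup\{-1,1\}^{w}$,
\end{center}
where $a,b$ are even natural numbers, $c=a+b\geq 4$ is the largest weight, and the $x_{i}$ are odd natural numbers. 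Set $\varepsilon=|\{i:x_{i}=1\}|$ and $\delta=|\{i:x_{i}=c-1\}|$. Since $c$ is the largest weight and is even while each $x_{i}$ is odd, we have $1\leq x_{i}\leq c-1$, so $x_{i}-c\leq-1$; combining this with the fact that $a,b,c$ are even, I would read off $N_{p}(1)=\varepsilon+w+3$, $N_{p}(-1)=w+2$, $N_{q}(1)=w+3$, and $N_{q}(-1)=\delta+w+2$.

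Next I would locate where the minimum $w=\min_{\alpha}\min\{N_{\alpha}(-1),N_{\alpha}(1)\}$ is attained. Since $\min\{N_{p}(1),N_{p}(-1)\}=w+2>w$ and $\min\{N_{q}(1),N_{q}(-1)\}\geq w+2>w$, it is not attained at $p$ or $q$, hence $\min\{N_{r}(1),N_{r}(-1)\}=w$. Applying Lemma \ref{l24} to the weight $1$ gives $N_{r}(-1)-N_{r}(1)=(N_{p}(1)-N_{p}(-1))+(N_{q}(1)-N_{q}(-1))=2+\varepsilon-\delta$. Because $c\geq 4$ we have $c-1\neq\tfrac{c}{2}$, so Lemma \ref{l66} applied with $e=c-1$ forces $\delta\leq 1$; therefore $N_{r}(-1)-N_{r}(1)\geq 1$, so $N_{r}(1)<N_{r}(-1)$, and hence $N_{r}(1)=w$.

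It then remains to quote Lemma \ref{l69}. We have $N_{p}(1)=\varepsilon+w+3\geq w+3=N_{r}(1)+3$ and $N_{q}(1)=w+3=N_{r}(1)+3$, so $N_{p}(1)>N_{r}(1)$ and $N_{q}(1)>N_{r}(1)$, yet neither $N_{p}(1)<N_{r}(1)+3$ nor $N_{q}(1)<N_{r}(1)+3$ holds; this contradicts Lemma \ref{l69} and finishes the case. The only delicate part is the exact bookkeeping of the multiplicities of $\pm1$ at $p$ and $q$ — in particular that no $x_{i}-c$ can equal $1$, and that $\delta\leq 1$ by Lemma \ref{l66} — after which Lemma \ref{l69} applies immediately, so I do not expect a real obstacle here (in contrast to the smaller-$v$ cases, where the analogous count only yields $N_{p}(1)\geq N_{r}(1)+2$ and Lemma \ref{l69} is not strong enough).
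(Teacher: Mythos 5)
Your proof is correct, and it takes a genuinely shorter route than the paper's. The paper handles $t=0$, $v=2$ by a case analysis (whether $x_i=c-x_j$ for some $i,j$, which by Lemma \ref{l64} forces $2x_1=2x_2=c$, and whether some $c-x_i=1$), pins down $\Sigma_r$ in each case by repeated use of Lemma \ref{l24}, and then derives a contradiction either from Lemma \ref{l69} or from the minimality of $w$ (an extra pair $\{-f,f\}\subset\Sigma_r$ forced to have $f=1$, contradicting the definition of $w$). You bypass the determination of $\Sigma_r$ entirely: the exact counts $N_p(1)=\varepsilon+w+3$, $N_p(-1)=w+2$, $N_q(1)=w+3$, $N_q(-1)=\delta+w+2$ are right (all $x_i$ are odd with $1\leq x_i\leq c-1$, so $x_i-c\leq -1$, and $a,b,c$ are even), Lemma \ref{l66} with $e=c-1$ (legitimate since $c=a+b\geq 4$ gives $c-1\neq\frac{c}{2}$) does give $\delta\leq 1$, so Lemma \ref{l24} for the weight $1$ yields $N_r(-1)-N_r(1)=2+\varepsilon-\delta\geq 1$; combined with the fact that the global minimum $w$ must be attained at $r$ (it is $\geq w+2$ at $p$ and $q$), this gives $N_r(1)=w$, and then $N_p(1)\geq N_r(1)+3$, $N_q(1)=N_r(1)+3$ contradicts Lemma \ref{l69} in one stroke. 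What each approach buys: the paper's argument is uniform with the neighboring cases (Lemmas \ref{l81}--\ref{l89}), where, as you note, the analogous count is too weak ($v\leq 1$ only yields a gap of $2$) and the finer structure of $\Sigma_r$ is genuinely needed; your count shows that in this particular case the full case analysis is dispensable, at the cost of using the same toolbox (\ref{l24}, \ref{l66}, \ref{l69}) in a less constructive way.
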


\begin{proof} The weights in this case are
\begin{center}
$\Sigma_{p}=\{-c,-b,x_{1},x_{2},x_{3},1\}\cup\{-1,1\}^{w+2}$
$\Sigma_{q}=\{c,a,x_{1}-c,x_{2}-c,x_{3}-c,1\}\cup\{-1,1\}^{w+2}$
$\Sigma_{r}=\{-a,b,\cdots\}\cup\{-1,1\}^{w},$
\end{center}
where $w=\min_{\alpha \in M^{S^1}} \min \{ N_{\alpha}(-1), N_{\alpha}(1) \}$, $a,b,$ and $c$ are even natural numbers such that $c=a+b$ is the largest weight, and $x_i$'s are odd natural numbers for all $i$. Moreover,  and the remaining weights at $r$ are odd.

\begin{enumerate}
\item $x_i \neq c-x_j$ for all $i$ and $j$.

By Lemma \ref{l72}, $-x_i \in \Sigma_r\setminus(\{-a,b\}\cup\{-1,1\}^w)$ for all $i$. Assume that $x_i=x_j$ for some $i\neq j$. Then by Lemma \ref{l66}, $2x_i=2x_j=c$ hence $x_i=c-x_j$, which contradicts the assumption. Hence $x_i \neq x_j$ for $i \neq j$.

First, assume that $c-x_i \neq 1$ for all $i$. Then by Lemma \ref{l74}, $c-x_i \in \Sigma_r\setminus(\{-a,b\}\cup\{-1,1\}^w)$ for all $i$. Also, $N_p(1)\geq N_p(-1)+1=w+3$ and $N_q(1)=N_q(-1)+1=w+3$. Therefore, $N_r(1)+2 \leq N_r(-1)$ by Lemma \ref{l24} for 1. Considering Lemma \ref{l24} for each integer, one can show that the weights are
\begin{center}
$\Sigma_{p}=\{-c,-b,x_{1},x_{2},x_{3},1,-1,1,-1,1\}\cup\{-1,1\}^{w}$
$\Sigma_{q}=\{c,a,x_{1}-c,x_{2}-c,x_{3}-c,1,-1,1,-1,1\}\cup\{-1,1\}^{w}$
$\Sigma_{r}=\{-a,b,-x_1,-x_2,-x_3,c-x_1,c-x_2,c-x_3,-1,-1\}\cup\{-1,1\}^{w}$
\end{center}
Then we have that $N_{p}(1) \geq w+3$, $N_{q}(1) \geq w+3$, and $N_{r}(1) =w$, which contradict Lemma \ref{l69}.

Next, assume that $c-x_i=1$ for some $i$. Without loss of generality, let $c-x_3=1$. Then by Lemma \ref{l66}, $c-x_i \neq 1$ for $i \neq 3$. Hence, by Lemma \ref{l74}, $c-x_i \in \Sigma_r\setminus(\{-a,b\}\cup\{-1,1\}^w)$ for $i \neq 3$. Also, $N_p(1)\geq N_p(-1)+1=w+3$ and $N_q(1)=N_q(-1)=w+3$. Therefore, $N_r(1)+1 \leq N_r(-1)$ by Lemma \ref{l24} for 1. Considering Lemma \ref{l24} for each integer, one can show that the weights are
\begin{center}
$\Sigma_{p}=\{-c,-b,x_{1},x_{2},c-1,1,-1,1,-1,1\}\cup\{-1,1\}^{w}$
$\Sigma_{q}=\{c,a,x_{1}-c,x_{2}-c,-1,1,-1,1,-1,1\}\cup\{-1,1\}^{w}$
$\Sigma_{r}=\{-a,b,-x_{1},-x_{2},1-c,c-x_{1},c-x_{2},-1,-f,f\}\cup\{-1,1\}^{w}$
\end{center}
for some odd natural number $f$. If $f>1$, then by Lemma \ref{l68}, $\{-2f,-f\}\subset\Sigma_p$, which is a contradiction since $p$ has no negative odd weight that is less than -1. Hence $f=1$. However, this means that $\min_{\alpha \in M^{S^1}} \min \{ N_{\alpha}(-1), N_{\alpha}(1) \} \geq w+1$, which is a contradiction.

\item $x_{i}= c-x_{j}$ for some $i$ and $j$.

By Lemma \ref{l64}, there exist $x_i$ and $x_j$ where $i \neq j$ such that $2x_i=2x_j=c$. Without loss of generality, let $2x_1=2x_2=c$. Lemma \ref{l64} also implies that $x_3 \neq c-x_i$ for all $i$. Therefore, $-x_3 \in \Sigma_r\setminus(\{-a,b\}\cup\{-1,1\}^w)$. Note that $x_1=c-x_1 =\frac{c}{2} \geq 2$.

First, assume that $c-x_3=1$. Then we have that $N_p(1)= N_p(-1)+1=w+3$ and $N_q(1)=N_q(-1)=w+3$. Therefore, $N_r(1)+1 = N_r(-1)$ by Lemma \ref{l24} for 1. Considering Lemma \ref{l24} for each integer, one can show that the weights are
\begin{center}
$\Sigma_{p}=\{-c,-b,x_{1},x_{1},c-1,1,-1,1,-1,1\}\cup\{-1,1\}^{w}$
$\Sigma_{q}=\{c,a,-x_1,-x_1,-1,1,-1,1,-1,1\}\cup\{-1,1\}^{w}$
$\Sigma_{r}=\{-a,b,1-c,-1,-f,f,-h,h,-k,k\}\cup\{-1,1\}^{w}$
\end{center}
for some odd natural numbers $f,h$, and $k$. As above, $f=1$ and this means that $\min_{\alpha \in M^{S^1}} \min \{ N_{\alpha}(-1), N_{\alpha}(1) \} \geq w+1$, which is a contradiction.

Second, assume that $c-x_3 \neq 1$. By Lemma \ref{l74}, $c-x_3 \in \Sigma_r\setminus(\{-a,b\}\cup\{-1,1\}^w)$. Also, $N_p(1)\geq N_p(-1)+1=w+3$ and $N_q(1)= N_q(-1)+1=w+3$. Therefore, $N_r(1)+2 \leq N_r(-1)$ by Lemma \ref{l24} for 1. Considering Lemma \ref{l24} for each integer, one can show that the weights are
\begin{center}
$\Sigma_{p}=\{-c,-b,x_{1},x_{1},x_{3},1,-1,1,-1,1\}\cup\{-1,1\}^{w}$
$\Sigma_{q}=\{c,a,-x_1,-x_1,x_{3}-c,1,-1,1,-1,1\}\cup\{-1,1\}^{w}$
$\Sigma_{r}=\{-a,b,-x_{3},c-x_{3},-1,-1,-f,f,-h,h\}\cup\{-1,1\}^{w}$
\end{center}
for some odd natural numbers $f$ and $h$. As above, $f=1$ and this means that $\min_{\alpha \in M^{S^1}} \min \{ N_{\alpha}(-1), N_{\alpha}(1) \} \geq w+1$, which is a contradiction.
\end{enumerate}
\end{proof}

\begin{lem} \label{l84} In Lemma \ref{l71}, $t=1$ and $v=0$ are impossible.
\end{lem}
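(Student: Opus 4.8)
The plan is to follow the template of Lemmas~\ref{l81}--\ref{l83}. Specializing Lemma~\ref{l71} to $t=1$, $v=0$ gives $\dim M=16+4w$, hence $n=8+2w$, and the weights
\begin{gather*}
\Sigma_p=\{-c,-b,x_1,x_2,x_3,x_4,-y_1,1\}\cup\{-1,1\}^w,\\
\Sigma_q=\{c,a,x_1-c,x_2-c,x_3-c,x_4-c,c-y_1,1\}\cup\{-1,1\}^w,\\
\Sigma_r=\{-a,b,\cdots\}\cup\{-1,1\}^w,
\end{gather*}
where the ``$\cdots$'' in $\Sigma_r$ has exactly $6$ odd weights. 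Since $t=1\not<0=v$, Lemma~\ref{l76} forces $x_i=c-x_j$ for some $i,j$, so by Lemma~\ref{l64} we may relabel so that $2x_1=2x_2=c$ (in particular $c/2$ is an odd integer $\geq 3$, so $c\geq 6$), $x_3\neq c-x_k$ and $x_4\neq c-x_k$ for all $k$, and $(x_1,x_2)$ is the unique complementary pair.

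Next I would push weights into $\Sigma_r$. By Lemmas~\ref{l72} and~\ref{l73}, $-x_3$, $-x_4$, $y_1-c$ all lie in $\Sigma_r\setminus(\{-a,b\}\cup\{-1,1\}^w)$, and Lemmas~\ref{l63}, \ref{l66}, \ref{l67} show these three are distinct. The dichotomy is whether some member of $\{c-x_3,\,c-x_4,\,y_1\}$ equals $1$; by Lemma~\ref{l66} at most one does. If none of them is $1$, then Lemmas~\ref{l74} and~\ref{l75} put $c-x_3$, $c-x_4$, $y_1$ into $\Sigma_r\setminus(\{-a,b\}\cup\{-1,1\}^w)$ as well, and using $x_3\neq x_4$, $x_i\neq c/2$, $x_i\neq c-y_1$ and $x_i,x_4<c$ one checks that the six weights $-x_3,-x_4,y_1-c,c-x_3,c-x_4,y_1$ are distinct, hence comprise the ``$\cdots$'' exactly. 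Counting the negative weights of $\Sigma_r$ then gives $\lambda_r=2(4+w)=n$, contradicting $\lambda_r=n+2$ from Lemma~\ref{l33}.

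It remains to treat the case that exactly one of $c-x_3,c-x_4,y_1$ equals $1$; after relabeling $x_3\leftrightarrow x_4$ it is $c-x_3=1$ or $y_1=1$. In each, a short argument (uniqueness of the complementary pair from Lemma~\ref{l64} together with Lemmas~\ref{l66} and~\ref{l67}) shows the remaining $x_i$'s are $\neq 1$, so the still-forced weights determine ``$\cdots$'' up to one odd weight, which Lemma~\ref{l24} applied to the integer $1$ pins down; equivalently, where it occurs in a pair $\{-f,f\}\subset\Sigma_r$, Lemma~\ref{l68} forces $f=1$, since $f>1$ would give $c=2f=2x_1$ and then $-f=-x_1\notin\Sigma_p$ (none of $c,b,y_1,1$ equals $x_1=c/2$). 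Now $\Sigma_p,\Sigma_q,\Sigma_r$ are fully determined; imposing $c_1(M)|_p=c_1(M)|_q=c_1(M)|_r=0$ (Corollary~\ref{c27}) yields a linear relation among the parameters (for instance $y_1=a+x_4$, i.e.\ $c-y_1=b-x_4$, when $c-x_3=1$). Finally I would invoke ABBV localization (Theorem~\ref{t21}): writing $A=c_n(M)|_p$, $B=c_n(M)|_q$, $C=c_n(M)|_r$ and recording the signs $(-1)^{\lambda_p/2},(-1)^{\lambda_q/2},(-1)^{\lambda_r/2}$, one establishes a polynomial inequality (of the flavor $c\,x_1^2-b\,x_3>0$ used in Lemma~\ref{l81}, after substituting $x_1=c/2$ and the $c_1=0$ relation) showing that $(-1)^{\lambda_p/2}\bigl(\tfrac{1}{A}+\tfrac{1}{B}+\tfrac{1}{C}\bigr)\neq0$, contradicting $\int_M1=0$; in the sub-subcases where some weight acquires multiplicity $\geq2$ a quicker contradiction comes from the ``no additional multiples'' clause of Lemma~\ref{l46}.

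The delicate part is this last step. Making the ``$\cdots$ of $\Sigma_r$ equals exactly that set'' claims airtight requires ruling out every accidental coincidence among the candidate weights, which is precisely where the full force of Lemmas~\ref{l63}--\ref{l67} is needed; and then the ABBV endgame needs both the correct parity bookkeeping and a polynomial inequality that must be checked to keep a definite sign once the $c_1=0$ relation is imposed (equivalently, that no admissible choice of $a,b,x_3,x_4$ makes the two sides equal). I expect verifying that inequality --- much like the lengthy positivity expansion at the end of Lemma~\ref{l56} --- to be the main obstacle.
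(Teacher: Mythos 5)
Your first case (none of $c-x_3$, $c-x_4$, $y_1$ equal to $1$) is exactly the paper's argument: the six forced odd weights fill up $\Sigma_r\setminus(\{-a,b\}\cup\{-1,1\}^w)$ and the count gives $\lambda_r=n$, contradicting Lemma~\ref{l33}. Your $y_1=1$ sub-case is also essentially the paper's: there $c_1(M)|_p=0$ gives $x_3+x_4=b$, and ABBV works because $(-1)^w(B+C)$ is a positive multiple of $c x_1^2-b x_3x_4$, which is positive by AM--GM since $x_3+x_4=b<c=2x_1$. So far so good, even though you only gesture at the inequality.

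The genuine gap is the remaining sub-case, $c-x_i=1$ for some $i$ (say $x_4=c-1$), which you also propose to finish by an ABBV sign/inequality argument after imposing $c_1=0$. That endgame does not go through: with the forced weights $\Sigma_p=\{-c,-b,x_1,x_1,x_3,c-1,-y,1\}\cup\{-1,1\}^w$, $\Sigma_q=\{c,a,-x_1,-x_1,x_3-c,-1,c-y,1\}\cup\{-1,1\}^w$, $\Sigma_r=\{-a,b,-x_3,1-c,c-x_3,y-c,y,-1\}\cup\{-1,1\}^w$ and the relation $y=a+x_3$ from $c_1(M)|_p=0$, the signs are $(-1)^w/A<0$, $(-1)^w/B>0$, $(-1)^w/C<0$, so there is no sign contradiction, and the quantity analogous to $B+C$ is $a(c-x_3)(c-y)\bigl(cx_1^2-bx_3y(c-1)\bigr)$, whose sign is genuinely indefinite as $a,b,x_3$ vary; no polynomial inequality of the Lemma~\ref{l81}/\ref{l56} type is available, which is precisely the obstacle you flag but do not resolve. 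The paper closes this case by a different mechanism: since $x_4=c-1$ is the weight pairing $p$ and $r$, Lemma~\ref{l46} part 1 gives $\Sigma_p\equiv\Sigma_r \bmod (c-1)$; comparing multiplicities of $1$ forces $2-c\in\Sigma_r$, hence $-a=2-c$ and $b=2$, and then the congruence forces $y=c-3$, after which $c_1(M)|_p=0$ yields $x_3=-1<0$ --- a contradiction with no localization at all. (A small additional slip: your appeal to Lemma~\ref{l68} to pin down the last slot of $\Sigma_r$ is misplaced --- in these sub-cases the sixth slot is forced to be $-1$ directly by the count $N_r(-1)=N_r(1)+1$ from Lemma~\ref{l24}, since the other five slots are known not to be $\pm1$.) Without the $\bmod\,(c-1)$ congruence step, your outline cannot complete this sub-case.
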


\begin{proof}
The weights in this case are
\begin{center}
$\Sigma_{p}=\{-c,-b,x_{1},x_{2},x_{3},x_{4},-y,1\}\cup\{-1,1\}^{w}$
$\Sigma_{q}=\{c,a,x_{1}-c,x_{2}-c,x_{3}-c,x_{4}-c,c-y,1\}\cup\{-1,1\}^{w}$
$\Sigma_{r}=\{-a,b,\cdots\}\cup\{-1,1\}^{w},$
\end{center}
where $a,b,$ and $c$ are even natural numbers such that $c=a+b$ is the largest weight, $x_i$'s and $y$ are odd natural numbers for all $i$, and $w=\min_{\alpha \in M^{S^1}} \min \{ N_{\alpha}(-1), N_{\alpha}(1) \}$. Moreover, the remaining weights at $r$ are odd.

By Lemma \ref{l76}, $x_i=c-x_j$ for some $i$ and $j$. Then by Lemma \ref{l64}, there exist $x_i$ and $x_j$ where $i \neq j$ such that $2x_i=2x_j=c$. Without loss of generality, let $2x_1=2x_2=c$. Lemma \ref{l64} also implies that $x_i \neq c-x_j$ for $i \neq 1$ and 2, and for all $j$. Therefore, $-x_i \in \Sigma_r\setminus(\{-a,b\}\cup\{-1,1\}^w)$ for $i =3$ and 4 by Lemma \ref{l72}. Also, by Lemma \ref{l73}, $y-c \in \Sigma_r\setminus(\{-a,b\}\cup\{-1,1\}^w)$. Moreover, by Lemma \ref{l46} part 3 for $x$, none of $x_i$'s, $y$, $c-x_i$'s and $c-y$ can be $x$ for $i \neq 1$ and 2. Hence, by Lemma \ref{l66}, all of $x_i$'s, $y$, $c-x_i$'s and $c-y$ are different for $i \neq 1$ and 2.

First, suppose that $c-x_i \neq 1$ for all $i$ and $y \neq 1$. Then by Lemma \ref{l74}, $x_i-c \in \Sigma_r\setminus(\{-a,b\}\cup\{-1,1\}^w)$ for $i=3$ and 4. Also, by Lemma \ref{l75}, $y \in \Sigma_r\setminus(\{-a,b\}\cup\{-1,1\}^w)$. Then the weights are
\begin{center}
$\Sigma_{p}=\{-c,-b,x_{1},x_{1},x_{3},x_{4},-y,1\}\cup\{-1,1\}^{w}$
$\Sigma_{q}=\{c,a,-x_{1},-x_{1},x_{3}-c,x_{4}-c,c-y,1\}\cup\{-1,1\}^{w}$
$\Sigma_{r}=\{-a,b,-x_{3},-x_{4},y,c-x_{3},c-x_{4},y-c\}\cup\{-1,1\}^{w}$
\end{center}
Then we have that $\lambda_r=\frac{1}{2}\dim M$, which contradicts Lemma \ref{l33} that $\lambda_r=\frac{1}{2}\dim M+2$.

Second, suppose that $y=1$. Then by Lemma \ref{l66}, none of $x_i$'s, $c-x_i$'s, and $c-y$ is 1 for all $i$. Hence, by Lemma \ref{l74}, $x_i-c \in \Sigma_r\setminus(\{-a,b\}\cup\{-1,1\}^w)$ for $i = 3$ and 4. Moreover, $N_p(1)=N_p(-1)=w+1$ and $N_q(1)-1=N_q(-1)=w$. By Lemma \ref{l24} for 1, this implies that $N_r(1)=N_r(-1)-1$. Then the weights are
\begin{center}
$\Sigma_{p}=\{-c,-b,x_{1},x_{1},x_{3},x_{4},-1,1\}\cup\{-1,1\}^{w}$
$\Sigma_{q}=\{c,a,-x_{1},-x_{1},x_{3}-c,x_{4}-c,c-1,1\}\cup\{-1,1\}^{w}$
$\Sigma_{r}=\{-a,b,-x_{3},-x_{4},c-x_{3},c-x_{4},1-c,-1\}\cup\{-1,1\}^{w}$
\end{center}
By Corollary \ref{c27}, $c_{1}(M)|_{p}=0$ and this implies that $x_{3}+x_{4}=b$. Since $x_{3}+x_{4}=b<a+b=c=2x_1$ and $x_1 = \frac{c}{2} \geq2$, we have that $x_{1}^{2} > x_{3}x_{4}$. Therefore,
\begin{center}
$\D{(-1)^w\frac{B+C}{a(c-x_{3})(c-x_{4})(c-1)}=c x_{1}^{2}-bx_{3}x_{4}>0}.$
\end{center}
Also, $(-1)^w A<0$. Then, by Theorem \ref{t21},
\begin{center}
$\D{0=\int_M 1 = (-1)^w \bigg(\frac{1}{A} + \frac{1}{B} + \frac{1}{C} \bigg) <0},$
\end{center}
which is a contradiction.

Finally, suppose that $c-x_{i}=1$ for some $i$. Since $c \geq 4$, $c-x_1=x_1=\frac{c}{2} \geq 2$. Hence, $c-x_i=x_i \neq 1$ for $i=1$ and 2. Therefore, without loss of generality, let $c-x_{4}=1$. By Lemma \ref{l66}, none of $x_i$'s, $c-x_j$'s, $y$, and $c-y$ is 1 for all $i$ and for $j \neq 4$. Hence, by Lemma \ref{l74}, $c-x_3 \in \Sigma_r\setminus(\{-a,b\}\cup\{-1,1\}^w)$. Also, by Lemma \ref{l75}, $y \in \Sigma_r\setminus(\{-a,b\}\cup\{-1,1\}^w)$. Moreover, $N_p(1)=N_p(-1)+1=w+1$ and $N_q(1)=N_q(-1)=w+1$. By Lemma \ref{l24} for 1, this implies that $N_r(1)=N_r(-1)-1$. Then the weights are
\begin{center}
$\Sigma_{p}=\{-c,-b,x_{1},x_{1},x_{3},c-1,-y,1\}\cup\{-1,1\}^{w}$
$\Sigma_{q}=\{c,a,-x_{1},-x_{1},x_{3}-c,-1,c-y,1\}\cup\{-1,1\}^{w}$
$\Sigma_{r}=\{-a,b,-x_{3},1-c,c-x_{3},y-c,y,-1\}\cup\{-1,1\}^{w}$
\end{center}
By Lemma \ref{l46} part 1 for $c-1=x_4$, $\Sigma_{p}\equiv\Sigma_{r}\mod c-1$. First, we can choose a bijection between $\Sigma_p$ and $\Sigma_q$ so that $\Sigma_{p}\supset\{c-1,-c\} \cup \{-1,1\}^w\equiv \{1-c,-1\}\cup \{-1,1\}^w\subset\Sigma_{r} \mod c-1$. Since $N_p(1)=w+1$ and $N_r(1)=w$, $\Sigma_{p}\equiv\Sigma_{r}\mod c-1$ implies that $2-c \in \Sigma_r$ since $|1+k(c-1)| > c$ for $|k| \geq 2$ and $c \notin \Sigma_r$. Since $-a$ is the only negative even weight at $r$, we have that $-a=2-c$, i.e., $a+2=c=a+b$. Hence, $b=2$. Then we are left with
\begin{center}
$\{-2=-b,x_{1},x_{1},x_{3},-y\}$
$\equiv \{2=b,-x_{3},c-x_{3},y-c,y\} \mod c-1.$
\end{center}
Since for $2 \in \Sigma_r$, $2 \neq -2, x_{1}$, and $x_{3} \mod c-1$, the only possibility is that $2 \equiv -y \mod c-1$, i.e., $2-c+1=-y$. Thus $y=c-3$. By Corollary \ref{c27}, $c_{1}(M)|_{p}=-c-2+x_{1}+x_{1}+x_{3}+c-1-y+1=0$. Hence, we have that $x_3+c=y+2$. However, $x_{3}+c=y+2=c-3+2=c-1$ and so $0<x_{3}=-1$, which is a contradiction.
\end{proof}

\begin{lem} \label{l85} In Lemma \ref{l71}, $t=1$ and $v=1$ are impossible.
\end{lem}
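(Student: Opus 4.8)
The plan is to run the scheme of Lemmas \ref{l81}--\ref{l84}. For $t=1$ and $v=1$ the weights from Lemma \ref{l71} are
$$\Sigma_{p}=\{-c,-b,x_{1},x_{2},x_{3},x_{4},-y,1\}\cup\{-1,1\}^{w+1},$$
$$\Sigma_{q}=\{c,a,x_{1}-c,x_{2}-c,x_{3}-c,x_{4}-c,c-y,1\}\cup\{-1,1\}^{w+1},$$
$$\Sigma_{r}=\{-a,b,\cdots\}\cup\{-1,1\}^{w},$$
with $\dim M=20+4w$, so $n=10+2w$ and, by Lemma \ref{l33}, the numbers of negative weights at $p,q,r$ are $4+w$, $5+w$, $6+w$ respectively.

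First I would fix the even part of the picture. Since here $t=v$ (not $t<v$), Lemma \ref{l76} forces $x_{i}=c-x_{j}$ for some $i,j$, so by Lemma \ref{l64} we may relabel so that $2x_{1}=2x_{2}=c$, while $x_{3}\neq x_{4}$ and neither $x_{3}$ nor $x_{4}$ equals $c-x_{k}$ for any $k$. Applying Lemma \ref{l46} part 3 to $e=c/2$ (legitimate since $N_{p}(c/2)=2$ and $c\geq4$) also shows that $a$ is not a multiple of $c/2$, hence $a\neq b$. Then Lemma \ref{l72} puts $-x_{3},-x_{4}$ in $\Sigma_{r}\setminus(\{-a,b\}\cup\{-1,1\}^{w})$, Lemma \ref{l73} puts $y-c$ there as well, and Lemmas \ref{l63}, \ref{l67} give $x_{3},x_{4}\notin\{y,c-y\}$.

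Next I would split into sub-cases according to which of $c-x_{3}$, $c-x_{4}$, $y$, $c-y$ equals $1$; by Lemma \ref{l66} at most one of them can (recall $c/2\geq 2$, so none of $x_{1},x_{2},c-x_{1},c-x_{2}$ is $1$). In each sub-case, Lemmas \ref{l74} and \ref{l75} locate $c-x_{3},c-x_{4},y$ (or their partners) inside $\Sigma_{r}$, and then Lemma \ref{l24} for the integer $1$, together with the fact that $\Sigma_{r}$ has exactly $6+w$ negative weights, pins down $\Sigma_{r}$ up to a couple of extra odd weights whose sum is then forced by $c_{1}(M)|_{r}=0$ (Corollary \ref{c27}), making them $\pm1$. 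Any sub-case in which this bookkeeping makes $\lambda_{r}\neq n/2+2$, or produces an extra pair $\{-f,f\}\subset\Sigma_{r}$ with $f>1$ (which by Lemma \ref{l68} collapses to $c=2f$ and then contradicts the minimality of $w$, exactly as in Lemmas \ref{l82}, \ref{l83}), is disposed of immediately. In the surviving sub-cases $\Sigma_{r}$ is completely determined, and $c_{1}(M)|_{p}=0$ reads $x_{3}+x_{4}-y=b-1$ (using $2x_{1}=c$); I would then invoke ABBV localization (Theorem \ref{t21}): writing $A=c_{n}(M)|_{p}$, $B=c_{n}(M)|_{q}$, $C=c_{n}(M)|_{r}$, one has $\mathrm{sign}(A)=\mathrm{sign}(C)=(-1)^{w}$ and $\mathrm{sign}(B)=(-1)^{w+1}$, so $0=\int_{M}1$ becomes $\tfrac{1}{|B|}=\tfrac{1}{|A|}+\tfrac{1}{|C|}$, which I would contradict by a magnitude estimate in the spirit of the inequality $cx_{1}^{2}-bx_{3}x_{4}>0$ used in Lemma \ref{l84}.

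The main obstacle is precisely this last step in the genuinely generic sub-case, where none of $c-x_{3},c-x_{4},y$ equals $1$ and $\Sigma_{r}=\{-a,b,-x_{3},-x_{4},y-c,c-x_{3},c-x_{4},y,-1,-1\}\cup\{-1,1\}^{w}$. Here $|A|=cbx_{1}^{2}x_{3}x_{4}y$, $|B|=cax_{1}^{2}(c-x_{3})(c-x_{4})(c-y)$, $|C|=abx_{3}x_{4}y(c-x_{3})(c-x_{4})(c-y)$, and $\tfrac{1}{|B|}=\tfrac{1}{|A|}+\tfrac{1}{|C|}$ is equivalent to $a(c-x_{3})(c-x_{4})(c-y)+\tfrac{c^{3}}{4}=bx_{3}x_{4}y$, which is not visibly impossible from $c=a+b$ and $x_{3}+x_{4}-y=b-1$ alone. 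To kill it I expect one must feed in the congruence $\Sigma_{q}\equiv\Sigma_{r}\pmod{c-x_{3}}$ (or $\pmod{a}$) coming from Lemma \ref{l46} part 1 and run a matching-modulo argument in the style of Lemmas \ref{l64}, \ref{l65} and the final case of Lemma \ref{l84}, forcing either a weight of absolute value exceeding $c$ or an impossible equality such as $0<x_{3}=-1$. Keeping track of all the genericity conditions ($x_{3}\neq x_{4}$, $x_{i}\notin\{y,c-y,c-x_{j}\}$, and the exact multiplicities of $\pm1$ relative to $w$) while doing this is the only delicate part; the remaining sub-cases reduce to the routine case-chasing already carried out in Lemmas \ref{l81}--\ref{l84}.
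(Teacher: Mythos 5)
Your setup is sound and for the two degenerate sub-cases your plan matches the paper: the same normalization $2x_1=2x_2=c$ via Lemmas \ref{l76} and \ref{l64}, the same placement of $-x_3,-x_4,y-c$ in $\Sigma_r$ via Lemmas \ref{l72} and \ref{l73}, and when $y=1$ or $c-x_i=1$ the same mechanism (an extra pair $\{-f,f\}\subset\Sigma_r$, Lemma \ref{l68} forcing $c=2f=2x$, hence $f=1$, contradicting the minimality of $w$). Your sign bookkeeping for ABBV in the generic sub-case is also correct, and the identity $b\,x_3x_4y=a(c-x_3)(c-x_4)(c-y)+c^{3}/4$ is the right reduction of $\int_M 1=0$.

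However, there is a genuine gap: the generic sub-case ($y\neq 1$ and $c-x_i\neq 1$ for all $i$) is the heart of the lemma, and you do not prove it — you explicitly leave it as something you ``expect'' a congruence-matching argument to settle. As you yourself observe, the localization identity together with $c=a+b$ and $c_1(M)|_p=0$ does not visibly yield a contradiction, so unlike Lemma \ref{l84} the ABBV magnitude estimate is a dead end here; the paper in fact does not use ABBV at all in this sub-case. Its argument is purely combinatorial: since no further multiples of $x=c/2$ may occur (Lemma \ref{l46} part 3), $a\neq x$ and $b\neq x$, so either $a>c/2$ or $b>c/2$; applying Lemma \ref{l46} part 1 modulo $a$ (resp. modulo $b$) and matching $\Sigma_q$ with $\Sigma_r$ (resp. $\Sigma_p$ with $\Sigma_r$) — using that $\{-x,-x\}\subset\Sigma_q$ with $-x\notin\Sigma_r$, and that $N_q(1)=w+2$ while $N_r(1)=w$ — forces $\{1+a,\,a-x,\,2a-x\}=\{y,\,c-x_3,\,c-x_4\}$ (resp. $\{b+1,\,b-x,\,2b-x\}=\{y,\,c-x_3,\,c-x_4\}$), after ruling out the doubled options by Lemma \ref{l46} part 3. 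The contradiction then comes from the structural fact that each element of $\{-x_3,-x_4,y-c\}$ shifted by $c$ lies in $\{y,c-x_3,c-x_4\}$ (in the $a$-case this produces a weight $2a-x=c+1>c$), respectively from summing the three identified weights and combining with $c_1(M)|_p=0$ (in the $b$-case). None of this matching is carried out in your proposal, and it is precisely the delicate part; so as written the proof is incomplete at its decisive step.
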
 

\begin{proof} The weights in this case are
\begin{center}
$\Sigma_{p}=\{-c,-b,x_{1},x_{2},x_{3},x_{4},-y,1\}\cup\{-1,1\}^{w+1}$
$\Sigma_{q}=\{c,a,x_{1}-c,x_{2}-c,x_{3}-c,x_{4}-c,c-y,1\}\cup\{-1,1\}^{w+1}$
$\Sigma_{r}=\{-a,b,\cdots\}\cup\{-1,1\}^{w},$
\end{center}
where $a,b,$ and $c$ are even natural numbers such that $c=a+b$ is the largest weight, $x_i$'s and $y$ are odd natural numbers for all $i$, and $w=\min_{\alpha \in M^{S^1}} \min \{ N_{\alpha}(-1), N_{\alpha}(1) \}$. Moreover, the remaining weights at $r$ are odd.

By Lemma \ref{l76}, $x_i=c-x_j$ for some $i$ and $j$. Then by Lemma \ref{l64}, there exist $x_i$ and $x_j$ where $i \neq j$ such that $2x_i=2x_j=c$. Without loss of generality, let $2x_1=2x_2=c$. Denote $x=x_1$. Lemma \ref{l64} also implies that $x_i \neq c-x_j$ for $i \neq 1$ and 2, and for all $j$. Therefore, $-x_i \in \Sigma_r\setminus(\{-a,b\}\cup\{-1,1\}^w)$ for $i=3$ and 4 by Lemma \ref{l72}. Also, by Lemma \ref{l73}, $y-c \in \Sigma_r\setminus(\{-a,b\}\cup\{-1,1\}^w)$. Moreover, by Lemma \ref{l46} part 3 for $x$, none of $x_i$'s, $y$, $c-x_i$'s, and $c-y$ can be $x$ for $i \neq 1$ and 2. Hence, by Lemma \ref{l66}, all of $x_i$'s, $y$, $c-x_i$'s, and $c-y$ are different for $i \neq 1$ and 2. We have the following cases:

\begin{enumerate}
\item $y=1$.

By Lemma \ref{l66}, none of $x_i$'s, $c-x_i$'s, and $c-y$ is 1 for all $i$. Then by Lemma \ref{l74}, $c-x_i \in \Sigma_r\setminus(\{-a,b\}\cup\{-1,1\}^w)$ for $i=3$ and 4. Moreover, $N_p(1)=N_p(-1)=N_q(1)=N_q(-1)+1=w+2$. Hence, by Lemma \ref{l24} for 1, $N_r(-1)=N_r(1)+1$. Considering Lemma \ref{l24} for each integer, one can show that the weights are
\begin{center}
$\Sigma_{p}=\{-c,-b,x,x,x_{3},x_{4},-1,1,-1,1\}\cup\{-1,1\}^{w}$
$\Sigma_{q}=\{c,a,-x,-x,x_{3}-c,x_{4}-c,c-1,1,-1,1\}\cup\{-1,1\}^{w}$
$\Sigma_{r}=\{-a,b,-x_{3},-x_{4},c-x_{3},c-x_{4},1-c,-1,-f,f\}\cup\{-1,1\}^{w}$
\end{center}
for some odd natural number $f$. Suppose that $f>1$. Then by Lemma \ref{l68}, $c=2x=2f$, which is a contradiction since no additional multiples of $x$ should appear by Lemma \ref{l46} part 3 for $x$. Hence $f=1$. However, this means that $\min_{\alpha \in M^{S^1}} \min \{ N_{\alpha}(-1), N_{\alpha}(1) \} \geq w+1$, which is a contradiction.

\item $c-x_{i}=1$ for some $i$.

Since $2x_1=2x_2=c\geq4$, $c-x_1=c-x_2=x_1 \geq 2$. Hence, without loss of generality, assume that $c-x_{4}=1$. By Lemma \ref{l66}, none of $x_i$'s, $y$, $c-x_j$'s, and $c-y$ can be 1 for all $i$ and $j \neq 4$. Thus $y \in \Sigma_r\setminus(\{-a,b\}\cup\{-1,1\}^w)$ by Lemma \ref{l75} and $c-x_3 \in \Sigma_r\setminus(\{-a,b\}\cup\{-1,1\}^w)$ by Lemma \ref{l74}. Moreover, $N_p(1)=N_p(-1)+1=N_q(1)=N_q(-1)=w+2$. Hence, by Lemma \ref{l24} for 1, $N_r(-1)=N_r(1)+1$. Considering Lemma \ref{l24} for each integer, one can show that the weights are
\begin{center}
$\Sigma_{p}=\{-c,b,x,x,x_{3},c-1,-y,1,-1,1\}\cup\{-1,1\}^{w}$
$\Sigma_{q}=\{c,a,-x,-x,x_{3}-c,-1,c-y,1,-1,1\}\cup\{-1,1\}^{w}$
$\Sigma_{r}=\{-a,b,-x_{3},1-c,y,c-x_{3},y-c,-1,-f,f\}\cup\{-1,1\}^{w}$
\end{center}
for some odd natural number $f$. Then as above, $f=1$, which is a contradiction since this means that $\min_{\alpha \in M^{S^1}} \min \{ N_{\alpha}(-1), N_{\alpha}(1) \} \geq w+1$.

\item $y \neq 1$ and $c-x_{i} \neq1$, for all $i$.

By Lemma \ref{l75}, $y \in \Sigma_r\setminus(\{-a,b\}\cup\{-1,1\}^w)$. Also, $c-x_i \in \Sigma_r\setminus(\{-a,b\}\cup\{-1,1\}^w)$ for $i=3$ and 4 by Lemma \ref{l74}. Moreover, $N_p(1)\geq N_p(-1)+1$ and $N_q(1) \geq N_q(-1)+1$. Hence, by Lemma \ref{l24} for 1, $N_r(-1) \geq N_r(1)+2$. Considering Lemma \ref{l24} for each integer, one can show that the weights are
\begin{center}
$\Sigma_{p}=\{-c,-b,x,x,x_{3},x_{4},-y,1,-1,1\}\cup\{-1,1\}^{w}$
$\Sigma_{q}=\{c,a,-x,-x,x_{3}-c,x_{4}-c,c-y,1,-1,1\}\cup\{-1,1\}^{w}$
$\Sigma_{r}=\{-a,b,-x_{3},-x_{4},c-x_{3},c-x_{4},y,y-c,-1,-1\}\cup\{-1,1\}^{w}$
\end{center}

By Lemma \ref{l46} part 3 for $x$, no additional multiples of $x$ appear. Hence, $a \neq x$ and $b \neq x$. Since $2x=c=a+b$, either $a > \frac{c}{2}$ or $b>\frac{c}{2}$.

\begin{enumerate}[(a)]
\item $a > \frac{c}{2}$.

By Lemma \ref{l46} part 1 for $a$, $\Sigma_{q} \equiv \Sigma_{r} \mod a$. First, we can choose so that 
\begin{center}
$\Sigma_q \supset \{c=a+b,a,-1\}\cup\{-1,1\}^w$
$\equiv \{-a,b,-1\}\cup\{-1,1\}^w \subset \Sigma_r \mod a.$
\end{center}
Next, $\{-x,-x\}\subset\Sigma_{q}$ and $-x \notin \Sigma_{r}$ imply that $\{a-x,a-x\}\subset\Sigma_{r}\setminus(\{-a,b,-1\}\cup\{-1,1\}^w)$, $\{2a-x,2a-x\}\subset\Sigma_{r}\setminus(\{-a,b,-1\}\cup\{-1,1\}^w)$, or $\{a-x,2a-x\}\subset\Sigma_{r}\setminus(\{-a,b,-1\}\cup\{-1,1\}^w)$, since $|-x+ka|>c$ for $k<0$ or $k>2$. If the first case or the second case holds, it implies that two of $c-x_3$, $c-x_4$, and $y$ are equal since $c-x_3$, $c-x_4$, and $y$ are the only positive integers in $\Sigma_{r}\setminus(\{-a,b,-1\}\cup\{-1,1\}^w)$, which is a contradiction by Lemma \ref{l66}. Hence we have that $\{a-x,2a-x\}\subset\Sigma_{r}\setminus(\{-a,b,-1\}\cup\{-1,1\}^w)$.

Similarly, $N_q(1)=w+2$, $N_r(1)=w$, and $\Sigma_{q} \equiv \Sigma_{r} \mod a$ imply that either $\{1+a,1+a\}\subset\Sigma_{r}\setminus(\{-a,b,-1\}\cup\{-1,1\}^w)$, $\{1-a,1-a\}\subset\Sigma_{r}\setminus(\{-a,b,-1\}\cup\{-1,1\}^w)$, or $\{1-a,1+a\}\subset\Sigma_{r}\setminus(\{-a,b,-1\}\cup\{-1,1\}^w)$, since $|1+ka|>c$ for $|k| \geq 2$. If the first case holds, $-2(a+1) \in \Sigma_r$ by Lemma \ref{l46} part 3 for $a+1$, which is a contradiction since $-2(a+1) <-c$ where $-c$ is the smallest weight. If the second case holds, $2(a-1) \in \Sigma_r$ by Lemma \ref{l46} part 3 for $1-a$, which is a contradiction since $2(a-1) \geq 2x=c$ but $c \notin \Sigma_r$. Hence, the third case must be the case.

To sum up, we have that $\{1-a,1+a,a-x,2a-x\}\subset\{-x_3,-x_4,y,c-x_3,c-x_4,y-c\}=\Sigma_r\setminus(\{-a,b,-1\}\cup\{-1,1\}^w)$, i.e., $1-a \in \{-x_3,-x_4,y-c\}$ and $\{1+a,a-x,2a-x\} = \{y,c-x_3,c-x_4\}$. For each $\alpha \in \{-x_3,-x_4,y-c\}$, we have that $\alpha +c \in \{y,c-x_3,c-x_4\}$. This implies that $1-a+c \in \{y,c-x_3,c-x_4\}=\{1+a,a-x,2a-x\}$. If $1-a+c=1+a$, then $1-a+c=1-a+2x=1+a$, hence $2x=2a$, which contradicts that $a>x$. If $1-a+c=2a-x$, then $1-a+c=1-a+2x=2a-x$, hence $3x+1=3a$, which is a contradiction since $a>x$. Hence $1-a+c=a-x$. Then we have that $c+1=2a-x$, which is a contradiction since $2a-x \in \{y,c-x_3,c-x_4\}\subset\Sigma_r$ but $2a-x=c+1>c$, where $c$ is the largest weight.

\item $b > \frac{c}{2}$.

By Lemma \ref{l46} part 1 for $b$, $\Sigma_{p} \equiv \Sigma_{r} \mod b$. First, we can choose so that 
\begin{center}
$\Sigma_p \supset \{-c=-a-b,-b,-1\}\cup\{-1,1\}^w$
$\equiv \{-a,b,-1\}\cup\{-1,1\}^w \subset \Sigma_r \mod b.$
\end{center}
Next, as above, one can show that $\{b-x,2b-x,b+1\}=\{y,c-x_3,c-x_4\}$. This implies that $b-x+2b-x+b+1=4b-2x+1=2c+y-x_3-x_4=y+c-x_3+c-x_4$. Therefore, $4b-2x+1=2c+y-x_3-x_4=4x+y-x_3-x_4$, hence $4b+1=6x+y-x_3-x_4$. On the other hand, by Corollary \ref{c27}, $c_1(M)|_p=b+x_3+x_4-y+1=0$. Thus $y-x_3-x_4=b+1$. Then we have that $4b+1=6x+y-x_3-x_4=6x+b+1$, hence $3b=6x$, which is a contradiction since $b<2x=c$.
\end{enumerate}
\end{enumerate}
\end{proof}

\begin{lem} \label{l86} In Lemma \ref{l71}, $t=1$ and $v=2$ are impossible.
\end{lem}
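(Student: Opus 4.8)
The plan is to follow the template established in Lemmas~\ref{l84} and~\ref{l85}, the main new feature being that here $t<v$, so Lemma~\ref{l76} gives no information and both cases ``$x_i=c-x_j$ for some $i,j$'' and ``$x_i\neq c-x_j$ for all $i,j$'' must be treated, as in Lemmas~\ref{l82} and~\ref{l83}. First I would record that, by Lemma~\ref{l71} with $t=1$ and $v=2$, the weights are $\Sigma_p=\{-c,-b,x_1,x_2,x_3,x_4,-y,1\}\cup\{-1,1\}^{w+2}$, $\Sigma_q=\{c,a,x_1-c,x_2-c,x_3-c,x_4-c,c-y,1\}\cup\{-1,1\}^{w+2}$, and $\Sigma_r=\{-a,b,\cdots\}\cup\{-1,1\}^{w}$, with $a,b,c$ even, $c=a+b$ the largest weight, $x_i$ and $y$ odd, and $\dim M=24+4w$, so $n=12+2w$ and $\lambda_r=n+2=14+2w$ by Lemma~\ref{l33}.

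Case ``$x_i\neq c-x_j$ for all $i,j$'': by Lemma~\ref{l66} the $x_i$ are pairwise distinct, and by Lemmas~\ref{l72} and~\ref{l73}, $-x_1,-x_2,-x_3,-x_4$ and $y-c$ all lie in $\Sigma_r\setminus(\{-a,b\}\cup\{-1,1\}^w)$. If moreover $c-x_i\neq1$ for all $i$ and $y\neq1$, then Lemmas~\ref{l74} and~\ref{l75} put $c-x_1,\dots,c-x_4$ and $y$ into $\Sigma_r$ as well, and (using Lemma~\ref{l63} to see these are all distinct) a cardinality count forces $\Sigma_r=\{-a,b\}\cup\{-x_i\}_{i=1}^4\cup\{c-x_i\}_{i=1}^4\cup\{y-c,y\}\cup\{-1,1\}^w$; this set has $6+w$ negative weights, so $\lambda_r=12+2w=n$, contradicting Lemma~\ref{l33}. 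If instead $y=1$ or some $c-x_i=1$, I would compute $N_r(1)$ and $N_r(-1)$ from Lemma~\ref{l24} applied to $1$, then use Lemma~\ref{l24} for each integer to determine $\Sigma_r$ up to one or two pairs $\{-f,f\}$; since $p$ has no positive even weight and its only negative even weights are $-c$ and $-b$, Lemma~\ref{l68} forces $f=1$, which makes $\min_\alpha\min\{N_\alpha(1),N_\alpha(-1)\}\geq w+1$, contradicting the definition of $w$ (falling back on Theorem~\ref{t21} with the Euler classes $A=c_n(M)|_p$, $B=c_n(M)|_q$, $C=c_n(M)|_r$ if a leftover sub-sub-case resists).

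Case ``$x_i=c-x_j$ for some $i,j$'': by Lemma~\ref{l64}, after relabeling $2x_1=2x_2=c$, the $x_3,x_4$ satisfy $x_k\neq c-x_\ell$ for all $\ell$ and there is no second such pair; also $a\neq x_1\neq b$ since no further multiples of $x_1$ may appear (Lemma~\ref{l46} part 3). By Lemmas~\ref{l72} and~\ref{l73}, $-x_3,-x_4,y-c\in\Sigma_r\setminus(\{-a,b\}\cup\{-1,1\}^w)$. I would again subdivide: if $y=1$ or some $c-x_i=1$ ($i\in\{3,4\}$), the argument of the previous paragraph (Lemma~\ref{l24} on $1$ and each integer, then Lemma~\ref{l68}) produces a leftover pair $\{-f,f\}$ with $f=1$, contradicting minimality of $w$; in the remaining subcase all of $c-x_3,c-x_4,y$ are odd and $>1$, so $\Sigma_r$ is pinned down exactly, and since $2x_1=c=a+b$ with $a\neq x_1\neq b$, either $a>\tfrac c2$ or $b>\tfrac c2$. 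Applying Lemma~\ref{l46} part 1 for that weight gives $\Sigma_q\equiv\Sigma_r\bmod a$ (resp.\ $\Sigma_p\equiv\Sigma_r\bmod b$), and I would run the residue-matching argument of Lemma~\ref{l85}, using $c_1(M)|_p=0$ (Corollary~\ref{c27}), the size bounds $2x_1>c$ and $2(c-x_k)>c$, and Lemma~\ref{l66}, to force a weight of $r$ of absolute value exceeding $c$; alternatively, when the signs of $1/A$, $1/B$, $1/C$ all agree, Theorem~\ref{t21} gives $0=\pm\int_M1\neq0$.

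The step I expect to be the main obstacle is this last subcase of ``$x_i=c-x_j$'': all the nontrivial weights at $r$ are squeezed into a handful of residue classes modulo the weight $>\tfrac c2$, and one must enumerate every assignment and kill it by size and by $c_1(M)|_p=0$ — precisely the bookkeeping that closes Lemma~\ref{l85}, but with one extra free parameter (the $v=2$ rather than $v=1$) to carry along, which is why organizing the count of $N_r(\pm1)$ carefully before invoking Lemma~\ref{l68} or Lemma~\ref{l46} part 1 is the delicate part.
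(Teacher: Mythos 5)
Your case division (whether some $x_i=c-x_j$, then sub-splitting on $y=1$, some $c-x_i=1$, or neither) is the paper's, and your $\lambda_r$-count in the subcase ``$x_i\neq c-x_j$ for all $i,j$, all of $c-x_i,y$ different from $1$'' is a correct variant of the paper's argument. But the mechanism you rely on for the remaining subcases of that first case does not apply. When $x_i\neq c-x_j$ for all $i,j$ and $y=1$ (or some $c-x_i=1$), Lemmas \ref{l72}--\ref{l75} already force \emph{nine} of the ten elements of $\Sigma_r\setminus(\{-a,b\}\cup\{-1,1\}^w)$, and Lemma \ref{l24} for $1$ forces the single remaining element to be $-1$: there is no leftover pair $\{-f,f\}$, $\min\{N_r(1),N_r(-1)\}$ stays equal to $w$, so neither Lemma \ref{l68} nor minimality of $w$ yields anything, and the $\lambda_r$-count is consistent with Lemma \ref{l33} as well. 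The contradiction the paper uses there is Lemma \ref{l69}: one gets $N_p(1)\geq w+3$, $N_q(1)\geq w+3$ while $N_r(1)=w$. You never invoke Lemma \ref{l69} anywhere, and ``fall back on Theorem \ref{t21}'' is not an argument, so these subcases are genuinely open in your proposal.

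In the case $2x_1=2x_2=c$ with $y\neq1$ and $c-x_i\neq1$ the situation is the reverse of what you expect: your claim that $\Sigma_r$ is ``pinned down exactly'' is wrong, because with $v=2$ there are ten slots and only six forced entries $-x_3,-x_4,y,y-c,c-x_3,c-x_4$; Lemma \ref{l24} then leaves two extra $-1$'s and one pair $\{-f,f\}$. That extra pair is precisely what closes the subcase cheaply: if $f>1$, Lemma \ref{l68} gives $c=2f=2x_1$, contradicting Lemma \ref{l46} part 3 for $x_1$ (no further multiples of $x_1$), and if $f=1$ then every fixed point has $\min\{N(1),N(-1)\}\geq w+1$, contradicting the definition of $w$. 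So the heavy residue-matching of Lemma \ref{l85} (which you flag as the main obstacle, and which would now carry the unknown pair $\pm f$ and is not carried out) is unnecessary here; it is needed in Lemma \ref{l85} only because $v=1$ leaves no such pair. In short: you apply the pair-plus-minimality trick exactly where it fails and miss it exactly where it works, and the missing tool for the failing subcases is Lemma \ref{l69}.
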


\begin{proof} The weights in this case are
\begin{center}
$\Sigma_{p}=\{-c,-b,x_{1},x_{2},x_{3},x_{4},-y,1\}\cup\{-1,1\}^{w+2}$
$\Sigma_{q}=\{c,a,x_{1}-c,x_{2}-c,x_{3}-c,x_{4}-c,c-y,1\}\cup\{-1,1\}^{w+2}$
$\Sigma_{r}=\{-a,b,\cdots\}\cup\{-1,1\}^{w},$
\end{center}
where $a,b,$ and $c$ are even natural numbers such that $c=a+b$ is the largest weight, $x_i$'s and $y$ are odd natural numbers for all $i$, and $w=\min_{\alpha \in M^{S^1}} \min \{ N_{\alpha}(-1), N_{\alpha}(1) \}$. Moreover, the remaining weights at $r$ are odd. By Lemma \ref{l73}, $y-c \in \Sigma_r\setminus(\{-a,b\}\cup\{-1,1\}^w)$. We have the following cases:

\begin{enumerate}
\item $x_i=c-x_j$ for some $i$ and $j$.

By Lemma \ref{l64}, there exist $x_i$ and $x_j$ where $i \neq j$ such that $2x_i=2x_j=c$. Without loss of generality, let $2x_1=2x_2=c$. Denote $x=x_1$. Lemma \ref{l64} also implies that $x_i \neq c-x_j$ for $i \neq 1$ and 2, and for all $j$. Therefore, $-x_i \in \Sigma_r\setminus(\{-a,b\}\cup\{-1,1\}^w)$ for $i =3$ and 4 by Lemma \ref{l72}. Moreover, by Lemma \ref{l46} part 3 for $x$, none of $x_i$'s, $y$, $c-x_i$'s, and $c-y$ can be $x$ for $i \neq 1$ and 2. Hence, by Lemma \ref{l66}, all of $x_i$'s, $y$, $c-x_i$'s, and $c-y$ are different for $i \neq 1$ and 2. 

First, assume that $y=1$. Then none of $x_i$'s, $c-x_i$'s, and $c-y$ is 1 for all $i$. Hence $c-x_i \in \Sigma_r\setminus(\{-a,b\}\cup\{-1,1\}^w)$ for $i=3$ and 4 by Lemma \ref{l74}. Moreover, $N_p(1)=N_p(-1)=N_q(1)=N_q(-1)+1=w+3$. Hence, by Lemma \ref{l24} for 1, $N_r(-1)=N_r(1)+1$. Considering Lemma \ref{l24} for each integer, one can show that the weights are
\begin{center}
$\Sigma_{p}=\{-c,-b,x,x,x_{3},x_{4},-1,1,-1,1,-1,1\}\cup\{-1,1\}^{w}$
$\Sigma_{q}=\{c,a,-x,-x,x_{3}-c,x_{4}-c,c-1,1,-1,1,-1,1\}\cup\{-1,1\}^{w}$
$\Sigma_{r}=\{-a,b,-x_3,-x_4,c-x_3,c-x_4,1-c,-1,-f,f,-h,h\}\cup\{-1,1\}^{w}$
\end{center}
for some odd natural numbers $f$ and $h$. Suppose that $f>1$. Then by Lemma \ref{l68}, $c=2x=2f$, which contradicts Lemma \ref{l46} part 3 for $x$. Hence $f=1$, which is a contradiction since it means that $\min_{\alpha \in M^{S^1}} \min \{ N_{\alpha}(-1), N_{\alpha}(1) \} \geq w+1$.

Second, assume that $c-x_i=1$ for some $i$. Since $c-x_1=c-x_2=x_1=\frac{c}{2}\geq2$, without loss of generality, let $c-x_{4}=1$. Then none of $x_i$'s, $c-x_j$'s, $y$, and $c-y$ is 1 for all $i$ and $j \neq 4$. Thus $y \in \Sigma_r\setminus(\{-a,b\}\cup\{-1,1\}^w)$ by Lemma \ref{l75} and $c-x_3 \in \Sigma_r\setminus(\{-a,b\}\cup\{-1,1\}^w)$ by Lemma \ref{l74}. Moreover, $N_p(1)=N_p(-1)+1=N_q(1)=N_q(-1)=w+3$. Hence, by Lemma \ref{l24} for 1, $N_r(-1)=N_r(1)+1$. Considering Lemma \ref{l24} for each integer, one can show that the weights are
\begin{center}
$\Sigma_{p}=\{-c,-b,x,x,x_{3},c-1,-y,1,-1,1,-1,1\}\cup\{-1,1\}^{w}$
$\Sigma_{q}=\{c,a,-x,-x,x_{3}-c,-1,c-y,1,-1,1,-1,1\}\cup\{-1,1\}^{w}$
$\Sigma_{r}=\{-a,b,-x_{3},1-c,y,c-x_{3},y-c,-1,-f,f,-h,h\}\cup\{-1,1\}^{w}$
\end{center}
for some odd natural numbers $f$ and $h$. As above, $f=1$, which is a contradiction since it means that $\min_{\alpha \in M^{S^1}} \min \{ N_{\alpha}(-1), N_{\alpha}(1) \} \geq w+1$.

Last, assume that $y \neq 1$ and $c-x_i \neq 1$ for all $i$. By Lemma \ref{l75}, $y \in \Sigma_r\setminus(\{-a,b\}\cup\{-1,1\}^w)$. Also, $c-x_i \in \Sigma_r\setminus(\{-a,b\}\cup\{-1,1\}^w)$ for $i=3$ and 4 by Lemma \ref{l74}. Moreover, $N_p(1)\geq N_p(-1)+1$ and $N_q(1) \geq N_q(-1)+1$. Hence, by Lemma \ref{l24} for 1, $N_r(-1) \geq N_r(1)+2$. Considering Lemma \ref{l24} for each integer, one can show that the weights are
\begin{center}
$\Sigma_{p}=\{-c,-b,x,x,x_{3},x_{4},-y,1,-1,1,-1,1\}\cup\{-1,1\}^{w}$
$\Sigma_{q}=\{c,a,-x,-x,x_{3}-c,x_{4}-c,c-y,1,-1,1,-1,1\}\cup\{-1,1\}^{w}$
$\Sigma_{r}=\{-a,b,-x_3,-x_4,y,c-x_3,c-x_4,y-c,-1,-1,-f,f\}\cup\{-1,1\}^{w}$
\end{center}
for some odd natural number $f$. As above, $f=1$, which is a contradiction since it means that $\min_{\alpha \in M^{S^1}} \min \{ N_{\alpha}(-1), N_{\alpha}(1) \} \geq w+1$.

\item $x_{i} \neq c-x_{j}$, for all $i$ and $j$.

By Lemma \ref{l72}, $-x_i \in \Sigma_r\setminus(\{-a,b\}\cup\{-1,1\}^w)$ for all $i$. Also, by Lemma \ref{l73}, $y-c \in \Sigma_r\setminus(\{-a,b\}\cup\{-1,1\}^w)$.

We show that $x_i \neq x_j$ for $i \neq j$. Suppose that $x_i=x_j$ for some $i \neq j$. Then by Lemma \ref{l66}, $2x_i=2x_j=c$, hence $x_i=c-x_j$, which contradicts the assumption. Therefore, $x_i \neq x_j$ for $i \neq j$. Moreover, by Lemma \ref{l63}, $x_i \neq c-y$ for all $i$ and $j$.

First, suppose that $y=1$. By Lemma \ref{l66}, none of $x_i$'s, $c-x_i$'s, and $c-y$ is 1. Then by Lemma \ref{l74}, $c-x_i \in \Sigma_r\setminus(\{-a,b\}\cup\{-1,1\}^w)$ for all $i$. Moreover, $N_p(1)=N_p(-1)=N_q(1)=N_q(-1)+1=w+3$. Hence, by Lemma \ref{l24} for 1, $N_r(-1)=N_r(1)+1$. Hence, the weights are
\begin{center}
$\Sigma_{p}=\{-c,-b,x_{1},x_{2},x_{3},x_{4},-1,1,-1,1,-1,1\}\cup\{-1,1\}^{w}$
$\Sigma_{q}=\{c,a,x_{1}-c,x_{2}-c,x_{3}-c,x_{4}-c,c-1,1,-1,1,-1,1\}\cup\{-1,1\}^{w}$
$\Sigma_{r}=\{-a,b\}\cup\{-x_i\}_{i=1}^4\cup\{c-x_i\}_{i=1}^4\cup\{1-c,-1\}\cup\{-1,1\}^{w}$
\end{center}
Then we have that $N_p(1) \geq w+3$, $N_q(1) \geq w+3$, and $N_r(1)=w$, which is a contradiction by Lemma \ref{l69}.

Second, suppose that $c-x_{i}=1$ for some $i$. Without loss of generality, assume that $c-x_{4}=1$. By Lemma \ref{l66}, none of $x_i$'s, $c-x_j$'s, $y$, and $c-y$ is 1 for all $i$ and $j \neq 4$. Thus $y \in \Sigma_r\setminus(\{-a,b\}\cup\{-1,1\}^w)$ by Lemma \ref{l75} and $c-x_i \in \Sigma_r\setminus(\{-a,b\}\cup\{-1,1\}^w)$ for $i \neq 4$ by Lemma \ref{l74}. Moreover, $N_p(1)=N_p(-1)+1=N_q(1)=N_q(-1)=w+3$. Therefore, by Lemma \ref{l24} for 1, $N_r(-1)=N_r(1)+1$. Hence, the weights are
\begin{center}
$\Sigma_{p}=\{-c,-b,x_{1},x_{2},x_{3},x_4,-y,1,-1,1,-1,1\}\cup\{-1,1\}^{w}$
$\Sigma_{q}=\{c,a,x_{1}-c,x_{2}-c,x_{3}-c,-1,c-y,1,-1,1,-1,1\}\cup\{-1,1\}^{w}$
$\Sigma_{r}=\{-a,b\}\cup\{-x_i\}_{i=1}^4\cup\{c-x_i\}_{i=1}^3\cup\{y,y-c,-1\}\cup\{-1,1\}^{w}$
\end{center}
Then we have that $N_p(1) \geq w+3$, $N_q(1) \geq w+3$, and $N_r(1)=w$, which is a contradiction by Lemma \ref{l69}.

Finally, suppose that $y \neq 1$ and $c-x_{i}\neq1$ for all $i$. By Lemma \ref{l75}, $y \in \Sigma_r\setminus(\{-a,b\}\cup\{-1,1\}^w)$. Also, $c-x_i \in \Sigma_r\setminus(\{-a,b\}\cup\{-1,1\}^w)$ for all $i$ by Lemma \ref{l74}. Hence, the weights are
\begin{center}
$\Sigma_{p}=\{-c,-b,x_{1},x_{2},x_{3},x_{4},-y,1,-1,1,-1,1\}\cup\{-1,1\}^{w}$
$\Sigma_{q}=\{c,a,x_{1}-c,x_{2}-c,x_{3}-c,x_{4}-c,c-y,1,-1,1,-1,1\}\cup\{-1,1\}^{w}$
$\Sigma_{r}=\{-a,b\}\cup\{-x_i\}_{i=1}^4\cup\{y\}\cup\{c-x_i\}_{i=1}^4\cup\{y-c\}\cup\{-1,1\}^{w}$
\end{center}
Then we have that $N_p(1) \geq w+3$, $N_q(1) \geq w+3$, and $N_r(1)=w$, which is a contradiction by Lemma \ref{l69}.
\end{enumerate}
\end{proof}

\begin{lem} \label{l87} In Lemma \ref{l71}, $t=2$ and $v=1$ are impossible.
\end{lem}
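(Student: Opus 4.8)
The plan follows the template of Lemmas \ref{l82}--\ref{l86}. With $t=2$ and $v=1$, Lemma \ref{l62} gives $\dim M = 24+4w$, so $n = 12+2w$, and Lemma \ref{l33} says $r$ carries $7+w$ negative weights. Since $t = 2 > 1 = v$, Lemma \ref{l76} forces $x_i = c-x_j$ for some $i,j$, and Lemma \ref{l64} then yields, after relabelling, $2x_1 = 2x_2 = c$ as the unique such coincidence; write $x := x_1 = x_2 = \frac{c}{2}$, which is odd and at least $2$ because $c = a+b \geq 4$. Applying Lemma \ref{l46} part 3 to $x$ (legitimate since $\{x,x\}\subset\Sigma_p$) records $\{-c,x,x\}\subset\Sigma_p$, $\{c,-x,-x\}\subset\Sigma_q$, and the crucial fact that no further multiple of $x$ occurs as a weight anywhere, in particular not at $r$; hence $a,b \neq x$, and since $a+b = 2x$ (so $a = b$ would force $a = x$) we get $a \neq b$, with exactly one of $a,b$ exceeding $\frac{c}{2}$. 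A short argument of the kind used in the proofs of Lemmas \ref{l84}--\ref{l86} (again via Lemma \ref{l46} part 3 and Lemmas \ref{l63}, \ref{l65}--\ref{l67}) shows that $x_3,x_4,x_5$ are pairwise distinct and different from $x$, that $y_1 \neq y_2$, and that the ten odd values $-x_3,-x_4,-x_5,y_1-c,y_2-c,c-x_3,c-x_4,c-x_5,y_1,y_2$ are all distinct and different from the even weights $-a,b$.

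Next I would split according to which weight value, if any, equals $1$; by Lemma \ref{l66} at most one can. If none equals $1$, Lemmas \ref{l72}--\ref{l75} force precisely the ten distinct weights $-x_3,-x_4,-x_5,y_1-c,y_2-c,c-x_3,c-x_4,c-x_5,y_1,y_2$ into $\Sigma_r\setminus(\{-a,b\}\cup\{-1,1\}^w)$; since this set has exactly $n-2-2w = 10$ elements these account for all of $\Sigma_r$, whence $\lambda_r = \frac12\dim M$, contradicting Lemma \ref{l33}, exactly as in the first case of the proof of Lemma \ref{l84}. In each remaining case exactly one value equals $1$, and, following the integer-by-integer bookkeeping of Lemmas \ref{l82}--\ref{l86}, one writes $\Sigma_r$ down explicitly. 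Typically $\Sigma_r$ then carries a spare pair $\{-f,f\}$: if $f > 1$ then Lemma \ref{l68} gives $c = 2f$, forcing $f = x$ and hence $\pm x \in \Sigma_r$, contradicting Lemma \ref{l46} part 3; if $f = 1$ then $\min_{\alpha \in M^{S^1}}\min\{N_\alpha(1),N_\alpha(-1)\}\geq w+1$, contradicting the definition of $w$.

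For the sub-cases in which $\Sigma_r$ comes out fully determined with no spare pair (the analogues of the final cases in the proofs of Lemmas \ref{l84} and \ref{l85}), I would use the remaining devices. Corollary \ref{c27} applied at $p$ gives $c_1(M)|_p = 0$, i.e.\ the relation $x_3+x_4+x_5+1 = b+y_1+y_2$. Then either (a) apply Lemma \ref{l46} part 1 modulo whichever of $a,b$ exceeds $\frac{c}{2}$ and track which residues the weights of $r$ can realise --- this replaces Lemma \ref{l69}, unavailable here since $v=1$ makes $N_p(1)$ only $w+2$ --- until a numerical impossibility appears, as in Lemma \ref{l85}; or (b) set $A = c_n(M)|_p$, $B = c_n(M)|_q$, $C = c_n(M)|_r$ and show, using that $c = 2x$ is the largest weight together with the relation above, that $(-1)^{w}\bigl(\frac{1}{A}+\frac{1}{B}+\frac{1}{C}\bigr) > 0$, contradicting $\int_M 1 = 0$ from Theorem \ref{t21}.

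The main obstacle I expect is the case bookkeeping: five $x_i$'s and two $y_i$'s produce a noticeably larger case tree than in Lemmas \ref{l84}--\ref{l86}, and in each branch one must track how the extra copies of $\pm 1$ generated by Lemma \ref{l24} distribute among $p,q,r$ and rule out spurious coincidences (a forced weight of $r$ equalling $-a$, $b$, or a shifted copy) before the pairing, congruence, or localization step closes the case. The genuinely delicate step, when the clean counting-plus-Lemma \ref{l68} argument does not apply, is the residue analysis modulo $a$ or $b$, which has to be carried out essentially by hand.
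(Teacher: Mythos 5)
Your setup (Lemma \ref{l76} plus Lemma \ref{l64} giving $2x_1=2x_2=c$, the distinctness of the remaining values via Lemma \ref{l46} part 3 and Lemma \ref{l66}, and the split according to which value, if any, equals $1$) matches the paper, and your ``no value equals $1$'' branch is exactly the paper's third case: the ten forced values fill $\Sigma_r\setminus(\{-a,b\}\cup\{-1,1\}^w)$, so $\lambda_r=\frac12\dim M$, contradicting Lemma \ref{l33}. The gap is in the two branches $y_i=1$ and $c-x_i=1$. There your primary mechanism never fires: with $t=2$, $v=1$ the ten available slots at $r$ are filled by nine forced values together with one extra $-1$ (coming from $N_r(-1)=N_r(1)+1$), so $\Sigma_r$ is completely determined and contains no spare pair $\{-f,f\}$; the Lemma \ref{l68}/``$f=1$ contradicts the definition of $w$'' argument, which closes the analogous branches of Lemmas \ref{l82}, \ref{l83}, \ref{l85}, \ref{l86}, is simply unavailable here. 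Your fallback for such fully determined branches is left as a sketch (``residue analysis modulo $a$ or $b$ until a numerical impossibility appears,'' or a localization sign estimate), and neither is shown to work: the localization route needs magnitude comparisons among $x_3,x_4,x_5,y_1,y_2,a,b$ that you do not have, and you do not carry out the modular bookkeeping.

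The idea you are missing is that the appearance of the value $1$ forces the complementary weight $c-1$ to appear: if $y_2=1$ then $c-y_2=c-1\in\Sigma_q$ and $y_2-c=1-c\in\Sigma_r$, and if $c-x_5=1$ then $x_5=c-1\in\Sigma_p$ and $1-c\in\Sigma_r$. Since $c-1>\frac{c}{2}$, Lemma \ref{l46} part 1 applies with $e=c-1$ and gives $\Sigma_q\equiv\Sigma_r \bmod (c-1)$ (respectively $\Sigma_p\equiv\Sigma_r \bmod (c-1)$). Comparing $N_q(1)=w+2$ (resp.\ $N_p(1)=w+2$) with $N_r(1)=w$, and using $|1+k(c-1)|>c$ for $|k|\geq 2$ together with $c\notin\Sigma_r$, this forces $N_r(2-c)\geq 2$, which is impossible because $-a$ is the only negative even weight at $r$. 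That is how the paper closes both of these branches; without this (or some worked-out substitute) your argument does not dispose of the $y_i=1$ and $c-x_i=1$ cases, so the proof is incomplete.
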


\begin{proof}
The weights in this case are
\begin{center}
$\Sigma_{p}=\{-c,-b,x_{1},x_{2},x_{3},x_{4},x_{5},-y_{1},-y_{2},1\}\cup\{-1,1\}^{w+1}$
$\Sigma_{q}=\{c,a,x_{1}-c,x_{2}-c,x_{3}-c,x_{4}-c,x_{5}-c,c-y_{1},c-y_{2},1\}\cup\{-1,1\}^{w+1}$
$\Sigma_{r}=\{-a,b,\cdots\}\cup\{-1,1\}^{w},$
\end{center}
where $a,b,$ and $c$ are even natural numbers such that $c=a+b$ is the largest weight, $x_i$'s and $y_i$'s are odd natural numbers for all $i$, and $w=\min_{\alpha \in M^{S^1}} \min \{ N_{\alpha}(-1), N_{\alpha}(1) \}$. Moreover, the remaining weights at $r$ are odd.

By Lemma \ref{l76}, $x_i=c-x_j$ for some $i$ and $j$. Then by Lemma \ref{l64}, there exist $x_i$ and $x_j$ where $i \neq j$ such that $2x_i=2x_j=c$. Without loss of generality, let $2x_1=2x_2=c$. Denote $x=x_1$. Lemma \ref{l64} also implies that $x_i \neq c-x_j$ for $i \neq 1$ and 2, and for all $j$. Therefore, $-x_i \in \Sigma_r\setminus(\{-a,b\}\cup\{-1,1\}^w)$ for $i \neq 1$ and 2 by Lemma \ref{l72}. Also, by Lemma \ref{l73}, $y_i-c \in \Sigma_r\setminus(\{-a,b\}\cup\{-1,1\}^w)$ for all $i$. Moreover, by Lemma \ref{l46} part 3 for $x$, none of $x_i$'s, $y_j$'s, $c-x_i$'s, and $c-y_j$'s can be $x$ for $i \neq 1$ and 2, and for all $j$. Hence, by Lemma \ref{l66}, all of $x_i$'s, $y_j$'s, $c-x_i$'s, and $c-y_j$'s are different for $i \neq 1$ and 2, and for all $j$.

First, suppose that $y_i=1$ for some $i$. Without loss of generality, let $y_2=1$. Then none of $x_i$'s, $c-x_i$'s, $y_1$, and $c-y_i$'s is 1 for all $i$. Then we have that $y_1 \in \Sigma_r\setminus(\{-a,b\}\cup\{-1,1\}^w)$ by Lemma \ref{l75} and $c-x_i \in \Sigma_r\setminus(\{-a,b\}\cup\{-1,1\}^w)$ for $i \neq 1$ and 2 by Lemma \ref{l74}. Moreover, $N_p(1)=N_p(-1)$ and $N_q(1)=N_q(-1)+1$. Hence, by Lemma \ref{l24} for 1, $N_r(1)+1=N_r(-1)$. Then the weights are
\begin{center}
$\Sigma_{p}=\{-c,-b,x,x,x_{3},x_{4},x_{5},-y_1,-1,1,-1,1\}\cup\{-1,1\}^{w}$
$\Sigma_{q}=\{c,a,-x,-x,x_{3}-c,x_{4}-c,x_{5}-c,c-y_1,c-1,1,-1,1\}\cup\{-1,1\}^{w}$
$\Sigma_{r}=\{-a,b,-x_3,-x_4,-x_5,y_1,c-x_3,c-x_4,c-x_5,y_1-c,1-c,-1\}\cup\{-1,1\}^{w}$
\end{center}
Then $\Sigma_{q} \equiv \Sigma_{r} \mod c-1$ by Lemma \ref{l46} part 1 for $c-1$. First, $|1+k(c-1)| > c$ for $|k| \geq 2$. Also, $c \notin \Sigma_r$. Then, $N_q(1)=w+2$, $N_r(1)=w$, and $\Sigma_{q} \equiv \Sigma_{r} \mod c-1$ imply that $N_r(2-c)=2$, which is a contradiction since $r$ has only one negative even weight.

Second, suppose that $c-x_i=1$ for some $i$. Since $c-x_1=c-x_2=x_1=\frac{c}{2}\geq2$, without loss of generality, let $c-x_5=1$. Then none of $x_i$'s, $c-x_j$'s, $y_i$'s, and $c-y_i$'s is 1 for all $i$ and $j \neq 5$. Therefore, we have that $y_i \in \Sigma_r\setminus(\{-a,b\}\cup\{-1,1\}^w)$ for all $i$ by Lemma \ref{l75} and $c-x_j \in \Sigma_r\setminus(\{-a,b\}\cup\{-1,1\}^w)$ for $j=3$ and 4 by Lemma \ref{l74}. Moreover, $N_p(1)=N_p(-1)+1$ and $N_q(1)=N_q(-1)$. Hence, by Lemma \ref{l24} for 1, $N_r(1)+1=N_r(-1)$. Then the weights are
\begin{center}
$\Sigma_{p}=\{-c,-b,x,x,x_{3},x_{4},c-1,-y_1,-y_2,1,-1,1\}\cup\{-1,1\}^{w}$
$\Sigma_{q}=\{c,a,-x,-x,x_{3}-c,x_{4}-c,-1,c-y_1,c-y_2,1,-1,1\}\cup\{-1,1\}^{w}$
$\Sigma_{r}=\{-a,b,-x_3,-x_4,1-c,y_1,y_2,c-x_3,c-x_4,y_1-c,y_2-c,-1\}\cup\{-1,1\}^{w}$
\end{center}
Then $\Sigma_{p} \equiv \Sigma_{r} \mod c-1$ by Lemma \ref{l46} part 1 for $c-1$. As above, $N_p(1)=w+2$, $N_r(1)=w$, and $\Sigma_{p} \equiv \Sigma_{r} \mod c-1$ imply that $N_r(2-c)=2$, which is a contradiction since $r$ has only one negative even weight.

Last, suppose that $c-x_i \neq 1$ and $y_i \neq 1$ for all $i$. Then we have that $y_i \in \Sigma_r\setminus(\{-a,b\}\cup\{-1,1\}^w)$ for all $i$ by Lemma \ref{l75} and $c-x_j \in \Sigma_r\setminus(\{-a,b\}\cup\{-1,1\}^w)$ for $j \neq 1$ and 2 by Lemma \ref{l74}. Then the weights are
\begin{center}
$\Sigma_{p}=\{-c,-b,x,x,x_{3},x_{4},x_{5},-y_1,-y_2,1,-1,1\}\cup\{-1,1\}^{w}$
$\Sigma_{q}=\{c,a,-x,-x,x_{3}-c,x_{4}-c,x_{5}-c,c-y_1,c-y_2,1,-1,1\}\cup\{-1,1\}^{w}$
$\Sigma_{r}=\{-a,b,-x_3,-x_4,-x_5,y_1,y_2,c-x_3,c-x_4,c-x_5,y_1-c,y_2-c\}\cup\{-1,1\}^{w}$
\end{center}
Then we have that $\lambda_r=\frac{1}{2}\dim M$, which contradicts Lemma \ref{l33} that $\lambda_r=\frac{1}{2}\dim M+2$.
\end{proof}

\begin{lem} \label{l88} In Lemma \ref{l71}, $t=2$ and $v=2$ are impossible.
\end{lem}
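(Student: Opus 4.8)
The weights in this case are
$$\Sigma_{p}=\{-c,-b,x_{1},x_{2},x_{3},x_{4},x_{5},-y_{1},-y_{2},1\}\cup\{-1,1\}^{w+2}$$
$$\Sigma_{q}=\{c,a,x_{1}-c,x_{2}-c,x_{3}-c,x_{4}-c,x_{5}-c,c-y_{1},c-y_{2},1\}\cup\{-1,1\}^{w+2}$$
$$\Sigma_{r}=\{-a,b,\cdots\}\cup\{-1,1\}^{w},$$
in the notation of Lemma \ref{l71}. The plan is to imitate the proof of Lemma \ref{l87}, keeping track of the extra weights that the larger value $v=2$ introduces.

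First, since $t=2$ is not less than $v=2$, Lemma \ref{l76} rules out the possibility that $x_{i}\neq c-x_{j}$ for all $i$ and $j$, so $x_{i}=c-x_{j}$ for some $i,j$, and by Lemma \ref{l64} we may relabel so that $2x_{1}=2x_{2}=c$; put $x:=x_{1}=\frac{c}{2}\geq2$, and note Lemma \ref{l64} also gives $x_{i}\neq c-x_{j}$ for $i\in\{3,4,5\}$ and all $j$. Lemma \ref{l46} part 3 for $x$ shows no further multiple of $x$ is a weight, so by Lemma \ref{l66} the ten odd numbers $x_{3},x_{4},x_{5},y_{1},y_{2},c-x_{3},c-x_{4},c-x_{5},c-y_{1},c-y_{2}$ are pairwise distinct and distinct from $x$. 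Then Lemmas \ref{l72} and \ref{l73} force $-x_{3},-x_{4},-x_{5}$ and $y_{1}-c,y_{2}-c$ into $\Sigma_{r}\setminus(\{-a,b\}\cup\{-1,1\}^{w})$, and Lemmas \ref{l74} and \ref{l75} force $c-x_{i}$ (whenever $c-x_{i}\neq1$) and $y_{j}$ (whenever $y_{j}\neq1$) in as well.

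Next I would split into two cases. In \emph{Case 1}, some one of those ten numbers equals $1$ (at most one does, by Lemma \ref{l66}); then, tracing through Lemmas \ref{l72}--\ref{l75}, the weight $c-1$ (or $1-c$) occurs at $r$ and at one of $p,q$, so Lemma \ref{l46} part 1 for $c-1$ (which exceeds $\frac{c}{2}$ since $c\geq4$) gives $\Sigma_{p}\equiv\Sigma_{r}$ or $\Sigma_{q}\equiv\Sigma_{r}\bmod(c-1)$. Computing $N_{p}(\pm1),N_{q}(\pm1)$ from the lists and using Lemma \ref{l24} at $1$, one finds that $\Sigma_{r}$ has only three undetermined odd weights and only $w$ weights equal to $1$ among the rest; matching the $\geq w+3$ weights of the relevant fixed point that are $\equiv1\bmod(c-1)$ (plus its single weight $c$) against the weights of $\Sigma_{r}$ congruent to $1\bmod(c-1)$, which must lie in $\{1,2-c\}$ because $c\notin\Sigma_{r}$ and $|1+k(c-1)|>c$ for $|k|\geq2$, then forces $N_{r}(2-c)\geq2$, contradicting that $-a$ is $r$'s unique negative even weight. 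In \emph{Case 2}, none of the ten numbers is $1$; here a dimension count together with Lemma \ref{l24} at $1$ pins down the two remaining weights of $\Sigma_{r}$ as $-1,-1$, so $\lambda_{r}=\frac{1}{2}\dim M+2$ (no contradiction yet, unlike in Lemma \ref{l87}), while $c_{1}(M)|_{p}=c_{1}(M)|_{q}=c_{1}(M)|_{r}=0$ all reduce to $b+y_{1}+y_{2}=x_{3}+x_{4}+x_{5}+1$. I would finish as in the last case of Lemma \ref{l85}: since $c=a+b$ one of $a,b$ is at least $\frac{c}{2}$ (treating $a=b=\frac{c}{2}$ via Lemma \ref{l45}), so Lemma \ref{l46} part 1 applies to it, and combining the resulting congruence with $c_{1}=0$ and with ABBV localization (Theorem \ref{t21})---which after clearing denominators reads $a\prod_{i=3}^{5}(c-x_{i})\prod_{j=1}^{2}(c-y_{j})+cx^{2}=b\,x_{3}x_{4}x_{5}y_{1}y_{2}$---gives a contradiction.

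The hard part is the $\pm1$ bookkeeping. Unlike in Lemma \ref{l87}, where $v=1$ leaves a single undetermined weight at $r$ so that $\Sigma_{r}$ is pinned down and the generic case dies immediately from $\lambda_{r}=\frac{1}{2}\dim M$, here $v=2$ leaves two undetermined weights at $r$ in Case 2, so $\Sigma_{r}$ is no longer contradicted by $\lambda_{r}$ alone and one must resort to ABBV localization together with a congruence modulo $\max(a,b)$. Carrying $N_{p}(\pm1),N_{q}(\pm1),N_{r}(\pm1)$ correctly through the sub-cases, and verifying that the two leftover weights of $\Sigma_{r}$ are exactly $\{-1,-1\}$ in Case 2, is where the genuine work lies.
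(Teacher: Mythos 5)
Your setup (Lemmas \ref{l76}, \ref{l64}, \ref{l46} part 3, \ref{l66}, \ref{l72}--\ref{l75}) matches the paper, and your Case 1 is a correct argument, though different from the paper's: where the paper pins down $\Sigma_r$ completely up to a leftover pair $\{-f,f\}$, rules out $f>1$ via Lemma \ref{l68} (which would force $c=2f=2x$, against ``no further multiples of $x$''), and then contradicts the minimality of $w$, you instead run the mod-$(c-1)$ congruence from Lemma \ref{l46} part 1 and count the residue class of $1$: since $N_r(1)=w$ (forced by Lemma \ref{l24} at $1$ together with $\min\{N_r(1),N_r(-1)\}=w$) while the relevant point among $p,q$ has at least $w+3$ weights in that class, you get $N_r(2-c)\geq 2$, impossible. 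That count checks out in each subcase (whichever of the ten numbers equals $1$), so this part is a legitimate alternative.

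The genuine gap is your Case 2 (none of the ten numbers equals $1$). You correctly pin the two leftover weights of $\Sigma_r$ as $\{-1,-1\}$ and correctly record the ABBV identity $a\prod_{i=3}^{5}(c-x_i)\prod_{j=1}^{2}(c-y_j)+cx^{2}=b\,x_3x_4x_5y_1y_2$ and the relation $b+y_1+y_2=x_3+x_4+x_5+1$, but the actual contradiction is only announced, not proved: ``combining the resulting congruence with $c_1=0$ and with ABBV \dots gives a contradiction'' is a plan, and not an obviously workable one --- the analogous argument in the last case of Lemma \ref{l85} is already delicate with fewer free parameters ($x_3,x_4,y$ there versus $x_3,x_4,x_5,y_1,y_2$ here), and nothing you write shows the sign or divisibility analysis closes in this larger setting. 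Moreover, you have already computed exactly what is needed to finish immediately: in this case $N_p(1)\geq w+3$, $N_q(1)\geq w+3$, and $N_r(1)=w$, which directly contradicts Lemma \ref{l69}; this is precisely how the paper disposes of the generic case (and of the subcases too, in the sense that its case 1 and 2 use the $f=1$ trick instead). So as written the proof is incomplete in Case 2, even though the missing step can be supplied in one line from a lemma you never invoke.
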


\begin{proof}
In this case, the weights are
\begin{center}
$\Sigma_{p}=\{-c,-b,x_1,x_2,x_{3},x_4,x_5,-y_1,-y_2,1\}\cup\{-1,1\}^{w+2}$
$\Sigma_{q}=\{c,a,x_1-c,x_2-c,x_{3}-c,x_4-c,x_5-c,c-y_1,c-y_2,1\}\cup\{-1,1\}^{w+2}$
$\Sigma_{r}=\{-a,b,\cdots\}\cup\{-1,1\}^w,$
\end{center}
where $a,b,$ and $c$ are even natural numbers such that $c=a+b$ is the largest weight, $x_i$'s and $y_i$'s are odd natural numbers for all $i$, and $w=\min_{\alpha \in M^{S^1}} \min \{ N_{\alpha}(-1), N_{\alpha}(1) \}$. Moreover, the remaining weights at $r$ are odd.

By Lemma \ref{l76}, $x_i=c-x_j$ for some $i$ and $j$. Then by Lemma \ref{l64}, there exist $x_i$ and $x_j$ where $i \neq j$ such that $2x_i=2x_j=c$. Without loss of generality, let $2x_1=2x_2=c$. Denote $x=x_1$. Lemma \ref{l64} also implies that $x_i \neq c-x_j$ for $i \neq 1$ and 2, and for all $j$. Therefore, $-x_i \in \Sigma_r\setminus(\{-a,b\}\cup\{-1,1\}^w)$ for $i \neq 1$ and 2 by Lemma \ref{l72}. Also, by Lemma \ref{l73}, $y_i-c \in \Sigma_r\setminus(\{-a,b\}\cup\{-1,1\}^w)$ for all $i$. Moreover, by Lemma \ref{l46} part 3 for $x$, none of $x_i$'s, $y_j$'s, $c-x_i$'s, and $c-y_j$'s can be $x$ for $i \neq 1$ and 2, and for all $j$. Hence, by Lemma \ref{l66}, all of $x_i$'s, $y_j$'s, $c-x_i$'s, and $c-y_j$'s are different for $i \neq 1$ and 2, and for all $j$.

First, suppose that $y_i=1$ for some $i$. Without loss of generality, let $y_2=1$. Then none of $x_i$'s, $c-x_i$'s, $y_1$, and $c-y_i$'s is 1 for all $i$. Therefore, we have that $y_1 \in \Sigma_r\setminus(\{-a,b\}\cup\{-1,1\}^w)$ by Lemma \ref{l75} and $c-x_i \in \Sigma_r\setminus(\{-a,b\}\cup\{-1,1\}^w)$ for $i \neq 1$ and 2 by Lemma \ref{l74}. Moreover, $N_p(1)=N_p(-1)$ and $N_q(1)=N_q(-1)+1$. Hence, by Lemma \ref{l24} for 1, $N_r(1)+1=N_r(-1)$. Considering Lemma \ref{l24} for each integer, the weights are 
$$\Sigma_{p}=\{-c,-b,x,x,x_{3},x_{4},x_{5},-y_1,-1,1,-1,1,-1,1\}\cup\{-1,1\}^{w}$$
$$\Sigma_{q}=\{c,a,-x,-x,x_{3}-c,x_{4}-c,x_{5}-c,c-y_1,c-1,1,-1,1,-1,1\}\cup\{-1,1\}^{w}$$
$$\Sigma_{r}=\{-a,b\}\cup\{-x_i\}_{i=3}^5\cup\{y_1\}\cup\{c-x_i\}_{i=3}^5\cup\{y_i-c\}_{i=1}^2\cup\{-1,-f,f\}\cup\{-1,1\}^{w}$$
for some odd natural number $f$. If $f>1$, by Lemma \ref{l68}, $c=2x=2f$, which is a contradiction that no additional multiples of $x$ should appear as weights by Lemma \ref{l46} part 3 for $x$. Hence $f=1$, which is a contradiction since it means that $\min_{\alpha \in M^{S^1}} \min \{ N_{\alpha}(-1), N_{\alpha}(1) \} \geq w+1$.

Second, suppose that $c-x_i=1$ for some $i$. Since $c-x_1=c-x_2=x_1=\frac{c}{2} \geq2$, without loss of generality, let $c-x_5=1$.
Then none of $x_i$'s, $c-x_j$'s, $y_i$'s, and $c-y_i$'s is 1 for all $i$ and $j \neq 5$. Therefore, we have that $y_i \in \Sigma_r\setminus(\{-a,b\}\cup\{-1,1\}^w)$ for all $i$ by Lemma \ref{l75} and $c-x_j \in \Sigma_r\setminus(\{-a,b\}\cup\{-1,1\}^w)$ for $j=3$ and 4 by Lemma \ref{l74}. Moreover, $N_p(1)=N_p(-1)$ and $N_q(1)=N_q(-1)+1$. Hence, by Lemma \ref{l24} for 1, $N_r(1)+1=N_r(-1)$. Then the weights are
$$\Sigma_{p}=\{-c,-b,x,x,x_{3},x_{4},c-1,-y_1,-y_2,1,-1,1,-1,1\}\cup\{-1,1\}^{w}$$
$$\Sigma_{q}=\{c,a,-x,-x,x_{3}-c,x_{4}-c,-1,c-y_1,c-y_2,1,-1,1,-1,1\}\cup\{-1,1\}^{w}$$
$$\Sigma_{r}=\{-a,b\}\cup\{-x_i\}_{i=3}^5\cup\{y_i\}_{i=1}^2\cup\{c-x_i\}_{i=3}^4\cup\{y_i-c\}_{i=1}^2\cup\{-1,-f,f\}\cup\{-1,1\}^{w}$$
for some odd natural number $f$. As above, $f=1$, which is a contradiction since it means that $\min_{\alpha \in M^{S^1}} \min \{ N_{\alpha}(-1), N_{\alpha}(1) \} \geq w+1$.

Last, suppose that $c-x_i \neq 1$ and $y_i \neq 1$ for all $i$. Then we have that $y_i \in \Sigma_r\setminus(\{-a,b\}\cup\{-1,1\}^w)$ for all $i$ by Lemma \ref{l75} and $c-x_j \in \Sigma_r\setminus(\{-a,b\}\cup\{-1,1\}^w)$ for $j \neq 1$ and 2 by Lemma \ref{l74}. Since $N_p(1)\geq N_p(-1)+1$ and $N_q(1) \geq N_q(-1)+1$, by Lemma \ref{l24} for 1, $N_r(1)+2 \leq N_r(-1)$. Then the weights are
$$\Sigma_{p}=\{-c,-b,x,x,x_{3},x_{4},x_{5},-y_1,-y_2,1,-1,1,-1,1\}\cup\{-1,1\}^{w}$$
$$\Sigma_{q}=\{c,a,-x,-x,x_{3}-c,x_{4}-c,x_{5}-c,c-y_1,c-y_2,1,-1,1,-1,1\}\cup\{-1,1\}^{w}$$
$$\Sigma_{r}=\{-a,b\}\cup\{-x_i\}_{i=3}^5\cup\{y_i\}_{i=1}^2\cup\{c-x_i\}_{i=3}^5\cup\{y_i-c\}_{i=1}^2\cup\{-1,-1\}\cup\{-1,1\}^{w}$$
Then we have that $N_p(1)\geq w+3$, $N_q(1)\geq w+3$, and $N_r(1)=w$, which is a contradiction by Lemma \ref{l69}.
\end{proof}

\begin{lem} \label{l89} In Lemma \ref{l71}, $t=3$ and $v=2$ are impossible.
\end{lem}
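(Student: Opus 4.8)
The plan is to rerun, with $t+3=6$ and $v=2$, the argument of Lemmas \ref{l87} and \ref{l88}. With $t=3$ and $v=2$ one has $\dim M=12+4t+4s=32+4w$ where $s=v+w$, hence $n=16+2w$, so Lemma \ref{l33} forces $\lambda_r=n+2$; that is, $r$ has exactly $9+w$ negative weights. Since $t=3\geq 2=v$, the contrapositive of Lemma \ref{l76} gives $x_i=c-x_j$ for some $i,j$; by Lemma \ref{l64} I may assume $2x_1=2x_2=c$, write $x=x_1$, so that $x=\frac{c}{2}\geq 2$ and $x_i\neq c-x_j$ for $i\in\{3,4,5,6\}$ and all $j$. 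Applying Lemma \ref{l72}, Lemma \ref{l73}, Lemma \ref{l46} part 3 for $x$, and Lemma \ref{l66}, I obtain $-x_3,-x_4,-x_5,-x_6\in\Sigma_r\setminus(\{-a,b\}\cup\{-1,1\}^w)$ and $y_i-c\in\Sigma_r\setminus(\{-a,b\}\cup\{-1,1\}^w)$ for $i=1,2,3$, and that $x_3,x_4,x_5,x_6,y_1,y_2,y_3,c-x_3,c-x_4,c-x_5,c-x_6,c-y_1,c-y_2,c-y_3$ are pairwise distinct and none of them equals $x$.

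I then split into the same three cases as in Lemma \ref{l87}. If $c-x_i\neq 1$ and $y_i\neq 1$ for all $i$, then Lemmas \ref{l74} and \ref{l75} put $c-x_3,\dots,c-x_6$ and $y_1,y_2,y_3$ into $\Sigma_r$ as well; using Lemma \ref{l63} and Lemma \ref{l66} these are all distinct multiset elements of $\Sigma_r$ beyond $\{-a,b\}\cup\{-1,1\}^w$, and counting gives $\Sigma_r=\{-a,b\}\cup\{-x_i\}_{i=3}^{6}\cup\{c-x_i\}_{i=3}^{6}\cup\{y_i\}_{i=1}^{3}\cup\{y_i-c\}_{i=1}^{3}\cup\{-1,1\}^w$, which has exactly $8+w$ negative entries, so $\lambda_r=n$, contradicting Lemma \ref{l33}. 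If $y_i=1$ for some $i$, say $y_3=1$, then Lemma \ref{l66} rules out every other coincidence with $\pm1$, so $N_p(1)=N_q(1)=N_p(-1)=w+3$ and $N_q(-1)=w+2$; Lemma \ref{l24} for $1$ then gives $N_r(-1)=N_r(1)+1$, and together with the forced weights and $\lambda_r=n+2$ this determines $\Sigma_r$ with $N_r(1)=w$ and $N_r(-1)=w+1$. Now $c-y_3=c-1\in\Sigma_q$ and $-(c-1)=1-c\in\Sigma_r$, and $c-1>\frac{c}{2}$, so Lemma \ref{l46} part 1 for $c-1$ gives $\Sigma_q\equiv\Sigma_r\mod (c-1)$; since $c\geq 4$, $c$ is the largest weight, and $c\notin\Sigma_r$, the only weights of $\Sigma_r$ congruent to $1$ modulo $c-1$ are $1$ and $2-c$, so the $w+3$ weights equal to $1$ in $\Sigma_q$ force $N_r(2-c)\geq 3$, which is impossible because $-a$ is the only negative even weight of $r$. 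The case $c-x_i=1$ for some $i$ (necessarily $i\in\{3,4,5,6\}$, as $c-x_1=c-x_2=x\geq 2$) runs identically with the roles of $p$ and $q$ interchanged: here $x_i=c-1\in\Sigma_p$ and $-x_i=1-c\in\Sigma_r$, and one applies Lemma \ref{l46} part 1 for $x_i$.

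The step needing the most care is the bookkeeping in the last two cases: one must verify that the listed forced weights of $\Sigma_r$, together with the $\pm1$-balance from Lemma \ref{l24} and the value $\lambda_r=n+2$ from Lemma \ref{l33}, leave room for exactly one further weight and that this weight must be an additional $-1$, so that $\Sigma_r$ is completely determined and the equality $N_r(1)=w$ is exact; only with this in hand does the congruence argument modulo $c-1$ deliver the contradiction $N_r(2-c)\geq 3$. Apart from this, the proof is a mechanical adaptation of Lemmas \ref{l87} and \ref{l88} to the values $t=3$ and $v=2$.
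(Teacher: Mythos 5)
Your proposal is correct, and its skeleton is the paper's: the same reduction via Lemmas \ref{l76}, \ref{l64}, \ref{l72}, \ref{l73}, \ref{l46} part 3 and \ref{l66}, and the same three-way case split according to whether some $y_i=1$, some $c-x_i=1$, or neither. Where you diverge is the final contradiction in each case. The paper, having pinned down $\Sigma_r$ exactly as you do (including the forced extra $-1$ in the two cases with a weight equal to $1$), simply observes that $N_p(1)\geq w+3$, $N_q(1)\geq w+3$, $N_r(1)=w$ and invokes Lemma \ref{l69} uniformly in all three cases. You instead conclude the no-coincidence case by counting negative weights ($\lambda_r=n$ versus $\lambda_r=n+2$ from Lemma \ref{l33}), which is exactly how the paper closes the analogous case of Lemma \ref{l87}, and you close the two remaining cases by the congruence argument modulo $c-1$ (Lemma \ref{l46} part 1 applied to $c-1=c-y_3\in\Sigma_q$, respectively $c-1=x_i\in\Sigma_p$, forcing $N_r(2-c)\geq 3$ against the fact that $-a$ is the only negative even weight at $r$), again the mechanism of Lemma \ref{l87}. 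The bookkeeping you flag does check out: the thirteen forced odd weights plus $\{-a,b\}\cup\{-1,1\}^w$ leave exactly one slot, which must be negative by $\lambda_r=n+2$ and must be $-1$ by the $\pm1$ balance from Lemma \ref{l24}, so $N_r(1)=w$ is exact and the congruence count goes through. The trade-off is only economy: the paper's appeal to Lemma \ref{l69} dispatches all three cases in one line, while your route is slightly longer but needs no appeal to Lemma \ref{l69} and reuses machinery already exercised in Lemma \ref{l87}.
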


\begin{proof}
In this case, the weights 
\begin{center}
$\Sigma_{p}=\{-c,-b\}\cup\{x_i\}_{i=1}^6\cup\{-y_i\}_{i=1}^3\cup\{1\}\cup\{-1,1\}^{w+2}$
$\Sigma_{q}=\{c,a\}\cup\{x_i-c\}_{i=1}^6\cup\{c-y_i\}_{i=1}^3\cup\{1\}\cup\{-1,1\}^{w+2}$
$\Sigma_{r}=\{-a,b,\cdots\}\cup\{-1,1\}^w,$
\end{center}
where $a,b,$ and $c$ are even natural numbers such that $c=a+b$ is the largest weight, $x_i$'s and $y_i$'s are odd natural numbers for all $i$, and $w=\min_{\alpha \in M^{S^1}} \min \{ N_{\alpha}(-1), N_{\alpha}(1) \}$. Moreover, the remaining weights at $r$ are odd.

By Lemma \ref{l76}, $x_i=c-x_j$ for some $i$ and $j$. Then by Lemma \ref{l64}, there exist $x_i$ and $x_j$ where $i \neq j$ such that $2x_i=2x_j=c$. Without loss of generality, let $2x_1=2x_2=c$. Denote $x=x_1$. Lemma \ref{l64} also implies that $x_i \neq c-x_j$ for $i \neq 1$ and 2, and for all $j$. Therefore, $-x_i \in \Sigma_r\setminus(\{-a,b\}\cup\{-1,1\}^w)$ for $i \neq 1$ and 2 by Lemma \ref{l72}. Also, by Lemma \ref{l73}, $y_i-c \in \Sigma_r\setminus(\{-a,b\}\cup\{-1,1\}^w)$ for all $i$. Moreover, by Lemma \ref{l46} part 3 for $x$, none of $x_i$'s, $y_j$'s, $c-x_i$'s, and $c-y_j$'s can be $x$ for $i \neq 1$ and 2, and for all $j$. Hence, by Lemma \ref{l66}, all of $x_i$'s, $y_j$'s, $c-x_i$'s, and $c-y_j$'s are different for $i \neq 1$ and 2, and for all $j$. 

First, suppose that $y_i=1$ for some $i$. Without loss of generality, let $y_3=1$. Then none of $x_i$'s, $c-x_i$'s, $y_j$'s, and $c-y_i$'s is 1 for all $i$ and $j \neq 3$. Therefore, $y_i \in \Sigma_r\setminus(\{-a,b\}\cup\{-1,1\}^w)$ for $i\neq3$ by Lemma \ref{l75} and $c-x_j \in \Sigma_r\setminus(\{-a,b\}\cup\{-1,1\}^w)$ for $j \neq 1$ and 2 by Lemma \ref{l74}. Moreover, $N_p(1)=N_p(-1)$ and $N_q(1)=N_q(-1)+1$. Hence, by Lemma \ref{l24} for 1, $N_r(1)+1=N_r(-1)$. Then the weights are 
\begin{center}
$\Sigma_{p}=\{-c,-b,x,x,x_{3},x_{4},x_{5},x_6,-y_1,-y_2,-1,1\}\cup\{-1,1\}^{w+2}$
$\Sigma_{q}=\{c,a,-x,-x,x_{3}-c,x_{4}-c,x_{5}-c,x_6-c,c-y_1,c-y_2,c-1,1\}\cup\{-1,1\}^{w+2}$
$\Sigma_{r}=\{-a,b\}\cup\{-x_i\}_{i=3}^6\cup\{y_i\}_{i=1}^2\cup\{c-x_i\}_{i=3}^6\cup\{y_i-c\}_{i=1}^3\cup\{-1\}\cup\{-1,1\}^{w}$
\end{center}
Then we have that $N_p(1)=w+3$, $N_q(1)=w+3$, and $N_r(1)=w$, which is a contradiction by Lemma \ref{l69}.

Second, suppose that $c-x_i=1$ for some $i$. Since $c-x_1=c-x_2=x_1=\frac{c}{2}\geq2$, without loss of generality, let $c-x_6=1$. Then 
none of $x_i$'s, $c-x_j$'s, $y_i$'s, and $c-y_i$'s is 1 for all $i$ and $j \neq 6$. Therefore, $y_i \in \Sigma_r\setminus(\{-a,b\}\cup\{-1,1\}^w)$ for all $i$ by Lemma \ref{l75} and $c-x_j \in \Sigma_r\setminus(\{-a,b\}\cup\{-1,1\}^w)$ for $j=3,4$, and 5 by Lemma \ref{l74}. Moreover, $N_p(1)=N_p(-1)+1$ and $N_q(1)=N_q(-1)$. Hence, by Lemma \ref{l24} for 1, $N_r(1)+1=N_r(-1)$. Then the weights are
\begin{center}
$\Sigma_{p}=\{-c,-b,x,x,x_{3},x_{4},x_{5},c-1,-y_1,-y_2,-y_3,1\}\cup\{-1,1\}^{w+2}$
$\Sigma_{q}=\{c,a,-x,-x,x_{3}-c,x_{4}-c,x_{5}-c,-1,c-y_1,c-y_2,c-y_3,1\}\cup\{-1,1\}^{w+2}$
$\Sigma_{r}=\{-a,b\}\cup\{-x_i\}_{i=3}^6\cup\{y_i\}_{i=1}^3\cup\{c-x_i\}_{i=3}^5\cup\{y_i-c\}_{i=1}^3\cup\{-1\}\cup\{-1,1\}^{w}$
\end{center}
Then we have that $N_p(1)=w+3$, $N_q(1)=w+3$, and $N_r(1)=w$, which is a contradiction by Lemma \ref{l69}.

Last, suppose that $c-x_i \neq 1$ and $y_i \neq 1$ for all $i$. Then we have that $y_i \in \Sigma_r\setminus(\{-a,b\}\cup\{-1,1\}^w)$ for all $i$ by Lemma \ref{l75} and $c-x_j \in \Sigma_r\setminus(\{-a,b\}\cup\{-1,1\}^w)$ for $j \neq 1$ and 2 by Lemma \ref{l74}. Then the weights are
$$\Sigma_{p}=\{-c,-b,x,x,x_{3},x_{4},x_{5},x_6,-y_1,-y_2,-y_3,1\}\cup\{-1,1\}^{w+2}$$
$$\Sigma_{q}=\{c,a,-x,-x,x_{3}-c,x_{4}-c,x_{5}-c,x_6-c,c-y_1,c-y_2,c-y_3,1\}\cup\{-1,1\}^{w+2}$$
$$\Sigma_{r}=\{-a,b\}\cup\{-x_i\}_{i=3}^6\cup\{y_i\}_{i=1}^3\cup\{c-x_i\}_{i=3}^6\cup\{y_i-c\}_{i=1}^3\cup\{-1,1\}^{w}$$
Then we have that $N_p(1)\geq w+3$, $N_q(1)\geq w+3$, and $N_r(1)=w$, which is a contradiction by Lemma \ref{l69}.
\end{proof}

\begin{lem} \label{l810} In Lemma \ref{l71}, $t \geq v+2$ is impossible.
\end{lem}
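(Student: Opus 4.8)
The plan is to combine Lemma \ref{l76} with a counting argument for the weights at $r$. Assume for contradiction that $t \geq v+2$; in particular $t \geq v$, so the contrapositive of Lemma \ref{l76} produces indices $i,j$ with $x_i = c - x_j$. Lemma \ref{l64} then lets me relabel so that $x_1 = x_2 = c/2$, that this is the only such coincidence, and consequently that $x_i \neq c - x_j$ for every $i \geq 3$ and every $j$; in particular $x_i \neq c/2$ for $i \geq 3$. Write $x = x_1 = c/2$. Since $c = a+b \geq 4$ we have $x \geq 2$, and since $N_p(x) \geq 2$, Lemma \ref{l46} part 3 applied to $x$ shows that no other weight at any fixed point is a nonzero multiple of $x$; in particular $\pm x \notin \Sigma_r$, $x$ equals none of the $x_i$ ($i \geq 3$), $y_i$, $c - x_i$, $c - y_i$, and $a \neq x \neq b$.

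Next I would list the weights forced to lie in the multiset $T := \Sigma_r \setminus (\{-a,b\} \cup \{-1,1\}^w)$. On the negative side, Lemma \ref{l72} gives $-x_i \in T$ for $i = 3, \dots, t+3$ (using $x_i \neq c - x_j$), and Lemma \ref{l73} gives $y_i - c \in T$ for $i = 1, \dots, t$. By Lemma \ref{l66} the $x_i$ with $i \geq 3$ are pairwise distinct and the $y_i$ are pairwise distinct, and by Lemma \ref{l63} we have $x_i \neq c - y_j$, i.e.\ $-x_i \neq y_j - c$; hence these $2t+1$ negative weights are pairwise distinct elements of $T$. On the positive side, consider the $2t+1$ numbers $c - x_i$ ($3 \leq i \leq t+3$) and $y_i$ ($1 \leq i \leq t$); by Lemma \ref{l66} applied with $e = 1$ (legitimate since $c \geq 4$, so $1 \neq c/2$) at most one of them equals $1$, and each of the remaining $\geq 2t$ of them lies in $T$ by Lemma \ref{l74} (for $c - x_i$, using $c - x_i \neq x_j$) and Lemma \ref{l75} (for $y_i$), and they are pairwise distinct by Lemmas \ref{l63} and \ref{l66}. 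Since a positive and a negative weight cannot coincide, $T$ contains at least $(2t+1) + 2t = 4t+1$ distinct weights.

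Finally I would compare with the size of $T$. Since $\dim M = 12 + 4t + 4s$ with $s = v + w$, the fixed point $r$ has $n = 6 + 2t + 2v + 2w$ weights, so $|T| = n - 2 - 2w = 4 + 2t + 2v$. Hence $4t + 1 \leq 4 + 2t + 2v$, i.e.\ $t \leq v + 1$, contradicting $t \geq v+2$. The only step requiring genuine care is the bookkeeping around the weight $1$: using Lemma \ref{l66} one must check that at most one of the listed positive candidates can equal $1$, and that the forced positive weights, the forced negative weights, and any $-x_i$ or $y_i - c$ that happens to equal $\pm 1$ are all still distinct elements of the multiset $T$ (here Lemma \ref{l63}, forbidding $x_i = c - y_j$, does the work). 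Everything else is a direct application of the lemmas established above.
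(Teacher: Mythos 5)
Your proposal is correct and takes essentially the same route as the paper: after using Lemma~\ref{l76} and Lemma~\ref{l64} to get $x_1=x_2=c/2$, you force (via Lemmas~\ref{l72}--\ref{l75}, with distinctness from Lemmas~\ref{l63}, \ref{l66} and Lemma~\ref{l46} part 3) at least $4t+1$ distinct weights into $\Sigma_r\setminus(\{-a,b\}\cup\{-1,1\}^{w})$, whose size is $4+2t+2v$, contradicting $t\geq v+2$ --- exactly the paper's counting argument. The only difference is cosmetic: the paper runs three cases according to whether some $c-x_i$ or some $y_i$ equals $1$, whereas you absorb this into the uniform observation that at most one of the positive candidates can equal $1$.
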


\begin{proof}
By Lemma \ref{l76}, $x_i=c-x_j$ for some $i$ and $j$. Then by Lemma \ref{l64}, there exist $x_i$ and $x_j$ where $i \neq j$ such that $2x_i=2x_j=c$. Without loss of generality, let $2x_1=2x_2=c$. Lemma \ref{l64} also implies that $x_i \neq c-x_j$ for $i \neq 1$ and 2, and for all $j$. Therefore, $-x_i \in \Sigma_r\setminus(\{-a,b\}\cup\{-1,1\}^w)$ for $i \neq 1$ and 2 by Lemma \ref{l72}. Also, by Lemma \ref{l73}, $y_i-c \in \Sigma_r\setminus(\{-a,b\}\cup\{-1,1\}^w)$ for all $i$. Moreover, by Lemma \ref{l46} part 3 for $x$, none of $x_i$'s, $y_j$'s, $c-x_i$'s, and $c-y_j$'s can be $x$ for $i \neq 1$ and 2, and for all $j$. Hence, by Lemma \ref{l66}, all of $x_i$'s, $y_j$'s, $c-x_i$'s, and $c-y_j$'s are different for $i \neq 1$ and 2, and for all $j$. 

First, suppose that $c-x_i=1$ for some $i$. Since $c-x_1=c-x_2=x_1=\frac{c}{2}\geq2$, without loss of generality, let $c-x_3=1$. Then, by Lemma \ref{l66}, $c-x_i \neq 1$ for $i \neq 3$ and $y_j \neq 1$ for all $j$. Then $c-x_i \in \Sigma_r(\{-a,b\} \cup \{-1,1\}^w)$ for $i\neq 1,2$, and 3 by Lemma \ref{l74}, and $y_j \in \Sigma_(\{-a,b\} \cup \{-1,1\}^w)$ for all $j$ by Lemma \ref{l73}. Therefore, we have that $\{-x_i\}_{i=3}^{t+3} \cup \{y_i\}_{i=1}^{t} \cup \{c-x_i\}_{i=4}^{t+3} \cup \{y_i-c\}_{i=1}^{t} \subset \Sigma_r \setminus (\{-a,b\} \cup \{-1,1\}^w)$, which is a contradiction since there are $2t+4+2u+2v$ spaces in $\Sigma_r \setminus (\{-a,b\} \cup \{-1,1\}^w)$ but $4t+1 > 4t \geq 2t+4+2u+2v$ by the assumption.

Second, suppose that $y_i=1$ for some $i$. Without loss of generality, let $y_1=1$. Then, by Lemma \ref{l66}, $c-x_i \neq 1$ for all $i$ and $y_j \neq 1$ for $j\neq1$. Then $c-x_i \in \Sigma_r\setminus(\{-a,b\} \cup \{-1,1\}^w)$ for $i\neq 1$ and 2 by Lemma \ref{l74}, and $y_j \in \Sigma_r\setminus(\{-a,b\} \cup \{-1,1\}^w)$ for $j\neq1$ by Lemma \ref{l73}. Therefore, we have that $\{-x_i\}_{i=3}^{t+3} \cup \{y_i\}_{i=2}^{t} \cup \{c-x_i\}_{i=3}^{t+3} \cup \{y_i-c\}_{i=1}^{t} \subset \Sigma_r \setminus (\{-a,b\} \cup \{-1,1\}^w)$, which is a contradiction since there are $2t+4+2u+2v$ spaces in $\Sigma_r \setminus (\{-a,b\} \cup \{-1,1\}^w)$ but $4t+1 > 4t \geq 2t+4+2u+2v$ by the assumption.

Last, suppose that $c-x_i \neq 1$ and $y_i \neq 1$ for all $i$. Then $c-x_i \in \Sigma_r\setminus(\{-a,b\} \cup \{-1,1\}^w)$ for $i\neq 1$ and 2 by Lemma \ref{l74}, and $y_j \in \Sigma_r\setminus(\{-a,b\} \cup \{-1,1\}^w)$ for all $j$ by Lemma \ref{l73}. Therefore, we have that $\{-x_i\}_{i=3}^{t+3} \cup \{y_i\}_{i=1}^{t} \cup \{c-x_i\}_{i=3}^{t+3} \cup \{y_i-c\}_{i=1}^{t} \subset \Sigma_r \setminus (\{-a,b\} \cup \{-1,1\}^w)$, which is a contradiction since there are $2t+4+2u+2v$ spaces in $\Sigma_r \setminus (\{-a,b\} \cup \{-1,1\}^w)$ but $4t+2 > 4t \geq 2t+4+2u+2v$ by the assumption.
\end{proof}

\begin{lem} \label{l811} In Lemma \ref{l71}, $v\geq 3$ are impossible.
\end{lem}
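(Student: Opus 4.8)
The plan is to settle this by counting the multiplicities of the weights $1$ and $-1$ at the three fixed points and then invoking Lemma \ref{l69}. First I would read off from the weight lists in Lemma \ref{l71} exactly how $1$ and $-1$ can occur. Since $a$, $b$, $c$ are even and $c$ is the largest weight, every $x_i$ and every $y_i$ is odd and strictly less than $c$; consequently each $x_i-c$ is negative and each $c-y_i$ is positive, so among the entries $x_i$, $-y_i$, $x_i-c$, $c-y_i$ the only ways a $\pm1$ can appear are $x_i=1$, $y_i=1$, $x_i=c-1$, or $y_i=c-1$. Together with the explicit $\{1\}$ and the $\{-1,1\}^{v+w}$ blocks at $p$ and $q$, this yields $N_p(1)\ge 1+v+w$, $N_q(1)\ge 1+v+w$, $N_p(-1)\ge v+w$, $N_q(-1)\ge v+w$, while the $\{-1,1\}^w$ block at $r$ gives $N_r(1)\ge w$ and $N_r(-1)\ge w$.

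Next I would use the hypothesis $v\ge3$. It forces $N_p(1),N_q(1)\ge w+4$ and $N_p(-1),N_q(-1)\ge w+3$, so all four of these numbers strictly exceed $w$. Since $w=\min_{\alpha}\min\{N_{\alpha}(-1),N_{\alpha}(1)\}$ is attained at some fixed point, the minimum must be attained at $r$; that is, $N_r(1)=w$ or $N_r(-1)=w$.

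Then I would split into these two cases and apply Lemma \ref{l69}. If $N_r(1)=w$, then $N_p(1)>N_r(1)$ and $N_q(1)>N_r(1)$, so Lemma \ref{l69} gives $N_p(1)<N_r(1)+3$ or $N_q(1)<N_r(1)+3$; but both are at least $N_r(1)+4$, a contradiction. If instead $N_r(-1)=w$, then $N_p(-1)>N_r(-1)$ and $N_q(-1)>N_r(-1)$, and the second half of Lemma \ref{l69} gives $N_p(-1)<N_r(-1)+3$ or $N_q(-1)<N_r(-1)+3$, contradicting $N_p(-1),N_q(-1)\ge N_r(-1)+3$. In either case we reach a contradiction, so $v\ge3$ is impossible.

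I do not expect a serious obstacle here: the argument is essentially a multiplicity count plus one application of Lemma \ref{l69}. The one place requiring care is the very first step — verifying that no hidden occurrences of $\pm1$ among $x_i$, $-y_i$, $x_i-c$, $c-y_i$ have been missed — which is exactly where the parity of $a,b,c$ and the bound $x_i,y_i<c$ (from $c$ being the largest weight) are used. Once those counts are pinned down, $v\ge3$ immediately pushes the minimum $w$ into one of the two slots $N_r(\pm1)$, and Lemma \ref{l69} closes the case.
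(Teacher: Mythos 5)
Your proof is correct and follows essentially the same route as the paper: the $\{-1,1\}^{v+w}$ blocks with $v\geq 3$ force $\min\{N_p(\pm1)\},\min\{N_q(\pm1)\}\geq w+3$, so the minimum defining $w$ must be attained at $r$ (i.e.\ $N_r(1)=w$ or $N_r(-1)=w$), and either case contradicts Lemma \ref{l69}. The only difference is cosmetic: your preliminary worry about hidden occurrences of $\pm1$ is unnecessary, since the argument only needs the lower bounds coming from the explicit blocks.
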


\begin{proof}
First, $\min\{N_p(-1),N_p(1)\} \geq w+3$ and $\min\{N_q(-1),N_q(1)\} \geq w+3$. If $\min\{N_r(-1),N_r(1)\} > w$, then $\min_{\alpha \in M^{S^1}} \min \{ N_{\alpha}(-1), N_{\alpha}(1) \} \geq w+1$, which is a contradiction. Hence $\min\{N_r(-1),N_r(1)\} = w$. Then either $N_r(-1)=w$ or $N_r(1)=w$. However, neither case is possible by Lemma \ref{l69}.
\end{proof}

\section*{Acknowledgements}
I would like to thank Sue Tolman, my advisor, for her kind advice and help.

\end{document}